  \theoremstyle{plain}
  \newtheorem*{thm*}{\protect\theoremname}
  \theoremstyle{remark}
  \newtheorem{rem}{\protect\remarkname}[section]
  \theoremstyle{plain}
  \newtheorem*{cor*}{\protect\corollaryname}
  \theoremstyle{definition}
  \newtheorem{defn}{\protect\definitionname}[section]
  \theoremstyle{plain}
  \newtheorem{thm}{\protect\theoremname}[section]
  \theoremstyle{definition}
  \newtheorem*{example*}{\protect\examplename}
  \theoremstyle{remark}
  \newtheorem*{rem*}{\protect\remarkname}
  \theoremstyle{plain}
  \newtheorem{prop}{\protect\propositionname}[section]
  \theoremstyle{plain}
  \newtheorem{lem}{\protect\lemmaname}[section]
  \theoremstyle{plain}
  \newtheorem{cor}{\protect\corollaryname}[section]
\newtheorem*{thmdefn}{Definition/Theorem}
\newcommand{\real}{\mathbb{R}}
\newcommand{\hy}{\mathbb{H}}
\newcommand{\psl}{\mathrm{PSL}}
\newcommand{\co}{\mathbb{C}}
\newcommand{\vbdy}{\partial_{\infty}}
\newcommand{\us}{\widetilde{\Sigma}}
\newcommand{\limitset}{\Lambda{(\Gamma)}}
\newcommand{\ch}{\mathrm{Conv}(\Lambda{(\Gamma))}}
\newcommand{\Isom}{\mathrm{Isom}}
  \providecommand{\corollaryname}{Corollary}
  \providecommand{\definitionname}{Definition}
  \providecommand{\examplename}{Example}
  \providecommand{\lemmaname}{Lemma}
  \providecommand{\propositionname}{Proposition}
  \providecommand{\remarkname}{Remark}
  \providecommand{\theoremname}{Theorem}
\providecommand{\corollaryname}{Corollary}
\providecommand{\theoremname}{Theorem}
\begin{document}

\title{Entropy, Critical Exponent and Immersed Surfaces in Hyperbolic 3-Manifolds}

\author{Lien-Yung Kao\thanks{Department of Mathematics, University of Notre Dame, Notre Dame, IN
46545 USA. \textit{E-mail}\texttt{:\protect\href{mailto:lkao@nd.edu}{lkao@nd.edu}}}}

\maketitle
\begin{abstract}
\textsf{We consider a $\pi_{1}$--injective immersion $f:\Sigma\to M$
from a compact surface $\Sigma$ to a hyperbolic 3--manifold $M$.
Let $\Gamma$ denote the copy of $\pi_{1}\Sigma$ in $\mathrm{Isom}(\hy{}^{3})$
induced by the immersion and $\delta(\Gamma)$ be the critical exponent.
Suppose $\Gamma$ is convex cocompact and $\Sigma$ is negatively
curved, we prove that there are two geometric constants $C_{1}(\Sigma,M)$
and $C_{2}(\Sigma,M)$ not bigger than $1$ such that $C_{1}(\Sigma,M)\cdot\delta_{\Gamma}\leq h(\Sigma)\leq C_{2}(\Sigma,M)\cdot\delta_{\Gamma}$,
where $h(\Sigma)$ is the topological entropy of the geodesic flow
on. When $f$ is an embedding, we show that $C_{1}(\Sigma,M)$ and
$C_{2}(\Sigma,M)$ are exactly the geodesic stretches (a.k.a. Thurston's
intersection number) with respect to certain Gibbs measures. Moreover,
we prove the rigidity phenomenon arising from this inequality. Lastly,
as an application, we discuss immersed minimal surfaces in hyperbolic
3--manifolds and these discussions lead us to results similar to A.
Sanders' work \cite{Sanders:2014wv} on the moduli space of $\Sigma$
introduced by C. Taubes \cite{Taubes:2004ke}.}
\end{abstract}

\begin{doublespace}
\tableofcontents{}
\end{doublespace}

\newpage{}

\section{Introduction}

\subsection{Main results}

\textsf{We consider a $\pi_{1}$--injective immersion $f:\Sigma\to M$
from a compact surface $\Sigma$ to a hyperbolic 3--manifold $M$.
Let $\Gamma$ denote the copy of $\pi_{1}\Sigma$ in $\mathrm{Isom}(\hy^{3})$
induced by the immersion $f$, and we endow $\Sigma$ with the induced
metric $g$ from the given hyperbolic metric $h$ on $M$. The topological
entropy $h(\Sigma)$ of the geodesic flow on $T^{1}\Sigma$, and the
critical exponent $\delta_{\Gamma}$ of $\Gamma$ on $\hy^{3}$ are
two natural geometric quantities associated to this setting. Recall
that when $(\Sigma,g)$ is a negatively curved manifold, then each
closed geodesic on $\Sigma$ corresponds to a unique conjugacy class
$[\gamma]\in[\pi_{1}\Sigma]$, and vice versa. We can write the topological
entropy of the geodesic flow on $T^{1}\Sigma$ as 
\[
h(\Sigma)=\lim_{T\to\infty}\frac{1}{T}\log\#\{[\gamma]\in[\pi_{1}S];l_{g}(\gamma)\leq T\},
\]
where $l_{g}(\gamma)$ is the length of the closed geodesic $[\gamma]$
with respect to the metric $g.$ Moreover, the critical exponent $\delta_{\Gamma}$
can be understood by lengths of closed geodesics as well. By Sullivan's
theorem \cite{Sullivan:1979vs}, when $\Gamma$ is convex cocompact,
we have 
\[
\delta_{\Gamma}=\lim_{T\to\infty}\frac{1}{T}\log\#\{[\gamma]\in[\pi_{1}\Sigma];l_{h}(\gamma)\leq T\},
\]
where $l_{h}(\gamma)$ is the length of $[\gamma]$ using the hyperbolic
metric $h.$}

\textsf{The main tool used in the note will be the Thermodynamic Formalism.
Specifically, the reparametrization method introduced by Ledrappier
\cite{Ledrappier:1994uy} and Sambarino \cite{Sambarino:2014jv}.
Through the reparametrization method, we can link two different Anosov
flows on $\hy^{3}$ by a Hölder continuous function. Thus, we can
compare periods of closed orbits associating with different Anosov
flows, therefore, their topological entropies.}

\textsf{Our main theorem shows that we can relate the two geometric
quantities $h(\Sigma)$ and $\delta_{\Gamma}$ by an inequality. Moreover,
the equality cases exhibit rigidity features. }

\begin{thm*}
[Theorem \ref{Thm: main-1}]Let $f:\Sigma\to M$ be a $\pi_{1}-$injective
immersion from a compact surface $\Sigma$ to a hyperbolic 3-manifold
$M$, and $\Gamma$ be the copy of $\pi_{1}\Sigma$ in $\mathrm{Isom}(\hy^{3})$
induced by the immersion $f$. Suppose $\Gamma$ is convex cocompact
and $(\Sigma,f^{*}h)$ is negatively curved, then 
\begin{equation}
C_{1}(\Sigma,M)\cdot\delta_{\Gamma}\leq h(\Sigma)\leq C_{2}(\Sigma,M)\cdot\delta_{\Gamma},\label{eq:ineq}
\end{equation}
where $C_{1}(\Sigma,M)$ and $C_{2}(\Sigma,M)$ are two geometric
constants not bigger than 1. Moreover, each equality holds if and
only if the marked length spectrum of $\Sigma$ is proportional to
the marked length spectrum of $M$, and the proportion is the ratio
$\frac{\delta_{\Gamma}}{h(\Sigma)}.$ 
\end{thm*}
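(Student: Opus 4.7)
My plan is to translate the whole comparison into the thermodynamic formalism of the single Anosov flow $\phi^{\Sigma}$ on $T^{1}\Sigma$, so that both $h(\Sigma)$ and $\delta_{\Gamma}$ are read off from one topological pressure. Since $(\Sigma,g)$ is closed and negatively curved, $\phi^{\Sigma}$ is a transitive Anosov flow; and because $\Gamma$ is convex cocompact, every $[\gamma]\in[\pi_{1}\Sigma]$ has a well-defined hyperbolic translation length $l_{h}(\gamma)$. The reparametrization method of Ledrappier \cite{Ledrappier:1994uy} and Sambarino \cite{Sambarino:2014jv} then produces a positive H\"older function $\rho\colon T^{1}\Sigma\to\real_{>0}$ whose orbit integrals recover the hyperbolic periods, $\int_{c_{\gamma}}\rho=l_{h}(\gamma)$ for every closed $\phi^{\Sigma}$--orbit $c_{\gamma}$ in the free homotopy class of $[\gamma]$. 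Convex cocompactness is precisely what guarantees that $\rho$ extends to all of $T^{1}\Sigma$ as a H\"older positive function and that the reparametrized flow has topological entropy exactly $\delta_{\Gamma}$; equivalently, $P(\phi^{\Sigma},-\delta_{\Gamma}\rho)=0$.

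With this pressure equation in hand, both halves of \eqref{eq:ineq} follow from the variational principle applied to two natural invariant measures. Testing $P(-\delta_{\Gamma}\rho)=\sup_{\mu}\bigl(h_{\mu}(\phi^{\Sigma})-\delta_{\Gamma}\!\int\rho\,d\mu\bigr)=0$ against the measure of maximal entropy $\mu^{\Sigma}_{\mathrm{BM}}$ gives
\[
h(\Sigma)=h_{\mu^{\Sigma}_{\mathrm{BM}}}(\phi^{\Sigma})\;\leq\;\delta_{\Gamma}\!\int\rho\,d\mu^{\Sigma}_{\mathrm{BM}},
\]
so I set $C_{2}(\Sigma,M):=\int\rho\,d\mu^{\Sigma}_{\mathrm{BM}}$. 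For the lower bound I take the unique equilibrium state $\mu^{*}$ of the H\"older potential $-\delta_{\Gamma}\rho$; the defining identity $h_{\mu^{*}}(\phi^{\Sigma})=\delta_{\Gamma}\!\int\rho\,d\mu^{*}$ combined with $h_{\mu^{*}}(\phi^{\Sigma})\leq h(\Sigma)$ yields $C_{1}(\Sigma,M):=\int\rho\,d\mu^{*}\leq h(\Sigma)/\delta_{\Gamma}$. Both constants lie in $(0,1]$ because straightening a loop in $M$ only shortens it: $l_{h}(\gamma)\leq l_{g}(\gamma)$ for every $\gamma$, hence $\int_{c_{\gamma}}\rho/l_{g}(\gamma)=l_{h}(\gamma)/l_{g}(\gamma)\leq 1$ on every periodic orbit, and weak--$*$ density of periodic orbit measures in the space of $\phi^{\Sigma}$--invariant probability measures forces $\int\rho\,d\mu\leq 1$ in general. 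When $f$ is an embedding, these two integrals are precisely the Bonahon-Thurston geodesic stretches (intersection numbers) of the Gibbs measures $\mu^{\Sigma}_{\mathrm{BM}}$ and $\mu^{*}$, which is how $C_{1}$ and $C_{2}$ acquire the interpretation announced in the abstract.

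The rigidity half is then run through uniqueness of equilibrium states. Equality in either inequality forces $\mu^{\Sigma}_{\mathrm{BM}}=\mu^{*}$; since equilibrium states of H\"older potentials on transitive Anosov flows are unique, the potentials $0$ and $-\delta_{\Gamma}\rho$ must be Livsic-cohomologous up to an additive constant, and that constant is pinned down by $P(0)=h(\Sigma)$ and $P(-\delta_{\Gamma}\rho)=0$ to be exactly $-h(\Sigma)$. Integrating over a closed $\phi^{\Sigma}$--orbit kills the coboundary and leaves $\delta_{\Gamma}\,l_{h}(\gamma)=h(\Sigma)\,l_{g}(\gamma)$ for every $[\gamma]\in[\pi_{1}\Sigma]$, which is exactly the announced proportionality of marked length spectra with ratio $\delta_{\Gamma}/h(\Sigma)$. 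The step I expect to be most delicate is not the thermodynamic bookkeeping above but the construction of $\rho$ itself: one must check that the Busemann-type cocycle on $\vbdy\hy^{3}$ defined by the $\Gamma$--action pulls back, along an equivariant boundary map $\partial\pi_{1}\Sigma\to\limitset$, to a genuinely H\"older positive function on $T^{1}\Sigma$ with the claimed periods. Convex cocompactness supplies both the quasi-isometric embedding $\us\hookrightarrow\hy^{3}$ and the H\"older boundary map that make the hypotheses of Ledrappier-Sambarino applicable in this surface-in-3-manifold context.
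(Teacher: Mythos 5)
Your proposal is correct and follows essentially the same route as the paper: the same Busemann-cocycle reparametrization via Ledrappier--Sambarino producing a positive H\"older function with hyperbolic periods and pressure equation $P(-\delta_{\Gamma}F)=0$, the same two test measures (the Bowen--Margulis measure and the equilibrium state of $-\delta_{\Gamma}F$) in the variational principle, the same length comparison $l_{h}(\gamma)\leq l_{g}(\gamma)$ for the bound $C_{1},C_{2}\leq1$, and the same rigidity argument via uniqueness of equilibrium states. The only cosmetic difference is that you justify $\int F\,\mathrm{d}\mu\leq1$ by weak-$*$ density of periodic orbit measures, whereas the paper invokes the nonnegative Liv\v{s}ic theorem to replace $F$ by a cohomologous function bounded by $1$.
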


\begin{rem}
:

\begin{itemize}

\item[1.] By Sullivan's theorem \cite{Sullivan:1984bm}, one can
replace the critical exponent $\delta_{\Gamma}$ in $(\ref{eq:ineq})$
by the Hausdorff dimension $\dim_{H}\Lambda(\Gamma)$ of the limit
set $\limitset$. 

\item[2.] This result should be compared with Theorem 1 \cite{Burger:1993wb},
Theorem 1.2 \cite{Anonymous:1995bn}, and Theorem A \cite{Sambarino:2013vp}.
All these results possess a similar flavor of comparing entropies. 

\item[3.] In Glorieux's thesis \cite{Glorieux:2015uo}, he follows
Knieper's method and deduces an upper bound of $h(\Sigma)$ in the
case that $\Sigma$ is embedded in a quasi-Fuchsian manifold $M$.
We will prove that the upper bound in Glorieux's thesis is exactly
the same as the one in Theorem \ref{Thm: main-1}.

\end{itemize}
\end{rem}

\textsf{Next two theorems depict geometric meanings of $C_{1}(\Sigma,M)$
and $C_{2}(\Sigma,M)$ mentioned in Theorem \ref{Thm: main-1}. These
two constants could be regarded as averages of lengths of closed geodesics
with respect to different metrics $g$ and $h$.}

\begin{thm*}
[Theroem \ref{thm: c1c2}]Let $f:\Sigma\to M$ be a $\pi_{1}-$injective
immersion from a compact surface $\Sigma$ to a hyperbolic 3-manifold
$M$, and $\Gamma$ be the copy of $\pi_{1}\Sigma$ in $\mathrm{Isom}(\hy^{3})$
induced by the immersion $f$. Suppose $\Gamma$ is convex cocompact
and $(\Sigma,f^{*}h)$ is negatively curved, then 
\[
C_{2}(\Sigma,M)=\lim_{T\to\infty}\frac{{\displaystyle \sum_{[\gamma]\in R_{T}(g)}l_{h}(\gamma)}}{{\displaystyle \sum_{[\gamma]\in R_{T}(g)}l_{g}(\gamma)}};\quad C_{1}(\Sigma,M)=\lim_{T\to\infty}\frac{{\displaystyle \sum_{[\gamma]\in R_{T}(h)}l_{h}(\gamma)}}{{\displaystyle \sum_{[\gamma]\in R_{T}(h)}l_{g}(\gamma)}}
\]
 where 
\[
R_{T}(g):=\{[\gamma]\in[\pi_{1}\Sigma]:\mbox{ }l_{g}(\gamma)\leq T\},\mbox{and }R_{T}(h):=\{[\gamma]\in[\pi_{1}\Sigma]:\mbox{ }l_{h}(\gamma)\leq T\}.
\]

\end{thm*}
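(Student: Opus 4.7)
The strategy is to combine Bowen's equidistribution of closed orbits for hyperbolic flows with the reparametrization framework already deployed in the proof of Theorem \ref{Thm: main-1}. From that proof one obtains a positive H\"{o}lder function $\psi : T^{1}\Sigma \to \real$ whose period along the closed $g$-geodesic in each free homotopy class $[\gamma]$ equals $l_{h}(\gamma)$, together with the identifications
\[
C_{2}(\Sigma,M) = \int_{T^{1}\Sigma} \psi \, d\mu_{g}, \qquad C_{1}(\Sigma,M) = \int_{T^{1}\Sigma} \psi \, d\mu_{\Gamma},
\]
where $\mu_{g}$ is the Bowen--Margulis measure of the $g$-geodesic flow on $T^{1}\Sigma$ and $\mu_{\Gamma}$ is the Gibbs equilibrium state on $T^{1}\Sigma$ of the H\"{o}lder potential $-\delta_{\Gamma}\psi$ (the avatar on $T^{1}\Sigma$ of the Bowen--Margulis--Sullivan measure of $\Gamma$).

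For the formula for $C_{2}$, I would invoke Bowen's equidistribution theorem for the geodesic flow $\phi^{t}$ of $(\Sigma,g)$, which is topologically mixing Anosov because $g$ is negatively curved. In the ratio form it reads: for every H\"{o}lder function $F$ on $T^{1}\Sigma$,
\[
\lim_{T\to\infty} \frac{\sum_{[\gamma]\in R_{T}(g)} \int_{\gamma} F}{\sum_{[\gamma]\in R_{T}(g)} l_{g}(\gamma)} = \int_{T^{1}\Sigma} F\, d\mu_{g}.
\]
Taking $F=\psi$ converts $\int_{\gamma} F$ into $l_{h}(\gamma)$, producing the first limit.

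For $C_{1}$, I would apply the same principle to the reparametrized flow $\phi^{t}_{\psi}$, that is, the flow on $T^{1}\Sigma$ whose closed orbits coincide as sets with those of $\phi^{t}$ but now carry periods $l_{h}(\gamma)$. Standard thermodynamic formalism shows that $\phi^{t}_{\psi}$ is again topologically mixing Anosov, its topological entropy is $\delta_{\Gamma}$, and by Abramov's formula its measure of maximal entropy is
\[
d\mu^{*} = \frac{\psi\, d\mu_{\Gamma}}{\int \psi\, d\mu_{\Gamma}}.
\]
The set of closed orbits of $\phi^{t}_{\psi}$ with period $\leq T$ is exactly $R_{T}(h)$, and the H\"{o}lder function $1/\psi$ (positive and H\"{o}lder on the compact space $T^{1}\Sigma$) integrates along the reparametrized closed orbit in class $[\gamma]$ to $l_{g}(\gamma)$. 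Applying Bowen's equidistribution to $\phi^{t}_{\psi}$ with $F = 1/\psi$ therefore gives
\[
\lim_{T\to\infty} \frac{\sum_{[\gamma]\in R_{T}(h)} l_{g}(\gamma)}{\sum_{[\gamma]\in R_{T}(h)} l_{h}(\gamma)} = \int \frac{1}{\psi}\, d\mu^{*} = \frac{1}{\int \psi\, d\mu_{\Gamma}} = \frac{1}{C_{1}},
\]
and taking reciprocals yields the second formula.

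The genuine analytic content sits upstream, in Theorem \ref{Thm: main-1}, which is what identifies $C_{1}$ and $C_{2}$ with the geodesic stretches $\int\psi\, d\mu_{\Gamma}$ and $\int\psi\, d\mu_{g}$; the remaining work in the present theorem is essentially bookkeeping. The points that need verification are that $\phi^{t}_{\psi}$ is Anosov and topologically mixing so Bowen's theorem applies, the Abramov relation between $\mu_{\Gamma}$ and $\mu^{*}$, and the H\"{o}lder regularity of $1/\psi$. The one mildly delicate step is the reduction from the standard period-weighted form of Bowen's equidistribution to the ratio form used above, which is a summation-by-parts argument combined with the prime orbit theorem for each of the two flows.
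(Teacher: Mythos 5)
Your proposal is correct and follows essentially the same route as the paper: the formula for $C_{2}$ comes from the equidistribution theorem applied to the $g$-geodesic flow with the reparametrization function $F$, and the formula for $C_{1}$ comes from applying equidistribution to the reparametrized flow (whose measure of maximal entropy is $\widehat{F\cdot\mu_{-h_{F}F}}$ by Bowen's formula, exactly your $\mu^{*}$) tested against $1/F$, followed by taking reciprocals.
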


\textsf{In additional, when $f:\Sigma\to M$ is an embedding, we have
another geometrical interpretation of $C_{1}(\Sigma,M)$ and $C_{2}(\Sigma,M)$
. The following theorem shows that $C_{1}(\Sigma,M)$ and $C_{2}(\Sigma,M)$
are the geodesic stretches of $\Sigma$ relative to $M$ with respect
to certain Gibbs measures.}

\begin{thm*}
[Theorem \ref{Thm: geodesic stetch}]Let $f:\Sigma\to M$ be a $\pi_{1}-$injective
embedding from a compact surface $\Sigma$ to a hyperbolic 3-manifold
$M$, and $\Gamma$ be the copy of $\pi_{1}\Sigma$ in $\mathrm{Isom}(\hy^{3})$
induced by the embedding $f$. Suppose $\Gamma$ is convex cocompact
and $(\Sigma,f^{*}h)$ is negatively curved, then
\begin{alignat*}{1}
C_{1}(\Sigma,M) & =I_{\mu}(\Sigma,M),\\
C_{2}(\Sigma,M) & =I_{\mu_{BM}}(\Sigma,M).
\end{alignat*}

Here, $I_{\mu}(\Sigma,M)$ and $I_{\mu_{BM}}(\Sigma,M)$ are the geodesic
stretches with respect to a Gibbs measure $\mu$ and the Bowen-Margulis
measure $\mu_{BM}$ of the geodesic flow $\phi$ on $T^{1}\Sigma$. 
\end{thm*}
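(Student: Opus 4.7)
The plan is to identify each constant $C_i(\Sigma,M)$ with the integral of the Ledrappier--Sambarino reparametrization function against an appropriate $\phi$-invariant probability measure on $T^{1}\Sigma$, and then use equidistribution of closed geodesics (Bowen--Parry--Pollicott type) to convert the length-ratio formulas from Theorem \ref{thm: c1c2} into stretch integrals. Recall that the reparametrization method, already invoked in the introduction, produces a positive H\"older continuous function $r:T^{1}\Sigma\to\real$ such that for every conjugacy class $[\gamma]\in[\pi_{1}\Sigma]$ the integral of $r$ around the corresponding periodic orbit of the $g$-geodesic flow $\phi$ equals the hyperbolic length $l_{h}(\gamma)$. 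Because $f$ is an embedding, one may then define, for each $\phi$-invariant probability measure $\mu$ on $T^{1}\Sigma$, the geodesic stretch $I_{\mu}(\Sigma,M)=\int r\, d\mu$; by Birkhoff this equals the $\mu$-a.e.\ asymptotic ratio $\lim_{t\to\infty}d_{h}(\tilde{f}(\tilde{x}),\tilde{f}(\tilde{\phi}_{t}\tilde{x}))/t$.

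For $C_{2}(\Sigma,M)$, I would apply Bowen's equidistribution theorem to $\phi$: the length-weighted orbital averages $\bigl(\sum_{[\gamma]\in R_{T}(g)}l_{g}(\gamma)\bigr)^{-1}\sum_{[\gamma]\in R_{T}(g)}l_{g}(\gamma)\,\mu_{\gamma}$ converge weak-$*$ to the Bowen--Margulis measure $\mu_{BM}$, where $\mu_{\gamma}$ is the uniform probability measure on the periodic orbit. Since $l_{h}(\gamma)=l_{g}(\gamma)\int r\, d\mu_{\gamma}$, testing this convergence against $r$ yields
\[
C_{2}(\Sigma,M)=\int r\, d\mu_{BM}=I_{\mu_{BM}}(\Sigma,M).
\]

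For $C_{1}(\Sigma,M)$, the counting is by $l_{h}(\gamma)$, which coincides with the period function of the reparametrized Anosov flow $\phi^{r}$. The topological entropy of $\phi^{r}$ equals $\delta_{\Gamma}$ and its measure of maximal entropy $\mu^{r}_{BM}$ is the image of $\mu_{BM}^{\Gamma}$ on the unit tangent bundle of the convex core. Applying Bowen equidistribution to $\phi^{r}$ gives, for any continuous $G$,
\[
\lim_{T\to\infty}\frac{\sum_{[\gamma]\in R_{T}(h)}l_{h}(\gamma)\,\nu^{r}_{\gamma}(G)}{\sum_{[\gamma]\in R_{T}(h)}l_{h}(\gamma)}=\int G\, d\mu^{r}_{BM},
\]
where $\nu^{r}_{\gamma}$ is the $\phi^{r}$-invariant probability measure on the orbit. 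The change-of-variables between $\phi$- and $\phi^{r}$-invariant measures gives $l_{h}(\gamma)\,\nu^{r}_{\gamma}=l_{g}(\gamma)\,r\,\mu_{\gamma}$; applying the displayed limit to $G=1/r$ therefore identifies $\lim_{T\to\infty}\sum_{R_{T}(h)}l_{g}(\gamma)/\sum_{R_{T}(h)}l_{h}(\gamma)$ with $\int(1/r)\,d\mu^{r}_{BM}$. Letting $\mu$ be the $\phi$-invariant probability measure corresponding to $\mu^{r}_{BM}$ under reparametrization, namely $d\mu^{r}_{BM}=(r/\int r\, d\mu)\, d\mu$, this reciprocal collapses to $\int r\, d\mu$, yielding
\[
C_{1}(\Sigma,M)=\int r\, d\mu=I_{\mu}(\Sigma,M).
\]
Finally, one checks that $\mu$ so obtained is precisely the Gibbs equilibrium state on $T^{1}\Sigma$ for the potential $-\delta_{\Gamma}r$ (which has zero pressure by the reparametrization identity between pressures of $\phi$ and $\phi^{r}$), so $\mu$ deserves the name ``Gibbs measure.''

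The main obstacle I anticipate is the careful bookkeeping in the change-of-variables step: aligning the normalizations of the $\phi^{r}$-equidistribution with the exact ratios appearing in Theorem \ref{thm: c1c2}, and verifying that the resulting $\mu$ is indeed a Gibbs/equilibrium state with a clean variational characterization. The embedding hypothesis on $f$ enters here only to guarantee that $r$ genuinely encodes the hyperbolic geometry of the image (so $\int r\, d\mu$ merits the interpretation of a geodesic stretch), rather than just an abstract length-cocycle in $\Isom(\hy^{3})$.
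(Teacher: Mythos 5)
There is a genuine gap at the heart of your argument: you \emph{define} the geodesic stretch as $I_{\mu}(\Sigma,M)=\int r\,d\mu$ and then assert ``by Birkhoff'' that this equals the $\mu$-a.e.\ limit of $d_{h}\bigl(\widetilde{f}(\widetilde{x}),\widetilde{f}(\widetilde{\phi}_{t}\widetilde{x})\bigr)/t$. But the geodesic stretch in the statement is defined geometrically (Knieper-style), via Kingman's subadditive ergodic theorem applied to $a(v,t)=d_{h}\bigl(\pi\circ\mathbf{f}(v),\pi\circ\mathbf{f}\circ\widetilde{\phi}_{t}(v)\bigr)$, and the entire content of the theorem is that this subadditive limit coincides with $\int F\,d\mu$ for the two Gibbs measures. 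The Birkhoff theorem only controls $\frac{1}{t}\int_{0}^{t}F(\phi_{s}v)\,ds$; the reparametrization function $F$ is produced abstractly by Ledrappier--Sambarino and its orbit integrals are known to equal hyperbolic lengths \emph{only on closed orbits} ($\int_{[\gamma]}F=l_{h}(\gamma)$). Passing from closed orbits to a generic orbit segment, and from the abstract cocycle to the actual displacement $a(v,t)$, is not automatic and is exactly where the paper does its work: it picks a point generic for both Birkhoff and Kingman limits, uses the Anosov closing lemma together with Poincar\'e recurrence to produce closed orbits $[\gamma_{n}]$ shadowing the orbit of $v$, uses H\"older continuity of $F$ to show $\int F\,d\mu=\lim_{n}l_{h}(\gamma_{n})/l_{g}(\gamma_{n})$, and then uses the quasi-isometry of $\widetilde{f}$ and the Morse lemma (the image of the lifted closed geodesic is within bounded Hausdorff distance $R$ of the corresponding geodesic in $\mathrm{Conv}(\Lambda(\Gamma))$) to show that the same limit computes $\lim_{t}a(v,t)/t=I_{\mu}(\Sigma,M)$, up to errors of size $2R$ that vanish after dividing by $l_{g}(\gamma_{n})\to\infty$. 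Your proposal never supplies this bridge, so as written it proves only that $C_{1}$ and $C_{2}$ are integrals of $r$ against the two Gibbs measures, which is already the content of the main theorem and of Theorem \ref{thm: c1c2}, not the identification with the geodesic stretch.

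The equidistribution computations you carry out for $C_{1}$ and $C_{2}$ (including the change of variables between $\phi$- and $\phi^{r}$-invariant measures and the identification of $\mu$ as the equilibrium state of $-\delta_{\Gamma}r$) are fine, but they reprove Theorem \ref{thm: c1c2} rather than the statement at hand. Note also that the identification $\int F\,d\mu=I_{\mu}(\Sigma,M)$ uses ergodicity of $\mu$ (both limits are a.e.\ constants only in the ergodic case); this is harmless here since Gibbs measures are ergodic, but it should be said. To repair the proposal you need, in place of ``by Birkhoff,'' an argument comparing $\int_{0}^{t}F(\phi_{s}v)\,ds$ with $a(v,t)$ up to sublinear error --- either the closing-lemma-plus-Morse-lemma route the paper takes, or a direct bounded-cocycle comparison, which would itself require the embedding/quasi-isometry input you currently invoke only informally at the end.
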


\begin{rem}
Our definition of the geometric stretch $I_{\mu}(\Sigma,M)$ in Section
\ref{sec:Geodesic-Stretch} is inspired by Knieper \cite{Anonymous:1995bn}.
In the general setting, the geodesic stretch was introduced in the
paper \cite{Croke:1990eb} of Croke and Fathi is known as the Thurston's
intersection number.
\end{rem}

\subsection{Applications}

\textsf{By the Gauss equation, immersed minimal surfaces in hyperbolic
3--manifolds are negatively curved. Minimal surfaces in hyperbolic
3--manifolds is a very rich subject and have drawn a lot of attention,
with important contributions by Uhlenbeck \cite{Uhlenbeck:1983wl}
and Taubes \cite{Taubes:2004ke}. In this note, we take a glance at
this rich subject from a dynamical system point of view. }

\textsf{The following corollary is a consequence of the main theorem. }

\begin{cor*}
[Corollary \ref{Cor: main}]Let $f:\Sigma\to M$ be a $\pi_{1}$--injective
minimal immersion from a compact surface $\Sigma$ to a hyperbolic
3--manifold $M$, and $\Gamma$ be the copy of $\pi_{1}\Sigma$ in
$\mathrm{Isom}(\hy^{3})$ induced by the immersion. Suppose $\Gamma$
is convex cocompact, then there are explicit constants $C_{1}(\Sigma,M)$and
$C_{2}(\Sigma,M)$not bigger than 1 such that 
\[
C_{1}(\Sigma,M)\cdot\delta_{\Gamma}\leq h(\Sigma)\leq C_{2}(\Sigma,M)\cdot\delta_{\Gamma}
\]
Moreover, each equality holds if and only if the marked length spectrum
of $\Sigma$ is proportional to the marked length spectrum of $M$,
and the proportion is the ration $\frac{\delta_{\Gamma}}{h(\Sigma)}.$ 
\end{cor*}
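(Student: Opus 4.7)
The plan is to deduce this corollary directly from Theorem \ref{Thm: main-1}. The hypotheses of that theorem are $\pi_{1}$-injectivity of the immersion, convex cocompactness of $\Gamma$, and negative curvature of the induced metric $(\Sigma, f^{*}h)$. The first two are assumed in the statement of the corollary, so the only substantive task is to verify the negative curvature hypothesis from the assumption that $f$ is minimal.

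For this I would invoke the Gauss equation for a surface immersed in a 3-manifold of constant sectional curvature $-1$. Writing $\mathrm{II}$ for the second fundamental form of $f$, the intrinsic Gauss curvature of $(\Sigma, f^{*}h)$ satisfies
\[
K_{\Sigma}(p) = -1 + \det(\mathrm{II}_{p}).
\]
Minimality means the mean curvature $\mathrm{tr}(\mathrm{II})$ vanishes identically, so at each $p \in \Sigma$ the two principal curvatures are opposite, of the form $\kappa(p)$ and $-\kappa(p)$. Therefore $\det(\mathrm{II}_{p}) = -\kappa(p)^{2} \leq 0$, which yields $K_{\Sigma}(p) \leq -1$ pointwise. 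In particular $(\Sigma, f^{*}h)$ is strictly negatively curved, actually with curvature bounded above by $-1$.

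Once the negative curvature hypothesis is certified, Theorem \ref{Thm: main-1} applies verbatim and produces the two explicit constants $C_{1}(\Sigma,M), C_{2}(\Sigma,M) \leq 1$ together with the two-sided inequality and the equality/rigidity characterization: equality on either side forces the marked length spectra of $\Sigma$ and $M$ to be proportional with ratio $\delta_{\Gamma}/h(\Sigma)$. Moreover, by Theorems \ref{thm: c1c2} and (in the embedded case) \ref{Thm: geodesic stetch}, these constants coincide with the limiting ratios of summed lengths over $R_{T}(g)$ and $R_{T}(h)$, respectively with the geodesic stretches $I_{\mu}(\Sigma,M)$ and $I_{\mu_{BM}}(\Sigma,M)$.

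The main point to be careful about is simply the Gauss-equation computation together with the observation that the principal curvatures are opposite; there is no deeper obstacle, since the dynamical machinery of reparametrizations, Gibbs measures, and thermodynamic formalism used to establish Theorem \ref{Thm: main-1} has already been set up before this corollary. In particular, no convex-cocompactness subtleties or Anosov-flow constructions need to be revisited; the corollary is genuinely a clean consequence of the main theorem combined with a one-line Riemannian geometry input.
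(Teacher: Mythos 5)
Your proposal is correct and matches the paper's argument: the paper likewise notes (via the Gauss equation, cf. Remark \ref{rem: Gauss-Codazzi +miminal}) that minimality forces $K_{g}=-1-\tfrac{1}{2}\left\Vert B\right\Vert _{g}^{2}\leq-1$, so $(\Sigma,f^{*}h)$ is negatively curved, and then applies Theorem \ref{Thm: main-1} directly. The additional remarks about Theorems \ref{thm: c1c2} and \ref{Thm: geodesic stetch} are harmless but not needed for the corollary itself.
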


\begin{rem}
:

From \cite{Uhlenbeck:1983wl}, we learn that the $\pi_{1}$--injectivity
is guaranteed if we put some curvature conditions on $\Sigma$. Namely,
all principal curvatures are between $-1$ and $1$. Furthermore,
in such cases, immersed minimal surfaces are indeed embedded. Therefore,
we can interpret the constants $C_{i}(\Sigma,M)$ of such pairs $(\Sigma,M)$
as geodesic stretches. 
\end{rem}

\textsf{In the last part of this note, we change gear to the Taubes'
moduli space of $\Sigma.$ Taubes \cite{Taubes:2004ke} constructs
the space of minimal hyperbolic germs $\mathcal{H}$ which is a deformation
space for the set whose archetypal elements is a pair that consists
of a Riemannian metric $g$ and the second fundamental form $B$ from
a closed, oriented, negative Euler characteristic minimal surfaces
$\Sigma$ in some hyperbolic 3--manifold $M$. }

\textsf{Uhlenbeck \cite{Uhlenbeck:1983wl} proved that there exists
a representation $\rho:\pi_{1}(\Sigma)\to\mbox{Isom }(\hy{}^{3})\cong\psl(2,\co)$
leaving this minimal immersion invariant. In other words, there is
a map 
\[
\Phi:\mathcal{H}\to\mathcal{R}(\pi_{1}(\Sigma),\psl(2,\co)),
\]
where $\mathcal{R}(\pi_{1}(\Sigma),\psl(2,\co))$ is the space of
conjugacy classes of representations of $\pi_{1}(S)$ into $\psl(2,\co)$.}

\textsf{The following corollary gives an upper and a lower bound of
the topological entropy $h(g,B)$ of the geodesic flow on $T^{1}\Sigma$
provided the data $(g,B)\in\mathcal{H}$. }

\begin{cor*}
[Corollary \ref{Cor: taubes' space}]Let $\rho\in\mathcal{R}(\pi_{1}(\Sigma),\mathrm{PSL}(2,\mathbb{C}))$
be a discrete, faithful and convex cocompact representation. Suppose
$(g,B)\in\Phi^{-1}(\rho)$, then there are explicit constants $C_{1}(g,B)$and
$C_{2}(g,B)$not bigger than 1 such that 
\[
C_{1}(g,B)\cdot\delta_{\rho(\pi_{1}\Sigma)}\leq h(g,B)\leq C_{2}(g,B)\cdot\delta_{\rho(\pi_{1}\Sigma)}\leq\delta_{\rho(\pi_{1}\Sigma)}
\]
with the last equality if and only if $B$ is identically zero which
holds if and only if $\rho$ is Fuchsian. 
\end{cor*}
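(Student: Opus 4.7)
The plan is to reduce the statement to Corollary~\ref{Cor: main} and then analyze geometrically the equality case $C_{2}(g,B)=1$. Given $(g,B)\in\Phi^{-1}(\rho)$, Taubes' construction provides an equivariant minimal immersion $\tilde{f}\colon\tilde{\Sigma}\to\hy^{3}$ with holonomy $\rho$, which descends to a $\pi_{1}$-injective minimal immersion $f\colon(\Sigma,g)\to M=\hy^{3}/\rho(\pi_{1}\Sigma)$ with $g=f^{*}h$ and second fundamental form $B$. The faithfulness of $\rho$ yields $\pi_{1}$-injectivity and the convex cocompactness is assumed. The Gauss equation combined with minimality ($\mathrm{tr}(B)=0$, so the principal curvatures are $\pm\lambda$) gives $K_{\Sigma}=-1+\det(B)=-1-\lambda^{2}\leq-1$, so $(\Sigma,g)$ is negatively curved. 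All hypotheses of Corollary~\ref{Cor: main} are therefore satisfied, and applying it directly produces constants $C_{1}(g,B),C_{2}(g,B)\leq 1$ with $C_{1}(g,B)\cdot\delta_{\rho(\pi_{1}\Sigma)}\leq h(g,B)\leq C_{2}(g,B)\cdot\delta_{\rho(\pi_{1}\Sigma)}$; the trailing inequality $C_{2}\cdot\delta\leq\delta$ is then immediate from $C_{2}\leq 1$.

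For the equality characterization, by the rigidity clause of Corollary~\ref{Cor: main}, $C_{2}(g,B)=1$ is equivalent to $h(g,B)=\delta_{\rho(\pi_{1}\Sigma)}$, and in turn to the marked length spectra of $(\Sigma,g)$ and $M$ coinciding (the ratio $\delta_{\Gamma}/h$ equals $1$). Since $g=f^{*}h$, for any closed $g$-geodesic $\gamma^{*}$ on $\Sigma$ we have $l_{h}(\gamma)\leq\mathrm{length}_{h}(f\circ\gamma^{*})=l_{g}(\gamma^{*})$, with equality in every free homotopy class precisely when each $f\circ\gamma^{*}$ is itself an $h$-geodesic of $M$. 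Writing the geodesic equation in $M$ along $\dot\gamma^{*}\in T\Sigma$ as $\nabla^{M}_{\dot\gamma^{*}}\dot\gamma^{*}=\nabla^{\Sigma}_{\dot\gamma^{*}}\dot\gamma^{*}+B(\dot\gamma^{*},\dot\gamma^{*})\,\nu$, this forces $B(\dot\gamma^{*},\dot\gamma^{*})\equiv 0$ along every closed $g$-geodesic. Because $(\Sigma,g)$ is negatively curved the geodesic flow on $T^{1}\Sigma$ is Anosov, so the set of unit tangent vectors to closed orbits is dense; by continuity of $B$ we obtain $B(v,v)=0$ for every $v\in T^{1}\Sigma$, and polarization of the symmetric form $B$ yields $B\equiv 0$.

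Finally, $B\equiv 0$ means $f$ is totally geodesic, so the image of $\tilde f$ lies in a $\rho$-invariant totally geodesic $\hy^{2}\subset\hy^{3}$; this forces $\rho(\pi_{1}\Sigma)$ to be conjugate into $\psl(2,\real)$, i.e.\ $\rho$ is Fuchsian. Conversely, a Fuchsian $\rho$ leaves a totally geodesic $\hy^{2}$ invariant, and the associated totally geodesic germ (which has $B\equiv 0$) lies in $\Phi^{-1}(\rho)$; standard uniqueness for Fuchsian minimal germs then identifies this with the given $(g,B)$, so $B\equiv 0$. In that case $l_{g}=l_{h}$ trivially, giving $h(g,B)=\delta_{\rho(\pi_{1}\Sigma)}$ and $C_{2}(g,B)=1$. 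The main obstacle in the argument is the density-plus-continuity step that upgrades equality of the marked length spectra to the pointwise vanishing of $B$; everything else is bookkeeping layered on top of Corollary~\ref{Cor: main} and the Gauss equation.
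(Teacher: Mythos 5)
Your proposal is correct and follows essentially the same route as the paper: reduce to Corollary \ref{Cor: main} (equivalently Theorem \ref{Thm: main-1}) via the Gauss equation, and handle the equality case by the density of tangent vectors to closed geodesics plus continuity of the second fundamental form, which is exactly the content and proof of the paper's Corollary \ref{cor:rigidity}. The only differences are cosmetic: you reprove that rigidity step inline (with polarization of $B$) instead of citing it, and you make explicit the converse direction that a Fuchsian $\rho$ forces $B\equiv0$, which the paper leaves implicit.
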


\begin{rem}
:

\begin{itemize}

\item[1.] When $\rho$ is quasi-Fuchsian, the above upper bound of
$h(g,B)$ is a special case treated in Sanders' paper \cite{Sanders:2014wv}.
However, the inequality in Sanders' work has no information about
the constant $C_{2}(g,B)$. 

\item[2.] The lower bound given in Sanders' paper \cite{Sanders:2014wv}
is derived by Manning's formula \cite{Manning:1981es}, which gives
a lower bound of the topological entropy $h(g,B)$ in terms of the
curvature of $(\Sigma,g).$ Whereas, our method doesn't see the curvature
directly. It will be interesting to compare our lower bound with Sanders'
lower bound of $h(g,B)$.

\end{itemize}
\end{rem}
\hspace{-0.5cm}

\textsf{From above corollary, we recover the Bowen's rigidity theorem
\cite{Bowen:1979eh}.}

\begin{cor*}
[Bowen's rigidity \cite{Bowen:1979eh}]A quasi-Fuchsian representation
$\rho\in\mathcal{QF}$ is Fuchsian if and only if $\dim_{H}\Lambda(\Gamma)=1$.
\end{cor*}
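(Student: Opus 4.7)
The plan is to deduce this statement from the rigidity clause of Corollary \ref{Cor: taubes' space} by a simple sandwich on the topological entropy. Since $\rho$ is quasi-Fuchsian, $\Gamma = \rho(\pi_{1}\Sigma)$ is convex cocompact, so Sullivan's theorem identifies $\delta_{\Gamma}$ with $\dim_{H}\Lambda(\Gamma)$; thus the statement is equivalent to the assertion that $\rho$ is Fuchsian if and only if $\delta_{\Gamma}=1$. The forward direction is classical: when $\rho$ is Fuchsian, $\Lambda(\Gamma)\subset\vbdy\hy^{3}$ is a round circle, whose Hausdorff dimension is $1$.

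For the non-trivial direction, I would assume $\delta_{\Gamma}=1$ and invoke Uhlenbeck--Taubes to produce a $\rho$-equivariant minimal immersion $f:\Sigma\to\hy^{3}/\Gamma$, giving a pair $(g,B)\in\Phi^{-1}(\rho)$. The Gauss equation combined with minimality of $f$ yields
\[
K_{g}=-1-\tfrac{1}{2}|B|^{2}\leq -1
\]
pointwise, with equality if and only if $B$ vanishes identically. A standard curvature/volume comparison argument (equivalently Manning's lower bound) then forces the topological entropy of the geodesic flow on $(\Sigma,g)$ to satisfy $h(g,B)\geq 1$. Substituting into the chain of inequalities from Corollary \ref{Cor: taubes' space} gives
\[
1 \leq h(g,B) \leq C_{2}(g,B)\cdot\delta_{\Gamma} \leq \delta_{\Gamma} = 1,
\]
so every inequality collapses to an equality; in particular $C_{2}(g,B)\cdot\delta_{\Gamma}=\delta_{\Gamma}$. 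The rigidity clause of Corollary \ref{Cor: taubes' space} then forces $B\equiv 0$, and hence $\rho$ is Fuchsian.

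The essential content of the argument is entirely packaged into the previous corollary, so once the sandwich is in place the rest is automatic. The only genuinely independent step is the lower bound $h(g,B)\geq 1$ obtained from the Gauss equation and curvature comparison, which I expect to be routine. The main subtlety to verify is that a minimal immersion realizing an arbitrary quasi-Fuchsian $\rho$ exists, so that the hypothesis $(g,B)\in\Phi^{-1}(\rho)$ of Corollary \ref{Cor: taubes' space} is non-vacuous; this is where I anticipate the most care, though the existence is standard in the quasi-Fuchsian setting.
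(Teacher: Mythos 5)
Your proposal is correct and follows essentially the same route as the paper: use Uhlenbeck's existence of a minimal surface for quasi-Fuchsian $\rho$ to get $(g,B)\in\Phi^{-1}(\rho)$, the Gauss equation to get $K_{g}\leq-1$ and hence $h(g,B)\geq1$, and then collapse the sandwich $1\leq h(g,B)\leq C_{2}(g,B)\cdot\delta_{\Gamma}\leq\delta_{\Gamma}$ to invoke the rigidity clause forcing $B\equiv0$. The only cosmetic difference is that the paper derives $h(g,B)\geq1$ from Katok's Theorem B rather than Manning's bound, which is an interchangeable tool for the same step.
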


\textsf{Lastly we focus on a two special subsets of the minimal hyperbolic
germs: the Fuchsian space $\mathcal{F}$ and the almost-Fuchsian space
$\mathcal{AF}$. The Fuchsian space is the space of all hyperbolic
metrics on $\Sigma$, i.e. 
\[
\mathcal{F}=\{(m,0)\in\mathcal{H};\mbox{ }m\mbox{ is a hyperboilic metric on }\Sigma\}
\]
and the almost-Fuchsian space $\mathcal{AF}$ is defined by the condition
that $\left\Vert B\right\Vert _{g}^{2}<2$. By Uhlenbeck's theorem
in \cite{Uhlenbeck:1983wl} we know that if $(g,B)\in\mathcal{AF}$
then there exists a unique quasi-Fuchsian 3--manifold $M$, up to
isometry, such that $\Sigma$ is an embedded minimal surface in $M$
with the induced metric $g$ and the second fundamental form $B$. }

\textsf{The following theorem was first proved in \cite{Sanders:2014wv},
and we recover this theorem by the reparametrization method. }

\begin{cor*}
[Theorem \ref{Thm: entropy decreasing}, Theorem 3.5  \cite{Sanders:2014wv}]Consider
the entropy function restricting on the almost-Fuchsian space $h:\mathcal{AF}\to\real$,
then \begin{itemize}

\item[1.] the entropy function $h$ realizes its minimum at the Fuchsian
space $\mathcal{F}$, and

\item[2.] for $(m,0)\in\mathcal{F}$, $h$ is monotone increasing
along the ray $r(t)=(g_{t},tB)$ provided $\left\Vert tB\right\Vert _{g_{t}}<2$,
i.e. $r(t)\subset\mathcal{AF}$, where $g_{t}=e^{2u_{t}}m$.

\end{itemize}
\end{cor*}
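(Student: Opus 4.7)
My plan is to split the proof into the two statements and tackle each using the Gauss equation plus one key entropy tool (classical for Part~1 and the reparametrization derivative for Part~2).

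For Part~1, I would start from the Gauss equation for a minimal immersion into a hyperbolic $3$--manifold: since $B$ is traceless with eigenvalues $\pm\kappa$, one has $\det(B)=-\kappa^{2}=-\|B\|_{g}^{2}/2$, hence $K_{g}=-1-\|B\|_{g}^{2}/2\leq -1$ pointwise. Combining this with Gauss--Bonnet $\int_{\Sigma}K_{g}\,dA_{g}=2\pi\chi(\Sigma)$ gives $\mathrm{Area}(\Sigma,g)\leq 2\pi|\chi(\Sigma)|$, with equality iff $B\equiv 0$. Finally I would invoke Manning's inequality for compact negatively curved surfaces, $h(g)^{2}\cdot\mathrm{Area}(\Sigma,g)\geq 2\pi|\chi(\Sigma)|$, to conclude $h(g,B)\geq 1=h(m,0)$, so the minimum of $h$ on $\mathcal{AF}$ is attained on the Fuchsian locus $\mathcal{F}$.

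For Part~2, the plan is to realise the family of $g_{t}$--geodesic flows as reparametrizations of a single fixed Anosov flow and then differentiate the defining pressure equation. Fix the Fuchsian geodesic flow $\phi_{0}$ on $T^{1}\Sigma$ associated to $m$. By the reparametrization procedure of Ledrappier and Sambarino used earlier in the paper, for each admissible $t$ there is a positive H\"older function $G_{t}:T^{1}\Sigma\to\mathbb{R}_{>0}$ with $\int_{\gamma}G_{t}\,d\phi_{0}=l_{g_{t}}(\gamma)$ for every closed $\phi_{0}$--orbit $\gamma$, and the topological entropy $h_{t}:=h(g_{t},tB)$ is characterised as the unique root of $P_{\phi_{0}}(-h_{t}G_{t})=0$. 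Implicit differentiation together with the standard derivative-of-pressure formula yields
\[
\dot{h}_{t}\;=\;-\,\frac{h_{t}\displaystyle\int \dot{G}_{t}\,d\mu_{t}}{\displaystyle\int G_{t}\,d\mu_{t}},
\]
where $\mu_{t}$ is the equilibrium state of $-h_{t}G_{t}$ for $\phi_{0}$, i.e.\ the Bowen--Margulis measure of the $g_{t}$--geodesic flow pulled back via the orbit equivalence with $\phi_{0}$. Since $G_{t}>0$ and $h_{t}>0$, the desired monotonicity $\dot{h}_{t}>0$ is equivalent to $\int \dot{G}_{t}\,d\mu_{t}<0$.

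The main obstacle I anticipate is verifying this sign all along the ray. I would attack it as follows. The conformal factor $u_{t}$ in $g_{t}=e^{2u_{t}}m$ satisfies the minimal-surface PDE $\Delta_{m}u_{t}=e^{2u_{t}}-t^{2}e^{-2u_{t}}|q|_{m}^{2}-1$ (with $q$ the Hopf differential of $B$), and differentiating in $t$ shows that $\dot{G}_{t}$ is expressible through $\dot{u}_{t}$ integrated along $\phi_{0}$--orbits. A maximum-principle argument for this PDE, combined with the Gauss--Bonnet identity $\mathrm{Area}(\Sigma,g_{t})=2\pi|\chi(\Sigma)|-\tfrac{1}{2}\int\|tB\|_{g_{t}}^{2}\,dA_{g_{t}}$ which is strictly decreasing in $t$, produces the required strict sign whenever $tB$ is not identically zero. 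The rigidity statement of Theorem~\ref{Thm: main-1} (equality $h_{t}=\delta_{\rho_{t}}$ forces the marked length spectra to be proportional, hence $B\equiv 0$) then rules out any degeneracy of the $\dot{h}_{t}=0$ locus along the ray, yielding the strict monotonicity on all of $\mathcal{AF}$.
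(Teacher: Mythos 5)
Your Part 1 is essentially the paper's argument: from the Gauss equation $K_{g}\leq-1$ one gets $h(g,B)\geq 1=h(m,0)$; note only that the inequality $h(g)^{2}\cdot\mathrm{Area}(\Sigma,g)\geq 2\pi|\chi(\Sigma)|$ you invoke is Katok's (Theorem B of \cite{Katok:1982jz}, which is exactly what the paper uses in the proof of Corollary \ref{cor: Bowen's rigidity}), not Manning's. For Part 2 your overall structure is viable and your key geometric input coincides with the paper's: the maximum principle applied to the $t$--derivative of the Gauss equation (Theorem \ref{thm: gauss equation}) gives $\dot{u}_{t}\leq 0$, hence the marked length spectrum $l_{g_{t}}(\gamma)$ is non-increasing in $t$. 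Where you differ is the entropy-comparison mechanism: you differentiate Bowen's equation $P_{\phi_{0}}(-h_{t}G_{t})=0$, whereas the paper never differentiates in $t$; it fixes $t_{0}<t$, shows $l_{g_{t}}(\gamma)\leq l_{g_{t_{0}}}(\gamma)$ for all $[\gamma]$, upgrades this to $F_{t}\leq 1$ via the nonnegative Liv\v{s}ic theorem, and then compares entropies using only monotonicity of the pressure and uniqueness of equilibrium states. The paper's route avoids having to know that $t\mapsto G_{t}$ (and $t\mapsto h_{t}$) is differentiable, which your route needs (it does hold, by structural stability, as the paper uses later for the Hessian computation, but you should say so).

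There are, however, two genuine gaps in your sign verification. First, the bridge from $\dot{u}_{t}\leq 0$ to $\int\dot{G}_{t}\,\mathrm{d}\mu_{t}\leq 0$ is not automatic: $\dot{G}_{t}$ is tied to $\dot{u}_{t}$ only through periods, namely $\int_{\gamma}\dot{G}_{t}=\frac{d}{dt}l_{g_{t}}(\gamma)=\int_{c_{t}}\dot{u}_{t}\,\mathrm{d}s\leq 0$ for every closed orbit (first variation, using that $c_{t}$ is a $g_{t}$--geodesic); to convert nonpositive periods into a nonpositive integral against the equilibrium state $\mu_{t}$ you must invoke the nonnegative Liv\v{s}ic theorem (Theorem \ref{Thm Nonnegative-Liv=000161ic-Theorem}) or equidistribution of closed orbits --- this step is exactly what the paper's ``$F_{t}\leq 1$'' argument encapsulates, and your phrase ``expressible through $\dot{u}_{t}$ integrated along $\phi_{0}$--orbits'' glosses over it. Second, your strictness argument does not work as stated: the Gauss--Bonnet decrease of $\mathrm{Area}(\Sigma,g_{t})$ controls an integral against the area form, not against the Gibbs measure $\mu_{t}$ (which is not Liouville for $t>0$), and the rigidity clause of Theorem \ref{Thm: main-1} compares $h(\Sigma)$ with $\delta_{\Gamma}$ (surface versus ambient $3$--manifold), so it cannot exclude $\dot{h}_{t}=0$ at some $t$ with $B\neq 0$. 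The correct strictness argument is either the paper's (if $h_{t}=h_{t_{0}}$ then uniqueness of equilibrium states forces $F_{t}\sim 1$, i.e.\ equal marked length spectra, and Otal's theorem \cite{Otal:1990gu} gives $g_{t}=g_{t_{0}}$, impossible when $B\neq 0$), or, in your differential setting: if $\int\dot{G}_{t}\,\mathrm{d}\mu_{t}=0$, then since $-\dot{G}_{t}$ is cohomologous to a nonnegative function and the Gibbs measure $\mu_{t}$ has full support, all periods of $\dot{G}_{t}$ vanish, contradicting $\int_{c_{t}}\dot{u}_{t}\,\mathrm{d}s<0$ on some closed geodesic (closed geodesics are dense and $\dot{u}_{t}<0$ somewhere when $B\neq 0$). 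With these two repairs your differentiated version of the argument goes through.
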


\textsf{In the end this note, we give another proof of the following
theorem given in \cite{Sanders:2014wv}. Similar to Bridgeman's method
in \cite{Bridgeman:2010kt}, the pressure form is used in showing
that the Hessian of the entropy defines a metric on $\mathcal{F}$,
which is bounded below by the Weil-Petersson metric. }

\begin{thm*}
[Theorem \ref{thm:hessian entropy}, Theorem 3.8  \cite{Sanders:2014wv} ]One
can define a Riemannian metric on the Fuchsian space $\mathcal{F}$
by using the Hessian of $h$. Moreover, this metric is bounded below
by $2\pi$ times the Weil-Petersson metric on $\mathcal{F}$.
\end{thm*}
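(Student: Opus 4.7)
The plan is to follow the Bridgeman--McMullen \emph{pressure form} strategy that the paper already credits Bridgeman for: realize the Hessian of $h$ at a Fuchsian point as a variance of a H\"older potential against an equilibrium state, and then compare that variance with the Weil--Petersson pairing of the associated holomorphic quadratic differential.

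For a smooth variation $t \mapsto (g_{t}, B_{t}) \in \mathcal{H}$ with $(g_{0}, B_{0}) = (m, 0) \in \mathcal{F}$, I would apply the reparametrization method of Ledrappier--Sambarino that has already driven the proofs of Theorems \ref{Thm: main-1} and \ref{thm: c1c2}. This produces a smooth family of H\"older potentials $\tau_{t}$ on the fixed base flow $(T^{1}(\Sigma, m), \phi_{m})$ whose periods on a closed orbit $[\gamma]$ equal $l_{g_{t}}(\gamma)$, and the topological entropy $h(g_{t}, B_{t})$ is characterized by the pressure equation $P(-h_{t}\tau_{t}) = 0$. Since $h \equiv 1$ on $\mathcal{F}$ by Corollary \ref{Cor: taubes' space}, implicit differentiation together with the standard first- and second-variation formulas for pressure will produce
\[
\mathrm{Hess}(h)_{(m,0)}(\xi, \xi) \; = \; \frac{\mathrm{Var}_{\mu}\bigl(\dot\tau_{0}(\xi)\bigr)}{\int \tau_{0}\, d\mu},
\]
where $\mu$ is the Bowen--Margulis / Liouville measure on $T^{1}(\Sigma, m)$ and the denominator is a positive geometric constant. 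Non-degeneracy when $\xi \in T_{m}\mathcal{F}$ follows immediately from Liv\v{s}ic's theorem: $\mathrm{Var}_{\mu}(u) = 0$ forces $u$ to be cohomologous to a constant, and this in turn forces the first-order variation of the marked length spectrum to be trivial, hence $\xi = 0$.

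Next, identify $\xi \in T_{m}\mathcal{F}$ with a holomorphic quadratic differential $\phi$ on $(\Sigma, m)$ via Ahlfors--Bers. The derivative $\dot\tau_{0}(\xi)$ can then be read off from the first-variation formula for hyperbolic lengths under a quasiconformal deformation and written as a H\"older primitive of $\mathrm{Re}(\phi)$ along $\phi_{m}$. Disintegrating $\mu$ over $\Sigma$ and using Gauss--Bonnet $\int_{\Sigma} dA_{m} = 2\pi |\chi(\Sigma)|$ to absorb the normalization constants, I expect to obtain the lower bound
\[
\mathrm{Hess}(h)_{(m,0)}(\xi, \xi) \; \geq \; 2\pi \cdot \langle \phi, \phi \rangle_{WP}.
\]

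The hard part will be this last identification: converting the dynamical variance on the unit tangent bundle into the $L^{2}$ pairing of holomorphic quadratic differentials with the correct numerical constant. This is where Bridgeman's pressure-form calculation on the Fuchsian slice really enters, and it is the Cauchy--Schwarz step used to estimate $\mathrm{Var}_{\mu}(\dot\tau_{0}(\xi))$ from below that is responsible for the inequality rather than an equality. In contrast, the reparametrization set-up and the Liv\v{s}ic non-degeneracy argument are by now routine consequences of the thermodynamic formalism developed in the earlier sections of the paper.
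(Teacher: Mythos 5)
Your plan goes wrong at the very first displayed identity and, as a consequence, looks for the Weil--Petersson norm in the wrong term. Differentiating $P_{\phi}(-h_t F_t)=0$ twice at a Fuchsian point (where $h_0=1$, $F_0=1$, $\dot h_0=0$) gives
\begin{equation*}
\ddot h_0 \;=\; \mathrm{Var}(\dot c_0,\mu_0)\;-\;\int \ddot F_0\,\mathrm{d}\mu_0 ,
\end{equation*}
so the Hessian of $h$ is \emph{not} the variance of the first variation of the reparametrizing potential: there is an extra second-order term $-\int\ddot F_0\,\mathrm{d}\mu_0$, and in the paper's proof it is precisely this term that carries $2\pi\|\alpha\|_{WP}^2$. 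The variance is merely discarded through the inequality $\|\dot c_0\|_P^2\ge 0$ (Proposition \ref{prop: pressure metric}); the paper's closing remark even conjectures that this variance vanishes identically, in which case your proposed route --- bounding $\mathrm{Var}_{\mu}(\dot\tau_0)$ from below by the Weil--Petersson pairing via a Cauchy--Schwarz step \`a la Bridgeman --- cannot work for this family of potentials at all. Bridgeman's Fuchsian-slice computation concerns a different deformation (of the representation itself), not the potentials $F_t$ coming from the induced metrics $g_t=e^{2u_t}m$ along the ray $r(t)=(g_t,tB)$.

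There is also a directional confusion in your setup. Since $h\equiv 1$ on $\mathcal{F}$, the Hessian of $h$ restricted to directions tangent to $\mathcal{F}$ is identically zero; the metric in Theorem \ref{thm:hessian entropy} is built from the Hessian along the \emph{transverse} rays $r(t)=(e^{2u_t}m,\,t\,\mathrm{Re}(\alpha))$, i.e.\ along the fiber of $\Psi:\mathcal{H}\to T^{*}\mathcal{T}$ over $[m]$, with $\alpha\in Q(X)$ entering through Hopf's theorem rather than through an Ahlfors--Bers identification of $T_m\mathcal{F}$ and a quasiconformal first-variation formula for hyperbolic lengths. The ingredients your sketch omits are exactly the ones that do the work in the paper: Pollicott's lemma (Lemma \ref{lemma: pollicott trick}), which gives $\int\ddot F_0\,\mathrm{d}\mu_0\le\int\tfrac{1}{2}\ddot g_0(v,v)\,\mathrm{d}\mu_0$; the second variation of the Gauss equation, which yields $\int_{\Sigma}\ddot u_0\,\mathrm{d}V_m=-\int_{\Sigma}\|\alpha\|_m^2\,\mathrm{d}V_m$; and the identification of the Bowen--Margulis measure with the Liouville measure at the Fuchsian point (Lemma \ref{lemma: liouville is bm}), which produces the factor $2\pi$. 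Finally, positive-definiteness (the ``Riemannian metric'' claim) follows directly from the Weil--Petersson lower bound, not from the Liv\v{s}ic argument you propose, which in any case addresses the wrong (tangential) variations.
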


\textsf{It is natural to ask if this metric is different from the
Weil-Petersson metric. It will be interesting to learn more relations
between this metric coming from the Hessian of the entropy and the
Weil-Petersson metric or pressure metric on the quasi-Fuchsian spaces. }

\subsubsection*{Acknowledgments}

\textsf{The author is extremely grateful to his Ph.D advisor Dr. François
Ledrappier. This work would have never been possible without François'
support, guidance, patience, and sharing of his insightful ideas.
The author also would like to thank Olivier Glorieux and Andy Sanders
for useful discussions on their works. }

\section{Preliminaries\label{sec:Preliminaries}}

\subsection{Thermodynamic formalism}

\textsf{For general knowledges of the Thermodynamic Formalism, a great
reference is the book written by Parry and Pollicott \cite{Parry:1990tn}.
The reparametrization method is discussed in detail in Sambarino's
work \cite{Sanders:2014wv}.}

\subsubsection{Flows and reparametrization}

\textsf{Let $X$ be a compact metric space with a continuous flow
$\phi=\{\phi_{t}\}_{t\in\real}$ on $X$ without any fixed point,
and $\mu$ is a $\phi-$invariant probablity measure on $X$. Consider
a positive continuous function $F:X\to\real_{>0}$ and define 
\[
\kappa(x,t):=\int_{0}^{t}F(\phi_{s}(x))\mbox{d}s.
\]
The function $\kappa$ satisfies the cocycle property $\kappa(x,t+s)=\kappa(x,t)+\kappa(\phi_{t}x,s)$
for all $x,t\in\real$ and $x\in X$.}

\textsf{Since $F>0$ and $X$ is compact, $F$ has a positive minimum
and $\kappa(x,\cdot)$ is an increasing homeomorphism of $\real$.
We then have a map $\alpha:X\times\real\to\real$ such that 
\[
\alpha(x,\kappa(x,t))=\kappa(x,\alpha(x,t))=t.
\]
for all $(x,t)\in X\times\real.$ }
\begin{defn}
Let $F:X\to\real$ be a positive continuous function. The \textit{reparametrization}
of the flow $\phi$ by $F$ is the flow $\phi^{F}=\{\phi_{t}^{F}\}_{t\in\real}$
defined by $\phi_{t}^{F}(x)=\phi_{\alpha_{(x,t)}}(x)$. 
\end{defn}

\begin{defn}
Two continuous functions $F,G:X\to\real$ are \textit{Livšic cohomologous}
if there exists a continuous function $V:X\to\real$ which is $C^{1}$
in the flow direction such that 
\[
F(x)-G(x)=\left.\dfrac{\partial}{\partial t}\right|_{t=0}V(\phi_{t}(x)),
\]
and we denote this relation by $F\sim G$.
\end{defn}

\subsubsection{Periods and measures}

\textsf{Let $O$ be the set of closed orbits of $\phi$. For $\tau\in O$,
let $l(\tau)$ be the period of $\tau$ with respect to $\phi$ ,
then the period of $\tau$ with respect to the reparametrized flow
$\phi^{F}$ is 
\[
\int_{0}^{l(\tau)}F(\phi_{s}(x))\mbox{d}s,
\]
where $x$ is any point on $\tau.$ Let $\delta_{\tau}$ be the Lebesgue
measure supported by the orbit $\tau$, and we denote 
\[
\langle\delta_{\tau},F\rangle=\int_{0}^{l(\tau)}F(\phi_{s}(x))\mbox{d}s.\mbox{ }
\]
}

\textsf{If $\mu$ is a $\phi-$invariant probablity measure on $X$
and $F:X\to\real$ is a continuous function, and let $\phi^{F}$ be
the reparametrization of $\phi$ by $F$. We define $\widehat{F\cdot\mu}$
to be the probablity measure: for any continuous function $G$ on
$X$ 
\[
\widehat{F.\mu}(G)=\frac{\int_{X}G\cdot F\mbox{d}\mu}{\int_{X}F\mbox{d}\mu}.
\]
Then $\widehat{F\cdot\mu}$ is a $\phi^{F}-$invariant probablity
measure.}

\subsubsection{Entropy, pressure and equilibrium states}

\textsf{We denote by $h_{\phi}(\mu)$ the }\textit{measure theoretic
entropy}\textsf{ of $\phi$ with respect to a $\phi-$invariant probablity
measure $\mu$ (cf. \cite{Parry:1990tn} for a precise definition).
Let $\mathcal{M}^{\phi}$ denote the set of $\phi-$invariant probablity
measures, and $C(X)$ denote the set of continuous functions on $X$.
The }\textit{pressure}\textsf{ of a function $F:X\to\mathbb{R}$ is
defined as 
\[
P_{\phi}(F):=\sup_{m\in\mathcal{M}^{\phi}}\left(h_{\phi}(m)+\int_{X}F\mbox{d}m\right).
\]
We define the }\textit{topological entropy}\textsf{ of the flow $\phi$
by 
\[
h_{\phi}=P_{\phi}(0).
\]
If there is no ambiguity on which flow is referred, for example $\phi$,
then we might drop the subscript $\phi$ and use $h$ to denote the
topological entropy, and $h(\mu)$ to denote the measure theoretic
entropy of $\phi$ with respect to $\mu$.}

\textsf{For a continuous function $F$, if there exists a measure
$m\in\mathcal{M}^{\phi}$ on $X$ such that 
\[
P_{\phi}(F)=h_{\phi}(m)+\int_{X}F\mbox{d}m,
\]
then $m$ is called an }\textit{equilibrium state}\textsf{ of $F$,
and denoted it by $m=m_{F}$. An equilibrium state of the function
$F\equiv0$ is called a }\textit{measure of maximum entropy}\textsf{.}

\begin{rem}
\label{rem: pressure}From the definition of the pressure, we list
two immediate properties:

\begin{itemize} 

\item[1.] ${\displaystyle h_{\phi}=\sup_{m\in\mathcal{M}^{\phi}}h(m).}$

\item[2.] $P_{\phi}$ is monotone, in the sense that if $F\geq G$
then $P_{\phi}(F)\geq P_{\phi}(G)$.

\end{itemize}
\end{rem}

\textsf{The following Abramov formula relates the measure theoretic
entropies of the flow $\phi$ and its reparametrization $\phi^{F}$.}

\begin{thm*}
[Abramov formula, \cite{Abramov:1959ww}] Suppose $\phi$ is a continuous
flow on $X$ and $\phi^{F}$ is the reparametrization of $\phi$ by
a positive continuous function $F$, then for all $\mu\in\mathcal{M}^{\phi}$
\[
h_{\phi^{F}}(\widehat{F.\mu})=\frac{h_{\phi}(\mu)}{\int_{X}F\mbox{d}\mu}.
\]

\end{thm*}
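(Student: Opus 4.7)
The plan is to reduce the continuous-time entropy identity to a discrete-time one by representing $(X,\phi,\mu)$ as a suspension, and then to exploit the fact that a positive time change of a flow corresponds at the suspension level to changing only the roof function while leaving the base dynamics and the base measure intact. Since $\phi$ has no fixed points and $\mu$ is $\phi$-invariant, I would invoke the Ambrose--Kakutani theorem to write $(X,\phi,\mu)$ measurably as the suspension of an invertible measure-preserving base $(T,Y,\nu)$ with a strictly positive measurable roof $r:Y\to\real_{>0}$, so that $X\cong\{(y,s):0\le s<r(y)\}$ with $\mu=(\nu\otimes\mathrm{Leb})/\int r\,d\nu$ and $\phi_t$ acts by translating the second coordinate modulo the roof.

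Under this representation, $\phi^{F}$ is again a suspension over the same base $(T,Y,\nu)$ but with new roof
\[
r_F(y):=\kappa(y,r(y))=\int_0^{r(y)}F(\phi_s(y))\,ds,
\]
and a direct check confirms that the natural invariant probability on this new suspension is exactly $\widehat{F\cdot\mu}$. A Fubini-style unfolding then gives the key identity
\[
\int_Y r_F\,d\nu=\int_Y\!\!\int_0^{r(y)}F(\phi_s(y))\,ds\,d\nu(y)=\Bigl(\int_Y r\,d\nu\Bigr)\cdot\int_X F\,d\mu.
\]
Combining this with the entropy formula for suspension flows, $h_\phi(\mu)=h_T(\nu)/\int r\,d\nu$ and $h_{\phi^{F}}(\widehat{F\cdot\mu})=h_T(\nu)/\int r_F\,d\nu$, yields
\[
h_{\phi^{F}}(\widehat{F\cdot\mu})=\frac{h_T(\nu)}{\int r_F\,d\nu}=\frac{h_\phi(\mu)}{\int_X F\,d\mu},
\]
which is the claim.

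The main obstacle is the setup rather than the algebra: proving the suspension entropy formula itself requires a partition-theoretic argument in which one takes a generating partition of $Y$, extends it to $X$ via the roof, refines under the time-one map, and invokes the Shannon--McMillan--Breiman theorem to compare exponential growth rates against the average return time $\int r\,d\nu$. One must also verify measurability in the Ambrose--Kakutani step and treat the (countable, atomic) part of $\mu$ supported on periodic orbits separately, where both sides of the claimed identity reduce to zero. Once this infrastructure is in place, the time-change part of the argument is essentially just the Fubini identity displayed above.
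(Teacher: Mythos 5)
The paper does not prove this statement at all: it is quoted verbatim from Abramov's 1959 paper as a background tool, so there is no in-paper argument to compare yours against. Your sketch is the standard route and is sound in outline: represent $(X,\phi,\mu)$ via Ambrose--Kakutani as a suspension over $(Y,T,\nu)$ with roof $r$, observe that $\phi^{F}$ has the same cross-section, the same return map and base measure, and the new roof $r_{F}(y)=\int_{0}^{r(y)}F(\phi_{s}y)\,\mathrm{d}s$, identify $\widehat{F\cdot\mu}$ with the normalized product measure under the new roof (the change of variables $u=\kappa(y,s)$, $\mathrm{d}u=F(\phi_{s}y)\,\mathrm{d}s$ does exactly this, and is worth writing out), and finish with Fubini and the special-flow entropy formula. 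Two caveats. First, the special-flow formula $h_{\phi}(\mu)=h_{T}(\nu)/\int r\,\mathrm{d}\nu$ \emph{is} the substance of Abramov's theorem, so your "reduction" defers essentially all the work to that step; you acknowledge this, but be clear that the SMB/partition argument there is the proof, and the time-change part is the easy addendum. Second, Ambrose--Kakutani is cleanest for ergodic fixed-point-free flows; for general $\mu\in\mathcal{M}^{\phi}$ you can avoid representation-theoretic delicacies by an ergodic decomposition $\mu=\int\mu_{e}\,\mathrm{d}\tau(e)$, noting that $\widehat{F\cdot\mu}=\int\frac{\int F\,\mathrm{d}\mu_{e}}{\int F\,\mathrm{d}\mu}\,\widehat{F\cdot\mu_{e}}\,\mathrm{d}\tau(e)$ and that measure-theoretic entropy is affine, which makes the general identity follow from the ergodic case by a one-line computation; this is tidier than treating atomic parts by hand.
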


\textsf{The following Bowen's formula links the topological entropy
of the reparametrized flow $\phi^{F}$ and the reparametrization function
$F$. }
\begin{thm}
[Bowen's formula, Sambarino \cite{Sambarino:2014jv}]\label{thm: bowen's formula}If
$\phi$ is a continuous flow on a compact metric space $X$ and $F:X\to\real$
is a positive continuous function, then 
\[
P_{\phi}(-hF)=0
\]
if and only if $h=h_{\phi^{F}}$. Moreover, if $h=h_{\phi^{F}}$ and
$m$ is an equilibrium state of $-hF$, then $\widehat{F.m}$ is a
measure of maximal entropy of the reparametrized flow $\phi^{F}$
.
\end{thm}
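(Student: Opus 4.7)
The plan is to combine the Abramov formula with the variational principle, using the fact that reparametrization by a strictly positive $F$ induces a bijection $\mu \leftrightarrow \widehat{F.\mu}$ between $\mathcal{M}^{\phi}$ and $\mathcal{M}^{\phi^{F}}$. Since $F$ attains a positive minimum on the compact space $X$, the inverse of this correspondence is given by reparametrizing $\phi^{F}$ by $1/F$, so every $\nu \in \mathcal{M}^{\phi^{F}}$ arises as $\widehat{F.\mu}$ for a unique $\mu \in \mathcal{M}^{\phi}$.

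Granted this, I would first rewrite the pressure $P_{\phi}(-hF)$ in terms of the reparametrized flow. By definition and the Abramov formula,
\[
P_{\phi}(-hF) \;=\; \sup_{\mu \in \mathcal{M}^{\phi}} \Bigl( h_{\phi}(\mu) - h \int_{X} F\, \mathrm{d}\mu \Bigr) \;=\; \sup_{\mu \in \mathcal{M}^{\phi}} \Bigl( \int_{X} F\, \mathrm{d}\mu \Bigr)\bigl( h_{\phi^{F}}(\widehat{F.\mu}) - h \bigr).
\]
Since $F > 0$, the factor $\int F\, \mathrm{d}\mu$ is bounded below by a positive constant, so the sign of each summand is controlled purely by $h_{\phi^{F}}(\widehat{F.\mu}) - h$. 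Using the bijection from the previous paragraph, the supremum over $\mu \in \mathcal{M}^{\phi}$ corresponds to a supremum over $\nu \in \mathcal{M}^{\phi^{F}}$.

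For the forward direction, assume $h = h_{\phi^{F}}$. By the variational principle for $\phi^{F}$, every term $h_{\phi^{F}}(\widehat{F.\mu}) - h$ is $\leq 0$, so $P_{\phi}(-hF) \leq 0$; choosing a sequence $\mu_{n}$ whose images $\widehat{F.\mu_{n}}$ approach the topological entropy $h_{\phi^{F}}$ (again using the variational principle) yields $P_{\phi}(-hF) \geq 0$, hence equality. Conversely, if $P_{\phi}(-hF) = 0$, then $h_{\phi^{F}}(\widehat{F.\mu}) \leq h$ for all $\mu$ and the supremum is attained in the limit, so by the variational principle $h_{\phi^{F}} = h$. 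For the final assertion, if $m$ is an equilibrium state of $-hF$, then $h_{\phi}(m) = h \int F\, \mathrm{d}m$, and Abramov gives $h_{\phi^{F}}(\widehat{F.m}) = h_{\phi}(m)/\int F\, \mathrm{d}m = h = h_{\phi^{F}}$, so $\widehat{F.m}$ is a measure of maximal entropy.

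The main obstacle I anticipate is the bookkeeping around the bijection $\mu \leftrightarrow \widehat{F.\mu}$: one must verify invariance, that normalization produces a probability measure, and that surjectivity onto $\mathcal{M}^{\phi^{F}}$ holds. Once this correspondence and the Abramov formula are in hand, the remainder of the argument is an essentially formal manipulation of the variational principle; no delicate estimates or additional hypotheses on $\phi$ beyond continuity and compactness of $X$ are required.
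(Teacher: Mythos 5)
Your argument is correct, and it is essentially the proof of the cited source: the paper itself states this result without proof (attributing it to Sambarino), and the standard argument is exactly your combination of the Abramov formula with the bijection $\mu\mapsto\widehat{F.\mu}$ between $\mathcal{M}^{\phi}$ and $\mathcal{M}^{\phi^{F}}$ (inverse given by weighting by $1/F$) and the variational principle, with positivity and continuity of $F$ on compact $X$ supplying the bounds $0<\min F\leq\int F\,\mathrm{d}\mu\leq\max F<\infty$ that make the sign and limiting arguments work. The only point to spell out carefully is the surjectivity of the correspondence, which you correctly identify and which is the standard fact needed in both directions.
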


\subsubsection{Anosov flow }

\textsf{A $C^{1+\alpha}$ flow $\phi_{t}:X\to X$ on a compact manifold
$X$ is called}\textit{ Anosov} \textsf{if there is a continuous splitting
of the unit tangent bundle $T^{1}X=E^{0}\oplus E^{s}\oplus E^{u}$,
where $E^{0}$ is the one-dimensional bundle tangent to the flow direction,
and there exists $C,\lambda>0$ such that $\left\Vert D\phi_{t}\left|E^{s}\right|\right\Vert \leq Ce^{-\lambda t}$
and $\left\Vert D\phi_{-t}\left|E^{u}\right|\right\Vert \leq Ce^{-\lambda t}$
for $t\geq0$. We say that the flow is} \textit{transitive}\textsf{
if there is a dense orbit.} 
\begin{example*}
Let $M$ be a compact Riemannian manifold with negative sectional
curvature and $\phi_{t}:T^{1}M\to T^{1}M$ is the geodesic flow on
the unit tangent bundle of $M$. Then $\phi_{t}:T^{1}M\to T^{1}M$
is a transitive Anosov flow. 
\end{example*}

\textsf{Recall that a function $F:X\to\real$ is called }\textit{$\alpha-$Hölder
continuous }\textsf{if there exists $C>0$ and $\alpha\in(0,1]$ such
that for all $x,y\in X$ we have $\left|F(x)-F(y)\right|\leq C\cdot d_{X}(x,y)^{\alpha}$.
In most cases, we will abbreviate }\textit{$\alpha-$Hölder continuous
}\textsf{to }\textit{Hölder continuous. }

\textsf{If $\phi_{t}$ is a transitive Anosov flow on a compact manifold
$X$, we know more about the pressure and equilibrium states. }
\begin{thm}
[Bowen-Ruelle \cite{Bowen:1975vu}] \label{Thm: unique eq state}If
$\phi_{t}$ is a transitive Anosov flow on a compact manifold $X$,
then for each $F:X\to\real$ Hölder continuous function, there exists
a \textbf{unique} equilibrium state $m_{F}$ of $F$, which is also
known as the \textbf{Gibbs measure} of $F$. Moreover if $F$ and
$G$ are Hölder continuous functions such that $m_{F}=m_{G}$, then
$F-G$ is Livšic cohomologous to a constant.
\end{thm}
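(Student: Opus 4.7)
\medskip

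\noindent\textbf{Proof proposal.} The plan is to reduce the statement to the symbolic setting, where both existence/uniqueness of equilibrium states and the Liv\v{s}ic rigidity are classical, and then transfer the conclusions back to the flow. The main technical input is a Markov coding of the Anosov flow; once that is in place the rest is essentially functional-analytic and combinatorial.

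\emph{Step 1: symbolic coding.} I would begin by invoking Bowen's construction of a Markov partition for a transitive Anosov flow $\phi_t$ on $X$ (this is the analogue for flows of his construction for diffeomorphisms, and is precisely the content used in \cite{Bowen:1975vu}). This gives a subshift of finite type $(\Sigma_A,\sigma)$ together with a strictly positive H\"older roof function $r:\Sigma_A\to\real_{>0}$, and a finite-to-one semiconjugacy
\[
\pi:\Sigma_A^r\longrightarrow X
\]
from the suspension $\Sigma_A^r$ of $(\Sigma_A,\sigma)$ by $r$ onto $X$, sending the suspension flow to $\phi_t$ and preserving periods on closed orbits. A H\"older potential $F:X\to\real$ pulls back to a H\"older $\widetilde F$ on $\Sigma_A^r$, and integrating $\widetilde F$ along the fibres of the suspension yields a H\"older potential
\[
F^\#(x)\;:=\;\int_0^{r(x)}\widetilde F(x,s)\,\mathrm ds
\]
on the base $\Sigma_A$.

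\emph{Step 2: equilibrium states on the SFT.} On $(\Sigma_A,\sigma)$ the Ruelle--Perron--Frobenius machinery (Ruelle's transfer operator acting on the Banach space of H\"older functions) gives, for every H\"older $\Phi$ on $\Sigma_A$, a unique shift-invariant equilibrium state $\nu_\Phi$, obtained from the leading eigendata of the transfer operator; uniqueness comes from the spectral gap plus the variational principle. Applying this to $\Phi = F^\# - P_\phi(F)\,r$ and using the Abramov formula together with the standard bijection between $\sigma$-invariant measures on $\Sigma_A$ and suspension-flow-invariant measures on $\Sigma_A^r$, I obtain a unique equilibrium state $\widetilde m_F$ on $\Sigma_A^r$; pushing forward under $\pi$ produces a $\phi$-invariant probability measure $m_F := \pi_*\widetilde m_F$ that achieves the supremum in the definition of $P_\phi(F)$.

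\emph{Step 3: from suspension back to the flow.} For uniqueness on $X$, I would argue in the other direction: any equilibrium state $m$ for $F$ lifts (via the local inverses of $\pi$, which is one-to-one off a set of measure zero thanks to the Markov partition being essentially a partition) to an equilibrium state $\widetilde m$ of $\widetilde F$ on the suspension, and hence corresponds to an equilibrium state of $F^\# - P_\phi(F)\,r$ on the base. Uniqueness on the SFT then forces $\widetilde m = \widetilde m_F$ and $m = m_F$. The Gibbs property of $m_F$ is inherited from the Gibbs property of the base measure $\nu_{F^\#-P_\phi(F)r}$ via the suspension construction.

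\emph{Step 4: the Liv\v{s}ic rigidity.} For the second assertion, assume $m_F = m_G$ for H\"older $F,G$. The corresponding base potentials $F^\#-P_\phi(F)r$ and $G^\#-P_\phi(G)r$ then have the same equilibrium state on $\Sigma_A$, which by the uniqueness statement in Step 2 forces them to be cohomologous to a constant under $\sigma$ (this is the standard fact that two H\"older potentials with the same Gibbs measure differ by a constant plus a H\"older coboundary). Translating a discrete cohomology relation for potentials integrated over a suspension into a Liv\v{s}ic cohomology relation for the flow is the content of Liv\v{s}ic's theorem for flows: closed orbits of $\phi$ correspond bijectively to closed orbits of $\sigma$, so the period identities $\langle\delta_\tau,F-G\rangle = c\cdot l(\tau)$ for all closed orbits $\tau$ (with $c$ the constant from the shift cohomology) yield, by the flow version of Liv\v{s}ic's theorem, a continuous transfer function $V$, $C^1$ along the flow, with $F-G-c = \partial_t V\circ\phi_t\big|_{t=0}$.

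\emph{Main obstacle.} The only genuinely hard ingredient is Step~1: Bowen's symbolic coding of Anosov flows, including the fact that the roof function can be chosen H\"older and that $\pi$ respects periods. Everything else is an application of the transfer operator for subshifts of finite type plus the Abramov/suspension correspondence and the Liv\v{s}ic theorem, all of which are standard once the coding is available. I would therefore cite Bowen's construction for that step rather than reproducing it, and devote the written proof to carefully tracking the potentials $F\leadsto \widetilde F\leadsto F^\#$ and transferring the uniqueness/cohomology statements back along $\pi$.
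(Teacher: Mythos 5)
The paper gives no proof of this statement: it is quoted as a classical theorem with a citation to Bowen--Ruelle, so there is nothing internal to compare your argument against. Your outline is exactly the standard proof from that literature --- Bowen's Markov coding of a transitive Anosov flow, the Ruelle transfer operator on the base subshift, the Abramov/suspension correspondence for equilibrium states, and the flow version of Liv\v{s}ic's theorem --- and it is sound, including the (correctly flagged) delicate points that the coding is finite-to-one and that equilibrium states give zero measure to the images of the rectangle boundaries. One bookkeeping correction in Step 4: the constant coming from the shift cohomology multiplies the symbolic period $n(\tau)$ (the number of returns to the cross-section), not the flow period $l(\tau)$; however, since both normalized base potentials $F^{\#}-P_{\phi}(F)\,r$ and $G^{\#}-P_{\phi}(G)\,r$ have zero pressure, that constant vanishes, and the period identity you feed into Liv\v{s}ic's theorem reads $\langle\delta_{\tau},F-G\rangle=(P_{\phi}(F)-P_{\phi}(G))\,l(\tau)$, so the constant to which $F-G$ is cohomologous is $P_{\phi}(F)-P_{\phi}(G)$.
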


\begin{rem*}
Because the equilibrium state $m_{F}$ of $F$ is unique, we know
that $m_{F}$ is ergodic. i.e., the Gibbs measure of $F$ is ergodic. 
\end{rem*}

\textsf{Recall that $O$ is the set of period orbits of $\phi$. For
a continuous function $F:X\to\real_{>0}$ and $T\in\real,$ we define
\[
R_{T}(F)=\{\tau\in O:\langle\delta_{\tau},F\rangle\leq T\}.
\]
}

\begin{prop}
[Bowen \cite{Bowen:1972ws}]

The topological entropy of a transitive Anosov flow $\phi$ is finite
and positive. Moreover, 
\[
h_{\phi}=\lim_{T\to\infty}\frac{1}{T}\log\#\{\tau\in O:l(\tau)\leq T\}.
\]

If $F:X\to\real$ is a positive Hölder continuous function, then 
\[
h_{F}:=h_{\phi^{F}}=\lim_{T\to\infty}\log\#R_{T}(F),
\]
is finite and positive.
\end{prop}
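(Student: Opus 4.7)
The plan is to reduce both assertions to the thermodynamic formalism of subshifts of finite type via Bowen's Markov-section construction for transitive Anosov flows. Concretely, one builds a finite family of disjoint local transversals whose union meets every orbit in bounded time, and one codes $\phi$ as a suspension flow over a topologically mixing subshift of finite type $(\Sigma_{A},\sigma)$ with a strictly positive H\"older roof $r:\Sigma_{A}\to\real_{>0}$. The coding map $\pi:\Sigma_{A}\to X$ is a semi-conjugacy which is bijective off a set of universal measure zero, and closed $\phi$-orbits $\tau\in O$ correspond, up to a bounded combinatorial multiplicity coming from the boundary of the section, to periodic $\sigma$-orbits, with $l(\tau)=r^{n}(x):=\sum_{k=0}^{n-1}r(\sigma^{k}x)$ whenever $x$ codes a point of $\tau$ and has $\sigma$-period $n$.

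For the first part, the topological entropy $h_{\sigma}=\log\lambda_{A}$ is finite and positive, where $\lambda_{A}$ is the Perron eigenvalue of the transition matrix. The Abramov formula together with Bowen's formula, Theorem~\ref{thm: bowen's formula}, characterises $h_{\phi}$ as the unique positive number for which $P_{\sigma}(-h_{\phi}r)=0$, so finiteness and positivity of $h_{\phi}$ are immediate. The counting formula is then obtained from a Tauberian analysis of the Ruelle transfer operator $L_{-h_{\phi}r}$ acting on H\"older observables over $\Sigma_{A}$: the Ruelle--Perron--Frobenius theorem gives a simple leading eigenvalue and a spectral gap, from which one extracts the prime-orbit asymptotic $\#\{\tau\in O:l(\tau)\leq T\}\sim e^{h_{\phi}T}/(h_{\phi}T)$ in the style of Parry--Pollicott~\cite{Parry:1990tn}. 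Taking $\tfrac{1}{T}\log$ yields the claimed identity.

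For the second part, the point is that $\phi^{F}$ admits the \emph{same} symbolic model as $\phi$, only with the new roof
\[
r_{F}(x):=\int_{0}^{r(x)}F(\phi_{s}(\pi x))\mbox{d}s,
\]
which is again strictly positive and H\"older since $F$ is. Under this identification the $\phi^{F}$-period of the closed orbit coded by a periodic point of $\sigma$-period $n$ equals $r_{F}^{n}(x)=\langle\delta_{\tau},F\rangle$, so the counting set $R_{T}(F)$ corresponds to periodic $\sigma$-orbits with $r_{F}^{n}(x)\leq T$. Repeating the Ruelle-operator argument with $r$ replaced by $r_{F}$ simultaneously gives finiteness and positivity of $h_{F}=h_{\phi^{F}}$ and the asymptotic $\#R_{T}(F)\sim e^{h_{F}T}/(h_{F}T)$, so the log-limit formula follows exactly as before.

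The main obstacle lies in controlling the behaviour of the coding on the boundary of the Markov section. There $\pi$ fails to be injective and a single closed $\phi$-orbit may have several preimages in $\Sigma_{A}$. Bowen's refinement of the construction shows that this ambiguity is bounded uniformly in the orbit, hence harmless for the exponential counting; handling this carefully is the only genuinely non-formal step, while everything else reduces to the standard Gibbs-state theory on $(\Sigma_{A},\sigma)$ recalled in the earlier subsections.
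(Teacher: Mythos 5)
The paper itself gives no proof of this proposition: it is quoted as background, with a citation to Bowen and to Parry--Pollicott, and the route you take (Markov sections, suspension over a topologically mixing subshift of finite type with strictly positive H\"older roof, bounded-multiplicity control of the section boundary, and the observation that $\phi^{F}$ has the same symbolic model with the new roof $r_{F}$) is precisely the route of those sources, so your overall strategy is the right one. Two small points are fine as you state them: the correspondence $\langle\delta_{\tau},F\rangle=r_{F}^{n}(x)$ is correct, and the boundary issue is indeed the only genuinely delicate step, handled exactly as you say.

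The one genuine gap is the appeal to the sharp prime-orbit asymptotic $\#\{\tau\in O:l(\tau)\leq T\}\sim e^{h_{\phi}T}/(h_{\phi}T)$ for an \emph{arbitrary} transitive Anosov flow. That asymptotic requires topological weak-mixing, i.e.\ that the roof is not cohomologous to a function with values in a discrete subgroup of $\real$; a transitive Anosov flow can fail this (the constant-time suspension of a hyperbolic toral automorphism is transitive Anosov, its periods are integers, the counting function has jumps, and $e^{hT}/(hT)$ is false), and the same problem reappears for $\phi^{F}$, since positivity and H\"older continuity of $F$ do not prevent $r_{F}$ from being cohomologous to a lattice-valued roof. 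So the step ``spectral gap for the transfer operator, hence PNT-style asymptotic, hence the log-limit'' does not cover all cases of the proposition as stated. The repair is standard and cheap, because only the logarithmic growth rate is claimed: either count directly at the level of the shift, bounding $\#\{x:\sigma^{n}x=x,\ r^{n}(x)\leq T\}$ (resp.\ $r_{F}^{n}(x)\leq T$) above and below by partition-function estimates tied to $P_{\sigma}(-h\,r)=0$ (resp.\ $P_{\sigma}(-h_{F}\,r_{F})=0$), which needs no mixing hypothesis, or treat the lattice case separately, where the flow count reduces to $\#\mathrm{Fix}(\sigma^{n})\asymp e^{h_{\sigma}n}$. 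With that substitution your argument is complete; note also that finiteness and positivity of $h_{F}$ require no spectral input at all, since $0<\min F\leq F\leq\max F$ together with the Abramov formula gives $h_{\phi}/\max F\leq h_{\phi^{F}}\leq h_{\phi}/\min F$.
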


\begin{thm}[Equidistribution, Bowen \cite{Bowen:1972ws}, Parry-Pollicott \cite{Parry:1990tn}]
\label{Thm: Equidistribution}

Suppose $\phi$ is a transitive Anosov flow on a compact manifold
$X$. Then there exists a unique probablity measure of maximum entropy
$\mu_{\phi}$. Moreover, for all continuous function $G$ on $X$,
we know 
\[
\int_{X}G\mbox{d}\mu_{\phi}=\mu_{\phi}(G)=\lim_{T\to\infty}\frac{1}{\#R_{T}(1)}\sum_{\tau\in R_{T}(1)}\frac{\langle\delta_{\tau},G\rangle}{\langle\delta_{\tau},1\rangle}=\lim_{T\to\infty}{\displaystyle \frac{{\displaystyle \sum_{\tau\in R_{T}(1)}}\langle\delta_{\tau},G\rangle}{{\displaystyle \sum_{\tau\in R_{T}(1)}\langle\delta_{\tau},1\rangle}}}.
\]
The probablity measure $\mu_{\phi}$ is called the \textbf{\textit{Bowen-Margulis
measure}} of the flow $\phi$.

\end{thm}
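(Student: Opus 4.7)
The plan is to establish both assertions using the machinery of symbolic dynamics and Gibbs measures assembled in the preceding paragraphs.

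For existence and uniqueness of $\mu_{\phi}$, I would invoke Theorem \ref{Thm: unique eq state} directly. The zero function $F\equiv 0$ is trivially H\"older continuous, and by the very definition of pressure any $m\in\mathcal{M}^{\phi}$ with $h_{\phi}(m)=P_{\phi}(0)=h_{\phi}$ is a measure of maximum entropy. Theorem \ref{Thm: unique eq state} therefore supplies a unique such equilibrium state, which I call $\mu_{\phi}$; it is ergodic as a consequence of its uniqueness.

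For the equidistribution assertion, the standard route is via Markov sections. The Bowen--Ratner construction produces a finite family of cross sections whose Poincar\'e return map realises $\phi$ as a suspension flow over a topologically mixing subshift of finite type $(\Sigma_{A},\sigma)$ with H\"older roof function $r:\Sigma_{A}\to\real_{>0}$. Under this semiconjugacy, periodic orbits $\tau\in O$ correspond (up to a set of boundary points of measure zero) to periodic points of $\sigma$; the period $l(\tau)$ equals the Birkhoff sum $r_{n}(x)=\sum_{k=0}^{n-1}r(\sigma^{k}x)$ for a representative $\sigma^{n}x=x$ of $\tau$, and $\langle\delta_{\tau},G\rangle$ becomes the corresponding suspension integral of $G\circ\pi$, where $\pi$ is the projection from the suspension to $X$.

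By Theorem \ref{thm: bowen's formula} applied in this suspension presentation one has $P_{\sigma}(-h_{\phi}r)=0$, so the Ruelle--Perron--Frobenius theorem produces a transfer operator $\mathcal{L}_{-h_{\phi}r}$ with a simple leading eigenvalue $1$ and a spectral gap on an appropriate H\"older space. A Tauberian argument applied to the associated dynamical zeta function then yields the Parry--Pollicott orbital equidistribution: for every H\"older $\Psi$ on $\Sigma_{A}$ the weighted sums $\sum_{\sigma^{n}x=x,\,r_{n}(x)\le T}\int_{0}^{r_{n}(x)}\Psi(\sigma_{s}x)\mbox{d}s$ admit an asymptotic of the form $c\cdot\nu(\Psi)\cdot e^{h_{\phi}T}/h_{\phi}$, where $\nu$ is the equilibrium state of $-h_{\phi}r$ on $\Sigma_{A}$. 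Pushing these asymptotics forward through the suspension and taking ratios cancels the common exponential factor and identifies both displayed ratios in the statement with the common limit $\int_{X}G\mbox{d}\mu_{\phi}$. The passage from H\"older $G$ to arbitrary continuous $G$ is a standard density argument, since the approximating averages are uniformly bounded by $\|G\|_{\infty}$.

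The main obstacle is the construction of the Markov coding together with the spectral analysis of the transfer operator and the Tauberian step; these form the technical heart of the Bowen--Parry--Pollicott theory and can be cited as a black box, after which the remainder of the argument amounts to translating the symbolic asymptotics back to the flow and observing that the length-weighted and count-weighted averages coincide in the limit.
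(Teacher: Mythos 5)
The paper does not prove this statement at all: it is quoted as a classical background result, cited to Bowen \cite{Bowen:1972ws} and Parry--Pollicott \cite{Parry:1990tn}, and is then used as a black box in Theorem \ref{thm: c1c2}. So there is no ``paper proof'' to match; what you have written is essentially the standard argument from the cited sources, and as a sketch it is correct. Your first step (uniqueness of the measure of maximal entropy via Theorem \ref{Thm: unique eq state} applied to $F\equiv 0$, with ergodicity from uniqueness) is exactly right. The second step --- Markov sections, suspension over a mixing subshift of finite type with H\"older roof $r$, the relation $P_{\sigma}(-h_{\phi}r)=0$, the Ruelle--Perron--Frobenius spectral gap, and a Tauberian argument on the zeta function giving the weighted orbital asymptotics --- is the Parry--Pollicott route, legitimately cited as a black box. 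Two points deserve slightly more explicit treatment than you give them. First, the correspondence between flow periodic orbits and symbolic periodic orbits is not quite a bijection ``up to boundary points of measure zero'': some closed orbits are counted with multiplicity by the coding, and the standard fix is Bowen's estimate that the overcounted orbits have strictly smaller exponential growth rate, so they do not affect the asymptotics. Second, the theorem asserts two distinct limits --- the ratio of sums and the average of the ratios $\langle\delta_{\tau},G\rangle/\langle\delta_{\tau},1\rangle$ over $R_{T}(1)$ --- and their coincidence is not automatic from the ratio asymptotics alone; one uses the counting asymptotics $\#R_{T}(1)\sim e^{h_{\phi}T}/(h_{\phi}T)$ to see that all but an exponentially negligible proportion of orbits in $R_{T}(1)$ have period in $[T-\varepsilon T,T]$, so the two averaging schemes agree in the limit. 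You gesture at this in your last sentence, and with those two standard remarks filled in, the outline is a faithful reconstruction of the argument behind the cited theorem.
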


\subsubsection{Livšic type theorems}

\textsf{Recall that two continuous functions $F,G:X\to\real$ are}
\textit{Livšic cohomologous}\textsf{ if there exists a continuous
function $V:X\to\real$ which is $C^{1}$ in the flow direction such
that} 
\[
F(x)-G(x)=\left.\dfrac{\partial}{\partial t}\right|_{t=0}V(\phi_{t}(x)).
\]

\begin{rem}
By definition, the following properties are immediate: \begin{itemize} 

\item[1.] If $F$ and $G$ are Livšic cohomologous then they have
the same integral over any $\phi-$invariant measure.

\item[2.] The pressure $P_{\phi}(F)$ only depends on the Livšic
cohomology class of $F$.

\item[3.]\textsf{ }$R_{T}(F)$ only depends on the Livšic cohomology
class of $F$.

\end{itemize}
\end{rem}

\textsf{In the rest of this note, we will only discuss the Livšic
cohomology of }\textit{Hölder }\textsf{continuous functions on $X$.
Specifically, two }\textit{Hölder }\textsf{continuous $F,G:X\to\real$
are called Livšic cohomologous if there exists a}\textit{ Hölder }\textsf{continuous
$V:X\to\real$ which is $C^{1}$ in the flow direction such that}
\[
F(x)-G(x)=\left.\dfrac{\partial}{\partial t}\right|_{t=0}V(\phi_{t}(x)).
\]

\begin{thm}[Livšic Theorem \cite{Livsic:2007br}]\label{Thm Liv=000161ic-Theorem}

Let $\phi_{t}:X\to X$ be a transitive Anosov flow. Let $F:X\to\real$
be a Hölder continuous function such that $\langle\delta_{\tau},F\rangle=\int_{0}^{l(\tau)}F\circ\phi_{t}(x_{\tau})dt=0$
for each $\phi-$closed orbit $\tau$ for any $x_{\tau}\in\tau$,
then $F$ is cohomologous to $0$.

\end{thm}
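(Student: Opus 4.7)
The plan is to construct the cohomology function $V$ by integrating $F$ along a dense orbit, and then to use the Anosov Closing Lemma to show that this construction extends Hölder continuously to all of $X$.

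First, by transitivity of $\phi_t$, I would fix a point $x_0 \in X$ with dense (forward) orbit, and define
\[
V(\phi_t(x_0)) := \int_0^t F(\phi_s(x_0))\, ds
\]
on the orbit $\mathcal{O}(x_0)$. This is unambiguous on $\mathcal{O}(x_0)$, and the cohomological identity $V(\phi_t(y)) - V(y) = \int_0^t F(\phi_s(y))\,ds$ holds tautologically for $y\in \mathcal{O}(x_0)$.

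The main step is to show that $V$ is uniformly Hölder continuous on $\mathcal{O}(x_0)$. Suppose $y_1 = \phi_{t_1}(x_0)$ and $y_2 = \phi_{t_2}(x_0)$ are close in $X$, with $t_1 < t_2$; set $T = t_2 - t_1$ and $y = y_1$, so that
\[
V(y_2) - V(y_1) = \int_0^T F(\phi_s(y))\,ds.
\]
Up to a small time reparametrization, one has $d(y, \phi_T(y)) \leq d(y_1, y_2)$ roughly, so the Anosov Closing Lemma applies: there is a closed orbit $\tau$ of $\phi$ with period $T' \approx T$ and a point $z \in \tau$ such that $d(\phi_s(y), \phi_s(z)) \leq K\, e^{-\lambda\min(s, T-s)} d(y_1,y_2)$ for $0 \leq s \leq T$. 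By hypothesis $\langle \delta_\tau, F\rangle = 0$, so
\[
V(y_2) - V(y_1) = \int_0^T F(\phi_s(y))\,ds - \int_0^{T'} F(\phi_s(z))\,ds + O(d(y_1,y_2)).
\]
Splitting the interval and using the $\alpha$-Hölder bound on $F$ combined with the exponential shadowing estimate, the integral $\int_0^T [F(\phi_s(y)) - F(\phi_s(z))]\,ds$ is bounded by $C \int_0^{T}(e^{-\lambda\min(s,T-s)} d(y_1,y_2))^\alpha ds$, which converges and yields $|V(y_2) - V(y_1)| \leq C'\, d(y_1,y_2)^\alpha$ with constants depending only on $F, \lambda, K$, and independent of $y_1, y_2$ (the length $T$ does not appear because the integral converges uniformly in $T$).

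Once uniform Hölder continuity on the dense orbit $\mathcal{O}(x_0)$ is established, $V$ extends uniquely to a Hölder continuous function on $X$, still denoted $V$. The cohomological identity $V(\phi_t(x)) - V(x) = \int_0^t F(\phi_s(x))\,ds$ extends from $\mathcal{O}(x_0)$ to all $x \in X$ by continuity of both sides in $x$, and differentiating at $t = 0$ yields $\partial_t|_{t=0} V(\phi_t(x)) = F(x)$, i.e.\ $F$ is Livšic cohomologous to $0$.

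\emph{Principal obstacle.} The essential difficulty is the Hölder regularity of $V$, not the continuous extension. That step relies crucially on (i) the quantitative Anosov Closing Lemma, which produces shadowing closed orbits with exponential precision along hyperbolic time, and (ii) the fact that the sum $\sum_{n\geq 0} e^{-\alpha\lambda n}$ converges, letting us control the integral by a constant independent of the shadowing time $T$. Without the exponential rate in the Anosov splitting, merely continuous periodic data would not yield a continuous coboundary.
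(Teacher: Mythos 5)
The paper does not prove this statement at all: it is quoted as background from Livšic's work, so there is no internal proof to compare against. Your argument is the classical Livšic argument (integrate $F$ along a dense orbit to define $V$, use closing-orbit shadowing plus the vanishing of all periodic integrals to get a uniform Hölder estimate, extend by density, and read off the coboundary identity), and it is correct in outline and in its key quantitative mechanism.

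Two points deserve care if you were to write this out in full. First, the closing lemma you invoke is strictly stronger than the one stated in the paper as Theorem \ref{Thm Anosov Closing lemma}: that version only gives $d(\phi_s v,\phi_s w)<\varepsilon$ uniformly for $0\le s\le t$, which is not enough — a bound of the form $T\cdot\varepsilon^{\alpha}$ is useless for large return times $T$. You correctly flag that you need the quantitative version with exponential shadowing $d(\phi_s y,\phi_s z)\le K e^{-\lambda\min(s,T-s)}\,d(y,\phi_T y)$ and with $|T-T'|$ controlled by $d(y,\phi_T y)$; this is true for Anosov flows but must be cited or proved in that stronger form. Second, the Hölder estimate as you set it up only treats returns with $T$ large; when $y_2=\phi_T(y_1)$ with $T$ small the closing lemma is not the right tool, but then $|V(y_2)-V(y_1)|\le \|F\|_{\infty}T$ and, since the flow has no fixed points, $T$ is comparable to $d(y_1,y_2)$, so this case is handled directly. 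With these two routine additions your proposal is a complete and standard proof of the cited theorem.
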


\begin{thm}[Positive Livšic Theorem \cite{Sambarino:2014jv}]\label{Thm:positive-Liv=000161ic-Theorem}

Let $\phi_{t}:X\to X$ be a transitive Anosov flow. Let $F:X\to\real$
be a Hölder continuous function such that $\langle\delta_{\tau},F\rangle>0$
for each $\phi-$closed orbit $\tau$, then $F$ is cohomologous to
a Hölder continuous function $G(x)$ such that $G(x)>0$, $\forall x\in X$.

\end{thm}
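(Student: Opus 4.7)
The plan is to produce the desired positive cohomologous function by time-averaging $F$ along the flow. For each $T>0$ set
\[
F_T(x) := \frac{1}{T}\int_0^T F(\phi_s(x))\,\mathrm{d}s,\qquad V(x) := \int_0^T\Bigl(1-\frac{s}{T}\Bigr)F(\phi_s(x))\,\mathrm{d}s.
\]
A direct Leibniz-rule computation gives $\frac{\mathrm{d}}{\mathrm{d}t}\big|_{t=0}V(\phi_t(x)) = F_T(x)-F(x)$, and since $F$ is H\"older and $V$ is smooth in the flow direction with H\"older spatial dependence, this shows $F_T\sim F$ for every $T>0$. It therefore suffices to find some $T_0$ with $F_{T_0}>0$ everywhere on $X$.

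The key quantity is $m(T) := \inf_{x\in X}\int_0^T F(\phi_s(x))\,\mathrm{d}s$. Splitting integrals yields the superadditivity $m(T+S)\geq m(T)+m(S)$, so by Fekete's lemma the limit $c := \lim_{T\to\infty} m(T)/T = \sup_T m(T)/T$ exists. I would next identify
\[
c \;=\; \inf_{\mu\in\mathcal{M}^\phi}\int_X F\,\mathrm{d}\mu,
\]
the inequality $\leq$ coming from Birkhoff's ergodic theorem at a $\mu$-generic point combined with $\int_0^T F(\phi_s x)\,\mathrm{d}s\geq m(T)$, and the reverse inequality by extracting a weak-$*$ subsequential limit of empirical measures along near-minimizers $x_T$ of $\int_0^T F\circ\phi_s\,\mathrm{d}s$ (using compactness of $\mathcal{M}^\phi$). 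Once $c>0$ is established, choosing $T_0$ with $m(T_0)/T_0>c/2$ gives $F_{T_0}(x)\geq m(T_0)/T_0>0$ for every $x\in X$, closing the argument.

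The core difficulty is proving $c>0$. Weak-$*$ density of the periodic-orbit measures $\mu_\tau := \delta_\tau/l(\tau)$ in $\mathcal{M}^\phi$ (a standard consequence of specification for transitive Anosov flows) together with the hypothesis $\int F\,\mathrm{d}\mu_\tau = \langle\delta_\tau,F\rangle/l(\tau)>0$ immediately gives $c\geq 0$. Suppose for contradiction $c=0$; by ergodic decomposition one obtains an ergodic $\mu\in\mathcal{M}^\phi$ with $\int F\,\mathrm{d}\mu = 0$. For a $\mu$-generic $x$, Birkhoff forces $A(T,x):=\int_0^T F(\phi_s x)\,\mathrm{d}s = o(T)$. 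If $A(\cdot,x)$ happens to remain bounded along this orbit, a Gottschalk--Hedlund/Liv\v{s}ic-type argument sharpening Theorem \ref{Thm Liv=000161ic-Theorem} makes $F$ H\"older cohomologous to $0$, forcing $\langle\delta_\tau,F\rangle=0$ for every $\tau$ and contradicting the hypothesis. Otherwise, combining Poincar\'e recurrence with the sub-linear oscillation of $A(\cdot,x)$ produces return times $T_n\to\infty$ with $A(T_n,x)\to-\infty$; the Anosov closing lemma then yields closed orbits $\tau_n$ exponentially shadowing these segments, and H\"older continuity of $F$ gives $|\langle\delta_{\tau_n},F\rangle - A(T_n,x)|$ uniformly bounded, so $\langle\delta_{\tau_n},F\rangle\to-\infty$, a contradiction. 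Arranging recurrence at precisely the times where $A$ is very negative---and in particular ruling out the pathological possibility that $A(\cdot,x)$ is bounded below but unbounded above along every $\mu$-generic orbit---is the most delicate technical step.
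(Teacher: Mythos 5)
Your reduction is fine and is the standard one: the computation $\frac{\mathrm{d}}{\mathrm{d}t}\big|_{t=0}V(\phi_t x)=F_T(x)-F(x)$ is correct, $F_T$ is H\"older, and together with $c:=\lim m(T)/T=\min_{\mu\in\mathcal{M}^{\phi}}\int F\,\mathrm{d}\mu$ it shows that $F$ is cohomologous to a strictly positive H\"older function if and only if $c>0$. (For what it is worth, the paper itself offers no proof of this statement -- it is quoted from Sambarino -- so there is no in-paper argument to compare with.) The problem is the step you yourself flag as delicate: $c>0$ simply does not follow from the hypothesis that every period is strictly positive, because the infimum of the (all positive) periodic averages $\langle\delta_\tau,F\rangle/l(\tau)$ can be $0$. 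Concretely, let $K\subset X$ be a compact $\phi$-invariant set containing no closed orbit (for instance a non-periodic minimal set; for the geodesic flow of a closed hyperbolic surface take the unit vectors tangent to a minimal geodesic lamination which is not a closed geodesic) and put $F(x)=d_X(x,K)$. Then $F$ is Lipschitz, every closed orbit leaves $K$, so $\langle\delta_\tau,F\rangle>0$ for all $\tau$; but any invariant measure $\mu_K$ supported on $K$ satisfies $\int F\,\mathrm{d}\mu_K=0$, so $c=0$, and since cohomologous functions have the same integral against every invariant measure, this $F$ is not cohomologous to any positive continuous function at all. So the contradiction scheme cannot be repaired: in the branch where the Birkhoff integrals $A(T,x)$ of a $\mu$-generic point stay bounded (here they are identically zero), a Gottschalk--Hedlund argument only produces a transfer function on $\mathrm{supp}\,\mu$, not on all of $X$, and Theorem \ref{Thm Liv=000161ic-Theorem} is inapplicable because the periods of $F$ do not vanish.

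What makes a statement of this type true is a strengthened hypothesis, and with it your averaging argument closes immediately. If one assumes a uniform bound $\langle\delta_\tau,F\rangle\geq k\,l(\tau)$ for some $k>0$, then the Anosov closing lemma argument you sketch (shadowing long near-minimizing segments by closed orbits, with a H\"older error that is uniformly bounded) gives $c=\inf_\tau\langle\delta_\tau,F\rangle/l(\tau)\geq k>0$, and then $F\sim F_{T_0}>0$. Alternatively, in the H\"older-cocycle setting in which the result is actually invoked in this paper, positivity of the reparametrizing function comes from Theorem \ref{Thm: Sambarino}, whose hypotheses include finiteness of the exponential growth rate $h_c$ and whose proof rests on properness of the boundary action -- a different mechanism from abstract Liv\v{s}ic theory; note that in the application (step 1 of Theorem \ref{Thm: main-1}) the periods are $l_h(\gamma)$ and the quasi-isometry of Lemma \ref{lem: quasi-iso q} does give a uniform lower bound $l_h(\gamma)\geq k\,l_g(\gamma)$, so the strengthened hypothesis is available there. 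At the level of generality of the statement as quoted, only the nonnegative version (Theorem \ref{Thm Nonnegative-Liv=000161ic-Theorem}) holds.
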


\textsf{For the geodesic flow on the unit tangent bundle of negative
curved manifolds, we have a similar theorem as the following:}

\begin{thm}[Nonnegative Livšic Theorem \cite{Lopes:2005hp}]\label{Thm Nonnegative-Liv=000161ic-Theorem}

Suppose $\Sigma$ is a compact Riemannian manifold with negative sectional
curvature. Let $\phi_{t}:T^{1}\Sigma\to T^{1}\Sigma$ be the geodesic
flow on $T^{1}\Sigma$. Let $F:T^{1}\Sigma\to\real$ be a Hölder continuous
function such that $\langle\delta_{\tau},F\rangle\geq0$ for each
$\phi-$closed orbit $\tau$, then $F$ is cohomologous to a Hölder
continuous function $G(x)$ such that $G(x)\geq0$, $\forall x\in T^{1}\Sigma$.

\end{thm}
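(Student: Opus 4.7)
The plan is to construct a Hölder ``subaction'' $V$ so that $G(x):=F(x)-\partial_t|_{t=0}V(\phi_t x)$ is pointwise nonnegative. I would follow the subaction approach of Mañé--Conze--Guivarc'h--Lopes--Thieullen, adapted to Anosov flows via Markov sections (the geodesic flow on $T^{1}\Sigma$ is transitive Anosov, so it admits a symbolic coding of finite type, which lets us import the diffeomorphism-case machinery).

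First I would promote the hypothesis from closed orbits to \emph{all} $\phi$-invariant probability measures. By the equidistribution result (Theorem \ref{Thm: Equidistribution}) and the classical fact that convex combinations of periodic orbit measures are weak-$*$ dense in $\mathcal{M}^\phi$, the assumption $\langle\delta_\tau,F\rangle\ge 0$ for every closed orbit $\tau$ implies $\int F\,\mbox{d}m\ge 0$ for every $m\in\mathcal{M}^\phi$. In particular $\alpha:=\inf_{m\in\mathcal{M}^\phi}\int F\,\mbox{d}m\ge 0$.

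Next I would define, for a small radius $r>0$,
\[
V(x)\;:=\;\sup\left\{\int_{0}^{T}F(\phi_s y)\,\mbox{d}s \;:\; T\ge 0,\; y\in T^{1}\Sigma,\; d(\phi_T y,x)<r\right\}.
\]
The key point is that this supremum is \emph{finite and bounded}: if it were not, one could extract a sequence of ever-longer orbit segments with large Birkhoff integrals that return close to their starting point, and invoke the Anosov shadowing lemma to shadow these pseudo-closed orbits by genuine closed orbits of the flow $\phi$. Standard Hölder-distortion estimates (bounded-distortion along shadowing orbits, exploiting the local product structure of $E^s\oplus E^0\oplus E^u$) would then transfer the large Birkhoff integral to the shadowing closed orbit, producing a closed orbit $\tau$ with $\langle\delta_\tau,F\rangle\ll 0$, contradicting the hypothesis. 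The same local product and contraction estimates, applied to pairs of nearby points $x,x'$, show that $V$ is Hölder continuous with an exponent depending on the Hölder exponent of $F$ and on the expansion/contraction rates of $E^u,E^s$.

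By construction, for every $x$ and every $t>0$ one has $V(\phi_t x)\ge V(x)+\int_{0}^{t}F(\phi_s x)\,\mbox{d}s$, so the Dini derivative of $t\mapsto V(\phi_t x)$ at $t=0$ is at most $F(x)$, i.e.\ $G:=F-\partial_t V\ge 0$. To meet the $C^{1}$-in-the-flow-direction requirement of the Livšic cohomology definition, I would replace $V$ by its flow average
\[
\widetilde{V}(x)\;:=\;\frac{1}{\eta}\int_{0}^{\eta}V(\phi_s x)\,\mbox{d}s
\]
for small $\eta>0$: this is $C^1$ along the flow, still Hölder, and differs from $V$ by a Hölder coboundary, so $F-\partial_t\widetilde{V}$ is Hölder continuous, nonnegative, and Livšic cohomologous to $F$. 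The main obstacle is the uniform-boundedness step for $V$: getting the Hölder distortion estimate along shadowing orbits to be additive-with-small-error (rather than just multiplicative) is the delicate part, and is where one uses the full strength of the transitive Anosov structure — passing to a Markov section to invoke the discrete subaction theorem of Lopes--Thieullen \cite{Lopes:2005hp} is the cleanest way to package these estimates.
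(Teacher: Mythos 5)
The paper does not prove this statement at all --- it is quoted directly from Lopes--Thieullen \cite{Lopes:2005hp} --- so your proposal is measured against that reference, and your overall framework (Ma\~n\'e-type subaction, Anosov closing lemma for boundedness, hyperbolicity for H\"older regularity, Markov sections to import the discrete machinery) is indeed the right one. However, your construction has the orientation of the subaction reversed, and this is fatal as written. With your definition $V(x)=\sup\left\{\int_0^T F(\phi_s y)\,\mbox{d}s : T\ge 0,\ d(\phi_T y,x)<r\right\}$ the supremum is typically infinite: the hypothesis $\langle\delta_\tau,F\rangle\ge 0$ bounds Birkhoff integrals from \emph{below}, not above, so already the trivial example $F\equiv 1$ (which satisfies the hypothesis) gives $V\equiv+\infty$. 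Your closing-lemma argument for finiteness does not apply here, since unboundedness of this supremum would only produce closed orbits with very \emph{positive} integrals, contradicting nothing. Worse, even if $V$ were finite, the inequality you correctly derive from this definition, $V(\phi_t x)\ge V(x)+\int_0^t F(\phi_s x)\,\mbox{d}s$, gives a lower Dini derivative of $t\mapsto V(\phi_t x)$ that is at least $F(x)$, not at most, so your $G=F-\partial_t V$ comes out $\le 0$ --- the reverse of the desired conclusion. The correct Ma\~n\'e potential for this problem is built from $-F$ (equivalently, from infima of Birkhoff integrals, with the orbit segments anchored near the starting point): then the closing lemma genuinely uses the hypothesis to give a uniform bound, and the calibration inequality $V(\phi_t x)-V(x)\le\int_0^t F(\phi_s x)\,\mbox{d}s$ comes out with the right sign. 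Your first reduction (nonnegativity of $\int F\,\mbox{d}m$ for all $m\in\mathcal{M}^\phi$) is fine, but the relevant tool is Sigmund's density of periodic-orbit measures for flows with specification, not Theorem \ref{Thm: Equidistribution}, which only concerns the Bowen--Margulis measure.

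A secondary gap is the final smoothing step: for $\widetilde{V}(x)=\frac{1}{\eta}\int_0^\eta V(\phi_s x)\,\mbox{d}s$ one has $\left.\partial_t\right|_{t=0}\widetilde{V}(\phi_t x)=\frac{1}{\eta}\bigl(V(\phi_\eta x)-V(x)\bigr)$, and the (corrected) subaction inequality only bounds this by $\frac{1}{\eta}\int_0^\eta F(\phi_s x)\,\mbox{d}s$, i.e.\ by the flow average of $F$ rather than by $F(x)$; thus $F-\partial_t\widetilde{V}$ is nonnegative only up to an error of order $\eta^\alpha$. Making the transfer function genuinely $C^1$ in the flow direction while keeping exact nonnegativity requires the more careful regularization carried out in \cite{Lopes:2005hp} (or an argument absorbing the $O(\eta^\alpha)$ error), so this step needs repair as well.
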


\textsf{Here we recall the Anosov closing lemma \cite{Anosov:1967vf}:}

\begin{thm}[Anosov Closing Lemma]\label{Thm Anosov Closing lemma}

Let $\phi_{t}:X\to X$ be a transitive Anosov flow. Then for all $\varepsilon>0$
there exists $\delta=\delta(\varepsilon,d_{X})>0$ such that if for
$v\in X$, $t>0$ satisfying
\[
d_{X}(v,\phi_{t}v)<\delta,
\]
then there exists a closed orbit $\tau_{w}=\{\phi_{s}w\}_{s=0}^{t'}$
of period $t'$, where $\left|t-t'\right|<\varepsilon$, such that
\[
d_{X}(\phi_{s}v,\phi_{s}w)<\varepsilon\quad\mbox{for }0\leq s\leq t.
\]

\end{thm}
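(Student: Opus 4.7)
The plan is to combine the local product structure of the Anosov flow with a Banach fixed-point argument to produce a closed orbit through a point $w^*$ near $v$, and then to verify the shadowing estimate using the uniform hyperbolic bounds on $E^s$ and $E^u$. Recall that for a $C^{1+\alpha}$ Anosov flow the splitting $TX = E^0 \oplus E^s \oplus E^u$ integrates to local strong stable and unstable manifolds $W^s_\eta(x), W^u_\eta(x)$, and for $\eta > 0$ small enough the map $(a,b,\tau) \mapsto \phi_\tau([a,b])$, where $[a,b] := W^s_\eta(a) \cap W^u_\eta(b)$, is a bi-Lipschitz chart around each point of $X$, with Lipschitz constants uniform in the base by compactness. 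Given $\varepsilon > 0$, I will choose $\eta \in (0, \varepsilon/10)$ small enough for these charts to exist and for the Anosov estimates to yield a uniform contraction factor $\mu = \mu(\eta) < 1$ per unit of time for the backward unstable and forward stable holonomies. Then I pick $\delta = \delta(\varepsilon,d_X) > 0$ so small that $d_X(p,q) < \delta$ forces $q$ into the product chart of $p$ with each of the three coordinates of absolute value at most $\eta/100$.

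Given $v, t$ with $d_X(v, \phi_t v) < \delta$, I write $\phi_t v = \phi_{s_0}([p_0, q_0])$ in the product chart around $v$, with $|s_0| < \varepsilon/2$ and $p_0 \in W^s_\eta(v)$, $q_0 \in W^u_\eta(v)$ of size at most $\eta/100$, and I set $t' := t - s_0$. I then define $T : W^s_{\eta/2}(v) \times W^u_{\eta/2}(v) \to W^s_\eta(v) \times W^u_\eta(v)$ by $T(p,q) = (p', q')$, where $p'$ is the stable coordinate of $\phi_{-t'}([p, q])$ projected onto $W^s_\eta(v)$ and $q'$ is the unstable coordinate of $\phi_{t'}([p, q])$ projected onto $W^u_\eta(v)$. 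The Anosov estimates show that $T$ is a $\mu^{t'}$-contraction (the short-orbit case $t' < 1$ is absorbed by shrinking $\delta$ further), so by Banach there is a unique fixed point $(p^*, q^*)$, and $w^* := [p^*, q^*]$ is periodic of period $t''$ for some $|t'' - t'| < \eta$ after a small flow-direction adjustment; in particular $|t - t''| < \varepsilon$.

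The remaining task is the uniform shadowing bound $d_X(\phi_s v, \phi_s w^*) < \varepsilon$ for all $0 \le s \le t$, and this is the main obstacle: the unstable expansion would allow a naive forward estimate to blow up exponentially in $t$, which is not controlled by $\varepsilon$. The standard remedy is to split $[0, t]$ into $[0, t/2] \cup [t/2, t]$ and to exploit a different direction of hyperbolicity on each half. On $[0, t/2]$, the pair $(v, w^*)$ lies in a uniformly small product chart, so forward iteration yields a decaying stable contribution and an unstable contribution that remains bounded by $\eta$ up to the midpoint. On $[t/2, t]$, I use that $\phi_{t''}(w^*) = w^*$ together with $d_X(\phi_{t''} v, v) < \delta$: the pair $(\phi_s v, \phi_s w^*)$ is then the image under $\phi_{s - t''}$ (backward in time) of a $\delta$-close pair near $v$, and backward stable contraction keeps it within $\eta$. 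Gluing the two halves through the bi-Lipschitz chart gives a total bound at most $L \eta < \varepsilon$. The essential quantitative point is that the Anosov constants $C, \lambda$, and hence $\mu$, are uniform in the base point and independent of $t$, which is exactly what makes the closing lemma valid for arbitrarily long pseudo-orbits.
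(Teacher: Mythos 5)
The paper itself offers no proof of this statement --- it is quoted from Anosov's work and used as a black box --- so your attempt can only be measured against the standard arguments. Your outline (local product structure, a hyperbolic fixed-point argument for the approximate return map at $v$, then a two-sided exponential estimate for the shadowing bound) is indeed the classical route; the other classical route, which is closer to how the lemma is usually invoked, closes the orbit by applying the Shadowing Lemma to the periodic $\delta$-pseudo-orbit obtained by concatenating the segment $\{\phi_{s}v\}_{0\le s\le t}$ with itself and then using expansivity.

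As written, however, the key step fails. Your map $T$ takes the \emph{stable} coordinate of $\phi_{-t'}([p,q])$ and the \emph{unstable} coordinate of $\phi_{t'}([p,q])$; but the backward flow expands $E^{s}$ and the forward flow expands $E^{u}$, so this is exactly the expanding combination --- $T$ is (up to bounded distortion) the inverse of the map you want, and the Banach fixed-point theorem does not apply to it. The correct map pairs the stable coordinate of the \emph{forward} image with the unstable coordinate of the \emph{backward} image; moreover the backward component cannot be defined naively in the chart at $v$, since only $\phi_{t'}(v)$, not $\phi_{-t'}(v)$, is known to be near $v$, so it must be routed through the inverse of the return map on its image (or a graph transform on unstable disks). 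Second, the shadowing estimate is asserted rather than proved: on $[0,t/2]$ the unstable separation of $(\phi_{s}v,\phi_{s}w^{*})$ grows like $e^{\lambda s}$, so keeping it below $\eta$ requires knowing that the unstable coordinate of $w^{*}$ relative to $v$ is of order $e^{-\lambda t}\delta$; this is precisely the extra output of the corrected fixed-point equation (in the linearized return map $R(p,q)\approx(Ap+a,Bq+b)$ one gets $|q^{*}|=O(\|B^{-1}\|\,\delta)=O(e^{-\lambda t'}\delta)$), and you never extract it. Symmetrically, on $[t/2,t]$ it is the unstable part, not the stable one, that contracts under backward flow (``backward stable contraction'' has the bundles reversed), and the backward-expanding stable part is controlled only because the stable separation at time $t$ is exponentially small. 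With the roles of the two bundles corrected and these two exponential-smallness estimates extracted from the fixed point, your sketch becomes the standard proof; as it stands, both the contraction claim and the shadowing bound are unsupported.
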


\subsubsection{Variance and derivatives of the pressure}

\textsf{In this subsection we shall recall the definition and basic
properties of the variance. Let $\phi_{t}:X\to X$ be a transitive
Anosov flow on a compact metric space $X$, and $C^{\alpha}(X)$ be
the set of $\alpha$-Hölder continuous function on $X$.}

\begin{defn}
Suppose $F\in C^{\alpha}(X)$ and $m_{F}$ is the equilibrium state
of $F$. For any $G\in C^{\alpha}(X)$ we define the\textit{ variance}
of $G$ with respect to $m_{F}$ by 
\[
\mathrm{Var}(G,m_{F}):=\lim_{T\to\infty}\frac{1}{T}\int_{X}\left(\int_{0}^{T}G(\phi_{t}(x))\mbox{d}t-T\int G\mbox{d}m_{F}\right)^{2}\mbox{d}m_{F}(x)
\]
 
\end{defn}

\textsf{The following properties give us some handy formulas of the
the derivatives of the pressure.}

\begin{prop}
[Parry-Pollicott, Prop 4.10, 4.11 \cite{Parry:1990tn}] Suppose that
$\phi_{t}:X\to X$ is a transitive Anosov flow on a compact metric
space $X$, and $F,G\in C^{\alpha}(X).$ If $m_{F}$ is the equilibrium
state of $F$, then \begin{itemize}

\item[1.] The function $t\mapsto P_{\phi}(F+tG)$ is analytic, 

\item[2.] The first derivative is given by 
\[
\left.\frac{dP_{\phi}(F+tG)}{dt}\right|_{t=0}=\int_{X}G\mbox{d}m_{F},
\]

\item[3.] If $\int G\mbox{d}m_{F}=0$, then the second derivative
could be formulated as 
\[
\left.\frac{d^{2}P_{\phi}(F+tG)}{dt^{2}}\right|_{t=0}=\mathrm{Var}(G,m_{F}),
\]

\item[4.] If $\mathrm{Var}(G,m_{F})=0$, then $G$ is Livšic cohomologous
to zero. 

\end{itemize}
\end{prop}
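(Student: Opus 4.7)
The plan is to reduce everything to the well-developed thermodynamic formalism for shifts of finite type via symbolic dynamics, and then transport the statements back to the flow via the suspension construction. Because $\phi_{t}$ is a transitive Anosov flow on a compact manifold, Bowen--Ratner Markov sections produce a subshift of finite type $(\Sigma_{A},\sigma)$ and a H\"older roof function $r:\Sigma_{A}\to\real_{>0}$ such that $\phi_{t}$ is semi-conjugate (and measurably conjugate) to the suspension flow $\sigma_{r}$. Under this coding H\"older functions on $X$ pull back to H\"older functions on the suspension, equilibrium states correspond, and $P_{\phi}(H)=P_{\sigma_{r}}(H)$. Writing $P_{\sigma_{r}}(H)$ as the unique real number $h$ for which $P_{\sigma}(H-hr)=0$ (the analogue of Bowen's formula, Theorem~\ref{thm: bowen's formula}), the entire question is reduced to the analyticity and derivative behavior of the base pressure function on the subshift.

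For the base pressure we invoke the Ruelle--Perron--Frobenius theorem: for a H\"older potential $\Psi$ the transfer operator $\mathcal{L}_{\Psi}$ acting on the space of H\"older functions admits a simple leading eigenvalue $\lambda(\Psi)=e^{P_{\sigma}(\Psi)}$ with a spectral gap. Kato's analytic perturbation theory then shows that $\Psi\mapsto\lambda(\Psi)$, and hence $\Psi\mapsto P_{\sigma}(\Psi)$, depends analytically on $\Psi$ in any direction; applying this to the one-parameter family $\Psi_{t,s}=F+tG-sr$ and using the implicit function theorem to solve $P_{\sigma}(\Psi_{t,s})=0$ for $s=s(t)=P_{\phi}(F+tG)$ yields the analyticity claim in (1). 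Differentiating this identity in $t$ at $t=0$, using that the leading normalized eigenmeasure of $\mathcal{L}_{F-s(0)r}$ descends to $\widehat{r\cdot m}=m_{F}$ on the flow side, and applying the Abramov formula, one recovers $\tfrac{d}{dt}\big|_{t=0}P_{\phi}(F+tG)=\int G\,dm_{F}$, giving (2).

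For (3) we differentiate the eigenvalue relation one more time. When $\int G\,dm_{F}=0$ the first-order term drops out, and the textbook computation of $\tfrac{d^{2}}{dt^{2}}\log\lambda(\Psi_{t,0})|_{t=0}$ produces the Green--Kubo series $\sum_{n\in\mathbb{Z}}\int \tilde G\cdot\tilde G\circ\sigma^{n}\,d\tilde m$ on the base, which by a direct resummation along the flow and Fubini equals
\[
\lim_{T\to\infty}\frac{1}{T}\int_{X}\Bigl(\int_{0}^{T}G(\phi_{t}x)\,dt\Bigr)^{2}dm_{F}(x)=\mathrm{Var}(G,m_{F}).
\]
This step is where the Gibbs property and the spectral gap are really used, since one needs the exponential mixing implicit in the transfer operator estimates to ensure absolute convergence of the correlation series.

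For (4), assume $\mathrm{Var}(G,m_{F})=0$. By (1)--(3) the analytic function $t\mapsto P_{\phi}(F+tG)$ has vanishing first derivative (since we may replace $G$ by $G-\int G\,dm_{F}$ without changing cohomology class up to a constant) and vanishing second derivative at $t=0$. A standard convexity/strict-convexity argument for the pressure then forces $P_{\phi}(F+tG)=P_{\phi}(F)+t\int G\,dm_{F}$ for all $t$, which by Theorem~\ref{Thm: unique eq state} (uniqueness of the equilibrium state and the characterization of equality of Gibbs measures) implies that $G-\int G\,dm_{F}$ is Liv\v{s}ic cohomologous to a constant; integrating against $m_{F}$ shows the constant is zero, hence $G$ is cohomologous to $\int G\,dm_{F}$, as claimed. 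The main technical obstacle in the whole argument is the analytic perturbation/spectral-gap input for the transfer operator on a H\"older space; once that is in hand, (1)--(4) follow by direct differentiation and the Liv\v{s}ic-type theorems already recorded above.
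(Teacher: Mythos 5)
This proposition is quoted in the paper as background and is not proved there (it is cited to Parry--Pollicott), so the relevant comparison is with the standard textbook argument, which is exactly the route you take: Markov sections, suspension over a subshift, Ruelle--Perron--Frobenius spectral gap, Kato perturbation, and the implicit relation $P_{\sigma}(\tilde F+t\tilde G-s(t)r)=0$ defining $s(t)=P_{\phi}(F+tG)$. For items (1)--(3) your sketch is correct in outline; the only imprecisions are bookkeeping ones (the potentials fed to the base pressure must be the fiber-integrated versions $\tilde F,\tilde G$ of $F,G$, the correspondence between the base equilibrium state $\nu$ and $m_{F}$ goes through the normalized suspension of $\nu$, and one should note the standard fact that the non-injectivity locus of the coding is null for equilibrium states of H\"older potentials so that pressure and equilibrium states really do transfer). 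The relation between the flow variance and the base Green--Kubo series in (3) also deserves more than ``resummation and Fubini,'' but that computation is standard and is where the spectral gap is used, as you say.

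Item (4), however, has a genuine gap. Your argument is: $p(t):=P_{\phi}(F+tG)$ has $p'(0)=0$ (after centering) and $p''(0)=\mathrm{Var}(G,m_{F})=0$, and then ``a standard convexity/strict-convexity argument'' forces $p$ to be affine in $t$, whence coincidence of equilibrium states and the cohomology conclusion. But convexity plus analyticity plus $p''(0)=0$ does not force affineness: $t\mapsto t^{4}$ is analytic, convex, and has vanishing first and second derivatives at $0$ without being affine, so nothing in (1)--(3) rules out this behavior for $p$. The dichotomy ``the pressure line is either strictly convex or affine'' cannot be invoked here either, because its proof is precisely the statement you are trying to establish (variance zero implies cohomologous to a constant), so the appeal is circular. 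The correct argument works at the level of the symbolic model: writing the asymptotic variance of the centered function $\tilde G'$ as $\lim_{n}\frac{1}{n}\int(S_{n}\tilde G')^{2}\,\mathrm{d}\nu$, its vanishing yields (via the Gordin/Leonov martingale--coboundary decomposition, or equivalently the degeneracy of the eigenvalue perturbation of the transfer operator) that $\tilde G'=u\circ\sigma-u$ with $u\in L^{2}(\nu)$; the Liv\v{s}ic regularity theorem then upgrades $u$ to a H\"older function, since the periodic-orbit obstructions of $\tilde G'$ vanish, and transferring back through the suspension gives that $G-\int G\,\mathrm{d}m_{F}$ is Liv\v{s}ic cohomologous to zero for the flow (which is the precise content of the cited Parry--Pollicott result; note that the bare statement ``$G$ is cohomologous to zero'' requires the mean-zero normalization of item (3), as you implicitly observed). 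Once this $L^{2}$-coboundary step is supplied, the rest of your write-up stands.
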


\subsubsection{Pressure metric}

\textsf{Here we keep the same setting as in previous subsection that
$\phi_{t}:X\to X$ is a transitive Anosov flow on a compact metric
space $X$. We consider the space $\mathcal{P}(X)$ of Livšic cohomology
classes of pressure zero Hölder continuous functions on $X$, i.e.
\[
\mathcal{P}(X):=\left\{ F;F\in C^{\alpha}(X)\mbox{ for some \ensuremath{\alpha\:}and \ensuremath{P_{\phi}(F)=0}}\right\} /\sim.
\]
The tangent space of $\mathcal{P}(X)$ at $F$ is 
\[
T_{F}\mathcal{P}(X)=\mathrm{ker}\left((\mathbf{D}P_{\phi})(F)\right)=\left\{ G;\mbox{ \ensuremath{G\in C^{\alpha}(X)\mbox{ for some \ensuremath{\alpha\:}and}\int Gdm_{F}=0}}\right\} /\sim,
\]
where $m_{F}$ is the equilibrium state of $F$. }

\textsf{Since the variance vanishes only on functions $G$ that are
cohomologous to 0, 
\[
\left\Vert G\right\Vert _{P}^{2}:=\frac{\mathrm{Var}(G,m_{F})}{-\int F\mbox{d}m_{F}}
\]
 is positive definite on equivalence classes of functions cohomologous
to 0, and thus a metric on $T_{F}\mathcal{P}(X)$. We call this metric
$||\cdot||_{P}$ the }\textit{pressure metric}\textsf{ on $T_{F}\mathcal{P}(X)$. }

\begin{prop}
\label{prop: pressure metric}If $\{c_{t}\}_{t\in(-1,1)}$ is a smooth
one parameter family contained in $\mathcal{P}(X)$, then 
\[
\left\Vert \dot{c}_{0}\right\Vert _{P}^{2}=\frac{\int\ddot{c}_{0}\mbox{d}m_{c_{0}}}{\int c_{0}\mbox{d}m_{c_{0}}},
\]
where $\dot{c}_{0}=\left.\frac{d}{dt}c_{t}\right|_{t=0}$ and $\ddot{c}_{0}=\left.\frac{d^{2}}{dt^{2}}c_{t}\right|_{t=0}$.\end{prop}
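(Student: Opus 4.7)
The plan is to differentiate the identity $P_\phi(c_t) \equiv 0$ twice at $t=0$ and read off the required relation between $\mathrm{Var}(\dot c_0, m_{c_0})$ and $\int \ddot c_0\, dm_{c_0}$. Differentiating once and applying point~2 of the preceding proposition gives $0 = \int \dot c_0\, dm_{c_0}$, which is exactly the statement that $\dot c_0 \in T_{c_0}\mathcal P(X)$. The heart of the argument is the second derivative.

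To compute it, I would compare the curved path $c_s$ with its affine tangent approximation $c_0 + s\dot c_0$. Writing $c_s = c_0 + s\dot c_0 + \Delta_s$ with $\Delta_s = \tfrac{s^2}{2}\ddot c_0 + o(s^2)$, the fundamental theorem of calculus gives
\[
P_\phi(c_s) - P_\phi(c_0 + s\dot c_0) = \int_0^1 \int_X \Delta_s\, dm_{c_0 + s\dot c_0 + u\Delta_s}\, du.
\]
By continuity of $F \mapsto m_F$ in the H\"older topology (a consequence of the analyticity in point~1 of the preceding proposition), the right-hand side equals $\tfrac{s^2}{2}\int \ddot c_0\, dm_{c_0} + o(s^2)$. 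For the affine piece, the expansion in points~2 and 3 of the preceding proposition, combined with $\int \dot c_0\, dm_{c_0}=0$, yields
\[
P_\phi(c_0 + s\dot c_0) = P_\phi(c_0) + \tfrac{s^2}{2}\,\mathrm{Var}(\dot c_0, m_{c_0}) + o(s^2).
\]
Adding these and using $P_\phi(c_0)=0=P_\phi(c_s)$, dividing by $s^2/2$, and letting $s\to 0$ produces the key identity
\[
\mathrm{Var}(\dot c_0, m_{c_0}) = -\int \ddot c_0 \, dm_{c_0}.
\]
Substituting this into the definition $\|\dot c_0\|_P^2 = \mathrm{Var}(\dot c_0, m_{c_0})/(-\int c_0\, dm_{c_0})$ gives the claimed formula.

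The main technical obstacle is justifying the Taylor expansion of the pressure along the non-affine path $c_s$, since the earlier proposition only states derivative formulas along affine one-parameter families $F+tG$. I would handle this through the integral comparison above, which reduces everything to derivatives along affine segments plus a continuity statement for the equilibrium state map $F \mapsto m_F$. This continuity is standard for Gibbs measures of H\"older potentials over transitive Anosov flows (cf. \cite{Parry:1990tn}), and indeed the map is real-analytic as a consequence of the analytic dependence in point~1 of the preceding proposition, so the remainder estimates $o(s^2)$ hold uniformly in $u\in[0,1]$, which is all the argument requires.
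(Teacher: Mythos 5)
Your proof is correct and follows essentially the same route as the paper: differentiate the identity $P_{\phi}(c_{t})\equiv0$ twice at $t=0$, obtain $\mathrm{Var}(\dot{c}_{0},m_{c_{0}})=-\int\ddot{c}_{0}\,\mathrm{d}m_{c_{0}}$, and substitute into the definition of $\left\Vert \cdot\right\Vert _{P}$. The only difference is that the paper simply asserts the G\^ateaux chain rule $\left.\frac{d^{2}}{dt^{2}}P_{\phi}(c_{t})\right|_{t=0}=(\mathbf{D}^{2}P_{\phi})(c_{0})(\dot{c}_{0},\dot{c}_{0})+(\mathbf{D}P_{\phi})(c_{0})(\ddot{c}_{0})$, whereas you justify that step by comparing with the affine path via the fundamental theorem of calculus and continuity of $F\mapsto m_{F}$ -- a more careful rendering of the same computation.
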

\begin{proof}
This follows the direction computation of the (Gâteaux) second derivative
of $P_{\phi}(c_{t})$: 
\begin{align*}
\left.\frac{d^{2}}{dt^{2}}P_{\phi}(c_{t})\right|_{t=0} & =(\mathbf{D}^{2}P_{\phi})(c_{0})(\dot{c}_{0},\dot{c}_{0})+(\mathbf{D}P_{\phi})(c_{0})(\ddot{c}_{0})\\
 & =\mathrm{Var}(\dot{c}_{0},m_{c_{0}})+\int\ddot{c}_{0}\mbox{d}m_{c_{0}}.
\end{align*}

Since $P_{\phi}(c_{t})=0$, we have 
\[
\left\Vert \dot{c}_{0}\right\Vert _{P}^{2}=\frac{\mathrm{Var}(\dot{c}_{0},m_{c_{0}})}{-\int c_{0}\mbox{d}m_{c_{0}}}=\frac{\int\ddot{c}_{0}\mbox{d}m_{c_{0}}}{\int c_{0}\mbox{d}m_{c_{0}}}.
\]

\end{proof}

\subsection{Negatively curved manifolds and the group of isometries}

\textsf{In this subsection, we survey several facts of $\delta-$hyperbolic
spaces and their group of isometries. A good reference is the book
\cite{Ghy:1990} edited by Ghys and de la Harpe.}

\subsubsection{$\delta$--hyperbolic spaces}

\textsf{A metric space $(X,d)$ is said to be}\textit{ geodesic}\textsf{
if any two points $x,y\in X$ can be joined be a}\textit{ geodesic
segment}\textsf{ $[x,y]$ that is a naturally parametrized path from
$x$ to $y$ whose length is equal to $d(x,y)$, and is called }\textit{proper}\textsf{
if all closed balls are compact. }
\begin{defn}
A geodesic metric space $(X,d)$ is called $\delta-$\textit{hyperbolic}
(where $\delta\geq0$ is some real number) if for any geodesic triangle
in $X$ each side of the triangle is contained in the $\delta-$neighborhood
of the union of two other sides. A metric space $(X,d)$ is called
\textit{hyperbolic} if it is $\delta-$\textit{hyperbolic }for some
$\delta\geq0$.

\textsf{In the following, we lists two types of hyperbolic spaces
appearing in this note. }
\end{defn}

\begin{example*}
:\begin{itemize} 

\item[1.]\textit{Pinched Hadamard manifold}\textsl{ $(\widetilde{M},d_{\widetilde{g}})$:}
a complete and simply connected Riemannian manifold $(\widetilde{M},\widetilde{g})$
whose sectional curvature is bounded by two negative numbers. The
metric on $\widetilde{M}$ is the distance function $d_{\widetilde{g}}$
induced by the Riemannian metric $\widetilde{g}$. 

\item[2.]The \textit{Cayley graph} $C(G,S)$ and its \textit{word
metric} $w$: given a finitely generated group $G$ and a finite generating
set $S$ of $G$, $C(G,S)$ is a graph whose vertices are elements
of $G$. Two vertices $g,h\in G$ are connected by an edge if and
only if there is a generator $s\in S$ such that $h=gs$. The word
metric $w$ on $C(G,S)$ is defined by assuming that each edge has
unit length, and $w(g,h)$ is the minimum of the length of all paths
connecting $g$ and $h$.

\end{itemize}
\end{example*}

\begin{rem}
A group $G$ is called \textit{hyperbolic} if for one (and for all)
finite generating set the Cayley graph is hyperbolic. For example,
finitely generated free groups and surface groups for surfaces with
genus $>1$.
\end{rem}

\textsf{We say that two geodesics ray $\tau_{1}:[0,\infty)\to X$
and $\tau_{2}:[0,\infty)\to X$ are }\textit{equivalent} \textsf{and
write $\tau_{1}\sim\tau_{2}$ if there is a $K>0$ such that for all
$t>0$
\[
d(\tau_{1}(t),\tau_{2}(t))<K.
\]
It is easy to see that $\sim$ is indeed an equivalence relation on
the set of geodesic rays. We then define the }\textit{geometric boundary}\textsf{
$\vbdy X$ of $X$ by 
\[
\vbdy X:=\{[\tau]:\mbox{\ensuremath{\tau}\ is a geodesic ray in }X\}.
\]
Moreover, we know that when $X$ is proper, $\vbdy X$ is metrizable.
More precisely, the following }\textit{visual metric,}\textsf{ given
by Gromov, defines a metric on $\vbdy X$.}

\begin{defn}
Let $(X,d)$ be a $\delta$--hyperbolic proper metric space. Let $a>1$
and let $o\in X$ be a basepoint. We say that a metric $d_{a}$ on
$\vbdy X$ is a \textit{visual metric} with respect to the basepoint
$o$ and the visual parameter $a$ if there exists $C>0$ such that
for any two distinct point $\xi,\eta\in\vbdy X$, and for any biinfinite
geodesic $\tau$ connecting $\xi$ to $\eta$ in $X$ we have 
\[
\frac{1}{C}a^{-d(o,\tau)}\leq d_{a}(\xi,\eta)\leq Ca^{-d(o,\tau)}.
\]

\end{defn}

\begin{thm*}
[Gromov, cf. Theorem 1.5.2 \cite{Bourdon:1995uy}] There is $a_{0}>1$
such that for any basepoint $o\in X$ and any $a\in(1,a_{0})$ the
boundary $\vbdy X$ admits a visual metric $d_{a}$ with respect to
$o$.
\end{thm*}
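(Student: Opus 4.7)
The plan is to build the visual metric via the Gromov product and then invoke a chaining argument (Frink's lemma) to upgrade a quasi-ultrametric to an honest metric. Concretely, fix a basepoint $o \in X$ and for $x,y \in X$ define the Gromov product
\[
(x \mid y)_o := \tfrac{1}{2}\bigl(d(o,x) + d(o,y) - d(x,y)\bigr).
\]
The first step is to extend this to the boundary by
\[
(\xi \mid \eta)_o := \sup \liminf_{i,j \to \infty} (x_i \mid y_j)_o,
\]
where the supremum runs over sequences $x_i \to \xi$, $y_j \to \eta$, and to verify the standard $\delta$--hyperbolic estimate $(\xi \mid \eta)_o = d(o, \tau) + O(\delta)$ for any bi--infinite geodesic $\tau$ connecting $\xi$ to $\eta$. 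This is the content one gets by thin--triangle chasing along approximating finite geodesics and passing to the limit; the hyperbolicity constant $\delta$ ensures the $\liminf$ is well--defined up to an additive $2\delta$, independent of the chosen sequences.

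The second step is to define, for $a > 1$, the candidate function
\[
\rho_a(\xi,\eta) := a^{-(\xi \mid \eta)_o}, \qquad \rho_a(\xi,\xi) := 0.
\]
Using the $\delta$--thin triangle condition for triples of boundary points, one shows the quasi--ultrametric inequality
\[
\rho_a(\xi,\zeta) \leq a^{2\delta} \max\bigl(\rho_a(\xi,\eta),\rho_a(\eta,\zeta)\bigr)
\]
for all $\xi,\eta,\zeta \in \vbdy X$. In particular, choosing $a_0 > 1$ so that $a_0^{2\delta} \leq \sqrt{2}$ and restricting to $a \in (1,a_0)$ forces the multiplicative defect in the triangle inequality to be controllably small.

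The third step is the chain construction. Define
\[
d_a(\xi,\eta) := \inf \sum_{i=0}^{n-1} \rho_a(\xi_i, \xi_{i+1}),
\]
where the infimum is over all finite chains $\xi = \xi_0, \xi_1, \ldots, \xi_n = \eta$ in $\vbdy X$. By construction $d_a$ satisfies the triangle inequality; the content is to prove that
\[
\tfrac{1}{2}\rho_a(\xi,\eta) \leq d_a(\xi,\eta) \leq \rho_a(\xi,\eta),
\]
which is Frink's lemma applied in the regime where $a^{2\delta}$ is close enough to $1$. The upper bound is immediate from the one--term chain; the lower bound is proved by induction on the length of the chain, splitting the chain at the step of largest partial sum and invoking the quasi--ultrametric inequality to collapse the two halves. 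Combining this comparison with the estimate $(\xi \mid \eta)_o = d(o,\tau) + O(\delta)$ from the first step yields the desired visual inequality
\[
\tfrac{1}{C} a^{-d(o,\tau)} \leq d_a(\xi,\eta) \leq C a^{-d(o,\tau)}
\]
for some constant $C = C(\delta,a)$.

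The main obstacle is the Frink chain argument: the function $\rho_a$ is not itself a metric because the triangle inequality can fail by the multiplicative constant $a^{2\delta}$, and one must show that taking the chain infimum does not collapse distances to zero. This is exactly where the restriction $a < a_0$ enters, since Frink's lemma requires the defect constant to be sufficiently close to $1$; the induction breaks down otherwise. Once this quantitative lemma is in place, the symmetry, non--degeneracy, and triangle inequality of $d_a$ are automatic, and the comparison to $a^{-d(o,\tau)}$ follows.
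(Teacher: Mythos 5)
The paper does not prove this statement at all—it is quoted as background with a citation to Bourdon (Theorem 1.5.2), and your argument is precisely the standard construction that the cited proof uses: extend the Gromov product $(\xi\mid\eta)_o$ to $\vbdy X$, set $\rho_a=a^{-(\xi\mid\eta)_o}$, verify the quasi-ultrametric inequality with defect $a^{O(\delta)}$, and apply the Frink/chain lemma for $a$ close enough to $1$, then compare $(\xi\mid\eta)_o$ with $d(o,\tau)$ using properness and geodesicity. Your outline is correct; the only cosmetic caveat is that the chain lemma's lower bound is usually stated with a constant such as $3-2a^{2\delta}$ (positive when $a^{2\delta}<\tfrac{3}{2}$) rather than exactly $\tfrac{1}{2}$, which does not affect the conclusion since any positive comparison constant suffices for the visual-metric estimate.
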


\begin{rem}
[ cf. Ch.11 \cite{Coornaert:1990tj}, Remark 1.5.3 \cite{Bourdon:1995uy}]:\begin{itemize}

\item[1.] For a pinched Hadamard manifold $M$ whose sectional curvature
is not larger than $-b^{2}$, then the boundary $\vbdy M$ admits
a visual metric $d_{a}$ for all $a\in(1,e^{b}]$. 

\item[2.] Let $d_{a}$ and $d_{a'}$ be two different visual metrics
with respect to a fixed basepoint $o\in X$ and different visual parameters
$a$ and $a'$, then $d_{a}$ and $d_{a'}$ are Hölder equivalent.
i.e., there exists $C\geq1$ and $\alpha=\frac{\log a'}{\log a}$
such that for $\xi,\eta\in\vbdy X$ 
\[
\frac{1}{C}\cdot\left(d_{a}(\xi,\eta)\right)^{\alpha}\leq d_{a'}(\xi,\eta)\leq C\cdot\left(d_{a}(\xi,\eta)\right)^{\alpha}.
\]

\item[3.] Let $d_{a}$ and $d'_{a}$ be two different visual metrics
with respect to a fixed visual parameter $a$ and different basepoints
$o,o'\in X$, then the metric $d_{a}$ and $d'_{a}$ are Lipchitz.
i.e., there exists $C\geq1$ such that for all $\xi,\eta\in\vbdy X$
\[
\frac{1}{C}\cdot d_{a}(\xi,\eta)\leq d'_{a}(\xi,\eta)\leq C\cdot d_{a}(\xi,\eta).
\]

\end{itemize}
\end{rem}

\subsubsection{Quasi-isometry}

\begin{defn}
A function $q:X\to Y$ from a metric space $(X,d_{X})$ to a metric
space $(Y,d_{y})$ is called a \textit{$(C,L)$-quasi-isometry embedding}
if there is $C,L>0$ such that: 

For any $x,x'\in X$, we have 
\[
\frac{1}{C}d_{X}(x,x')-L\leq d_{Y}(q(x),q(x'))\leq C\cdot d_{X}(x,x')+L.
\]

If, in addition, there exists an \textit{approximate inverse} map
$\bar{q}:Y\to X$ that is a $(C,L)$-quasi-isometric embedding such
that for all $x\in X$ and $y\in Y$
\[
d_{X}(\bar{q}q(x),x)\leq L,\qquad d_{Y}(q\bar{q}(y),y)\leq L,
\]
then we call $q$ a \textit{$(C,L)$-quasi-isometry}. $(X,d_{X})$
and $(Y,d_{Y})$ are called \textit{quasi-isometric.}
\end{defn}

\textsf{In most cases, the quasi-isometry constants $C$ and $L$
do not matter, so we shall use the words quasi-isometry and quasi-isometry
embedding without specifying constants. }

\begin{thm}
[Bourdon, Theorem 1.6.4 \cite{Bourdon:1995uy}]\label{thm: Bourdon}Let
$(X,d_{X})$ and $(Y,d_{Y})$ be hyperbolic spaces. Suppose the boundaries
equip with visual metrics. Then 

\begin{itemize}

\item[1.]Any quasi-isometry embedding $q:X\to X'$ extends to a bi-Hölder
embedding $q:\vbdy X\to\vbdy Y$ with respect to the corresponding
visual metrics.

\item[2.]Any quasi-isometry $q:X\to X'$ extends to a bi-Hölder homemorphism
$q:\vbdy X\to\vbdy Y$ with respect to the corresponding visual metrics. 

\end{itemize}
\end{thm}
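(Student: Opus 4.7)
The plan is to build the boundary extension of $q$ by tracking quasi-geodesic rays, and then to pass the quasi-isometry inequalities through the visual metric definitions to get the bi-Hölder estimates. The underlying tool throughout is the Morse lemma for $\delta$-hyperbolic spaces: a $(C,L)$-quasi-geodesic in $Y$ lies within Hausdorff distance $M = M(C,L,\delta)$ of some genuine geodesic with the same endpoints (or endpoints at infinity).

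First I would construct the extension $\bar q : \vbdy X \to \vbdy Y$. Given a geodesic ray $\tau : [0,\infty) \to X$ representing $\xi \in \vbdy X$, the composition $q\circ\tau$ is a $(C,L)$-quasi-geodesic ray in $Y$, so by the Morse lemma it fellow-travels a genuine geodesic ray $\tau'$ in $Y$ at bounded distance $M$. Define $\bar q(\xi) := [\tau']\in\vbdy Y$. Well-definedness: if $\tau_1 \sim \tau_2$ in $X$ then $d_X(\tau_1(t),\tau_2(t))$ is bounded, so the $(C,L)$-inequality gives $d_Y(q\tau_1(t),q\tau_2(t))$ bounded; combining with Morse the two limit geodesics represent the same point of $\vbdy Y$.

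Next I would establish the bi-Hölder bound. Fix basepoints $o\in X$ and $o' = q(o)\in Y$, and visual parameters $a$ on $\vbdy X$ and $b$ on $\vbdy Y$. The core estimate is that for any bi-infinite geodesic $\tau$ in $X$ joining $\xi,\eta\in\vbdy X$, the image $q(\tau)$ is a $(C,L)$-quasi-geodesic in $Y$ joining $\bar q(\xi),\bar q(\eta)$, and Morse gives a genuine geodesic $\tau''$ between those boundary points with Hausdorff distance $\leq M$ to $q(\tau)$. Using the two-sided quasi-isometric inequality applied to a point $p\in\tau$ realizing $d_X(o,\tau)=R$, together with the Morse bound,
\begin{equation*}
\tfrac{1}{C}R - L - M \;\leq\; d_Y(o',\tau'') \;\leq\; CR + L + M .
\end{equation*}
Plugging these into the visual metric estimates $d_a(\xi,\eta)\asymp a^{-R}$ and $d_b(\bar q(\xi),\bar q(\eta))\asymp b^{-d_Y(o',\tau'')}$ gives, after absorbing additive constants into multiplicative ones,
\begin{equation*}
C_1\,d_a(\xi,\eta)^{C\log b/\log a} \;\leq\; d_b(\bar q(\xi),\bar q(\eta)) \;\leq\; C_2\,d_a(\xi,\eta)^{\log b/(C\log a)},
\end{equation*}
which is the bi-Hölder embedding of (1).

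For part (2), I would apply the above to the approximate inverse $\bar q : Y \to X$ to obtain a map $\overline{\bar q}: \vbdy Y \to \vbdy X$, and check that $\overline{\bar q}\circ \bar q = \mathrm{id}_{\vbdy X}$ and $\bar q\circ \overline{\bar q} = \mathrm{id}_{\vbdy Y}$ — this uses only that $\bar q q$ and $q\bar q$ are within bounded $L$-distance of the respective identities, which suffices to send every ray to an equivalent ray at infinity. Combined with the bi-Hölder estimate applied in both directions, this gives a bi-Hölder homeomorphism. The main obstacle is the first step: ensuring the Morse-lemma control is applied uniformly, so that the comparison $d_X(o,\tau)\leftrightarrow d_Y(o',\tau'')$ stays linear up to additive error for all pairs $(\xi,\eta)$; independence of the choices of geodesic representatives $\tau$ and $\tau''$ relies on the standard fact that in a $\delta$-hyperbolic space two geodesics with the same endpoint pair are within Hausdorff distance $2\delta$, which keeps the visual-metric estimates intact up to the constant $C$ appearing in their definition.
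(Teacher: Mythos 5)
Your argument is correct, and it is the standard proof of this result; the paper itself offers no proof, quoting the statement directly from Bourdon, and your route (stability of quasi-geodesics via the Morse lemma to define the boundary extension, then passing the two-sided quasi-isometry inequality through the visual-metric estimates $d_{a}(\xi,\eta)\asymp a^{-d_{X}(o,\tau)}$ to get the bi-H\"older bounds, and using the approximate inverse for part (2)) is exactly the expected one. The only small imprecision is in the lower bound $d_{Y}(o',\tau'')\geq\frac{1}{C}d_{X}(o,\tau)-L-M$: it is not obtained by looking at the single nearest point $p\in\tau$, but by noting that every point of $\tau''$ lies within $M$ of some $q(p')$ with $p'\in\tau$ and $d_{X}(o,p')\geq d_{X}(o,\tau)$; also, extracting a genuine limit geodesic ray from the Morse lemma uses properness of $Y$, which is part of the standing hypotheses for visual metrics in this paper.
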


\begin{defn}
A $(C,L)-$\textit{quasi-geodesic} is a $(C,L)-$quasi-isometry embedding
$q:\mathbb{R}\to X$.
\end{defn}

\begin{thm}
[Morse Lemma, cf. Ch.5, Theorem 6 \cite{Ghy:1990} ] \label{thm: Morse Lemma}Suppose
$X$ and $Y$ are hyperbolic spaces , and $q:X\to Y$ is a $(C,L)$-quasi-isometry.
Then every geodesic $\gamma\subset X$ its image $q(\gamma)$ is a
quasi-geodesic on $Y$ and is within a bounded distance $R$ from
a geodesic on $Y$. Moreover, this constant $R$ is only depending
on $X$, $Y$ and the quasi-isometry constants $C$ and $L.$ 
\end{thm}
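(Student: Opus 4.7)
The statement splits into two parts, and I would treat them separately.

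The first assertion, that $q(\gamma)$ is a quasi-geodesic, is essentially immediate: since $\gamma:\real\to X$ is an isometric embedding, substituting $\gamma(s),\gamma(t)$ into the two-sided quasi-isometry inequality gives
\[
\tfrac{1}{C}|s-t|-L \;\leq\; d_Y(q(\gamma(s)),q(\gamma(t))) \;\leq\; C|s-t|+L,
\]
so $q\circ\gamma:\real\to Y$ is a $(C,L)$-quasi-isometric embedding, i.e.\ a quasi-geodesic. The substance of the theorem is therefore the second claim, a stability statement: any quasi-geodesic in a hyperbolic space is within uniformly bounded Hausdorff distance of a geodesic joining its endpoints. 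I would deduce this from $\delta$-thinness of triangles in $Y$ in three steps.

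\textbf{Step 1 (Taming).} First replace $\alpha:=q\circ\gamma$ by a continuous path. Sample $\alpha$ at integer parameters and connect consecutive samples by geodesic segments in $Y$; since $d_Y(\alpha(n),\alpha(n+1))\leq C+L$, this yields a $(C+L)$-Lipschitz continuous path $\widetilde\alpha$ which remains a $(C',L')$-quasi-geodesic with constants depending only on $C,L$, and whose image is at Hausdorff distance at most $\tfrac12(C+L)$ from the image of $\alpha$. It suffices to prove the stability bound for $\widetilde\alpha$, which I now treat on any compact piece with endpoints $p,p'\in Y$ and length parameter interval $[0,T]$; let $\beta$ be a geodesic from $p$ to $p'$.

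\textbf{Step 2 (Localizing the farthest excursion).} Let $D:=\sup_{t\in[0,T]} d_Y(\widetilde\alpha(t),\beta)$, realized at some $t_0$, and write $x_0=\widetilde\alpha(t_0)$. Choose $t_1<t_0<t_2$ maximal/minimal so that $d_Y(\widetilde\alpha(t_i),\beta)=D$ (if such a choice is impossible because $\widetilde\alpha$ doesn't return, restrict attention to a suitable sub-interval). Project $\widetilde\alpha(t_1),\widetilde\alpha(t_2)$ to nearest points $b_1,b_2\in\beta$, so that $d_Y(\widetilde\alpha(t_i),b_i)\leq D$. The concatenation of $[b_1,\widetilde\alpha(t_1)]$, $\widetilde\alpha|_{[t_1,t_2]}$, $[\widetilde\alpha(t_2),b_2]$, and the sub-geodesic $\beta|_{[b_1,b_2]}$ bounds a quadrilateral, which by iterated $\delta$-thinness is $2\delta$-thin; in particular $x_0$ lies within $2\delta$ of the union of the three sides other than $\widetilde\alpha|_{[t_1,t_2]}$.

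\textbf{Step 3 (Closing the estimate).} If $x_0$ is $2\delta$-close to $\beta|_{[b_1,b_2]}$, then $D\leq 2\delta$ directly, contradicting maximality unless $D$ is already small. If $x_0$ is $2\delta$-close to one of the short connectors $[b_i,\widetilde\alpha(t_i)]$, then $d_Y(x_0,\widetilde\alpha(t_i))\leq D+2\delta$, while the Lipschitz bound gives $d_Y(x_0,\widetilde\alpha(t_i))\geq \tfrac{1}{C+L}|t_0-t_i|$; combined with the lower quasi-geodesic inequality $d_Y(\widetilde\alpha(t_i),\widetilde\alpha(t_0))\geq \tfrac{1}{C'}|t_i-t_0|-L'$ this yields a linear bound on $|t_i-t_0|$ and therefore on $D$, purely in terms of $C,L,\delta$. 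Taking $R$ to be this bound (plus the taming error from Step~1) gives the theorem.

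The main obstacle is Step~3: extracting from $\delta$-thinness a quantitative estimate that survives the quasi-isometry distortion. The cleanest alternative to the quadrilateral argument sketched above is to invoke the standard exponential-divergence lemma in $\delta$-hyperbolic spaces, which asserts that a continuous path between two points straying Hausdorff distance $D$ from a connecting geodesic has length at least $c\cdot 2^{D/\delta}$; pairing this with the linear length bound on $\widetilde\alpha$ inherited from the Lipschitz/quasi-geodesic estimates again produces an explicit $R=R(C,L,\delta)$. Either route shows that $R$ depends only on $\delta$ (the hyperbolicity constant of $Y$) and the quasi-isometry constants $C,L$, as claimed.
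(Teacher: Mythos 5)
The paper itself gives no proof of this statement (it is quoted from Ch.~5 of Ghys--de la Harpe), so your proposal has to stand on its own as a proof of quasi-geodesic stability. The first assertion and the taming Step~1 are fine, but the core of your argument has a genuine gap at Step~2: $\delta$-thinness (of triangles, hence of geodesic quadrilaterals) controls points lying on a \emph{geodesic} side of the polygon, whereas your point $x_{0}$ lies on the side $\widetilde{\alpha}|_{[t_{1},t_{2}]}$, which is only a quasi-geodesic. You therefore cannot conclude that $x_{0}$ is within $2\delta$ of the union of the other three sides; that conclusion is essentially the Morse lemma itself (a priori a $(C',L')$-quasi-geodesic side could wander far from the geodesic quadrilateral obtained by replacing it with a geodesic), so the step is circular rather than an application of hyperbolicity. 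Your fallback in Step~3 is also misstated: the exponential-divergence (length) lemma says that if some point $p$ \emph{on the geodesic} $[x,y]$ satisfies $d(p,\mathrm{im}\,c)\geq D$ for a rectifiable path $c$ from $x$ to $y$, then $\ell(c)\gtrsim 2^{D/\delta}$; it does \emph{not} say that a path whose image strays distance $D$ from the geodesic is exponentially long --- a single out-and-back excursion of depth $D$ adds only about $2D$ to the length, so the lemma as you phrased it (in terms of Hausdorff distance) is false. Consequently, even the corrected lemma only bounds $\sup_{p\in\beta}d(p,\mathrm{im}\,\widetilde{\alpha})$, i.e. it shows the geodesic stays near the quasi-geodesic, which is not yet the containment $\mathrm{im}\,q(\gamma)\subset N_{R}(\beta)$ that the theorem asserts.

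The missing idea is the standard second stage: having obtained $D_{0}$ with $\beta\subset N_{D_{0}}(\mathrm{im}\,\widetilde{\alpha})$ from the (correct) divergence lemma, take a maximal parameter interval $[t_{1},t_{2}]$ on which $\widetilde{\alpha}$ leaves the closed $D_{0}$-neighborhood of $\beta$; then no point of $\beta$ is within $D_{0}$ of $\widetilde{\alpha}((t_{1},t_{2}))$, so $\beta$ is covered by the $D_{0}$-neighborhoods of the two outer pieces of $\widetilde{\alpha}$, and connectedness of $\beta$ produces a point within $D_{0}$ of both; this gives $d(\widetilde{\alpha}(t_{1}),\widetilde{\alpha}(t_{2}))\leq 2D_{0}+\mathrm{const}$, whence the quasi-geodesic inequality bounds $t_{2}-t_{1}$ and the Lipschitz bound caps the depth of the excursion, yielding $R=R(\delta,C,L)$. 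With that replacement your outline becomes the usual textbook proof. A final, smaller point: the statement concerns bi-infinite geodesics $\gamma\subset X$, so after proving uniform stability for finite segments you still need a limiting argument (unproblematic here since the relevant target, $\hy^{3}$ or a convex subset of it, is proper) to produce one geodesic of $Y$ lying within distance $R$ of all of $q(\gamma)$.
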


\begin{rem}
When the space $Y$ is a pinched Hadamard manifold, we have a stronger
result of the above theorem. Specificity, every geodesic $\gamma\subset X$
its image $q(\gamma)$ is a quasi-geodesic on $Y$ and is within a
bounded distance $R$ from a \textit{unique} geodesic on $Y$.
\end{rem}

\textsf{Let $X$ be a hyperbolic space, we denote its group of isometries
by $\mathrm{Isom}(X)$. The following lemma connects some subgroups
of $\mathrm{Isom}(X)$ and the hyperbolic space $X$. }

\begin{thm}
[Švarc-Milnor lemma, cf. Lemma 3.37 \cite{Kapovich:2009fk} ]\label{thm:Svarc-milnor}
Let $X$ be a proper geodesic metric space. Let $G$ be a subgroup
of $\mathrm{Isom}(X)$ acting properly discontinuously and cocompactly
on $X$. Pick a point $o\in X$. Then the group $G$ is finitely generated;
for some choice of finitely generating set $S$ of $G$, the map $q:G\to X$,
given by $q(\gamma)=\gamma(o)$, is a quasi-isometry. Here $G$ is
given the word metric induced from $C(G,S)$.
\end{thm}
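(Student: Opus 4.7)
The plan is to execute the classical chain-argument proof: use cocompactness to find a finite radius in which orbit points are dense, promote the finite set of elements moving the basepoint a controlled amount to a generating set, and then check both directions of the quasi-isometry inequality using geodesic subdivision.

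First, I would use cocompactness to pick a compact $K \subset X$ with $G \cdot K = X$, and then choose $R > 0$ large enough that $K \subset B(o, R)$; hence the orbit $G \cdot o$ is $R$-dense in $X$. This immediately supplies the approximate inverse: define $\bar{q}: X \to G$ by sending $x$ to any $\gamma \in G$ with $d(\gamma(o), x) \leq R$. The two quasi-inverse identities $d_X(q\bar{q}(x), x) \leq R$ and $w(\bar{q}q(\gamma), \gamma) \leq \mathrm{const}$ are then automatic from the construction and the upper quasi-isometry bound below.

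Next I would define $S := \{\gamma \in G \setminus \{e\} : d(o, \gamma(o)) \leq 2R + 1\}$. Finiteness of $S$ uses proper discontinuity: the closed ball $\overline{B(o, 2R+1)}$ is compact (since $X$ is proper), and only finitely many $\gamma \in G$ can send $o$ into this compact set, otherwise $\{\gamma(\overline{B(o, 2R+1)})\}$ would intersect $\overline{B(o, 2R+1)}$ in infinitely many ways, contradicting proper discontinuity. To show $S$ generates, take $\gamma \in G$ and a geodesic $[o, \gamma(o)]$ of length $L = d(o, \gamma(o))$. Subdivide this geodesic into points $x_0 = o, x_1, \ldots, x_n = \gamma(o)$ with $d(x_i, x_{i+1}) \leq 1$ and $n \leq \lceil L \rceil$. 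By $R$-density of $G \cdot o$, pick $\gamma_i \in G$ with $d(\gamma_i(o), x_i) \leq R$, taking $\gamma_0 = e$ and $\gamma_n = \gamma$. Then $d(o, \gamma_i^{-1}\gamma_{i+1}(o)) = d(\gamma_i(o), \gamma_{i+1}(o)) \leq 2R + 1$, so each $s_i := \gamma_i^{-1}\gamma_{i+1}$ lies in $S \cup \{e\}$, yielding $\gamma = s_0 s_1 \cdots s_{n-1}$ with $n \leq \lceil L \rceil$ factors.

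The two quasi-isometry bounds now follow. For the upper bound on displacement: writing $\gamma = s_1 \cdots s_k$ with $s_i \in S$ and $w(\gamma) = k$ (the word length), the triangle inequality along the orbit gives
\[
d_X(q(e), q(\gamma)) = d(o, \gamma(o)) \leq \sum_{i=1}^k d(o, s_i(o)) \leq (2R+1) \cdot w(\gamma).
\]
By $G$-invariance of the metric (since $G$ acts by isometries), this yields $d_X(q(\alpha), q(\beta)) \leq (2R+1) \cdot w(\alpha^{-1}\beta) = (2R+1) \cdot w(\alpha, \beta)$. For the lower bound, the chain construction above gives $w(\gamma) \leq \lceil d(o, \gamma(o)) \rceil \leq d(o, \gamma(o)) + 1$, and again $G$-invariance upgrades this to $w(\alpha, \beta) \leq d_X(q(\alpha), q(\beta)) + 1$. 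Setting $C = 2R+1$ and $L = 1$ (or the larger of the two) produces the $(C, L)$-quasi-isometry inequalities.

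The main technical obstacle is the generation step, where one has to be careful that proper discontinuity gives the right finiteness of $S$ and that the chain argument stays valid even when $d(x_i, x_{i+1})$ is small; the triangle inequality $d(\gamma_i(o), \gamma_{i+1}(o)) \leq d(\gamma_i(o), x_i) + d(x_i, x_{i+1}) + d(x_{i+1}, \gamma_{i+1}(o)) \leq 2R + 1$ is what forces the choice of the constant $2R+1$ in the definition of $S$. Everything else is bookkeeping.
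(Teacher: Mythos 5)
Your proof is correct: it is the standard Milnor--\v{S}varc chain argument (an $R$-dense orbit from cocompactness, the finite set $S$ of elements with displacement at most $2R+1$ via proper discontinuity and properness, generation by subdividing a geodesic, and the two displacement estimates), and all the constants and inequalities check out. The paper itself gives no proof of this lemma --- it is quoted as a known result (cf.\ Lemma 3.37 of the cited reference), where essentially the same argument you wrote appears, so there is nothing to reconcile.
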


\subsubsection{Negatively curved manifolds and the group of isometries}

\textsf{Let $(X,g$) be a negatively curved compact Riemannian manifold.
The universal covering $(\widetilde{X},\widetilde{g})$ of $(X,g)$
is a pinched Hadamard manifold, and $\pi_{1}X$ is finitely generated
and acting canonically on $\widetilde{X}$. Let $\Gamma$ denote the
group of deck transformations of the covering $\widetilde{X}$. We
know that $\Gamma\subset\mathrm{Isom}(\widetilde{X})$, $\Gamma\cong\pi_{1}X$,
and $X$ is isometric to $\Gamma\backslash\widetilde{X}$. More precisely,
using generators there exists a nature isomorphism $i_{X}:\pi_{1}X\to\Gamma$,
given by $\gamma_{X}:=i_{X}(\gamma),$ $\forall\gamma\in\pi_{1}X$.
Thus, using this isomorphism we can define a $\pi_{1}X$-action on
$\widetilde{X}$ by $\gamma\cdot x=(\gamma_{X})(x)$. It's clear that
this $\pi_{1}X$-action is nothing different from the $\Gamma$-action
on $\widetilde{X}$. }

\textsf{Because $(X,g$) is negatively curved, every $\gamma\in\Gamma$
corresponds to a unique geodesic $\tau_{\gamma}^{X}$ on $X$. Besides,
each conjugacy class $[\gamma]\in[\Gamma]$ corresponds to a unique
closed geodesic geodesic $\tau_{\gamma}^{X}$ on $X$ and vice versa.
Moreover, the length of the closed geodesic $\tau_{\gamma}^{X}$ is
exactly the translation distance of $\gamma\in\pi_{1}X$. i.e. $l_{g}(\tau_{\gamma}^{X})=l_{g}(\gamma):=d_{g}(x,\gamma\cdot x)=d_{g}(x,\gamma_{X}(x))$.}

\begin{thmdefn}[Margulis \cite{Margulis:1970gj}] Let $(X,g)$ be
a compact negatively curved Riemannian manifold and $\Gamma$ be the
group of deck transformations of $\widetilde{X}$, then the topological
entropy $h(X)$ geodesic flow on $T^{1}X$ is given by 
\[
h(X)=\lim_{T\to\infty}\frac{1}{T}\log\#\left\{ [\gamma]\in[\pi_{1}X];l_{g}(\gamma)\leq T\right\} .
\]

\end{thmdefn}

\textsf{Now let's consider a compact 3--manifold $M$ equipped with
a hyperbolic metric $h$. Then there exists a discrete and faithful
representation $\rho:\pi_{1}M\to\mbox{\ensuremath{\mathrm{Isom}}}(\hy^{3})$
such that $M\cong\rho(\pi_{1}M)\backslash\hy^{3}$ where $(\hy^{3},\widetilde{h})$
is the universal covering of $(M,h)$. For the sake of brevity, in
what follows we will denote the lifted metric of $\widetilde{h}$
on $\hy^{3}$ by $h.$ }

\begin{defn}
Let $\Gamma$ be a discrete subgroup of $\mathrm{Isom}(\hy^{3})$,
the \textit{limit set} $\Lambda(\Gamma)$\textsf{ }is the set of limit
points $\Gamma x$ for any $x\in\hy^{3}$. 
\end{defn}

\begin{defn}
The \textit{critical exponent} \textsf{$\delta_{\Gamma}$} is defined
as following: 
\[
\delta_{\Gamma}:=\inf\{s;\sum_{\gamma\in\Gamma}e^{-sd_{h}(x,\gamma x)}<\infty\},
\]
for any point $x\in\hy^{3}$ and $d_{h}$ is the hyperbolic distance
on $\hy^{3}$.
\end{defn}

\begin{defn}
A discrete subgroup $\Gamma$ of $\Isom(\hy^{3})$ is called \textit{convex
cocompact} if $\Gamma$ acts cocompactly on the convex hull of the
limit set of $\Gamma$. i.e., $\Gamma\backslash\mbox{Conv}(\Lambda(\Gamma))$
is compact .
\end{defn}

\begin{thm}
[Sullivan \cite{Sullivan:1979vs}] Suppose $\Gamma$ is a non-elementary,
convex cocompact, and discrete subgroup of $\Isom(\hy^{3})$, then
\[
\delta_{\Gamma}=\lim_{T\to\infty}\frac{1}{T}\log\#\{[\gamma]\in[\Gamma];l_{h}(\gamma)\leq T\},
\]
 where $l_{h}(\gamma)=d_{h}(o,\gamma o)$, $o$ is the origin of $\hy^{3}$. 
\end{thm}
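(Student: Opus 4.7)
The plan is to exploit convex cocompactness to reduce the statement to a counting result for a transitive Anosov flow, and then identify its topological entropy with $\delta_\Gamma$ via Patterson--Sullivan theory. First, since $\Gamma$ is convex cocompact, the quotient $N := \Gamma\backslash\mathrm{Conv}(\Lambda(\Gamma))$ is a compact convex core. The restriction of the geodesic flow $\phi_t$ of $\hy^3$ to the non-wandering set
\[
\Omega := \{v \in T^1 N : v^{\pm} \in \Lambda(\Gamma)\}
\]
is a topologically transitive Anosov flow: the contracting/expanding splitting comes from the constant negative curvature of $\hy^3$, while the compactness of $\Omega$ and the density of closed orbits come from the cocompact action of $\Gamma$ on $\mathrm{Conv}(\Lambda(\Gamma))$ together with the density of $\Lambda(\Gamma)\times\Lambda(\Gamma)$ minus the diagonal in the space of geodesics through $\Omega$.

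Next, I would identify the closed orbits. Each $[\gamma]\in[\Gamma]$ with $\gamma$ hyperbolic gives a unique axis in $\mathrm{Conv}(\Lambda(\Gamma))$, which projects to a closed orbit $\tau_{[\gamma]}$ of $\phi_t$ on $\Omega$ whose period equals the translation length $\ell(\gamma):=\inf_{x\in\hy^3}d_h(x,\gamma x)$; the correspondence $[\gamma]\leftrightarrow\tau_{[\gamma]}$ is a bijection between non-trivial conjugacy classes and closed orbits. By Bowen's orbit-counting proposition quoted earlier for transitive Anosov flows,
\[
h_\phi = \lim_{T\to\infty}\frac{1}{T}\log\#\{\tau\in O:l(\tau)\leq T\}
= \lim_{T\to\infty}\frac{1}{T}\log\#\{[\gamma]\in[\Gamma]:\ell(\gamma)\leq T\}.
\]
To replace $\ell(\gamma)$ by $l_h(\gamma)=d_h(o,\gamma o)$ as in the statement, I would use the fact that for every conjugacy class one can move $o$ to within distance $\mathrm{diam}(N)$ of the axis of some representative; the triangle inequality then gives $|d_h(o,\gamma o)-\ell(\gamma)|\leq 2\,\mathrm{diam}(N)$, and such a bounded change does not affect the exponential growth rate.

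The remaining and central step is to prove $h_\phi=\delta_\Gamma$. I would follow Sullivan: construct a $\Gamma$-equivariant conformal density $\{\mu_x\}_{x\in\hy^3}$ of dimension $\delta_\Gamma$ supported on $\Lambda(\Gamma)$ (the Patterson--Sullivan measure, built from normalized Poincaré series and a Patterson limit argument), assemble it into a flow-invariant Bowen--Margulis--Sullivan measure $\mu_{BMS}$ on $\Omega$ via the Hopf parametrization, and compute that its measure-theoretic entropy equals $\delta_\Gamma$ using the Jacobian identity $\mathrm{Jac}\,\phi_t = e^{-\delta_\Gamma t}$ on stable/unstable horospherical leaves. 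Combined with the variational principle this yields $h_\phi\geq\delta_\Gamma$; the reverse inequality comes from a standard orbital counting estimate controlling $\#R_T(\Gamma)$ by the Poincaré series $\sum_\gamma e^{-sd_h(o,\gamma o)}$, which diverges precisely at $s=\delta_\Gamma$.

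The main obstacle is the identification $h_\phi=\delta_\Gamma$, in particular the construction of the Patterson--Sullivan measure and the verification that $\mu_{BMS}$ attains the topological entropy. The Anosov structure on $\Omega$ and the bijection between closed orbits and conjugacy classes are routine consequences of convex cocompactness and constant curvature, and the passage from translation length to displacement distance is a harmless boundary correction.
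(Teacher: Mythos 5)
The paper does not prove this statement; it quotes it as Sullivan's theorem, so there is no internal proof to compare against, and your proposal is in effect an outline of how one would reprove the cited result. The overall route you describe --- reduce to counting closed orbits of the recurrent part of the geodesic flow on $\Gamma\backslash\hy^{3}$, then identify the topological entropy of that flow with $\delta_{\Gamma}$ via the Patterson--Sullivan density and the Bowen--Margulis--Sullivan measure --- is indeed the standard modern argument. Your boundary correction relating the translation length $\ell(\gamma)$ to $d_{h}(o,\gamma o)$ is essentially right, with two small repairs: $d_{h}(o,\gamma o)$ is not a conjugacy invariant, so the count must be read as using the minimal displacement over the class (equivalently a representative whose axis passes near $o$), and since $o$ need not lie in $\mathrm{Conv}(\Lambda(\Gamma))$ the additive constant should be $2(\mathrm{diam}(N)+d_{h}(o,\mathrm{Conv}(\Lambda(\Gamma))))$ rather than $2\,\mathrm{diam}(N)$; neither affects exponential growth rates.

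There are, however, two genuine gaps. First, the restriction of the geodesic flow to the non-wandering set $\Omega$ is \emph{not} a transitive Anosov flow in the sense used in this paper (a flow on a compact manifold): for a convex cocompact surface group in $\Isom(\hy^{3})$ the limit set is a circle or a Cantor set, so $\Omega$ is a compact hyperbolic invariant set that is not a manifold. You therefore cannot invoke the Anosov orbit-counting proposition as quoted; you need Bowen's counting results for Axiom A basic sets (or an equivalent symbolic-dynamics argument), together with the usual remark that counting conjugacy classes versus primitive closed orbits, and discarding the finitely many torsion classes, does not change the growth rate. Second, and more seriously, the step you yourself flag as central --- $h_{\phi}=\delta_{\Gamma}$ --- is only named, not carried out: constructing the Patterson--Sullivan density, assembling the Bowen--Margulis--Sullivan measure in the Hopf parametrization, proving it is finite and flow-invariant, computing its entropy to be $\delta_{\Gamma}$ (the scaling of conditionals on horospherical leaves does not by itself give the entropy; one needs a shadow-lemma/Brin--Katok or Rokhlin-type argument), and obtaining the matching upper bound $h_{\phi}\leq\delta_{\Gamma}$ from orbital counting all constitute the actual content of the theorem being cited. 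As written, the proposal defers exactly the part that makes the statement nontrivial, so it is a correct roadmap to Sullivan's proof rather than a proof.
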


\subsection{Hölder cocycles}

\textsf{Let $(X,g)$ be a compact negatively curved manifold, $\widetilde{X}$
be its universal covering, and $\Gamma$ be the group of deck transformations
of the covering $\widetilde{X}$. Recall that the $\pi_{1}X$-action
on $\widetilde{X}$ is defined by $\gamma\cdot x=i_{X}(\gamma)(x)$,
where $i$ is the isomorphism $i_{X}:\pi_{1}\Sigma\to\Gamma$. }
\begin{defn}
A\textit{ Hölder cocycle} is a function $c:\pi_{1}X\times\vbdy\widetilde{X}\to\real$
such that 
\[
c(\gamma_{0}\gamma_{1},x)=c(\gamma_{0},\gamma_{1}\cdot x)+c(\gamma_{1},x)
\]
for any $\gamma_{0},\gamma_{1}\in\pi_{1}X$ and $x\in\vbdy\widetilde{X}$,
and $c(\gamma,\cdot)$ is Hölder continuous for every $\gamma\in\pi_{1}X$.
\end{defn}

\textsf{Given a Hölder cocycle $c$ we define the }\textit{periods}
\textsf{of $c$ to be the number 
\[
l_{c}(\gamma):=c(\gamma,\gamma_{X}^{+})
\]
where $\gamma_{X}^{+}$ is the attracting fixed point of $\gamma$$\in\pi_{1}X\backslash\{e\}$
on $\vbdy\widetilde{X}$. }

\begin{rem}
The cocycle property implies that the period of an element $\gamma$
only depends on its conjugacy class $[\gamma]\in[\pi_{1}X]$. 
\end{rem}

\textsf{Two cocycles $c$ and $c'$ are said to be cohomologous if
there exists a Hölder continuous function $U:\vbdy\widetilde{X}\to\real$
such that for all $\gamma\in\pi_{1}X$ one has 
\[
c(\gamma,x)-c'(\gamma,x)=U(\gamma\cdot x)-U(x).
\]
One easily deduces from the definition that the set of periods of
a cocycle is a cohomological invariant.}

\begin{defn}
The\textit{ exponential growth} \textit{rate} for a Hölder cocycle
$c$ is defined as:
\[
h_{c}:=\limsup_{T\to\infty}\frac{1}{T}\log\#\{[\gamma]\in[\pi_{1}X]:l_{c}(\gamma)\leq T\}.
\]

\end{defn}

\subsubsection{From cocycle cohomology to Livšic cohomology}

\begin{thm}[Ledrappier \cite{Ledrappier:1994uy}]\label{Thm: Ledrappier}

For each Hölder cocycle $c:\pi_{1}X\times\vbdy\widetilde{X}\to\real$,
there exists a Hölder continuous function $F_{c}:T^{1}X\to\real$,
such that for all $\gamma\in\pi_{1}X-\{e\}$, one has 
\[
l_{c}(\gamma)=\int_{[\gamma]}F_{c}.
\]
 The map $c\mapsto F_{c}$ induces a bijection between the set of
cohomology classes of $\real-$valued Hölder cocycles, and the set
of Livšic cohomology classes of Hölder continuous functions from $T^{1}X$
to $\real$.

\end{thm}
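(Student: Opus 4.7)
The strategy is to reduce the bijection to the Liv\v{s}ic Theorem (Theorem \ref{Thm Liv=000161ic-Theorem}) by showing that both sides are characterized by the same invariant—the period sequence $\{l_c(\gamma)\}_{[\gamma]\in[\pi_{1}X]}$. On the Liv\v{s}ic side this is Theorem \ref{Thm Liv=000161ic-Theorem}; on the cocycle side it follows from a boundary analog (proved below). The content of the theorem is thus an existence/realizability statement, and the core technical work is constructing, from a cocycle $c$, a H\"older function $F_{c}$ on $T^{1}X$ that realizes the prescribed periods.

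\textbf{Step 1 (Construction of $F_c$).} The plan is to extend $c$ from $\pi_{1}X\times\vbdy\widetilde{X}$ to a Busemann--type cocycle $\widetilde{c}:\vbdy\widetilde{X}\times\widetilde{X}\times\widetilde{X}\to\real$ which is $\pi_1 X$-equivariant and reduces to $c$ on the diagonal orbit: $\widetilde{c}(\xi,\gamma o,o)=c(\gamma,\xi)$ for a fixed basepoint $o$. The value on the orbit $\pi_{1}X\cdot o$ is forced by the cocycle identity; continuous extension to all of $\widetilde{X}\times\widetilde{X}$ uses H\"older continuity of $c$ in the boundary variable together with the bi--H\"older nature of the boundary extension (Theorem \ref{thm: Bourdon}) and the quasi--isometry $\pi_{1}X\cdot o\hookrightarrow\widetilde{X}$ provided by Theorem \ref{thm:Svarc-milnor}. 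For $v\in T^{1}\widetilde{X}$ with forward endpoint $v^{+}$, set
\[
\widetilde{F}_{c}(v):=\left.\frac{d}{dt}\right|_{t=0}\widetilde{c}(v^{+},\pi(\phi_{t}v),\pi(v)),
\]
interpreted in the flow direction. By equivariance, $\widetilde{F}_{c}$ descends to $F_{c}:T^{1}X\to\real$.

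\textbf{Step 2 (Period formula and bijectivity).} Lift a closed orbit $[\gamma]$ to a geodesic in $T^{1}\widetilde{X}$ parametrised so that $\phi_{l_{g}(\gamma)}v=\gamma\cdot v$ with $v^{+}=\gamma_{X}^{+}$. Integration along one period telescopes via the cocycle property:
\[
\int_{[\gamma]}F_{c}=\int_{0}^{l_{g}(\gamma)}\widetilde{F}_{c}(\phi_{s}v)\,ds=\widetilde{c}(\gamma_{X}^{+},\gamma\pi(v),\pi(v))=c(\gamma,\gamma_{X}^{+})=l_{c}(\gamma).
\]
For well--definedness on cohomology classes: a coboundary $c-c'=U(\gamma\cdot\xi)-U(\xi)$ yields $F_{c}-F_{c'}=\frac{d}{dt}|_{t=0}\widetilde{U}\circ\phi_{t}$ for the corresponding extension, a Liv\v{s}ic coboundary. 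Injectivity on cohomology classes follows because $F_{c}\sim F_{c'}$ forces $l_{c}=l_{c'}$ (by the period formula), and then comparing $c-c'$ on the orbit $\pi_{1}X\cdot o$ and extending continuously shows $c-c'$ is a coboundary (boundary Liv\v{s}ic). For surjectivity, given H\"older $F:T^{1}X\to\real$ define
\[
c_{F}(\gamma,\xi):=\lim_{T\to\infty}\left(\int_{0}^{T}\widetilde{F}(\phi_{s}v_{o,\xi})\,ds-\int_{0}^{T}\widetilde{F}(\phi_{s}v_{\gamma^{-1}o,\xi})\,ds\right),
\]
which converges because the two geodesics are exponentially asymptotic and $\widetilde{F}$ is H\"older; $c_{F}$ is a H\"older cocycle with $l_{c_{F}}(\gamma)=\int_{[\gamma]}F$, so $F_{c_{F}}$ and $F$ have identical periods and hence agree up to Liv\v{s}ic cohomology by Theorem \ref{Thm Liv=000161ic-Theorem}.

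\textbf{Main obstacle.} The delicate point is Step 1: the extension $\widetilde{c}$ is only H\"older, not $C^{1}$, so the derivative defining $\widetilde{F}_{c}$ need not exist pointwise, and one must show nevertheless that a H\"older representative of $F_c$ exists on $T^{1}X$. The way around this is to interpret the derivative after a short--time averaging in the flow direction and use the transverse H\"older regularity coming from the visual metric on $\vbdy\widetilde{X}$ (via Theorem \ref{thm: Bourdon}); the Anosov structure of the geodesic flow, together with the stable/unstable foliations, provides the required transverse regularity. Alternatively, one can prescribe the values $\int_{[\gamma]}F_{c}=l_{c}(\gamma)$ directly on the dense set of closed orbits and use the Anosov Closing Lemma (Theorem \ref{Thm Anosov Closing lemma}) to show that this assignment extends by uniform continuity to a H\"older function on all of $T^{1}X$, bypassing differentiation entirely.
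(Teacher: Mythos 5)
This theorem is not proved in the paper at all: it is quoted from Ledrappier \cite{Ledrappier:1994uy}, so there is no in-paper argument to compare with, and your proposal must stand on its own. It does not, because the entire weight of the theorem sits in your Step 1, which is asserted rather than proved. The orbit $\pi_{1}X\cdot o$ is discrete in $\widetilde{X}$, so ``extension by continuity'' of the prescribed values $\widetilde{c}(\xi,\gamma o,o)=c(\gamma,\xi)$ is vacuous -- any assignment on a discrete set extends continuously in infinitely many ways -- and nothing in Theorem \ref{thm: Bourdon} or Theorem \ref{thm:Svarc-milnor} produces an extension that is simultaneously $\pi_{1}X$--equivariant, exactly additive in the space variables ($\widetilde{c}(\xi,x,z)=\widetilde{c}(\xi,x,y)+\widetilde{c}(\xi,y,z)$, which your telescoping in Step 2 uses in an essential way), Hölder in $\xi$, and regular enough along the flow that $\widetilde{F}_{c}$ is a well-defined Hölder function. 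Constructing such a kernel is precisely the content of Ledrappier's theorem. Your ``main obstacle'' paragraph concedes the differentiability issue but neither workaround closes the real gap: the time-averaging device presupposes the same extension $\widetilde{c}$, and the alternative of prescribing $\int_{[\gamma]}F_{c}=l_{c}(\gamma)$ on closed orbits and ``extending by uniform continuity'' is not a construction -- integrals over closed orbits do not determine the values of a function on any subset of $T^{1}X$, and what you would need is a converse realization theorem (given a compatible period sequence, produce a Hölder function with those periods), which is essentially the statement being proved; the Anosov Closing Lemma gives no such thing, so this route is circular.

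Two further points would need repair even granting Step 1. First, your injectivity argument invokes a ``boundary Livšic'' theorem (a Hölder cocycle with vanishing periods is a coboundary), announced as ``proved below'' but never proved; it is itself a nontrivial statement, and in the standard treatments it is deduced \emph{from} the correspondence you are constructing, so using it here risks circularity. Second, the surjectivity formula as written need not converge: the rays $v_{o,\xi}$ and $v_{\gamma^{-1}o,\xi}$ are asymptotic only after a time shift by the Busemann offset $B_{\xi}(o,\gamma^{-1}o)$, so with a common horizon $T$ the difference of integrals contains a term of the form $\int_{T}^{T+a}\widetilde{F}$ along the second ray, which is bounded but has no limit in general; the correct definition compares integrals along segments $[x,z]$ and $[y,z]$ with a common endpoint $z\to\xi$ (the Busemann/Gibbs cocycle of the potential $F$), and then the cocycle property and Hölder regularity must be checked for that object. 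In sum, the proposal reproduces the expected architecture of Ledrappier's correspondence (equivariant kernel, flow-direction increment, telescoping over periods, Livšic theorem to classify), but the three steps that carry the actual mathematical content are missing or flawed.
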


\textsf{Using the above Theorem \ref{Thm: Ledrappier}, Sambarino
give the following reparametrization theorem in \cite{Sambarino:2014jv}. }

\begin{thm}[Sambarino \cite{Sambarino:2014jv}]\label{Thm: Sambarino}

Let $c$ be a Hölder cocycle with positive periods such that $h_{c}$
is finite. Then the action of $\Gamma$ on $\left(\vbdy\widetilde{X}\times\vbdy\widetilde{X}-\mbox{diagonal}\right)\times\real$
via $c$, that is, 
\[
\gamma(x,y,s)=(\gamma x,\gamma y,s-c(\gamma,y)),
\]
is proper and compact. Moreover, the flow $\psi$ on $\pi_{1}X\backslash(\left(\vbdy\widetilde{X}\times\vbdy\widetilde{X}-\mbox{diagonal}\right)\times\real)$,
defined by 
\[
\psi_{t}\Gamma(x,y,s)=\Gamma(x,y,s-t),
\]
is conjugated to $\phi^{F_{c}}:T^{1}X\to T^{1}X$ which is the reparametrization
of the geodesic flow $\phi$ on $T^{1}X$ by a Hölder continuous function
$F_{c}$ provided $l_{c}(\gamma)=\int_{[\gamma]}F_{c}$ for all $[\gamma]\in[\pi_{1}X]$
. Furthermore, the conjugating map is also Hölder continuous, and
the topological entropy of $\psi$ is $h_{c}$.

\end{thm}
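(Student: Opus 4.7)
The plan is to reduce the statement to Ledrappier's Theorem \ref{Thm: Ledrappier} combined with the reparametrization viewpoint: I will translate the given H\"older cocycle $c$ into a strictly positive H\"older function $F_c$ on $T^1 X$, use Hopf coordinates on $T^1 \widetilde X$ to build a conjugacy, and read off the entropy from Bowen's formula.

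First I would apply Theorem \ref{Thm: Ledrappier} to obtain a H\"older continuous $F_c : T^1 X \to \real$ with $\int_{[\gamma]} F_c = l_c(\gamma)$ for every $[\gamma] \in [\pi_1 X]$. Because $c$ has positive periods, every such integral is strictly positive, so Theorem \ref{Thm:positive-Liv=000161ic-Theorem} allows us to replace $F_c$ by a Livšic-cohomologous strictly positive H\"older function (also denoted $F_c$). The reparametrization $\phi^{F_c}$ is then well-defined, and by Bowen's proposition its topological entropy equals
\[
\lim_{T \to \infty} \frac{1}{T} \log \#R_T(F_c) = \lim_{T \to \infty} \frac{1}{T}\log\#\{[\gamma]\in[\pi_1X] : l_c(\gamma) \leq T\} = h_c,
\]
which is finite by hypothesis.

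Next I would construct the conjugating map. Identify $T^1 \widetilde X$ with $(\vbdy \widetilde X \times \vbdy \widetilde X - \mathrm{diag}) \times \real$ by sending $v$ to $(v_+, v_-, s)$, where $v_\pm$ are the endpoints of the geodesic tangent to $v$. To parametrize the third coordinate, fix a basepoint $o \in \widetilde X$, let $v_o$ be the unit tangent vector on the geodesic $(v_-, v_+)$ above the foot of perpendicular from $o$, and set
\[
\Psi(v) = (v_+, v_-, \kappa(v)), \qquad \kappa(v) := \int_{v_o}^{v} \tilde F_c,
\]
the integral being taken along the geodesic flow and $\tilde F_c$ being the $\pi_1 X$-invariant lift of $F_c$. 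By construction $\kappa(\phi^{F_c}_t v) = \kappa(v) + t$, so $\Psi$ conjugates $\phi^{F_c}$ to the translation flow on the $\real$-factor.

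The key step is to check equivariance with the $\pi_1 X$-action: the difference $b(\gamma, v) := \kappa(\gamma v) - \kappa(v)$ defines a H\"older cocycle on $\pi_1 X \times \vbdy \widetilde X$ depending only on the pair $(\gamma, v_-)$ up to a term that is trivial on closed orbits. Its periods, evaluated at attracting fixed points of loxodromic $\gamma$, are exactly $\int_{[\gamma]} F_c = l_c(\gamma)$. Ledrappier's bijection (Theorem \ref{Thm: Ledrappier}) then forces $b$ to be cohomologous to $c$ via some H\"older function $U$ on $\vbdy \widetilde X$. Absorbing $U$ into the third coordinate replaces $\kappa$ by $\kappa - U \circ v_-$ and yields $\kappa(\gamma v) - \kappa(v) = -c(\gamma, v_-)$, so $\Psi$ now intertwines the deck action on $T^1 \widetilde X$ with the prescribed $\Gamma$-action via $c$. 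Properness and cocompactness of the latter then follow immediately from properness and cocompactness of the $\pi_1 X$-action on $T^1 \widetilde X$ (which holds because $X$ is compact). Passing to quotients, $\Psi$ descends to a conjugacy between $\psi$ and $\phi^{F_c}$; H\"older regularity of the conjugacy follows from the H\"older regularity of $\tilde F_c$, of the boundary endpoint maps (Theorem \ref{thm: Bourdon}), and of $U$. Since conjugacy preserves topological entropy, $h_\psi = h_{\phi^{F_c}} = h_c$.

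The main obstacle is the equivariance step: one must promote the equality of periods between the cocycle induced by $\kappa$ and the given cocycle $c$ to an actual H\"older coboundary on $\vbdy \widetilde X$, and then verify that this coboundary is H\"older regular enough for the final conjugacy $\Psi$ to be H\"older on the full space. This is essentially where one needs the full strength of Ledrappier's bijection (rather than just the existence of $F_c$), together with careful estimates linking H\"older regularity of functions on $T^1 \widetilde X$ to H\"older regularity of boundary cocycles via Busemann-type integrals.
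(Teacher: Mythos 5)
The paper itself gives no proof of this statement---it is quoted from Sambarino, who in turn builds on Ledrappier---so there is nothing internal to compare against; judged on its own, your outline reproduces the standard Ledrappier--Sambarino argument and its architecture is right: Ledrappier's correspondence to produce $F_{c}$, the positive Liv\v{s}ic theorem to make it strictly positive, Hopf coordinates with the integrated time change $\kappa$ to conjugate $\phi^{F_{c}}$ to a translation flow, transfer of properness and cocompactness through the equivariant homeomorphism with $T^{1}\widetilde{X}$, and the entropy identification via the counting formula $h_{\phi^{F_{c}}}=\lim_{T}\frac{1}{T}\log\#R_{T}(F_{c})=h_{c}$ together with invariance of entropy under conjugacy.

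The one step that is not correct as written is the equivariance step. The discrepancy $b(\gamma,v)=\kappa(\gamma v)-\kappa(v)$ equals the integral of $\widetilde{F_{c}}$ along the geodesic $(\gamma v_{-},\gamma v_{+})$ between the two reference vectors, so it genuinely depends on \emph{both} endpoints $(v_{-},v_{+})$, not ``only on $(\gamma,v_{-})$ up to a term trivial on closed orbits''; consequently Ledrappier's bijection, which is stated for cocycles $\pi_{1}X\times\vbdy\widetilde{X}\to\real$ in one boundary variable, cannot be applied to $b$ directly. The honest route is: (i) change the reference section from the foot-of-perpendicular vector to a horospherically defined one, the difference being an explicit H\"older coboundary of a function on $\vbdy\widetilde{X}\times\vbdy\widetilde{X}-\mathrm{diag}$; (ii) identify the resulting object with the one-variable ``Gibbs-type'' cocycle canonically attached to $F_{c}$ in the proof of Theorem \ref{Thm: Ledrappier} (this contraction along geodesics with common forward endpoint is exactly the technical content of Ledrappier's construction, not a formal consequence of the bijection); and (iii) invoke the corollary of Theorem \ref{Thm: Ledrappier} plus Theorem \ref{Thm Liv=000161ic-Theorem} that two H\"older cocycles with equal periods are cohomologous, to compare with $c$ and absorb the H\"older function $U$ into the $\real$-coordinate. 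You flag this as the main obstacle, which is fair, but the specific claim you make about $b$ would fail and must be replaced by (i)--(iii). Separately, mind the endpoint convention: in the theorem the action twists by $c(\gamma,y)$ in the \emph{second} coordinate, which at the fixed pair of $\gamma$ must be the attracting point $\gamma^{+}$ for the quotient periods to equal $l_{c}(\gamma)$; with your ordering $\Psi(v)=(v_{+},v_{-},\kappa(v))$ and twisting at $v_{-}$ the periods would come out as $-l_{c}(\gamma^{-1})$, so either swap the factors or evaluate the cocycle at the forward endpoint.
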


\subsection{Immersed surfaces in hyperbolic 3--manifolds}

\textsf{In this subsection, we review some well-known facts about
immersed surfaces in hyperbolic 3--manifolds. Let $\Sigma$ be a differentiable
2-manifold and $M$ be a 3-manifold, we say a differentiable mapping
$f:\Sigma\to M$ is an }\textit{immersion}\textsf{ if $df_{p}:T_{p}\Sigma\to T_{f(p)}M$
is injective for all $p\in S$. If, in addition, $f$ is a homemorphism
onto $f(\Sigma)\subset M$, where $f(\Sigma)$ has the subspace topology
induced from $M$, we say that $f$ is an }\textit{embedding}\textsf{.
Moreover, if the induce homomorphism $f_{*}:\pi_{1}\Sigma\to\pi_{1}M$
is injective, then we call $f$ is a }\textit{$\pi_{1}$--injective}\textsf{. }

\textsf{Throughout, we consider that $M$ is a hyperbolic 3--manifold
equipped with a hyperbolic metric $h$ and $\Sigma$ is a compact,
2--dimension manifolds with negative Euler characteristic. Before
moving further, we recall several terminologies in differential geometry.
Given an immersion $f:\Sigma\to M$, let $g=f^{*}g$ be the induced
Riemannian metric on $\Sigma$, $\nabla$ be the Levi-Civita connection
on $(M,h)$, $N$ be the unit outward normal vector field to the surface
$f(\Sigma)\subset M$, and $\partial_{1}$ and $\partial_{2}$ be
the coordinate fields of $T\Sigma$. }

\textsf{The }\textit{second fundamental form}\textsf{ $B:T\Sigma\times T\Sigma\to\real$
of $f$ is the symmetric 2-tensor on $\Sigma$ defined by, locally,
\[
B(\partial_{i},\partial_{j})=\langle\partial_{i},-\nabla_{\partial_{j}}N\rangle_{h},
\]
where $\langle,\rangle_{h}$ is the hyperbolic metric $h$ on $M$. }

\textsf{The}\textit{ shape operator}\textsf{ $S_{g}:T\Sigma\to T\Sigma$
is the symmetric self-adjoint endomorphism defined by raising one
index of the second fundamental form $B$ with respect to the metric
$g.$} 

\textsf{The}\textit{ mean curvature}\textsf{ $H$ of the immersion
$f:\Sigma\to M$ (or, of the immersed surface $(\Sigma,g)$) is the
trace of the shape operator. We call an immersion $f:\Sigma\to M$
}\textit{minimal}\textsf{ if the mean curvature $H$ vanishes identically. }

\textsf{Moreover, we can relate the induced Riemannian metric $g$
and shape operator $S_{g}$ by the famous Gauss-Codazzi equations:
\begin{alignat}{2}
K_{g} & =-1+\det S_{g}, & (\mbox{Gauss eq.})\label{eq: Gauss}\\
\nabla_{df(X)}(S_{g}(Y)) & -\nabla_{df(Y)}S_{g}(X)=S_{g}([X,Y]). & \hspace{1cm}(\mbox{Codazzi eq.})\label{eq: Codazzi}
\end{alignat}
where $X,Y\in T\Sigma$ and $[\cdot,\cdot]$ is the Lie bracket on
$T\Sigma.$}
\begin{rem}
\label{rem: Gauss-Codazzi +miminal}:\begin{itemize}

\item[1.]We call real eigenvalues $\lambda_{1}$ and $\lambda_{2}$
of $S_{g}$ the \textit{principal curvatures }of the immersion $f:\Sigma\to M$.

\item[2.]The shape operator $S_{g}$ and the second fundamental form
$B$ are linked by 
\[
B(X,Y)=\langle X,S_{g}(Y)\rangle_{g},\mbox{ }\forall X,Y\in T\Sigma.
\]

\item[3.]If $f$ is a minimal immersion the Gauss-Codazzi equations
could be expressed in terms of $B$ by 
\begin{alignat*}{1}
K_{g} & =-1-\frac{1}{2}\left\Vert B\right\Vert _{g}^{2},\\
(\nabla_{\partial_{i}}B)_{jk} & =(\nabla_{\partial_{j}}B)_{ik},
\end{alignat*}
where $||\cdot||_{g}$ is the tensor norm w.r.t. metric $g$ and $\partial_{1}$
and $\partial_{2}$ are coordinate fields of $TM$. Moreover, in this
case the Gauss equation implies $K_{g}\leq-1$. i.e., $(\Sigma,g)$
is a negatively curved surface. 

\end{itemize}
\end{rem}

\subsubsection{Minimal hyperbolic germs}

\textsf{In the context of minimal hyperbolic germs, the surface $\Sigma$
is always assumed to be closed. Let $(g,B)$ be a pair consisting
of a Riemannian metric $g$ and a symmetric 2-tensor $B$ on $\Sigma$. }

\begin{defn}
A pair $(g,B)$ is called a \textit{minimal hyperbolic germ} if it
satisfies the following equations 
\[
\left\{ \begin{array}{c}
K_{g}=-1-\frac{1}{2}\left\Vert B\right\Vert _{g}^{2},\\
(\nabla_{\partial_{i}}B)_{jk}=(\nabla_{\partial_{j}}B)_{ik},\\
H=0.
\end{array}\right.
\]

\end{defn}

\textsf{Recall that $\mbox{Diff}_{0}(\Sigma)$ is the space of orientation
preserving diffeomorphisms of $\Sigma$ isotopic to the identity.
There is a natural $\mathrm{Diff}_{0}(\Sigma)$ action (i.e. by pullback)
on the space of minimal hyperbolic germs, and we are mostly interested
in the following quotient space. }

\begin{defn}
The space $\mathcal{H}$ of minimal hyperbolic germs is the quotient:
\[
\mathcal{H}=\{\mbox{minimal hyperbolic germs}\}/\mbox{Diff}_{0}(\Sigma).
\]

\end{defn}

\textsf{Taubes shows that $\mathcal{H}$ is a smooth manifold of dimension
$12g-12$ where $g$ is the genus of $\Sigma.$ The fundamental theorem
of surface theory ensures that each $(g,B)\in\mathcal{H}$ can be
integrated to an immersed minimal surface in a hyperbolic 3-manifold
with the Riemannian metric $g$ and the second fundamental form $B.$ }

\textsf{Moreover, $\mathcal{H}$ is closely related with the Teichmüller
space. To be more precise, we first recall facts in the Teichmüller
theory. The }\textit{Teichmüller space}\textsf{ $\mathcal{T}$ of
$\Sigma$ is the space of isotopic classes of complex structures on
$\Sigma$. Alternatively, by the uniformization theorem, $\mathcal{T}$
can also be identified with the space of isotopic classes of conformal
structures on $\Sigma$, i.e. conformal classes of Riemannian metrics
with curvature $-1$. It is clear that we can identify $\mathcal{T}$
with a subspace $\mathcal{F}$ of $\mathcal{H}$. Namely, the }\textit{Fuchsian
space}\textsf{: 
\[
\mathcal{F}=\{(m,0)\in\mathcal{H};\mbox{}m\mbox{ is a Reimannian metric of constant curvature }-1\}.
\]
 }

\textsf{Let $[g]$ be the conformal class of a Riemannian metric $g$
on $S$ and $X=(S,[g])$ be the Riemann surface associated with $g$.
It is well-known that $T_{X}^{*}\mathcal{T}$ the fiber of the holomorphic
cotangent bundle over $X$ can be identified with $Q(X)$ the space
of holomorphic quadratic differentials on $X$. }

\textsf{The following theorem of Hopf \cite{Hopf:1951gh} helps us
see the relation between $\mathcal{H}$ and $Q(X).$}

\begin{thm}
[Hopf \cite{Hopf:1951gh}]

If $(g,B)\in\mathcal{H}$, then $B$ is the real part of a (unique)
holomorphic quadratic differential $\alpha\in Q(X)$. More precisely,
if $(x_{1},x_{2})=x_{1}+ix_{2}=z$ is a local isothermal coordinate
of $X$ and $B=B_{11}dx_{1}^{2}+B_{22}^{2}dx_{2}^{2}+2B_{12}dx_{1}dx_{2}$,
then 
\[
\alpha(g,B)=\left(B_{11}-iB_{12}\right)(x_{1},x_{2})dz^{2}.
\]

\end{thm}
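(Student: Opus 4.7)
The plan is to work entirely in a local isothermal coordinate system $z = x_1 + i x_2$ on $X$, in which the metric takes the form $g = e^{2u}(dx_1^2 + dx_2^2)$. In such coordinates I will verify three things: that $\mathrm{Re}(\alpha) = B$, that the coefficient function $B_{11} - i B_{12}$ is holomorphic in $z$, and (for uniqueness) that a holomorphic quadratic differential is determined by its real part.

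For the first step, expand $dz^2 = (dx_1^2 - dx_2^2) + 2i\, dx_1\,dx_2$, so
\[
\mathrm{Re}\bigl((B_{11}-iB_{12})\,dz^2\bigr) \;=\; B_{11}(dx_1^2 - dx_2^2) + 2B_{12}\, dx_1\,dx_2.
\]
This matches $B = B_{11}\,dx_1^2 + B_{22}\,dx_2^2 + 2B_{12}\,dx_1\,dx_2$ precisely when $B_{22} = -B_{11}$, which is exactly the minimality condition $H = g^{ij}B_{ij} = e^{-2u}(B_{11}+B_{22}) = 0$. So the minimality hypothesis is what makes the formula for $\alpha$ well-posed.

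Holomorphicity reads $\partial_{\bar z}(B_{11} - iB_{12}) = 0$, and a direct splitting into real and imaginary parts shows this is equivalent to
\[
\partial_{x_1}B_{11} + \partial_{x_2}B_{12} = 0, \qquad \partial_{x_2}B_{11} - \partial_{x_1}B_{12} = 0.
\]
The main obstacle is deriving these two identities from the Codazzi equation $(\nabla_i B)_{jk} = (\nabla_j B)_{ik}$. For a conformal metric the Christoffel symbols are linear in $\partial u$ and can be written down explicitly, e.g.\ $\Gamma_{11}^1 = u_1$, $\Gamma_{11}^2 = -u_2$, $\Gamma_{12}^1 = u_2$, $\Gamma_{12}^2 = u_1$, $\Gamma_{22}^1 = -u_1$, $\Gamma_{22}^2 = u_2$. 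Plugging these into the Codazzi identities with $(i,j,k) = (1,2,1)$ and $(1,2,2)$ and then imposing $B_{22} = -B_{11}$, the terms involving $u_1$ and $u_2$ cancel out, and what remains is precisely the pair of Cauchy-Riemann relations above. The key conceptual point is that minimality is exactly the condition needed for the conformal-factor contributions to cancel; the computation is routine but must be done with care on signs.

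Finally, uniqueness: if $\alpha$ and $\alpha'$ are holomorphic quadratic differentials with $\mathrm{Re}\,\alpha = \mathrm{Re}\,\alpha' = B$, then locally $\alpha - \alpha' = f(z)\,dz^2$ satisfies $\mathrm{Re}(f)(dx_1^2 - dx_2^2) - 2\,\mathrm{Im}(f)\,dx_1\,dx_2 \equiv 0$, forcing $f \equiv 0$; hence the locally defined $\alpha$ patches unambiguously to a global holomorphic quadratic differential on $X$, since in any overlap of isothermal charts each local expression is the unique holomorphic quadratic differential whose real part equals the intrinsic tensor $B$.
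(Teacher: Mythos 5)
Your proof is correct: the trace-free condition $B_{22}=-B_{11}$ makes $\mathrm{Re}\bigl((B_{11}-iB_{12})\,dz^2\bigr)=B$, the Codazzi equations in a conformal chart do reduce (after the $u_1,u_2$ terms cancel, which indeed uses $B_{11}+B_{22}=0$) to the Cauchy--Riemann equations $\partial_{x_1}B_{11}+\partial_{x_2}B_{12}=0$ and $\partial_{x_2}B_{11}-\partial_{x_1}B_{12}=0$, and your uniqueness/patching step is sound since transitions between oriented isothermal charts are holomorphic. The paper itself gives no proof of this statement (it is quoted from Hopf), and your argument is exactly the standard Hopf-differential computation that the citation refers to, so there is nothing to reconcile.
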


\begin{rem*}
In fact $B_{11}=-B_{22}$ because $(\Sigma,g)$ is minimal, and it
is not hard to see $\left\Vert \alpha\right\Vert _{g}=\left\Vert B\right\Vert _{g}$.
\end{rem*}

\textsf{\textit{Moreover, the space $\mathcal{H}$ admits a smooth
map to $\;T^{*}\mathcal{T}$ given by 
\begin{alignat*}{1}
\Psi:\mathcal{H} & \to\;\;\;\;T^{*}\mathcal{T}\\
(g,B) & \mapsto([g],\alpha(g,B)).
\end{alignat*}
}}

\textsf{For any two holomorphic quadratic differentials $\alpha$
and $\beta$ in $Q(X)$, the }\textit{Weil-Petersson pairing }\textsf{is
given by 
\[
\langle\alpha,\beta\rangle_{WP}=\int_{\Sigma}\frac{\alpha\overline{\beta}}{m},
\]
 where $m$ is the hyperbolic metric on $\Sigma$ conformal to $g$.
It's also well-known that this pairing defines a Kähler metric, }\textit{the
Weil-Petersson metric}\textsf{, on the Teichmüller space whose geometry
has been intensely studied. In the last section, we will discuss serval
applications of our results related with the Weil-Petersson metric
on $\mathcal{F}$. }

\textsf{We now change gear to the Gauss equation. Since every Riemannian
metric $g$ on $\Sigma$ is conformal to a unique hyperbolic metric
$m$, we can write $g=e^{2u}m$ where $e^{2u}$ is the conformal factor.
Therefore, we can rewrite the Gauss equation as the following. }

\begin{thm}
[Gauss equation, Theorem 4.2 \cite{Uhlenbeck:1983wl}]\label{thm: gauss equation}

The Gauss equation may be written, in terms of $m$, 
\[
-1-\frac{1}{2}\left\Vert B\right\Vert _{m}^{2}=K_{g}=e^{-2u}(-\Delta_{m}u-1),
\]
where $K_{g}$ is the Gaussian curvature of $(\Sigma,g)$.
\end{thm}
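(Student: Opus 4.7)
The plan is to establish the two stated equalities separately. The first is essentially the minimal-surface Gauss equation recorded in Remark \ref{rem: Gauss-Codazzi +miminal}(3), and the second follows from the classical conformal transformation law for Gaussian curvature on a surface. Thus the proof amounts to deriving a standard PDE identity and then substituting $K_m \equiv -1$.

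For the first equality, Remark \ref{rem: Gauss-Codazzi +miminal}(3) already tells us that a minimal immersion into a hyperbolic $3$-manifold satisfies $K_g = -1 - \frac{1}{2}\|B\|^2$, where the norm is taken with respect to the induced metric. To match the form $-1 - \frac{1}{2}\|B\|_m^2$ appearing in the statement, one either interprets $\|B\|_m$ via the identification of $B$ with (the real part of) the holomorphic quadratic differential $\alpha \in Q(X)$ from Hopf's theorem and the conformal norm attached to $m$, or works pointwise in isothermal coordinates to reconcile $\|B\|_g^2$ and $\|B\|_m^2$ through the conformal factor $e^{2u}$. Either way this is bookkeeping rather than a new geometric input.

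The substantive step is the second equality, for which I would invoke the conformal-change formula for Gaussian curvature: if $g = e^{2u}m$ on a surface, then
\[
K_g = e^{-2u}\bigl(K_m - \Delta_m u\bigr).
\]
I would derive this by choosing isothermal coordinates for $m$, writing $m = \mu^2(dx_1^2 + dx_2^2)$ so that $g = (e^{u}\mu)^2(dx_1^2 + dx_2^2)$, and applying the elementary identity $K = -\rho^{-2}\Delta_{\mathrm{eucl}}\log \rho$ for a conformal metric $\rho^2(dx_1^2+dx_2^2)$, finally re-expressing the Euclidean Laplacian in terms of $\Delta_m$. Since $m$ is hyperbolic, $K_m \equiv -1$, and the formula collapses to
\[
K_g = e^{-2u}\bigl(-1 - \Delta_m u\bigr) = e^{-2u}\bigl(-\Delta_m u - 1\bigr),
\]
which is the right-hand side of the claim.

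The main (and essentially only) technical ingredient is the conformal-change formula for Gaussian curvature; once this is in hand, combining it with Remark \ref{rem: Gauss-Codazzi +miminal}(3) completes the argument. No serious obstacle arises, and one may alternatively appeal directly to Theorem~4.2 of \cite{Uhlenbeck:1983wl}, which is the stated reference.
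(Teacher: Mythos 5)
Your argument is correct as a derivation, but note that the paper itself does not prove this statement: it is quoted verbatim as Theorem~4.2 of \cite{Uhlenbeck:1983wl}, so there is no internal proof to compare against. Your decomposition is the standard one and is exactly what lies behind Uhlenbeck's equation: the left equality is the minimal-surface Gauss equation of Remark \ref{rem: Gauss-Codazzi +miminal}(3), and the right equality is the conformal-change formula $K_{e^{2u}m}=e^{-2u}(K_m-\Delta_m u)$ specialized to $K_m\equiv-1$; your isothermal-coordinate derivation of the latter via $K=-\rho^{-2}\Delta_{\mathrm{eucl}}\log\rho$ is fine. The one point you wave at as ``bookkeeping'' deserves to be made explicit, because it is where the statement as printed is notationally loose: since $B$ is a covariant symmetric 2-tensor and $g=e^{2u}m$, one has $\left\Vert B\right\Vert _{g}^{2}=e^{-4u}\left\Vert B\right\Vert _{m}^{2}$, so the correct form of the identity is
\[
K_{g}=-1-\tfrac{1}{2}e^{-4u}\left\Vert B\right\Vert _{m}^{2}=e^{-2u}\left(-\Delta_{m}u-1\right),
\]
equivalently $\Delta_{m}u+1-e^{2u}-\tfrac{1}{2}e^{-2u}\left\Vert B\right\Vert _{m}^{2}=0$, which is the version the paper actually uses later (equation (\ref{eq:curvature}) and Theorem \ref{Thm: Uhlenbeck solution curve}); taking the theorem's $\left\Vert B\right\Vert _{m}$ literally, without the $e^{-4u}$ factor, would contradict Remark \ref{rem: Gauss-Codazzi +miminal}(3) unless $u\equiv0$. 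With that conformal factor inserted, your proof is complete and is the standard argument behind the cited result.
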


\textsf{From another point of view, using Uhlenbeck's result in \cite{Uhlenbeck:1983wl}
we can relate the space of minimal hyperbolic germs $\mathcal{H}$
with the character variety $\mathcal{R}(\pi_{1}(\Sigma),\psl(2,\co))$,
where $\mathcal{R}(\pi_{1}(\Sigma),\psl(2,\co))$ is the space of
conjugacy classes of representations of $\pi_{1}(S)$ into $\psl(2,\co)$.
More precisely, Uhlenbeck \cite{Uhlenbeck:1983wl} proves that for
each data $(g,B)\in\mathcal{H}$ there exists a representation $\rho:\pi_{1}(\Sigma)\to\mbox{Isom }(\hy{}^{3})\cong\psl(2,\co)$
leaving this minimal immersion invariant. i.e., there is a map 
\begin{equation}
\Phi:\mathcal{H}\to\mathcal{R}(\pi_{1}(\Sigma),\psl(2,\co)).\label{eq: charater variality-1}
\end{equation}
}

\subsubsection{Almost-Fuchsian germs}

\begin{defn}
The space of \textit{almost-Fuchsian} germs $\mathcal{AF}$ is defined
by 
\[
\mathcal{AF}=\{(g,B)\in H;\mbox{}\left\Vert B\right\Vert _{g}^{2}<2\}.
\]
\textsf{ }
\end{defn}

\begin{rem}
$\left\Vert B\right\Vert _{g}^{2}<2$ is equivalent to that the principal
curvatures $\lambda_{1}$ and $\lambda_{2}$ are between $-1$ and
$1$.
\end{rem}

\textsf{This definition is motivated by the observation of Uhlenbeck
\cite{Uhlenbeck:1983wl} that when $(g,B)\in\mathcal{AF}$ there exists
a unique quasi-Fuchsian manifold $M$, up to isometry, such that $\Sigma$
is an embedded minimal surface in $M$ with the induced metric $g$
and the second fundamental form $B$. In other words, almost-Fuchsian
germs are more than quasi-Fuchsian, but not Fuchsian. }

\textsf{In the following, we discuss a ray in $\mathcal{AF}$, which
could be considered as a path in the Teichmüller space $\mathcal{T}$.
Specifically, for a hyperbolic metric $m\in\mathcal{F}$, and a holomorphic
quadratic differential $\alpha\in Q((\Sigma,[m]))$, we consider the
ray 
\[
r(t)=(g_{t},tB)\subset\mathcal{AF},
\]
where $g_{t}$ and $B=\mathrm{Re}(\alpha)$ satisfying }$\left\Vert tB\right\Vert _{g_{t}}^{2}<2$
\textsf{. Notice that $g_{t}$ is conformal to the hyperbolic metric
$m$, so we can write $g_{t}=e^{2u_{t}}m$ where the conformal factor
$e^{2u_{t}}$ is a $C^{2}$ function on $\Sigma$. By studying the
Gauss equation, Uhlenbeck proved $u_{t}$ is smooth on $t$, hence
$r(t)$ is smooth when $t$ is small. We state that result in the
below.}

\begin{thm}
[Uhlenbeck, Theroem 4.4 \cite{Uhlenbeck:1983wl}]\label{Thm: Uhlenbeck solution curve}Consider
the maps $F:W^{2,2}(\Sigma)\times[0,\infty)\to L^{2}(\Sigma)$, 
\[
F(u,t)=\Delta_{m}u+1-e^{2u}-\frac{1}{2}\left\Vert tB\right\Vert _{m}^{2}e^{-2u},
\]
where $W^{2,2}(\Sigma)$ is the classical Sobolev space. Then there
exists $\tau_{0}>0$ and a smooth solution curve 
\begin{alignat*}{1}
c:[0,\tau_{0}] & \to W^{2,2}(\Sigma)\times[0,\infty)\\
t & \mapsto\quad(u(t),t)
\end{alignat*}
such that $c(0)=(0,0)$ and $F(c(t))=0$ for all $t\in[0,\tau_{0}]$.
\end{thm}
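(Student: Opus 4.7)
The plan is to obtain the solution curve $c(t) = (u(t), t)$ by a direct application of the Implicit Function Theorem in Banach spaces at the base point $(u, t) = (0, 0)$. I will first verify that $F$ is a well-defined smooth map $W^{2,2}(\Sigma) \times [0,\infty) \to L^2(\Sigma)$, then compute the partial Fréchet derivative $D_u F(0,0)$, then show this derivative is a Banach-space isomorphism, and finally invoke IFT to produce $u(t)$ with $F(u(t), t) = 0$ and $u(0) = 0$ on a small interval $[0, \tau_0]$.

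\textbf{Verification of the base point and smoothness.} A direct computation gives $F(0,0) = \Delta_m 0 + 1 - e^0 - 0 = 0$, so $(0,0)$ is indeed a zero of $F$. Since $\dim \Sigma = 2$, the Sobolev embedding $W^{2,2}(\Sigma) \hookrightarrow C^0(\Sigma)$ shows that $u \mapsto e^{2u}$ and $u \mapsto e^{-2u}$ are smooth maps from $W^{2,2}(\Sigma)$ into itself, hence into $L^2(\Sigma)$. The Laplace--Beltrami term $\Delta_m \colon W^{2,2}(\Sigma) \to L^2(\Sigma)$ is bounded and linear, and the dependence on $t$ is polynomial (indeed quadratic in $t$). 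Combining these, $F$ is smooth as a map $W^{2,2}(\Sigma) \times [0,\infty) \to L^2(\Sigma)$.

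\textbf{Computing and inverting the linearization.} Differentiating in $u$ at $(0,0)$ gives
\[
D_u F(0,0)\cdot v \;=\; \Delta_m v \;-\; 2 e^{2\cdot 0} v \;+\; \|0\cdot B\|_m^2\, e^{-2\cdot 0}\, v \;=\; \Delta_m v - 2 v.
\]
Using the same sign convention as in Theorem \ref{thm: gauss equation} (where $\Delta_m$ is the geometer's Laplacian, so that $-\Delta_m$ is the non-negative self-adjoint operator on the compact surface $(\Sigma, m)$), the spectrum of $\Delta_m$ lies in $(-\infty, 0]$, hence the spectrum of the self-adjoint elliptic operator $\Delta_m - 2$ lies in $(-\infty, -2]$, bounded away from zero. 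Standard elliptic regularity then guarantees that $\Delta_m - 2 \colon W^{2,2}(\Sigma) \to L^2(\Sigma)$ is a topological linear isomorphism.

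\textbf{Applying the IFT.} Since $F$ is smooth, $F(0,0)=0$, and $D_u F(0,0)$ is a bounded linear isomorphism, the Banach-space Implicit Function Theorem provides $\tau_0 > 0$ and a smooth map $t \mapsto u(t) \in W^{2,2}(\Sigma)$ on $[0, \tau_0]$ with $u(0) = 0$ and $F(u(t), t) = 0$ for every $t \in [0,\tau_0]$. Setting $c(t) = (u(t), t)$ yields the required smooth solution curve.

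\textbf{Main obstacle.} The only non-routine step is the invertibility of $D_u F(0,0) = \Delta_m - 2$: one must fix a sign convention for $\Delta_m$ consistent with the Gauss equation stated in Theorem \ref{thm: gauss equation}, and then verify that $0$ is not in the spectrum using the compactness of $\Sigma$ and self-adjointness on $L^2(\Sigma)$. All other ingredients --- smoothness of the pointwise nonlinearities via $W^{2,2} \hookrightarrow C^0$, boundedness of $\Delta_m$ on Sobolev scales, and the IFT itself --- are standard once this spectral fact is in place.
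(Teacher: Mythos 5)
Your proposal is correct, and it is essentially the argument behind the quoted result: the paper itself states Theorem \ref{Thm: Uhlenbeck solution curve} as a citation of Uhlenbeck without proof, and the standard (indeed Uhlenbeck's) proof is exactly this implicit-function-theorem argument, using that the linearization $\Delta_m - 2$ at $(0,0)$ is an isomorphism $W^{2,2}(\Sigma)\to L^{2}(\Sigma)$ because $\Delta_m$ (with the sign convention of Theorem \ref{thm: gauss equation}) has non-positive spectrum on the closed surface. The only point worth adding is that $(0,0)$ lies on the boundary of the parameter domain $[0,\infty)$; since $F$ depends on $t$ only through $t^{2}\left\Vert B\right\Vert _{m}^{2}$, one extends $F$ smoothly to $t\in(-\varepsilon,\infty)$, applies the Banach IFT on an open neighborhood of $(0,0)$, and restricts the resulting solution curve to $t\in[0,\tau_{0}]$.
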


\section{Proof of the main result\label{sec:Proof-of-the}}

\textsf{Throughout this section, $\Sigma$ denotes a compact 2--dimensional
manifold with negative Euler characteristic. Let $f:\Sigma\to M$
be a $\pi_{1}-$injective immersion from $\Sigma$ to a hyperbolic
3-manifold $M$ and $\Gamma$ be the copy of $\pi_{1}\Sigma$ in $\mathrm{Isom}(\hy^{3})$
induced by the immersion $f$}. \textsf{More precisely, let $\rho:\pi_{1}M\to\mbox{\ensuremath{\mathrm{Isom}}}(\hy^{3})$
be the discrete and faithful representation, up to conjugacy, corresponding
to $M$, i.e. $M=\rho(\pi_{1}M)\backslash\hy^{3}$. Then $\Gamma=\rho(f_{*}(\pi_{1}\Sigma))$
where $f_{*}$ is the induced homomorphism of $f:\Sigma\to M.$ }

\textsf{The hypotheses throughout here are: $\Gamma$ is a convex
cocompact and $(\Sigma,g)$ is negatively curved where $g=f^{*}h$
and $h$ is the given hyperbolic metric on $M$.}

\textsf{Notice that because $(\Sigma,g)$ is a compact negatively
curved surface, its universal covering $(\us,\widetilde{g})$ is a
pinched Hadamard manifold. Let $\Gamma_{\Sigma}$ denote  the group
of deck transformations of the covering $\us$ . Then we know $\Gamma_{\Sigma}\cong\pi_{1}\Sigma$
and $\Gamma_{\Sigma}\subset\mathrm{Isom}(\us)$. }

\begin{lem}
\label{lem: quasi-iso q}There exists a quasi-isometry $q:\widetilde{\Sigma}\to\mathrm{Conv}(\limitset)$,
where $\mathrm{Conv}(\limitset)$ is the convex hull of $\limitset$
in $\hy^{3}$. Moreover, $q$ extends to a bi-Hölder and $\Gamma$--equivariant
map between the boundaries.\end{lem}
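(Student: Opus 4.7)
The plan is to construct $q$ by applying the \v{S}varc--Milnor lemma (Theorem \ref{thm:Svarc-milnor}) to two different actions of $\pi_{1}\Sigma$ and then composing. On one side, $(\us,\widetilde{g})$ is a pinched Hadamard manifold, hence a proper geodesic metric space, and $\Gamma_{\Sigma}\cong\pi_{1}\Sigma$ acts on it properly discontinuously and cocompactly by deck transformations. On the other side, $\ch$ is a closed convex subset of $\hy^{3}$, hence itself proper and geodesic, and by the convex cocompactness hypothesis $\Gamma\cong\pi_{1}\Sigma$ (via $\rho\circ f_{*}$) acts on $\ch$ properly discontinuously and cocompactly. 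The \v{S}varc--Milnor lemma therefore yields orbit-map quasi-isometries
\[
q_{\Sigma}:(\pi_{1}\Sigma,w)\to\us,\qquad q_{\Gamma}:(\pi_{1}\Sigma,w)\to\ch,
\]
where $w$ is a word metric with respect to some finite generating set. Concretely, after fixing basepoints $o\in\us$ and $p\in\ch$, I would take $q_{\Sigma}(\gamma)=\gamma\cdot o$ (deck action) and $q_{\Gamma}(\gamma)=\rho(f_{*}\gamma)\cdot p$.

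Next, I would compose $q_{\Gamma}$ with an approximate inverse $\bar{q}_{\Sigma}$ of $q_{\Sigma}$ to obtain a quasi-isometry $q=q_{\Gamma}\circ\bar{q}_{\Sigma}:\us\to\ch$. By construction, $q$ is coarsely $\pi_{1}\Sigma$-equivariant (the two $\pi_{1}\Sigma$-actions commute with $q$ up to a bounded error, by the \v{S}varc--Milnor estimates). To upgrade this to genuine $\Gamma$-equivariance, I would pick a compact fundamental domain $D\subset\us$ for the $\Gamma_{\Sigma}$-action, keep $q$ unchanged on $D$, and define $q(\gamma_{\Sigma}\cdot x):=\rho(f_{*}\gamma)\cdot q(x)$ for each $x\in D$ and $\gamma\in\pi_{1}\Sigma$. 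The modified map still satisfies the quasi-isometry inequalities because it differs from the original only by translates by elements moving points of $D$ a uniformly bounded amount.

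For the boundary extension, both $\us$ (by pinched negative sectional curvature) and $\ch\subset\hy^{3}$ are proper $\delta$-hyperbolic geodesic metric spaces, and their geometric boundaries $\vbdy\us$ and $\vbdy\ch=\limitset$ carry visual metrics. Bourdon's theorem (Theorem \ref{thm: Bourdon}) then guarantees that the quasi-isometry $q$ extends to a bi-H\"older homeomorphism $\vbdy\us\to\limitset$. The strict $\Gamma$-equivariance of $q$ on the interior passes automatically to the boundary, because every element of $\Gamma$ (and of $\Gamma_{\Sigma}$) acts as a hyperbolic isometry whose continuous extension to $\vbdy$ commutes with the extension of $q$.

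The only mild obstacle is precisely this passage from coarse to strict equivariance: the raw composition $q_{\Gamma}\circ\bar{q}_{\Sigma}$ need not intertwine the two $\pi_{1}\Sigma$-actions on the nose. The fundamental-domain modification described above repairs this without altering the quasi-isometry constants (and hence without altering the Morse-type argument behind Bourdon's theorem), so the boundary extension remains bi-H\"older and inherits $\Gamma$-equivariance.
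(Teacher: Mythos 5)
Your proof is correct and follows essentially the same route as the paper: apply the \v{S}varc--Milnor lemma to the deck action of $\Gamma_{\Sigma}$ on $\us$ and to the convex-cocompact action of $\Gamma$ on $\ch$, compose through the group (Cayley graph) to get the quasi-isometry $q:\us\to\ch$, and invoke Bourdon's theorem (Theorem \ref{thm: Bourdon}) for the bi-H\"older boundary extension. Your extra step upgrading coarse to strict equivariance is fine but not strictly needed, since the boundary extension depends on $q$ only up to bounded distance, so coarse equivariance already yields exact $\Gamma$--equivariance of the boundary map.
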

\begin{proof}
Because $(\widetilde{\Sigma},\widetilde{g})$ and $(\ch,h)$ are both
pinched Hadamard manifold, and $\Gamma_{\Sigma}\in\mbox{\ensuremath{\mathrm{Isom}}}(\widetilde{\Sigma})$
acts cocompactly on $\widetilde{\Sigma}$ and $\Gamma$ acts convex
cocompactly on $\hy^{3}$, by Theorem \ref{thm:Svarc-milnor} (Švarc-Milnor
lemma), we know that there are quasi-isometries $q_{1}:\widetilde{\Sigma}\to C(\Gamma_{\Sigma},S')$
and $q_{2}:C(\Gamma,S)\to\ch$, where $C(\Gamma,S)$ is the Cayley
graph of $\Gamma=\langle S\rangle$. Because $\Gamma$ and $\Gamma_{\Sigma}$
are both isomorphic to $\pi_{1}\Sigma,$ by Švarc-Milnor lemma the
identity map $i:C(\Gamma_{\Sigma},S')\to C(\Gamma,S)$ is a quasi-isometry.
So, $q_{2}iq_{1}:\widetilde{\Sigma}\to\ch$ is the desired quasi-isometry.
The second assertion is a consequence of Proposition \ref{thm: Bourdon},
because the quasi-isometry $q=q_{2}iq_{1}$ extends to a bi-Hölder
map $q:\vbdy\widetilde{\Sigma}\to\limitset=\partial_{\infty}\mbox{Conv}(\limitset)$.
Lastly, by the construction of $q$, it is easy to see that $q$ is
$\Gamma$--equivariant.
\end{proof}

\textsf{Now, we have two different objects $\Gamma_{\Sigma}\subset\mathrm{Isom}(\us)$
and $\Gamma\subset\mathrm{Isom}(\hy^{3})$. Nevertheless, they are
the same as a group. Because $\pi_{1}\Sigma$ is finitely generated,
there are canonical isomorphisms between $\pi_{1}\Sigma$, $\Gamma_{\Sigma}$
and $\Gamma$, by sending generators to generators. Namely, $i_{\Sigma}:\pi_{1}\Sigma\to\Gamma_{\Sigma}$
and $i_{M}:\pi_{1}\Sigma\to\Gamma$. For brevity, we denote elements
in $\pi_{1}\Sigma$, $\Gamma_{\Sigma}$ and $\Gamma$ by $\gamma$,
$\gamma_{\Sigma}$ and $\gamma_{M}$ ,respectively, where $i_{\Sigma}(\gamma)=\gamma_{\Sigma}$
and $i_{M}(\gamma)=\gamma_{M}$.}

\begin{lem}
\label{Lem: closed geo to closed geo}The above quasi-isometry $q:\vbdy\widetilde{\Sigma}\to\limitset$
sends the attracting (repelling) limit point $\gamma_{\Sigma}^{+}$
($\gamma_{\Sigma}^{-}$) of the hyperbolic element $\gamma_{\Sigma}\in\Gamma_{\Sigma}\subset\mathrm{Isom}(\widetilde{\Sigma})$
to the attracting (repelling) limit point $\gamma_{M}^{+}$ ($\gamma_{M}^{-}$)
of $\gamma_{M}\in\Gamma\subset\mathrm{Isom}(\hy^{3})$.\end{lem}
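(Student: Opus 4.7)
The strategy is to combine two inputs from the previous lemma: the $\Gamma$--equivariance of $q$ on the boundary, and the fact that $q\colon\vbdy\widetilde{\Sigma}\to\limitset$ is a homeomorphism (bi-H\"older, hence bijective and continuous with continuous inverse). The dynamical input will come from the north--south behavior of hyperbolic/loxodromic isometries.

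First, I would record that $\gamma_\Sigma\in\Gamma_\Sigma$ acts on the pinched Hadamard manifold $(\widetilde{\Sigma},\widetilde{g})$ as a hyperbolic isometry with exactly two fixed points $\gamma_\Sigma^\pm$ on $\vbdy\widetilde{\Sigma}$ and north--south dynamics, and $\gamma_M\in\Gamma\subset\Isom(\hy^3)$ is loxodromic with two fixed points $\gamma_M^\pm$ on $\vbdy\hy^3$ that in fact lie in $\limitset$. Next, using that $q$ is $\Gamma$--equivariant with respect to the identification $i_M\circ i_\Sigma^{-1}\colon\Gamma_\Sigma\to\Gamma$, I apply equivariance to the identity $\gamma_\Sigma\cdot\gamma_\Sigma^\pm=\gamma_\Sigma^\pm$ to get
\[
\gamma_M\cdot q(\gamma_\Sigma^\pm)=q(\gamma_\Sigma\cdot\gamma_\Sigma^\pm)=q(\gamma_\Sigma^\pm),
\]
so $q(\gamma_\Sigma^+)$ and $q(\gamma_\Sigma^-)$ are both fixed by $\gamma_M$. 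Since $q$ is a bijection between $\vbdy\widetilde{\Sigma}$ and $\limitset$ and $\gamma_M$ has exactly two fixed points on $\limitset$, we conclude that
\[
\{q(\gamma_\Sigma^+),q(\gamma_\Sigma^-)\}=\{\gamma_M^+,\gamma_M^-\}.
\]

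To distinguish attracting from repelling, I would pick any $\xi\in\vbdy\widetilde{\Sigma}\setminus\{\gamma_\Sigma^+,\gamma_\Sigma^-\}$, so by bijectivity $q(\xi)\in\limitset\setminus\{\gamma_M^+,\gamma_M^-\}$. By the north--south dynamics on $\vbdy\widetilde{\Sigma}$, $\gamma_\Sigma^n\xi\to\gamma_\Sigma^+$ as $n\to\infty$. Applying continuity of $q$ and equivariance,
\[
q(\gamma_\Sigma^+)=\lim_{n\to\infty}q(\gamma_\Sigma^n\cdot\xi)=\lim_{n\to\infty}\gamma_M^n\cdot q(\xi)=\gamma_M^+,
\]
where the last equality uses that $q(\xi)$ is not the repelling fixed point $\gamma_M^-$ and so the loxodromic dynamics of $\gamma_M$ on $\vbdy\hy^3$ push it toward $\gamma_M^+$. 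The identical argument with $n\to-\infty$ (or applied to $\gamma_\Sigma^{-1}$) yields $q(\gamma_\Sigma^-)=\gamma_M^-$.

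The only subtle point, rather than a serious obstacle, is verifying that the equivariance of $q$ established in Lemma \ref{lem: quasi-iso q} is really with respect to the canonical identification $\gamma_\Sigma\leftrightarrow\gamma_M$ arising from $i_\Sigma$ and $i_M$ (both being the map sending generators to generators); this follows because the quasi-isometry $q=q_2\circ i\circ q_1$ was built from the \v Svarc--Milnor maps at a fixed basepoint, and the intermediate identification $i\colon C(\Gamma_\Sigma,S')\to C(\Gamma,S)$ is precisely $i_M\circ i_\Sigma^{-1}$ on vertices. Once equivariance in this precise sense is in hand, the rest is the elementary dynamical argument above.
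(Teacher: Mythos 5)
Your argument is correct and is essentially the paper's own proof: the paper also deduces the claim from the fact that $q$ is a $\Gamma$--equivariant homeomorphism on the boundary and that being an attracting (repelling) fixed point is a topological/dynamical property, merely stating this in one line where you spell out the north--south dynamics and the identification $i_M\circ i_\Sigma^{-1}$ explicitly. Your version is a more detailed write-up of the same approach, not a different route.
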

\begin{proof}
Notice that the boundary map $q:\vbdy\widetilde{\Sigma}\to\limitset$
is an equivariant homeomorphism, and having an attracting (repelling)
point is a topological feature. Therefore, $q$ maps the attracting
point of $\gamma_{\Sigma}\in\Gamma_{\Sigma}$ to the attracting point
of $\gamma_{M}\in\Gamma$. 
\end{proof}

\textsf{Now we are ready to state and prove the main theorem. }

\begin{thm}[Main theorem]

\label{Thm: main-1} 

Let $f:\Sigma\to M$ be a $\pi_{1}-$injective immersion from a compact
surface $\Sigma$ to a hyperbolic 3-manifold $M$, and $\Gamma$ be
the copy of $\pi_{1}\Sigma$ in $\mathrm{Isom}(\hy^{3})$ induced
by the immersion $f$. Suppose $\Gamma$ is convex cocompact and $(\Sigma,f^{*}h)$
is negatively curved, then 
\[
C_{1}(\Sigma,M)\cdot\delta_{\Gamma}\leq h(\Sigma)\leq C_{2}(\Sigma,M)\cdot\delta_{\Gamma}.
\]
Here $h(\Sigma)$ is the topological entropy of the geodesic flow
on $T^{1}\Sigma$, $\delta_{\Gamma}$ is the critical exponent, and
$C_{1}(\Sigma,M)$, $C_{2}(\Sigma,M)$ are two geometric constants
smaller or equal to 1. Moreover, each equality holds if and only if
the marked length spectrum of $\Sigma$ is proportional to the marked
length spectrum of $M$, and the proportion is the ratio $\frac{\delta_{\Gamma}}{h(\Sigma)}.$

\end{thm}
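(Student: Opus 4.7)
My strategy is to use the quasi-isometry $q:\us\to\ch$ of Lemma \ref{lem: quasi-iso q} to pull the hyperbolic geometry of $M$ back to a H\"older cocycle on $\vbdy\us$, and then translate this via Theorem \ref{Thm: Ledrappier} and Theorem \ref{thm: bowen's formula} into a statement about a H\"older reparametrization $F:T^{1}\Sigma\to\real$ of the geodesic flow $\phi$ on $T^{1}\Sigma$. The two constants $C_{i}(\Sigma,M)$ will then arise as averages of $F$ against two different Gibbs measures, and the inequality itself will come from convexity of the pressure function $t\mapsto P_{\phi}(-tF)$.

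\textbf{Building the cocycle and $F$.} Pulling back the Busemann cocycle of $\hy^{3}$ by $q$ yields $c:\pi_{1}\Sigma\times\vbdy\us\to\real$ defined by $c(\gamma,x):=\beta^{h}(o,\gamma_{M}^{-1}o\,;\,q(x))$. The bi-H\"older $\Gamma$--equivariance of $q$ makes $c$ a H\"older cocycle, and Lemma \ref{Lem: closed geo to closed geo} identifies $q(\gamma_{\Sigma}^{+})=\gamma_{M}^{+}$, so by a standard Busemann computation the periods satisfy $l_{c}(\gamma)=l_{h}(\gamma_{M})$. Sullivan's theorem then identifies the exponential growth rate of $c$ with $\delta_{\Gamma}$. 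Theorem \ref{Thm: Ledrappier} produces a H\"older continuous $F:T^{1}\Sigma\to\real$ with $\int_{[\gamma]}F=l_{h}(\gamma_{M})$ for every $[\gamma]$; since all periods are positive, Theorem \ref{Thm:positive-Liv=000161ic-Theorem} lets me replace $F$ by a cohomologous strictly positive function, and Theorem \ref{thm: bowen's formula} yields $P_{\phi}(-\delta_{\Gamma}F)=0$.

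\textbf{Main inequality.} Because $f:(\Sigma,g)\to(M,h)$ is an isometric immersion, the image in $M$ of the closed $g$--geodesic representing $[\gamma]$ is a closed curve of length $l_{g}(\gamma)$ freely homotopic to the $h$--geodesic in class $[\gamma_{M}]$, so $l_{h}(\gamma_{M})\leq l_{g}(\gamma)$ for every $[\gamma]$. Thus every periodic integral of $1-F$ is nonnegative, and Theorem \ref{Thm Nonnegative-Liv=000161ic-Theorem} permits me to assume $0<F\leq 1$. Set $p(t):=P_{\phi}(-tF)$ on $[0,\delta_{\Gamma}]$; by the pressure derivative formulas, $p$ is analytic with $p(0)=h(\Sigma)$, $p(\delta_{\Gamma})=0$, and $p'(t)=-\int F\,dm_{-tF}$ is nonincreasing. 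The mean value theorem produces $t^{*}\in(0,\delta_{\Gamma})$ with $h(\Sigma)=\delta_{\Gamma}\int F\,dm_{-t^{*}F}$, and monotonicity of $t\mapsto\int F\,dm_{-tF}$ gives
\[
\delta_{\Gamma}\int F\,dm_{-\delta_{\Gamma}F}\;\leq\;h(\Sigma)\;\leq\;\delta_{\Gamma}\int F\,d\mu_{\phi},
\]
where $\mu_{\phi}=m_{0}$ is the Bowen--Margulis measure. Setting $C_{1}(\Sigma,M):=\int F\,dm_{-\delta_{\Gamma}F}$ and $C_{2}(\Sigma,M):=\int F\,d\mu_{\phi}$, both constants lie in $(0,1]$ since $0<F\leq 1$.

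\textbf{Rigidity and the main obstacle.} If $h(\Sigma)=C_{2}(\Sigma,M)\cdot\delta_{\Gamma}$, then $\mu_{\phi}$ is an equilibrium state of both $0$ and $-\delta_{\Gamma}F$, so by Theorem \ref{Thm: unique eq state} the functions $0$ and $-\delta_{\Gamma}F$ are Livsic cohomologous up to a constant, necessarily $-h(\Sigma)$. Hence $F$ is cohomologous to $h(\Sigma)/\delta_{\Gamma}$, which translates to $l_{h}(\gamma_{M})=(h(\Sigma)/\delta_{\Gamma})\,l_{g}(\gamma)$ for every $[\gamma]$; the lower--bound case runs symmetrically from the equilibrium state of $-\delta_{\Gamma}F$. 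The step I expect to require the most care is verifying that the pulled-back cocycle $c$ is honestly H\"older on $\vbdy\us$ --- the Busemann cocycle on $\vbdy\hy^{3}$ is smooth in the boundary variable, but the $\Gamma$--equivariant boundary identification from Lemma \ref{lem: quasi-iso q} is only bi-H\"older --- and that the $F$ extracted from Theorem \ref{Thm: Ledrappier} really records hyperbolic translation lengths, so that Bowen's formula outputs precisely $\delta_{\Gamma}$ rather than some unrelated exponent.
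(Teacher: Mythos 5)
Your proposal is correct and follows essentially the same path as the paper: the same Busemann cocycle pulled back through the boundary extension of the quasi-isometry, the same passage (Ledrappier/Sambarino) to a positive H\"older reparametrization $F$ with $\int_{[\gamma]}F=l_h(\gamma)$ and $P_\phi(-\delta_\Gamma F)=0$, the same use of $l_h(\gamma)\le l_g(\gamma)$ plus the nonnegative Liv\v{s}ic theorem to force $C_i\le 1$, the same constants $C_1=\int F\,dm_{-\delta_\Gamma F}$, $C_2=\int F\,d\mu_{BM}$, and the same rigidity argument via uniqueness of equilibrium states. The one place you deviate is the middle inequality: you run the convexity/analyticity of $p(t)=P_\phi(-tF)$ through the mean value theorem, whereas the paper simply applies the variational principle twice, once with $\mu_{BM}$ tested against $-\delta_\Gamma F$ (giving $h(\Sigma)=h(\mu_{BM})\le\delta_\Gamma\int F\,d\mu_{BM}$) and once using $h(\Sigma)\ge h(m_{-\delta_\Gamma F})$ (giving the lower bound); both routes produce the identical constants, and the paper's version avoids any differentiability input. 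One small correction to your wording: since the pressure is convex, $p'(t)=-\int F\,dm_{-tF}$ is \emph{nondecreasing}, not nonincreasing; equivalently $t\mapsto\int F\,dm_{-tF}$ is nonincreasing, which is exactly the monotonicity your displayed inequality uses, so the conclusion stands as written. Your flagged worries (H\"older continuity of the cocycle via the bi-H\"older boundary map and smoothness of the Busemann function, and the identification $h_{\phi^F}=\delta_\Gamma$ via the orbit-counting statement together with Sullivan's theorem) are resolved in the paper exactly as you anticipate.
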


\textsf{We will discuss the geometric meaning of $C_{1}(\Sigma,M)$
and $C_{2}(\Sigma,M)$ in detail in the next section. In this section
we shall focus on the proof and the rigidity phenomena coming from
the equality cases. }

\begin{proof} [Proof of the main theorem]

Let $\phi$ denote the geodesic flow on the unit tangent bundle of
$(\Sigma,g)$, i.e. $\phi:T^{1}\Sigma\to T^{1}\Sigma.$

\textbf{The first step }is to construct a Hölder reparametrization
function $F:T^{1}M\to\real_{>0}$ such that the topological entropy
$h_{F}$ of the reparametrized flow $\phi^{F}$ is the critical exponent
$\delta_{\Gamma}$ of $\Gamma$ in $\hy^{3}$. 

Recall the Busemann function $B_{\eta}^{h}(x,y):\partial_{\infty}\mathbb{H}^{3}\times\mathbb{H}^{3}\times\mathbb{H}^{3}\to\real$,
for $\eta\in\vbdy\hy^{3}$ and $x,y\in\hy^{3}$ is given by \textbf{
\[
B_{\eta}^{h}(x,y):=\lim_{z\to\eta}d_{h}(x,z)-d_{h}(y,z).
\]
}

Using the quasi-isometry $q$ defined in Lemma \ref{lem: quasi-iso q},
we define a map $c:\pi_{1}\Sigma\times\vbdy\widetilde{\Sigma}:\to\real$
by 
\begin{alignat*}{1}
c:\pi_{1}\Sigma\times\vbdy\widetilde{\Sigma} & \to\real\\
(\gamma,\xi) & \mapsto B_{q(\xi)}^{h}(f(o),\gamma^{-1}\cdot f(o)),
\end{alignat*}
for $o\in\widetilde{\Sigma}$. 

\textbf{Claim:} $c$ is a Hölder cocycle. 

pf. 
\begin{alignat*}{2}
c(\gamma_{1}\gamma_{2},\xi) & =B_{q(\xi)}^{h}(f(o),(\gamma_{1}\gamma_{2})^{-1}\cdot f(o))\\
 & =B_{q(\xi)}^{h}(f(o),(\gamma_{2}^{-1}\gamma_{1}^{-1})\cdot f(o))\\
 & =B_{q(\xi)}^{h}(f(o),\gamma_{2}^{-1}\cdot f(o))+B_{q(\xi)}^{h}(\gamma_{2}^{-1}\cdot f(o),(\gamma_{2}^{-1}\gamma_{1}^{-1})\cdot f(o))\\
 & =c(\gamma_{2},\xi)+B_{\gamma_{2}q(\xi)}^{h}(f(o),\gamma_{1}^{-1}\cdot f(o))\\
 & =c(\gamma_{2},\xi)+B_{q(\gamma_{2}\xi)}^{h}(f(o),\gamma_{1}^{-1}\cdot f(o)) & \mbox{by Lemma }\ref{Lem: closed geo to closed geo}\\
 & =c(\gamma_{2},\xi)+c(\gamma_{1},\gamma_{2}\xi).
\end{alignat*}

Therefore, $c$ is a cocycle. To see $c$ is Hölder, we first notice
that the boundary map $q:\vbdy\widetilde{\Sigma}\to\limitset\subset\vbdy\hy^{3}$
is bi-Hölder as we have discussed in the beginning of this section.
Moreover, we know that $\limitset$ embeds in $\vbdy\hy^{3}$ and
$B_{\eta}^{h}(x,y)$ is smooth on $\vbdy\hy^{3}$. Therefore, $c(\gamma,\cdot)$
is Hölder continuous on $\vbdy\widetilde{\Sigma}$, and we finish
the proof of this claim.

\medskip{}

Notice that the period $c(\gamma,\gamma_{\Sigma}^{+})=B_{q(\gamma_{\Sigma}^{+})}^{h}(f(o),\gamma^{-1}f(o))=l_{h}(\gamma))>0$
for all $[\gamma]\in[\pi_{1}\Sigma]$. Thus, $l_{c}(\gamma)=l_{h}(\gamma)$
for all $[\gamma]\in[\pi_{1}\Sigma]$, and we can easily see that
\[
h_{c}=\delta_{\Gamma}=\lim_{T\to\infty}\frac{1}{T}\log\#\{[\gamma]\in[\pi_{1}\Sigma];l_{h}(\gamma)\leq T\}<\infty.
\]
 Thus, by Theorem \ref{Thm: Sambarino}, there exists a positive Hölder
continuous maps $F_{c}$ on $T^{1}\Sigma$ such that the translation
flow defined by the Hölder cocycle $c$ is conjugated to the reparametrization
$\phi^{F_{c}}$ of the geodesic flow $\phi_{t}:T^{1}\Sigma\to T^{1}\Sigma$
by $F_{c}$. In particular, for all $[\gamma]\in[\pi_{1}\Sigma]$
\[
c(\gamma,\gamma_{\Sigma}^{+})=\int_{[\gamma]}F_{c}=l_{h}(\gamma),
\]
and the topological entropy of the flow $\phi^{F_{c}}$ is exactly
the exponential growth rate of $c$, i.e. $h_{F_{c}}=h_{c}.$

Notice that for the constant function $1$ on $T^{1}\Sigma$, we have
$l_{g}(\gamma)=\int_{[\gamma]}1$ for all $[\gamma]\in[\pi_{1}\Sigma]$.
Therefore, we have the pressure of the function $-h_{1}\cdot1$ is
zero, i.e. $P(-h_{1}\cdot1)=0$, where 
\[
h_{1}=\lim_{T\to\infty}\frac{1}{T}\log\#\{[\gamma]\in[\pi_{1}\Sigma];l_{g}(\gamma)\leq T\}
\]
is the topological entropy of the geodesic flow $\phi$ on $T^{1}\Sigma.$

From now on we denote $F_{c}$ by $F.$

\textbf{The second step} is to show that

\[
h(\Sigma)\leq\underset{c_{2}(\Sigma,M)}{\underbrace{\int F\mbox{d}\mu_{BM}}}\cdot h_{F},
\]
where $\mu_{BM}$ the Bowen-Margulis measure of the geodesic flow
$\phi:T^{1}\Sigma\to T^{1}\Sigma$.

Since 
\begin{alignat*}{1}
P(-h_{F}\cdot F) & =0=h(\mu_{-h_{F}F})-h_{F}\int F\mbox{d}\mu_{-h_{F}F}\\
P(-h(\Sigma)\cdot1) & =0=h(\mu_{BM})-h(\Sigma)\cdot\int1\mbox{d}\mu_{BM}=h(\mu_{BM})-h(\Sigma).
\end{alignat*}
where $\mu_{-h_{F}F}$ is the equilibrium state of $-h_{F}F$. Since
$\mu_{BM}\in\mathcal{M}^{\phi}$, by the variational principle we
have 
\[
P(-h_{F}\cdot F)=0\geq h(\mu_{BM})-h_{F}\int F\mbox{d}\mu_{BM}.
\]

Furthermore, 
\[
h_{F}\int F\mbox{d}\mu_{BM}\geq h(\mu_{BM})=h(\Sigma).
\]

\textbf{The third step} is to show the inequality 
\[
\underset{c_{1}(\Sigma,M)}{\underbrace{\int F\mbox{d}\mu_{-Fh_{F}}}}\cdot h_{F}\leq h(\Sigma).
\]

By Remark \ref{rem: pressure} , we have 
\begin{align*}
h(\Sigma) & \geq h(\mu_{-Fh_{F}})\\
\iff h(\Sigma)-h_{F}\int F\mbox{d}\mu_{-Fh_{F}} & \geq\underset{=0}{\underbrace{h(\mu_{-Fh_{F}})-h_{F}\int F\mbox{d}\mu_{-Fh_{F}}}}\\
\iff h(\Sigma) & \geq h_{F}\cdot\int F\mbox{d}\mu_{-Fh_{F}}.
\end{align*}

\textbf{The fourth step} is to show that $0\leq C_{1}(\Sigma,M)\leq1$
and $0\le C_{2}(\Sigma,M)\leq1.$

Because $C_{1}(\Sigma,M)=\int F\mbox{d}\mu_{-Fh_{F}}$ , $C_{2}(\Sigma,M)=\int F\mbox{d}\mu_{BM}$
and $F$ is positive, it is enough to show that $F$ could be chosen
to be smaller or equal than 1. 

\textbf{Claim:} $F\leq1$.

This is a consequence of Theorem \ref{Thm Nonnegative-Liv=000161ic-Theorem}.
For each conjugacy class $[\gamma]\in[\pi_{1}\Sigma]$ there exists
a unique closed geodesic $\tau_{\gamma}^{\Sigma}$ on $\Sigma$ such
that $l_{g}(\gamma)=l_{g}(\tau_{\gamma}^{\Sigma})$. Because $f$
is $\pi_{1}$--injective, $f$ maps $\tau_{\gamma}^{\Sigma}$ to a
closed curve $f(\tau_{\gamma}^{\Sigma})$ on $M$ which is in the
same free homotopy class generated by $[\gamma]$. More precisely,
let $\tau_{\gamma}^{M}$ denote the closed geodesic on $M$ in the
conjugacy class $[\gamma]$, then we know that $f(\tau_{\gamma}^{\Sigma})$
and $\tau_{\gamma}^{M}$ are in the same free homotopy class. Moreover,
because $g$ is the induced metric $f^{*}h$, we know that $(\Sigma,g)$
is Riemannian isometric to $(f(\Sigma),h)$. Thus, $l_{g}(\tau_{\gamma}^{\Sigma})=l_{h}(f(\tau_{\gamma}^{\Sigma}))$.
Therefore, $\forall$ $[\gamma]\in[\pi_{1}\Sigma]$, 
\begin{alignat*}{2}
l_{g}(\gamma) & = & l_{g}(\tau_{\gamma}^{\Sigma})=l_{h}(f(\tau_{\gamma}^{\Sigma}))\geq l_{h}(\tau_{\gamma}^{M})=l_{h}(\gamma).
\end{alignat*}
Therefore, for all $[\gamma]\in[\pi_{1}\Sigma]$ 
\[
\int_{[\gamma]}1=l_{g}(\gamma)\geq l_{h}(\gamma)=\int_{[\gamma]}F.
\]

By Theorem \ref{Thm Nonnegative-Liv=000161ic-Theorem}, we have $1-F$
is cohomologous to a nonnegative Hölder continuous function $H$,
and $H$ is unique up to cohomology. Thus, we have that $F\sim1-H$
and $1-H\leq1$. By choosing $F$ to be $1-H$, we now finish the
proof of this claim.

\medskip{}

\textbf{The fifth step} is to examine the equality cases. 

If $h(\Sigma)=h_{F}\int F\mbox{d}\mu_{-Fh_{F}}$, then $h(\Sigma)=h(\mu_{-Fh_{F}})$.
i.e., $\mu_{-Fh_{F}}$ is the equilibrium state of the constant function
$-h(\Sigma)\cdot1$. By the uniqueness part of Theorem \ref{Thm: unique eq state},
we have that $Fh_{F}$ is cohomologous to the constant $h(\Sigma)$,
i.e. $F\sim\frac{h(\Sigma)}{h_{F}}$. Similarly, if $h(\Sigma)=h_{F}\cdot\int F\mbox{d}\mu_{BM}$,
then $\mu_{BM}=\mu_{-h_{F}F}$. Hence, again, $h(\Sigma)\sim F\cdot h_{F}$,
i.e.$F\sim\frac{h(\Sigma)}{h_{F}}$.

\end{proof}

\begin{cor}
\label{cor:rigidity}If $h(\Sigma)=\delta_{\Gamma}$, then $\Sigma$
is a totally geodesic submanifold in $M$.
\end{cor}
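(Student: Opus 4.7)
The plan is to extract from the equality hypothesis the identity $l_{g}(\gamma)=l_{h}(\gamma)$ on all conjugacy classes, translate this into the statement that each closed geodesic of $\Sigma$ lifts to a geodesic axis of the corresponding element of $\Gamma$ in $\hy^{3}$, and then use density of closed geodesics in $T^{1}\Sigma$ to conclude that the second fundamental form of the immersion vanishes identically. To start: because $C_{2}(\Sigma,M)\leq 1$, the assumption $h(\Sigma)=\delta_{\Gamma}$ forces $C_{2}(\Sigma,M)=1$, so the upper bound in Theorem \ref{Thm: main-1} is an equality. The rigidity portion of that theorem then yields $l_{g}(\gamma)=l_{h}(\gamma)$ for every $[\gamma]\in[\pi_{1}\Sigma]$, since the proportion $\delta_{\Gamma}/h(\Sigma)$ equals $1$.

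Next, I would lift to the universal covers. For a non-trivial $\gamma\in\pi_{1}\Sigma$, let $\tilde{\tau}_{\gamma}\subset\widetilde{\Sigma}$ denote the axis of $\gamma_{\Sigma}$. Since $g=f^{*}h$, the lift $\tilde{f}:\widetilde{\Sigma}\to\hy^{3}$ is an isometric immersion, so $\tilde{f}(\tilde{\tau}_{\gamma})$ is a unit-speed, $\gamma_{M}$-invariant curve in $\hy^{3}$. Its length over one fundamental period equals $l_{g}(\gamma)=l_{h}(\gamma)$, which is the translation distance $d_{h}(x,\gamma_{M}x)$ for any $x$ on the image. A unit-speed curve joining two points at a distance equal to its length must be a geodesic segment, so $\tilde{f}(\tilde{\tau}_{\gamma})$ coincides with the axis of $\gamma_{M}$ in $\hy^{3}$.

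Finally, I would invoke density. Since the geodesic flow on $T^{1}\Sigma$ is transitive Anosov, closed geodesics are dense in $T^{1}\Sigma$; equivalently, axes of hyperbolic elements of $\Gamma_{\Sigma}$ are dense in the space of oriented bi-infinite geodesics of $\widetilde{\Sigma}$. Combining the previous paragraph with continuity of $\tilde{f}$ and the closedness of geodesics of $\hy^{3}$ under uniform convergence on compacta, every bi-infinite geodesic of $\widetilde{\Sigma}$ maps under $\tilde{f}$ to a geodesic of $\hy^{3}$, which is exactly the vanishing of the second fundamental form of $f$ and hence the totally geodesic conclusion. The main obstacle is this density-to-continuity step: one must verify that the dense family of axes really forces the ``geodesic-preserving'' property in every tangent direction at every point. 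If needed, this can be handled via a contradiction argument: were $B_{p}(v_{0},v_{0})\neq 0$ at some $(p,v_{0})\in T^{1}\Sigma$, then by smoothness of $B$ an open neighborhood of $(p,v_{0})$ in $T^{1}\Sigma$ would fail the geodesic-preserving property, contradicting density of closed geodesic directions.
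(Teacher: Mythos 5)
Your proposal is correct and takes essentially the same route as the paper: the hypothesis forces $C_{2}(\Sigma,M)=1$, the rigidity clause of the main theorem gives $l_{g}(\gamma)=l_{h}(\gamma)$ for every conjugacy class, and then density of closed-geodesic directions in $T^{1}\Sigma$ together with continuity (and polarization) of the second fundamental form yields $B\equiv 0$, i.e.\ $\Sigma$ is totally geodesic. Your lifted translation-length sandwich argument simply makes explicit the intermediate step the paper leaves implicit (that equal lengths force the image of each closed geodesic of $\Sigma$ to be the corresponding closed geodesic of $M$), so the two proofs agree in substance.
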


\begin{proof}
Notice $h(\Sigma)=\delta_{\Gamma}$ implies $F=1$. This means that
the length of each closed geodesic on $\Sigma$ has the same length
with the corresponding closed geodesic on $M$. Furthermore, we know
that the closed geodesics in $\Sigma$ are dense. In the sense that,
for any point $p\in\Sigma$, the set of tangent vectors $v\in T_{p}\Sigma$
such that the exponential map $\exp_{p}tv$ gives a closed geodesic
is dense in $T{}_{p}\Sigma$. Therefore, the shape operator $S_{g}$
is zero when evaluating on this dense subset of vectors on $T_{p}\Sigma$.
By the continuity of the shape operator $S_{g}$, we have $S_{g}\equiv0$.
Therefore $\Sigma$ is totally geodesic in $M$. 
\end{proof}

\section{Geometric meaning of $C_{1}(\Sigma,M)$ and $C_{2}(\Sigma,M)$\label{sec:Geodesic-Stretch}}

\textsf{Throughout this section we keep the same setting as in the
last section: $f:\Sigma\to M$ is a $\pi_{1}-$injective immersion
from a compact surface $\Sigma$ to a hyperbolic 3-manifold $M$,
$\pi_{1}\Sigma\cong\Gamma$ is the subgroup of $\mathrm{Isom}(\hy^{3})$
induced by $f$ and $h$ is the given hyperbolic metric on $M$, and
assuming that $(\Sigma,f^{*}h)$ is negatively curved and $\Gamma$
is convex cocompact. In this section we will discuss the geometric
meaning of these two constants. When $f$ is an immersion, these two
constants could be regarded as averages of lengths of closed geodesics;
moreover when $f$ is an embedding, we show that these two constants
are exactly the geodesic stretches defined imitating \cite{Anonymous:1995bn}. }

\subsection{Immersion}

\textsf{First, using the equidistribution property of Gibbs measures,
we can understand $C_{1}(\Sigma,M)$ and $C_{2}(\Sigma,M)$ by averages
of the length of closed geodesics with respect to different metrics.}

\begin{thm}\label{thm: c1c2}

Let $\mu_{BM}$ be the Bowen-Margulis measure of the geodesic flow
$\phi:T^{1}\Sigma\to T^{1}\Sigma$ and $\mu_{-h_{F}F}$ be the Gibbs
measure for $-h_{F}F$ defined in Theorem \ref{Thm: main-1}. Then 

\[
C_{2}(\Sigma,M)=\int F\mbox{d}\mu_{BM}=\lim_{T\to\infty}\frac{1}{\#R_{T}(g)}\sum_{[\gamma]\in R_{T}(g)}\frac{l_{h}(\gamma)}{l_{g}(\gamma)}=\lim_{T\to\infty}\frac{{\displaystyle \sum_{[\gamma]\in R_{T}(g)}l_{h}(\gamma)}}{{\displaystyle \sum_{[\gamma]\in R_{T}(g)}l_{g}(\gamma)}}
\]

and

\[
C_{1}(\Sigma,M)=\int F\mbox{d}\mu_{-h_{F}F}=\left(\lim_{T\to\infty}\frac{1}{\#R_{T}(h)}\sum_{[\gamma]\in R_{T}(h)}\frac{l_{g}(\gamma)}{l_{h}(\gamma)}\right)^{-1}=\lim_{T\to\infty}\frac{{\displaystyle \sum_{[\gamma]\in R_{T}(h)}l_{h}(\gamma)}}{{\displaystyle \sum_{[\gamma]\in R_{T}(h)}l_{g}(\gamma)}}
\]
where 
\[
R_{T}(g):=\{[\gamma]\in[\pi_{1}\Sigma]:\mbox{ }l_{g}(\gamma)\leq T\}\mbox{ and }R_{T}(h):=\{[\gamma]\in[\pi_{1}\Sigma]:\mbox{ }l_{h}(\gamma)\leq T\}.
\]

\end{thm}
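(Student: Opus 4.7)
The plan is to apply the equidistribution theorem (Theorem~\ref{Thm: Equidistribution}) twice: once to the geodesic flow $\phi$ on $T^{1}\Sigma$ to obtain the $C_{2}$ formulas, and once to its reparametrization $\phi^{F}$ to obtain the $C_{1}$ formulas. Throughout, we use the identities
\[
\langle\delta_{\tau_{\gamma}},1\rangle=l_{g}(\gamma),\qquad \langle\delta_{\tau_{\gamma}},F\rangle=l_{h}(\gamma),
\]
established in the proof of Theorem~\ref{Thm: main-1}, where $\tau_{\gamma}$ is the closed $\phi$--orbit corresponding to $[\gamma]\in[\pi_{1}\Sigma]$.

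For $C_{2}$, since $\phi$ is a transitive Anosov flow on $T^{1}\Sigma$, Theorem~\ref{Thm: Equidistribution} applied with $G=F$ gives directly
\[
\int F\,\mbox{d}\mu_{BM}=\lim_{T\to\infty}\frac{1}{\#R_{T}(1)}\sum_{\tau\in R_{T}(1)}\frac{\langle\delta_{\tau},F\rangle}{\langle\delta_{\tau},1\rangle}=\lim_{T\to\infty}\frac{{\displaystyle \sum_{\tau\in R_{T}(1)}\langle\delta_{\tau},F\rangle}}{{\displaystyle \sum_{\tau\in R_{T}(1)}\langle\delta_{\tau},1\rangle}}.
\]
Under the bijection $[\gamma]\leftrightarrow\tau_{\gamma}$, the set $R_{T}(1)$ corresponds to $R_{T}(g)$, and the two formulas for $C_{2}$ fall out.

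For $C_{1}$, the reparametrization $\phi^{F}$ is topologically conjugate to a transitive Anosov flow (its periods coincide with the hyperbolic lengths, and the conjugating map of Theorem~\ref{Thm: Sambarino} is H\"older), so equidistribution applies to $\phi^{F}$. By Bowen's formula (Theorem~\ref{thm: bowen's formula}), the measure of maximal entropy of $\phi^{F}$ is $\widehat{F\cdot\mu_{-h_{F}F}}$. The closed $\phi^{F}$--orbit corresponding to $[\gamma]$ has $\phi^{F}$--period $\langle\delta_{\tau_{\gamma}},F\rangle=l_{h}(\gamma)$, so the set of closed orbits with $\phi^{F}$--period $\leq T$ is in bijection with $R_{T}(h)$. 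The key computation is, for $G=1/F$,
\[
\langle\delta_{\tau_{\gamma}}^{\phi^{F}},1/F\rangle=\int_{0}^{l_{h}(\gamma)}\frac{1}{F}\big(\phi^{F}_{s}x\big)\mbox{d}s=l_{g}(\gamma),
\]
obtained by substituting $s=\kappa(x,u)$, $\mbox{d}s=F(\phi_{u}x)\mbox{d}u$ and using $\phi^{F}_{\kappa(x,u)}x=\phi_{u}x$, so that the integrand collapses to $1$ and the range becomes $[0,l_{g}(\gamma)]$. Equidistribution for $\phi^{F}$ with $G=1/F$ then yields
\[
\frac{1}{\int F\mbox{d}\mu_{-h_{F}F}}=\widehat{F\cdot\mu_{-h_{F}F}}(1/F)=\lim_{T\to\infty}\frac{1}{\#R_{T}(h)}\sum_{[\gamma]\in R_{T}(h)}\frac{l_{g}(\gamma)}{l_{h}(\gamma)}=\lim_{T\to\infty}\frac{{\displaystyle \sum_{[\gamma]\in R_{T}(h)}l_{g}(\gamma)}}{{\displaystyle \sum_{[\gamma]\in R_{T}(h)}l_{h}(\gamma)}}.
\]
Taking reciprocals recovers both expressions for $C_{1}(\Sigma,M)$.

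The only non-routine step is the change-of-variables identity $\langle\delta_{\tau_{\gamma}}^{\phi^{F}},1/F\rangle=l_{g}(\gamma)$ and the verification that the equidistribution theorem legitimately transfers to $\phi^{F}$ via the H\"older conjugacy of Theorem~\ref{Thm: Sambarino}; once these are in hand, everything else is a direct substitution.
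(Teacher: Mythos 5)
Your proposal is correct and follows essentially the same route as the paper: equidistribution for $\phi$ with $G=F$ gives the $C_{2}$ formulas, while Bowen's formula identifying $\widehat{F\cdot\mu_{-h_{F}F}}$ as the measure of maximal entropy of $\phi^{F}$, the change-of-variables identity $\langle\delta_{\tau}^{F},1/F\rangle=l_{g}(\gamma)$, and the bijection between $R_{T}(F)$ and $R_{T}(h)$ give the $C_{1}$ formulas after taking reciprocals. The only difference is that you flag the transfer of equidistribution to $\phi^{F}$ via the H\"older conjugacy as a point to verify, which the paper applies without comment.
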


\begin{proof}
This is a consequence of the equidistribution theorem (Theorem \ref{Thm: Equidistribution}).

By Theorem \ref{Thm: Equidistribution}, we have 
\begin{alignat*}{1}
C_{2}(\Sigma,M)=\int F\mbox{d}\mu_{BM} & =\lim_{T\to\infty}\frac{1}{\#R_{T}(1)}\sum_{\tau\in R_{T}(1)}\frac{\langle\delta_{\tau},F\rangle}{\langle\delta_{\tau},1\rangle}=\lim_{T\to\infty}\frac{{\displaystyle \sum_{\tau\in R_{T}(1)}\langle\delta_{\tau},F\rangle}}{{\displaystyle \sum_{\tau\in R_{T}(1)}\langle\delta_{\tau},1\rangle}}.
\end{alignat*}
Notice that every closed orbit $\tau$ of the geodesic flow $\phi$
on $T^{1}\Sigma$ corresponds to a unique conjugacy class $[\gamma^{\tau}]$
of $\pi_{1}\Sigma$, and vice versa. Moreover, the period of $\tau$
is the length of $\gamma^{\tau}$ on $\Sigma$, i.e. 
\[
l_{h}(\gamma^{\tau})=\langle\delta_{\tau},F\rangle,\mbox{ }l_{g}(\gamma^{\tau})=\langle\delta_{\tau},1\rangle.
\]
 Since there is an one-to-one correspondence between $R_{T}(1)$ and
$R_{T}(g),$ we can rewrite the equation above by

\begin{alignat*}{1}
C_{2}(\Sigma,M)=\int F\mbox{d}\mu_{BM} & =\lim_{T\to\infty}\frac{1}{\#R_{T}(g)}\sum_{[\gamma]\in R_{T}(g)}\frac{l_{h}(\gamma)}{l_{g}(\gamma)}=\lim_{T\to\infty}\frac{{\displaystyle \sum_{[\gamma]\in R_{T}(g)}l_{h}(\gamma)}}{{\displaystyle \sum_{[\gamma]\in R_{T}(g)}l_{g}(\gamma)}}.
\end{alignat*}

For the other equation, by Theorem \ref{thm: bowen's formula}, we
know that $\mu_{\phi^{F}}=\widehat{F.\mu_{-h_{F}F}}$. Therefore 
\[
\mu_{\phi^{F}}(\frac{1}{F})=\widehat{F.\mu_{-h_{F}F}}(\frac{1}{F})=\frac{\int(\frac{1}{F})\cdot F\mbox{d}\mu_{-Fh_{F}}}{\int F\mbox{d}\mu_{-Fh_{F}}}=\frac{1}{\int F\mbox{d}\mu_{-Fh_{F}}}.
\]
By Theorem \ref{Thm: Equidistribution}, we have 
\[
\mu_{\phi^{F}}(\frac{1}{F})=\lim_{T\to\infty}\frac{1}{\#R_{T}(F)}\sum_{\tau'\in R_{T}(F)}\frac{\langle\delta_{\tau'}^{F},\frac{1}{F}\rangle}{\langle\delta_{\tau'}^{F},1\rangle}=\lim_{T\to\infty}\frac{{\displaystyle \sum_{\tau'\in R_{T}(F)}\langle\delta_{\tau'}^{F},\frac{1}{F}\rangle}}{{\displaystyle \sum_{\tau'\in R_{T}(F)}\langle\delta_{\tau'}^{F},1\rangle}}.
\]
Notice that for a closed geodesic $\tau'$ of the geodesic flow $\phi:T^{1}\Sigma\to T^{1}\Sigma$,
$\langle\delta_{\tau'}^{F},\frac{1}{F}\rangle=\int_{0}^{l_{g}(\tau')}\frac{1}{F(\phi_{t})}\cdot F(\phi_{t})dt=l_{g}(\tau')$
and similarly $\langle\delta_{\tau'}^{F},F\rangle=\int_{0}^{l_{g}(\tau')}F(\phi_{t})dt=l_{h}(\tau')$.
By the one-to-one correspondence between closed orbit $\tau'$ and
conjugacy class $[\gamma^{\tau'}]$, we have an one-to-one correspondence
between $R_{T}(F)$ and $R_{T}(h)$. 

Hence, we have the following equation:
\[
C_{1}(\Sigma,M)=\int F\mbox{d}\mu_{-h_{F}F}=\left(\mu_{\phi^{F}}(\frac{1}{F})\right)^{-1}=\left(\lim_{T\to\infty}\frac{1}{\#R_{T}(h)}\sum_{[\gamma]\in R_{T}(h)}\frac{l_{g}(\gamma)}{l_{h}(\gamma)}\right)^{-1}=\lim_{T\to\infty}\frac{{\displaystyle \sum_{[\gamma]\in R_{T}(h)}l_{h}(\gamma)}}{{\displaystyle \sum_{[\gamma]\in R_{T}(h)}l_{g}(\gamma)}}
\]

\end{proof}

\begin{rem}
:\begin{itemize}

\item[1.]From this result, although we don't understand the measure
$\mu_{-Fh_{F}}$ much, we still see that the integral $\int F\mbox{d}\mu_{-Fh_{F}}$
is exactly $\int G\mbox{d}\mu_{BM}^{'}$ where $G$ is the reparametrization
function that we get when we reparametrize the geodesic flow $\psi$
on $T^{1}M$ to conjugate the geodesic flow $\phi_{t}$ on $T^{1}\Sigma$,
and $\mu_{BM}^{'}$ is the Bowen-Margulis measure of $\psi$.

\item[2.] From above expression of $C_{1}(\Sigma,M)$ and $C_{2}(\Sigma,M)$,
we can also see $C_{1}(\Sigma,M)\leq1$ and $C_{2}(\Sigma,M)\leq1$.
It is because for each $[\gamma]\in[\pi_{1}\Sigma]$, we know $l_{g}(\gamma)\geq l_{h}(\gamma)$
(cf. step 4 in the proof of Theorem \ref{Thm: main-1}). 

\end{itemize}
\end{rem}

\subsection{Embedding}

\textsf{In this subsection, we will assume that $f:\Sigma\to M$ is
an embedding. To state our results more precisely and to put it in
context, we first introduce the geodesic stretch and discuss the relation
between the geodesic stretch, $C_{1}(\Sigma,M)$ and $C_{2}(\Sigma,M)$. }

\textsf{Notice that, we can lift $f:\Sigma\to M$ to an embedding
between their universal coverings, i.e. $\widetilde{f}:\us\to\widetilde{M}=\hy^{3}$.
Moreover, one can easily check that this lifting is $\pi_{1}\Sigma$-equivariant.
Specifically, for each $\gamma\in\pi_{1}\Sigma$, let $\gamma_{\Sigma}\in\Gamma_{\Sigma}$
and $\gamma_{M}\in\Gamma$ be the corresponding element of $\gamma$
in the deck transformation groups $\Gamma_{\Sigma}\subset\Isom\us$
and $\Gamma\subset\Isom(\hy^{3})$, respectively. Then for each $\widetilde{x}\in\us$
we have 
\[
\widetilde{f}(\gamma\cdot\widetilde{x}):=\widetilde{f}(\gamma_{\Sigma}(\widetilde{x}))=\gamma_{M}(\widetilde{f}(\widetilde{x}))=:\gamma\cdot\widetilde{f}(\widetilde{x}).
\]
 Using this embedding $\widetilde{f}:\widetilde{\Sigma}\to\hy^{3}$
we can define a tangent map $\mathbf{f}:T^{1}\widetilde{\Sigma}\to T^{1}\hy^{3}$
by 
\[
\mathbf{f}:(\widetilde{x}_{0},w)\mapsto(\widetilde{f}(\widetilde{x_{0}}),d\widetilde{f}_{\widetilde{x_{0}}}(w))
\]
 where $\widetilde{x_{0}}\in\widetilde{\Sigma}$ and $w$ is a unit
vector on the tangent plane $T_{\widetilde{x}_{0}}\widetilde{\Sigma}$.
Notice that $\pi_{1}\Sigma$ acts on $T^{1}\us$ and $T^{1}\hy^{3}$
in an obvious way. Thus $\mathbf{f}$ is also $\pi_{1}\Sigma$--equivariant.
More precisely, $\gamma\cdot\mathbf{f}(\widetilde{x_{0}},w)=(\gamma\cdot\widetilde{f}(\widetilde{x_{0}}),d\widetilde{f}_{\widetilde{x_{0}}}(w))=(\widetilde{f}(\gamma\cdot x_{0}),d\widetilde{f}_{\widetilde{x_{0}}}(w))=\mathbf{f}(\gamma\cdot(\widetilde{x_{0}},w))$.}

\medskip{}

\textsf{The following lemma depicts a key feature of the embedding
$f:\Sigma\to M$.}

\begin{lem}
\label{Lemma: embedding quasi-iso}$(\widetilde{\Sigma},d_{g})$ is
quasi-isometric to $(\widetilde{f}(\widetilde{\Sigma}),d_{h})\subset(\hy^{3},d_{h})$
where $d_{g}$ is the distance on $\widetilde{\Sigma}$ induced by
$g$ and $d_{h}$ is the hyperbolic distance on $\hy^{3}$.\end{lem}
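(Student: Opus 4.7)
The plan is to establish the two quasi-isometry inequalities separately. The easy direction is $d_h(\widetilde{f}(x), \widetilde{f}(y)) \le d_g(x,y)$. Since $g = f^*h$, the differential of $\widetilde{f}$ is a fiberwise linear isometry on tangent vectors, so for any path $c:[0,1]\to\widetilde{\Sigma}$ we have $\mathrm{length}_h(\widetilde{f}\circ c) = \mathrm{length}_g(c)$. Applying this to the $g$-geodesic from $x$ to $y$ gives a curve in $\hy^3$ from $\widetilde{f}(x)$ to $\widetilde{f}(y)$ of $h$-length $d_g(x,y)$, and the ambient hyperbolic distance is no more than this.

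For the reverse direction, the idea is that neither $(\widetilde{\Sigma}, d_g)$ nor $(\widetilde{f}(\widetilde{\Sigma}), d_h)$ is compared directly to the other; instead both are compared to the word metric on $\pi_1\Sigma$ through the \v{S}varc--Milnor lemma (Theorem \ref{thm:Svarc-milnor}). Fix $\widetilde{x}_0\in\widetilde{\Sigma}$ and let $y_0 := \widetilde{f}(\widetilde{x}_0)\in\hy^3$. Since $\pi_1\Sigma = \Gamma_\Sigma$ acts properly discontinuously and cocompactly on the pinched Hadamard manifold $(\widetilde{\Sigma}, d_g)$, the orbit map $\gamma\mapsto \gamma\cdot\widetilde{x}_0$ is a quasi-isometry from $(\pi_1\Sigma, |\cdot|_S)$ to $(\widetilde{\Sigma}, d_g)$, where $S$ is a finite generating set. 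On the other side, $\Gamma\cong\pi_1\Sigma$ acts convex cocompactly on $\hy^3$, so for any point $z_0\in\mathrm{Conv}(\Lambda(\Gamma))$ the orbit map $\gamma\mapsto\gamma\cdot z_0$ is a quasi-isometry onto $\mathrm{Conv}(\Lambda(\Gamma))$ with the restricted metric $d_h$. Because $f(\Sigma)$ is compact, $y_0$ lies within some bounded $d_h$-distance of $\mathrm{Conv}(\Lambda(\Gamma))$, so the orbit map $\gamma\mapsto\gamma\cdot y_0$ is still a quasi-isometric embedding of $(\pi_1\Sigma, |\cdot|_S)$ into $(\hy^3, d_h)$. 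Combining the two quasi-isometries gives
\[
  d_g(\widetilde{x}_0, \gamma\cdot\widetilde{x}_0) \;\asymp\; |\gamma|_S \;\asymp\; d_h(y_0, \gamma\cdot y_0)
\]
for all $\gamma\in\pi_1\Sigma$.

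To pass from orbit points to arbitrary $x,y\in\widetilde{\Sigma}$, I would use compactness of $\Sigma$ to pick $R>0$ so that the orbit $\pi_1\Sigma\cdot\widetilde{x}_0$ is $R$-dense in $(\widetilde{\Sigma}, d_g)$. Choose $\gamma_x,\gamma_y\in\pi_1\Sigma$ with $d_g(x,\gamma_x\widetilde{x}_0), d_g(y,\gamma_y\widetilde{x}_0)\le R$. The equivariance $\widetilde{f}(\gamma\cdot\widetilde{x}) = \gamma\cdot\widetilde{f}(\widetilde{x})$ combined with the first-step inequality gives $d_h(\widetilde{f}(x),\gamma_x\cdot y_0), d_h(\widetilde{f}(y),\gamma_y\cdot y_0) \le R$ as well. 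Two triangle inequalities then reduce both $d_g(x,y)$ and $d_h(\widetilde{f}(x),\widetilde{f}(y))$ to the comparable quantities $d_g(\widetilde{x}_0, \gamma_x^{-1}\gamma_y\widetilde{x}_0)$ and $d_h(y_0, \gamma_x^{-1}\gamma_y\cdot y_0)$, modulo the additive constant $2R$. Plugging into the orbit comparison above yields the lower bound $d_g(x,y) \le C\cdot d_h(\widetilde{f}(x),\widetilde{f}(y)) + L$ for uniform constants.

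The main obstacle is that $(\widetilde{f}(\widetilde{\Sigma}), d_h)$ is not intrinsically a proper geodesic space (the chord distance is not a length metric), so \v{S}varc--Milnor cannot be invoked directly on this subspace; the argument routes around this by taking both comparisons to the common reference $(\pi_1\Sigma, |\cdot|_S)$ via the convex hull $\mathrm{Conv}(\Lambda(\Gamma))$, which requires the convex cocompactness hypothesis on $\Gamma$ to ensure that orbit maps into $\hy^3$ are quasi-isometric embeddings.
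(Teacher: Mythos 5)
Your argument is correct, and at the top level it follows the same strategy as the paper: compare both $(\widetilde{\Sigma},d_{g})$ and the image in $\hy^{3}$ to the word metric on $\pi_{1}\Sigma$ via the \v{S}varc--Milnor lemma (Theorem \ref{thm:Svarc-milnor}). The difference is in how the $\hy^{3}$ side is handled, and here your implementation is actually more careful than the paper's. The paper's proof asserts that $(\widetilde{f}(\widetilde{\Sigma}),d_{h})$ is itself a proper geodesic space on which $\Gamma$ acts cocompactly and applies \v{S}varc--Milnor directly to it; as you point out, the chord metric $d_{h}$ restricted to the image is not a length metric (hyperbolic geodesics between surface points generally leave the surface), so that invocation is not literally justified unless $\Sigma$ is totally geodesic. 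You route around this by applying \v{S}varc--Milnor to $\mathrm{Conv}(\limitset)$ with the restricted metric, which genuinely is a proper geodesic space with a cocompact $\Gamma$-action (this is where convex cocompactness enters, exactly as in Lemma \ref{lem: quasi-iso q}), and then transfer to the basepoint $y_{0}=\widetilde{f}(\widetilde{x}_{0})$ by the bounded distance to the hull. Your two additional ingredients --- the $1$-Lipschitz inequality $d_{h}(\widetilde{f}(x),\widetilde{f}(y))\leq d_{g}(x,y)$ coming from $g=f^{*}h$ (the same fact the paper uses in Step 4 of Theorem \ref{Thm: main-1}), and the promotion from orbit points to arbitrary points using $R$-density of the orbit, equivariance of $\widetilde{f}$, and triangle inequalities --- supply exactly the quantitative two-sided estimate that the paper's one-line parenthetical ``both are quasi-isometric to the Cayley graph'' leaves implicit. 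In short: same skeleton, but your version repairs the questionable geodesicity claim and makes the quasi-isometry constants explicit, at the cost of a longer argument.
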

\begin{proof}
Because $\widetilde{f}$ is an embedding and $\pi_{1}\Sigma-$equivariant,
we know that $(\widetilde{f}(\widetilde{\Sigma}),d_{h})$ is a proper
geodesic space and $\Gamma\in\Isom(\widetilde{f}(\widetilde{\Sigma}))\subset\Isom(\hy^{3})$
acts properly discontinuously and compactly on $\widetilde{f}(\widetilde{\Sigma})$.
Hence, by Theorem \ref{thm:Svarc-milnor} (Švarc-Milnor lemma), $(\widetilde{\Sigma},d_{g})$
is quasi-isometric $(\widetilde{f}(\widetilde{\Sigma}),d_{h})$. (Because
$(\widetilde{\Sigma},d_{g})$ and $(\widetilde{f}(\widetilde{\Sigma}),d_{h})$
are both quasi-isometric to the Cayley graph of $\pi_{1}\Sigma$ with
a word metric.)
\end{proof}

\begin{defn}
For all $v\in T^{1}\widetilde{\Sigma}$ and $t>0,$ we define
\[
a(v,t):=d_{h}(\pi\circ\mathbf{f}(v),\pi\circ\mathbf{f}\circ\widetilde{\phi_{t}}(v)),
\]
where $\pi:T^{1}\widetilde{\Sigma}\to\widetilde{\Sigma}$ is the natural
projection and $\widetilde{\phi}$ is the lift of $\phi.$ 
\end{defn}

\begin{rem}
$a(v,t)$ is $\pi_{1}\Sigma$--invariant, because $\mathbf{f}$ is
$\pi_{1}\Sigma$--equivariant and $\pi_{1}\Sigma$ is acting on $\widetilde{\Sigma}$
via $\Gamma_{\Sigma}\subset\Isom(\us)$.
\end{rem}

\begin{lem}
For all $v\in T^{1}\widetilde{\Sigma}$ and $t_{1},t_{2}>0,$

\[
a(v,t_{1}+t_{2})\leq a(v,t_{1})+a(\widetilde{\phi_{t_{1}}}(v),t_{2}).
\]
\end{lem}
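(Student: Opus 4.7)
The plan is to unpack the definition of $a(v,t)$ and reduce the claimed inequality to the ordinary triangle inequality for the hyperbolic distance $d_h$ on $\hy^3$ applied to three carefully chosen points.

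First, I would use the flow (cocycle) property of the geodesic flow, namely $\widetilde{\phi}_{t_1+t_2} = \widetilde{\phi}_{t_2} \circ \widetilde{\phi}_{t_1}$, to rewrite the endpoint appearing in $a(v, t_1+t_2)$. Concretely, set
\[
x := \pi \circ \mathbf{f}(v), \qquad y := \pi \circ \mathbf{f} \circ \widetilde{\phi}_{t_1}(v), \qquad z := \pi \circ \mathbf{f} \circ \widetilde{\phi}_{t_1+t_2}(v).
\]
By definition $a(v, t_1+t_2) = d_h(x,z)$ and $a(v,t_1) = d_h(x,y)$. For the remaining term, the flow property gives
\[
\widetilde{\phi}_{t_2}(\widetilde{\phi}_{t_1}(v)) = \widetilde{\phi}_{t_1+t_2}(v),
\]
so that
\[
a(\widetilde{\phi}_{t_1}(v), t_2) = d_h\bigl(\pi\mathbf{f}(\widetilde{\phi}_{t_1}(v)), \pi\mathbf{f}(\widetilde{\phi}_{t_2}\widetilde{\phi}_{t_1}(v))\bigr) = d_h(y,z).
\]

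Second, I would apply the triangle inequality for the hyperbolic metric to the three points $x, y, z \in \hy^3$:
\[
d_h(x,z) \leq d_h(x,y) + d_h(y,z).
\]
Substituting the identifications above yields exactly $a(v, t_1+t_2) \leq a(v, t_1) + a(\widetilde{\phi}_{t_1}(v), t_2)$, which is the desired subadditivity.

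I do not expect any genuine obstacle: the statement is essentially a restatement of the triangle inequality once the flow semigroup property is invoked. The only point worth stating carefully is the flow property $\widetilde{\phi}_{t_1+t_2} = \widetilde{\phi}_{t_2} \circ \widetilde{\phi}_{t_1}$, which holds because $\widetilde{\phi}$ is a lift of the geodesic flow $\phi$ on $T^1 \Sigma$ and lifts of flows remain flows. This lemma is the natural cocycle-type estimate that will later let one interpret $\lim_{t\to\infty} a(v,t)/t$ (via the subadditive ergodic theorem applied along orbits) as the geodesic stretch, which is ultimately how one identifies $C_1(\Sigma,M)$ and $C_2(\Sigma,M)$ with intersection numbers in Theorem \ref{Thm: geodesic stetch}.
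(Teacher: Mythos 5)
Your proof is correct and matches the paper's argument, which simply invokes the triangle inequality for $d_h$; you have merely spelled out the flow property $\widetilde{\phi}_{t_1+t_2}=\widetilde{\phi}_{t_2}\circ\widetilde{\phi}_{t_1}$ and the three points explicitly, which the paper leaves implicit.
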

\begin{proof}
It's easy consequence of the triangle inequality of $d_{h}$.
\end{proof}

\textsf{The following corollary is a consequence of Kingman's sub-additive
ergodic theorem \cite{Kingman:1973uo}.}
\begin{cor}
\label{Cor subadd}Let $\mu$ be a $\phi_{t}-$invariant probablity
measure on $T^{1}\Sigma$. Then for $\mu-a.e.$ $v\in T^{1}\Sigma$
\[
I_{\mu}(\Sigma,M,v):=\lim_{t\to\infty}\frac{a(v,t)}{t},
\]
 and defines a $\mu-$integrable function on $T^{1}\Sigma$, invariant
under the geodesic flow $\phi_{t}$.\end{cor}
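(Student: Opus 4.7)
The plan is to verify the hypotheses of Kingman's subadditive ergodic theorem and then apply it. The preceding lemma has already established the subadditivity estimate
\[
a(v, t_1 + t_2) \leq a(v, t_1) + a(\widetilde{\phi}_{t_1} v, t_2),
\]
so $t \mapsto a(v, t)$ is a nonnegative subadditive cocycle over the flow $\widetilde{\phi}$. Moreover, the preceding remark records that $a(\cdot, t)$ is $\pi_1\Sigma$-invariant, hence descends to a subadditive cocycle on $T^1\Sigma$ over $\phi_t$. What remains is to check the integrability hypothesis of Kingman's theorem.

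For this I would invoke Lemma \ref{Lemma: embedding quasi-iso}, which supplies constants $C, L > 0$ such that $d_h(\widetilde{f}(x), \widetilde{f}(y)) \leq C\, d_g(x, y) + L$ for all $x, y \in \widetilde{\Sigma}$. Since $\widetilde{\Sigma}$ is a pinched Hadamard manifold, the geodesic issued from $v$ minimizes $d_g$, so $d_g(\pi v, \pi \widetilde{\phi}_t v) = t$. Combining these,
\[
a(v, t) = d_h\bigl(\pi \circ \mathbf{f}(v),\; \pi \circ \mathbf{f} \circ \widetilde{\phi}_t(v)\bigr) \leq C\,t + L.
\]
In particular $\int_{T^1\Sigma} a(\cdot, 1)\, d\mu \leq C + L < \infty$ for every $\phi$-invariant probability measure $\mu$, giving the required integrability.

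With subadditivity and integrability in hand, the continuous-time version of Kingman's subadditive ergodic theorem yields, for $\mu$-a.e.\ $v \in T^1\Sigma$, the existence of the limit $I_\mu(\Sigma, M, v) := \lim_{t \to \infty} a(v, t)/t$, together with its $\phi_t$-invariance and $\mu$-integrability. I do not anticipate a serious obstacle. The only minor care needed is the passage to continuous time: if one prefers to quote Kingman in its discrete formulation, one applies it to the time-one map $\phi_1$ with the cocycle $n \mapsto a(v, n)$ and then interpolates using the uniform bound $0 \leq a(v, t) - a(v, \lfloor t \rfloor) \leq C + L$ from the estimate above, so the interpolation error is negligible after division by $t$.
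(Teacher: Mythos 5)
Your proof is correct and follows essentially the same route as the paper: both reduce to Kingman's subadditive ergodic theorem applied to the time-one map of the flow, using the subadditivity lemma, and differ only in how the $L^{1}$ hypothesis is checked. The paper bounds $\sup_{0\leq t\leq1}a(\cdot,t)$ via compactness of $T^{1}\Sigma$ together with the $\pi_{1}\Sigma$-invariance of $a$, while you use the quasi-isometry estimate $a(v,t)\leq Ct+L$ (in fact $a(v,t)\leq t$ already holds, since $g=f^{*}h$ makes ambient distance no larger than intrinsic distance); either verification is fine.
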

\begin{proof}
Kingman's original theorem works on measure preserving transformations;
nevertheless, it works on flow as well. More precisely, for flows,
we consider the time one map to be the measure preserving transformation.
Thus, the only condition that we need is ${\displaystyle \sup\{a(v,t);v\in T^{1}\widetilde{\Sigma},\mbox{ }0\leq t\leq1\}\in L^{1}(\mu)}$.
Notice that we can always translate $v\in T^{1}\widetilde{\Sigma}$
to a fixed copy of $T^{1}\Sigma$ in $T^{1}\widetilde{\Sigma}$ by
a deck transformation. Because $T^{1}\Sigma$ is compact and $a(v,t)$
is $\pi_{1}\Sigma$--invariant, we have that $\sup\{a(v,t);v\in T^{1}\widetilde{\Sigma},\mbox{ }0\leq t\leq1\}$
is bounded. 
\end{proof}

\medskip{}

\textsf{From the above corollary, we can define the geodesic stretch
as the following. }

\begin{defn}
The \textit{geodesic stretch} $I_{\mu}(\Sigma,M)$ of $\Sigma$ relative
to $M$ and a $\phi_{t}-$invariant probablity measure $\mu$, i.e.
$\mu\in\mathcal{M}^{\phi}$, is defined as 
\[
I_{\mu}(\Sigma,M):=\int_{T^{1}\Sigma}I_{\mu}(\Sigma,M,v)\mbox{d}\mu.
\]
 
\end{defn}

\begin{rem}
If $\mu\in\mathcal{M}^{\phi}$ is ergodic, then $I_{\mu}(\Sigma,M)={\displaystyle \lim_{t\to\infty}\frac{a(v,t)}{t}}$
for $\mu$--a.e. $v\in T^{1}\Sigma$.
\end{rem}

\textsf{Since $f:(\us,d_{g})\to(f(\us),d_{h})$ is a quasi-isometry,
by Theorem \ref{thm: Bourdon} we know that $f$ extends to a bi-Hölder
map between $\vbdy\us$ and $\vbdy f(\us)=\limitset$. By the same
discussion as in Lemma \ref{Lem: closed geo to closed geo}, we know
that $f$ maps the attracting (repelling) fixed point $\gamma_{\Sigma}^{+}$
($\gamma_{\Sigma}^{-}$) of $\gamma_{\Sigma}\in\Gamma_{\Sigma}$ to
the corresponding attracting (repelling) fixed point $\gamma_{M}^{+}$
($\gamma_{M}^{-}$) of $\gamma_{M}\in\Gamma$. }

\textsf{Moreover, each conjugacy class $[\gamma]\in[\pi_{1}\Sigma]$
corresponds to a unique closed geodesic $\tau_{\gamma}^{\Sigma}$
on $\Sigma$ and $\tau_{\gamma}^{M}$ on $M$ , and $\tau_{\gamma}^{\Sigma}$
also corresponds to the unique geodesic $\widetilde{\tau_{\gamma}^{\Sigma}}$
connecting $\gamma_{\Sigma}^{-}$ and $\gamma_{\Sigma}^{+}$ on $\vbdy\us.$
Notice that $\widetilde{f}(\gamma_{\Sigma}^{-})=\gamma_{M}^{-}$ and
$\widetilde{f}(\gamma_{\Sigma}^{+})=\gamma_{\Sigma}^{+}$ on $\vbdy\widetilde{f}(\us)=\limitset\subset\vbdy\hy^{3}$,
so $f(\widetilde{\tau_{\gamma}^{\Sigma}})$ is a quasi-geodesic on
$\hy^{3}$ within a bounded Hausdorff distance from the geodesic $\widetilde{\tau_{\gamma}^{M}}$
on $\hy^{3}$, where $\widetilde{\tau_{\gamma}^{M}}$ is the geodesic
on $\mathrm{Conv}(\limitset)\subset\hy^{3}$ connecting $\gamma_{M}^{-}$
and $\gamma_{M}^{+}$ on $\limitset$.}

\begin{lem}
\label{Lem: F =00003D I} If $\mu\in\mathcal{M^{\phi}}$ and ergodic,
then there exists a sequence of conjugacy classes $\{[\gamma_{n}]\}\subset[\pi_{1}\Sigma]$,
i.e. closed geodesics, such that 
\[
\int F\mbox{d}\mu=\lim_{n\to\infty}\frac{l_{h}(\gamma_{n})}{l_{g}(\gamma_{n})},
\]
where $F$ is the reparametrization function defined in Theorem \ref{Thm: main-1}. 
\end{lem}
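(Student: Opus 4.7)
The plan is to realize the integral $\int F\,d\mu$ as a limit of Birkhoff averages of $F$ along longer and longer segments of a single generic orbit, and then to shadow these segments by genuine closed orbits using the Anosov closing lemma. Since periods of closed orbits with respect to the reparametrized flow recover the hyperbolic lengths $l_h(\gamma)$, while periods with respect to the original geodesic flow give $l_g(\gamma)$, this gives exactly the ratio we want.

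\textbf{Step 1 (Generic orbit).} By ergodicity of $\mu$ and Birkhoff's ergodic theorem applied to the Hölder continuous function $F$, there exists a $\mu$-generic point $v\in T^1\Sigma$ with
\[
\lim_{T\to\infty}\frac{1}{T}\int_0^T F(\phi_t v)\,dt=\int F\,d\mu.
\]
By the Poincaré recurrence theorem, we can moreover choose $v$ so that $v$ is recurrent, hence find a sequence $T_n\to\infty$ with $d_{T^1\Sigma}(v,\phi_{T_n}v)\to 0$.

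\textbf{Step 2 (Shadowing by closed orbits).} Since the geodesic flow $\phi$ on $T^1\Sigma$ is a transitive Anosov flow (Section \ref{sec:Preliminaries}), I apply the Anosov Closing Lemma (Theorem \ref{Thm Anosov Closing lemma}) to each such near-return: for every $\varepsilon_n\downarrow 0$ there is $\delta_n>0$ such that, passing to a subsequence so that $d_{T^1\Sigma}(v,\phi_{T_n}v)<\delta_n$, we obtain a closed orbit $\tau_{w_n}$ of period $t_n'$ with $|t_n'-T_n|<\varepsilon_n$ and $d_{T^1\Sigma}(\phi_s v,\phi_s w_n)<\varepsilon_n$ for all $0\le s\le T_n$. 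Let $[\gamma_n]\in[\pi_1\Sigma]$ be the conjugacy class of the closed geodesic corresponding to $\tau_{w_n}$, so that $l_g(\gamma_n)=t_n'$ and, by the defining property of $F$, $l_h(\gamma_n)=\int_{[\gamma_n]}F=\int_0^{t_n'}F(\phi_s w_n)\,ds$.

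\textbf{Step 3 (Comparing averages).} Since $F$ is Hölder (in particular uniformly continuous on the compact space $T^1\Sigma$) and bounded, the shadowing estimate $d_{T^1\Sigma}(\phi_s v,\phi_s w_n)<\varepsilon_n$ together with $|t_n'-T_n|<\varepsilon_n$ yields
\[
\left|\int_0^{t_n'}F(\phi_s w_n)\,ds-\int_0^{T_n}F(\phi_s v)\,ds\right|\longrightarrow 0.
\]
Dividing by $t_n'$ (which is comparable to $T_n$ and tends to infinity), the right-hand Birkhoff average converges to $\int F\,d\mu$ by Step 1, and the left-hand side equals $l_h(\gamma_n)/l_g(\gamma_n)$. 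Therefore
\[
\lim_{n\to\infty}\frac{l_h(\gamma_n)}{l_g(\gamma_n)}=\int F\,d\mu,
\]
which is the claimed equality.

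\textbf{Main obstacle.} The only delicate point is that the Anosov closing lemma produces a closed orbit \emph{near} $v$ but not necessarily passing through $v$, so the Birkhoff sum along the true orbit of $v$ and the integral along $\tau_{w_n}$ are not literally the same. This is handled cleanly by Hölder (in fact just uniform) continuity of $F$ and the uniform shadowing estimate over the whole interval $[0,T_n]$; one must be careful that the modulus of continuity of $F$ does not blow up and that the error $\varepsilon_n\cdot T_n$ in the Riemann-sum comparison is controlled — this is why we pick $\varepsilon_n$ small enough (by choosing a further subsequence of the recurrence times) that $\varepsilon_n T_n\to 0$, which is always possible since $\delta_n$ (hence $\varepsilon_n$) can be prescribed before choosing $T_n$ via Poincaré recurrence.
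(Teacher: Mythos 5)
Your overall strategy is the paper's own proof: Birkhoff's theorem at a $\mu$-generic point, Poincar\'e recurrence to produce near-returns, the Anosov closing lemma to shadow long orbit segments by closed orbits, H\"older continuity of $F$ to compare the two orbit integrals, and finally the defining identity $\int_{[\gamma_n]}F=l_h(\gamma_n)$ together with $l_g(\gamma_n)=t_n'$. (The paper additionally intersects with the full-measure set coming from the subadditive ergodic theorem only because it wants the same point $v$ and the same sequence $[\gamma_n]$ to serve in the next lemma; for the present statement your omission of that set is harmless.) However, one step as written would fail. Your displayed claim in Step 3, that the \emph{unnormalized} integrals satisfy $\bigl|\int_0^{t_n'}F(\phi_s w_n)\,ds-\int_0^{T_n}F(\phi_s v)\,ds\bigr|\to0$, carries an error of size roughly $C\varepsilon_n^{\alpha}T_n+\varepsilon_n\|F\|_\infty$, and the remedy you propose --- arrange $\varepsilon_n T_n\to0$ --- is not available with the stated order of quantifiers: $\varepsilon_n$ determines $\delta_n$, and only then does Poincar\'e recurrence hand you a return time $T_n$ with $d(v,\phi_{T_n}v)<\delta_n$. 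That $T_n$ is not under your control; you have no upper bound on it (heuristically it is of order $1/\mu(B_{\delta_n}(v))$, far larger than $1/\varepsilon_n$), and passing to further subsequences of return times only makes $T_n$ larger. So "$\varepsilon_n$ prescribed before $T_n$" is exactly why $\varepsilon_n T_n\to0$ cannot be guaranteed.

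The good news is that this intermediate claim is unnecessary: compare the normalized time averages directly, as the paper does. Your own shadowing estimates give
\[
\Bigl|\frac{1}{T_n}\int_0^{T_n}F(\phi_s v)\,ds-\frac{1}{t_n'}\int_0^{t_n'}F(\phi_s w_n)\,ds\Bigr|\le C\varepsilon_n^{\alpha}+\frac{2\varepsilon_n}{t_n'}\,\|F\|_\infty,
\]
which tends to $0$ with no relation imposed between $\varepsilon_n$ and $T_n$ (this is precisely the estimate in the paper's proof, where the bound is $2C\varepsilon_n^{\alpha}+\tfrac{2\varepsilon_n}{l_g(\gamma_n)-\varepsilon_n}\|F\|_\infty$). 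With that substitution, the rest of your argument goes through verbatim and coincides with the paper's.
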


\begin{proof}

First, by the sub-additive ergodic theorem we know that for $\mu-a.e.$
$v\in T^{1}\Sigma$
\begin{equation}
\lim_{t\to\infty}\frac{a(v,t)}{t}=I_{\mu}(\Sigma,M).\label{eq: subbadd ergodic}
\end{equation}
By the Birkhoff ergodic theorem we have for $\mu-a.e.$ $v\in T^{1}\Sigma$
\begin{equation}
\lim_{t\to\infty}\frac{1}{t}\int_{0}^{t}F(\phi_{s}v)\mbox{d}s=\int F\mbox{d}\mu.\label{eq:Birkhoff ergodic}
\end{equation}

We define two sets 
\begin{alignat*}{1}
A:= & \{v\in T^{1}\Sigma:v\mbox{ satisfies }(\ref{eq: subbadd ergodic})\}\\
B:= & \{v\in T^{1}\Sigma:v\mbox{ satisfies }(\ref{eq:Birkhoff ergodic})\}.
\end{alignat*}

Since $A$ and $B$ are both full $\mu$-measure, we have $A\cap B\neq\emptyset$.

Pick $v\in A\cap B$, and $\varepsilon_{n}\searrow0$ as $n\to\infty$.
By the Anosov Closing Lemma (Theorem \ref{Thm Anosov Closing lemma}),
for each $\varepsilon_{n}$, there exists $\delta_{n}=\delta_{n}(\varepsilon_{n})$
such that for $v\in T^{1}\Sigma$ and $T_{n}=T_{n}(\delta_{n})>0$
satisfying $D_{g}(\phi_{T_{n}}(v),v)<\varepsilon_{n}$, then there
exists $w_{n}\in T^{1}\Sigma$ generates a periodic orbit $\tau_{n}^{\Sigma}$
on $\Sigma$ of period $l_{g}(\tau_{n}^{\Sigma})=T_{n}'$ such that
$\left|T_{n}-T_{n}'\right|<\varepsilon_{n}$ and $D_{g}(\phi_{s}(v),\phi_{s}(w_{n}))<\varepsilon_{n}$
for all $s\in[0,T_{n}]$. 

Furthermore, because the geodesic flow $\phi_{t}$ on $T^{1}\Sigma$
is a transitive Anosov flow and $T^{1}\Sigma$ is compact, by the
Poincaré recurrent theorem, for each $\delta_{n}$ given as above,
we can pick $T_{n}$ to be the $n$-th return time of the flow $\phi_{t}$
to the set $B_{\delta_{n}}(v)$, i.e. $D_{g}(\phi_{T_{n}}(v),v)<\delta_{n}$
for each $n$.

Suppose $\tau_{n}^{\Sigma}$ corresponds to $[\gamma_{n}]\in[\pi_{1}\Sigma]$,\textsf{
}then since $\mu$ is ergodic, by the Birkhoff ergodic theorem we
have 
\[
{\color{red}}\int_{T^{1}\Sigma}F\mbox{d}\mu=\lim_{T\to\infty}\frac{1}{T}\int_{_{0}}^{T}F(\phi_{t}v)\mbox{d}t.
\]

\textbf{Claim:} ${\displaystyle \int F\mbox{d}\mu=\lim_{n\to\infty}\frac{\int_{\gamma_{n}}F}{l_{g}(\gamma_{n})}}$.

pf. Notice that 
\[
\frac{1}{l_{g}(\gamma_{n})+\varepsilon_{n}}\int_{0}^{l_{g}(\gamma_{n})-\varepsilon_{n}}F(\phi_{t}v)\leq\frac{1}{t_{n}}\int_{0}^{t_{n}}F(\phi_{t}v)\leq\frac{1}{l_{g}(\gamma_{n})-\varepsilon_{n}}\int_{0}^{l_{g}(\gamma_{n})+\varepsilon_{n}}F(\phi_{t}v).
\]

Because $F$ is Hölder, we know that $\left|F(\phi_{t}v)-F(\phi_{t}w_{n})\right|\leq C\cdot D_{g}(\phi_{t}v,\phi_{t}w_{n})^{\alpha}\leq C\cdot\varepsilon_{n}^{\alpha}$.

When $n$ is big enough such that $l_{g}(\gamma_{n})>2\varepsilon_{n}$
(notice that $\varepsilon_{n}\searrow0$ and $l_{g}(\gamma_{n})\nearrow\infty)$
, we have 

\begin{align*}
\left|\frac{1}{t_{n}}\int_{0}^{t_{n}}F(\phi_{t}v)-\frac{1}{l_{g}(\gamma_{n})}\int_{0}^{l_{g}(\gamma_{n})}F(\phi_{t}w_{n})\right| & \leq\frac{l_{g}(\gamma_{n})\int_{0}^{l_{g}(\gamma)}\left|F(\phi_{t}v)-F(\phi_{t}w_{n})\right|\mbox{d}t+2\cdot l_{g}(\gamma_{n})\cdot\varepsilon_{n}\cdot\left\Vert F\right\Vert _{\infty}}{l_{g}(\gamma_{n})\cdot(l_{g}(\gamma_{n})-\varepsilon_{n})}\\
 & \leq\frac{1}{l_{g}(\gamma_{n})-\varepsilon_{n}}\left(l_{g}(\gamma_{n})\cdot C\cdot\varepsilon_{n}^{\alpha}+2\varepsilon_{n}\cdot\left\Vert F\right\Vert _{\infty}\right)\\
 & \leq2C\cdot\varepsilon_{n}^{\alpha}+\frac{2\varepsilon_{n}}{l_{g}(\gamma_{n})-\varepsilon_{n}}\cdot\left\Vert F\right\Vert _{\infty}.
\end{align*}

So, we finish the proof of this claim.

Moreover, from the construction of $F$, $\forall[\gamma_{n}]\in[\pi_{1}\Sigma]$
we have 
\[
\int_{[\gamma_{n}]}F=l_{h}(\gamma_{n}).
\]
 Therefore, 
\[
\int F\mbox{d}\mu=\lim_{n\to\infty}\frac{\int_{\gamma_{n}}F}{l_{g}(\gamma_{n})}=\lim_{n\to\infty}\frac{l_{h}(\gamma_{n})}{l_{g}(\gamma_{n})}.
\]

\end{proof}

\begin{lem}
\label{Lem: equidistribution}There exists a sequence of conjugacy
classes $\{[\gamma_{n}]\}\subset[\pi_{1}\Sigma]$, i.e. closed geodesics,
such that 
\[
\lim_{n\to\infty}\frac{l_{h}(\gamma_{n})}{l_{g}(\gamma_{n})}=I_{\mu}(\Sigma,M).
\]

\end{lem}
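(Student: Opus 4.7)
Assume $\mu\in\mathcal{M}^\phi$ is ergodic (otherwise restrict to an ergodic component). My plan is to approximate a $\mu$-generic orbit by closed orbits via the Anosov Closing Lemma, thereby converting the ratio $a(v,t)/t$, which equals $I_\mu(\Sigma,M)$ by Corollary \ref{Cor subadd}, into ratios of the form $l_h(\gamma_n)/l_g(\gamma_n)$ for the shadowing closed orbits.

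First I fix $v\in T^1\Sigma$ in the full-measure set on which $\lim_{t\to\infty}a(v,t)/t=I_\mu(\Sigma,M)$. For $\varepsilon_n\searrow 0$, I use Poincar\'e recurrence to select return times $T_n\to\infty$ with $D_g(\phi_{T_n}v,v)<\delta(\varepsilon_n)$, and then apply Theorem \ref{Thm Anosov Closing lemma} to obtain vectors $w_n\in T^1\Sigma$ generating closed orbits of periods $T'_n$ with $|T_n-T'_n|<\varepsilon_n$ and $D_g(\phi_s v,\phi_s w_n)<\varepsilon_n$ for $s\in[0,T_n]$. Let $[\gamma_n]\in[\pi_1\Sigma]$ be the corresponding conjugacy class, so $l_g(\gamma_n)=T'_n$. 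Since $\widetilde{f}$ is uniformly continuous on a compact fundamental domain and $\pi_1\Sigma$-equivariant, the closeness of the two orbit segments in $T^1\us$ forces $|a(v,T_n)-a(w_n,T'_n)|\to 0$ as $n\to\infty$. Next I identify $a(w_n,T'_n)$ with $l_h(\gamma_n)$: lifting, the closedness of the orbit gives
$$a(w_n,T'_n)=d_h\bigl(\widetilde{f}(\pi\widetilde{w}_n),\,(\gamma_n)_M\cdot\widetilde{f}(\pi\widetilde{w}_n)\bigr),$$
and by Lemma \ref{Lem: closed geo to closed geo} together with the Morse Lemma (Theorem \ref{thm: Morse Lemma}), the point $\widetilde{f}(\pi\widetilde{w}_n)$ lies within a uniform (in $n$) distance of the axis of $(\gamma_n)_M\in\Isom(\hy^3)$. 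The elementary hyperbolic-geometry inequality $|d_h(x,\alpha x)-l_h(\alpha)|\le 2\,d_h(x,\mathrm{axis}(\alpha))$, valid for any loxodromic $\alpha$, then yields $|a(w_n,T'_n)-l_h(\gamma_n)|\le C$ uniformly in $n$. Dividing by $T_n$ and combining the three comparisons, I conclude $\lim_{n\to\infty}l_h(\gamma_n)/l_g(\gamma_n)=I_\mu(\Sigma,M)$.

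The hard part will be securing the uniform bound $|a(w_n,T'_n)-l_h(\gamma_n)|\le C$: one must know that the Morse-lemma tube radius is independent of $n$, so that every axis of $(\gamma_n)_M$ lies within a common tube around the $\widetilde{f}$-image of the axis of $(\gamma_n)_\Sigma$. This is not automatic for an arbitrary family of quasi-geodesics, but here all of these quasi-geodesics arise from the single fixed quasi-isometry $\widetilde{f}:\us\to\widetilde{f}(\us)\subset\hy^3$ provided by Lemma \ref{Lemma: embedding quasi-iso}, whose constants depend only on $(\Sigma,g)$ and the $\Gamma$-action; hence the Morse constant is universal. A secondary subtlety is the commutation of the limits $n\to\infty$ and $s\le T_n$ used in the comparison $|a(v,T_n)-a(w_n,T'_n)|\to 0$, which is handled by choosing $\varepsilon_n\to 0$ sufficiently fast relative to $T_n$.
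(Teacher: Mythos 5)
Your proposal is correct and follows essentially the same route as the paper's proof: pick a generic point, use Poincar\'e recurrence plus the Anosov Closing Lemma to produce shadowing closed orbits $[\gamma_n]$, compare $a(v,T_n)$ with $a(w_n,T'_n)$ via the fixed ($1$-Lipschitz/quasi-isometric) map $\widetilde{f}$, and use the Morse Lemma with a uniform constant (coming from the single quasi-isometry) to identify $a(w_n,T'_n)$ with $l_h(\gamma_n)$ up to a bounded error that disappears after dividing by $T_n\to\infty$. The only cosmetic difference is that you invoke the nearest-point-to-axis inequality where the paper picks a point $x_n$ on the axis within distance $R$ and applies the triangle inequality, which is the same estimate.
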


\begin{proof}
Choose the $[\gamma_{n}]\in[\pi_{1}\Sigma]$ to be the sequence $\{[\gamma_{n}]\}$
we found in Lemma \ref{Lem: F =00003D I}.

\textbf{Claim:} 
\[
\lim_{n\to\infty}\frac{a(w_{n},l_{g}(\gamma_{n}))}{l_{g}(\gamma_{n})}=\lim_{n\to\infty}\frac{l_{h}(\gamma_{n})}{l_{g}(\gamma_{n})}
\]

pf. By definition, 
\[
a(w_{n},l_{g}(\gamma_{n})):=d_{h}(\pi\circ\mathbf{f}\circ w_{n},\pi\circ\mathbf{f}\circ\widetilde{\phi}_{l_{g}(\gamma_{n})}w_{n}).
\]

For such $[\gamma_{n}]\in[\pi_{1}\Sigma]$, let $\tau_{n}^{\Sigma}$
and $\tau_{n}^{M}$denote the corresponding closed geodesics on $\Sigma$
and $M$, and $\widetilde{\tau_{n}^{\Sigma}}$ and $\widetilde{\tau_{n}^{M}}$
denote their lifting on $\us$ and $\mathrm{Conv}(\limitset)$, respectively.
Then we know that $\widetilde{f}(\widetilde{\tau_{n}^{\Sigma}})$
and $\widetilde{\tau_{n}^{M}}$ are at most Hausdorff distance $R$
from each other. Therefore we can choose $x_{n}\in\widetilde{\tau_{n}^{M}}$
such that $d_{h}(\pi w_{n},x_{n})<R$. Because $d_{h}$ is $\Gamma-$invariant,
$\widetilde{f}:\widetilde{\Sigma}\to\hy^{3}$ is an embedding, and
$\pi\circ\mathbf{f}\circ w_{n}$ and $\pi\circ\mathbf{f}\circ\phi_{l_{g}(\gamma_{n})}w_{n}$
project to the same point on $\Sigma$, we have $d_{h}(\gamma_{n}\cdot x_{n},\pi\circ\mathbf{f}\circ\widetilde{\phi}_{l_{g}(\tau_{n})}w_{n})=d_{h}(\pi\circ\mathbf{f}\circ w_{n},x_{n})<R$.
Hence, by the triangle inequality 
\begin{alignat*}{1}
{\color{black}\left|d_{h}(\pi\circ\mathbf{f}\circ w_{n},\pi\circ\mathbf{f}\circ\widetilde{\phi}_{l_{g}(\tau_{n})}w_{n})-\underset{=l_{h}(\tau_{n})}{\underbrace{d_{h}(x_{n},\gamma_{n}\cdot x_{n})}}\right|} & {\color{black}\leq\underset{\leq R}{\underbrace{d_{h}(\pi\circ\mathbf{f}\circ w_{n},x_{n})}}+\underset{\leq R}{\underbrace{d_{h}(\gamma_{n}\cdot x_{n},\pi\circ\mathbf{f}\circ\widetilde{\phi}_{l_{g}(\tau_{n})}w_{n})}}}\\
{\color{black}} & {\color{black}=2R.}
\end{alignat*}
Therefore, 
\[
\lim_{n\to\infty}\frac{l_{h}(\gamma_{n})}{l_{g}(\gamma_{n})}=\lim_{n\to\infty}\frac{l_{h}(\gamma_{n})-2R}{l_{g}(\gamma_{n})}\leq\lim_{n\to\infty}\frac{a(w_{n},l_{g}(\gamma_{n}))}{l_{g}(\gamma_{n})}\leq\lim_{n\to\infty}\frac{l_{h}(\gamma_{n})+2R}{l_{g}(\gamma_{n})}=\lim_{n\to\infty}\frac{l_{h}(\gamma_{n})}{l_{g}(\gamma_{n})},
\]
 and we finish the proof of this claim.

\textbf{Claim:} 
\[
I_{\mu}(\Sigma,M)=\lim_{t\to\infty}\frac{a(v,t)}{t}=\lim_{n\to\infty}\frac{l_{h}(\gamma_{n})}{l_{g}(\gamma_{n})}.
\]

pf. Pick the $t_{n}$ as we mentioned in the first paragraph. Then
\begin{alignat*}{2}
\left|a(v,t_{n})-a(w_{n},l_{g}(\gamma_{n}))\right| & \leq\left|d_{h}(\pi\circ\mathbf{f}\circ v,\pi\circ\mathbf{f}\circ\widetilde{\phi}_{t_{n}}v)-d_{h}(\pi\circ\mathbf{f}\circ w_{n},\pi\circ\mathbf{f}\circ\widetilde{\phi}_{t_{n}}w_{n})\right|\\
 & \leq d_{h}(\pi\circ\mathbf{f}\circ v,\pi\circ\mathbf{f}\circ w_{n})+d_{h}(\pi\circ\mathbf{f}\circ\widetilde{\phi}_{l_{g}(\gamma_{n})}w_{n},\pi\circ\mathbf{f}\circ\widetilde{\phi}_{t_{n}}v) & \mbox{triangle ineq.}\\
 & \leq C\cdot\left(d_{g}(\pi\circ v,\pi\circ w_{n})+d_{g}(\pi\circ\widetilde{\phi}_{l_{g}(\gamma_{n})}w_{n},\pi\circ\widetilde{\phi}_{t_{n}}v)\right)+2L & \qquad\mathrm{quasi-iso.\mbox{ }(Lem.\mbox{ }}\ref{Lemma: embedding quasi-iso})\\
 & \leq C\cdot(\delta_{2}+\varepsilon)+2L, & \mbox{Anosov closing lemma}
\end{alignat*}
where $C$ and $L$ are the quasi-isometry constants only depending
on the embedding $f:\Sigma\to M$.

Therefore, 
\[
\lim_{t_{n}\to\infty}\frac{a(v,t_{n})}{t_{n}}=\lim_{n\to\infty}\frac{a(w_{n},l_{g}(\gamma_{n}))}{l_{g}(\gamma_{n})}=\lim_{n\to\infty}\frac{l_{h}(\gamma_{n})}{l_{g}(\gamma_{n})}.
\]

\end{proof}

\begin{thm}
\label{Thm: geodesic stetch}Suppose that $\Sigma$ is a compact and
negatively curved surface embedded in a hyperbolic 3-manifold $M$
as in Theorem \ref{Thm: main-1}. Assuming that $\Gamma$ is convex
cocompact, then 
\begin{alignat*}{1}
C_{1}(\Sigma,M) & =I_{\mu}(\Sigma,M),\\
C_{2}(\Sigma,M) & =I_{\mu_{BM}}(\Sigma,M),
\end{alignat*}

where $\mu$ is a $\phi-$invariant Gibbs measure and $\mu_{BM}$
is the Bowen-Margulis measure of the geodesic flow $\phi_{t}$ on
$T^{1}\Sigma$.
\end{thm}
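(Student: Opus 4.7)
The plan is to combine Lemma \ref{Lem: F =00003D I} and Lemma \ref{Lem: equidistribution} directly. Both $\mu_{BM}$ and $\mu_{-h_{F}F}$ are equilibrium states for H\"older functions ($0$ and $-h_{F}F$, respectively) under the transitive Anosov geodesic flow $\phi$ on $T^{1}\Sigma$, so by Theorem \ref{Thm: unique eq state} they are unique Gibbs measures and hence ergodic. Therefore each of them falls into the hypothesis of Lemma \ref{Lem: F =00003D I}.

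I would carry out the argument in three steps. First, fix $\mu\in\{\mu_{BM},\mu_{-h_{F}F}\}$ and choose a generic point $v\in T^{1}\Sigma$ lying simultaneously in the full-measure sets where the Birkhoff averages of $F$ converge to $\int F\,d\mu$ and where the subadditive cocycle $a(v,t)/t$ converges to $I_{\mu}(\Sigma,M,v)=I_{\mu}(\Sigma,M)$ (the latter because ergodicity forces the limit to be $\mu$-a.e.\ constant). Second, apply the Anosov closing lemma (Theorem \ref{Thm Anosov Closing lemma}) at a sequence of recurrence times $t_{n}\nearrow\infty$ with $d_{g}(\phi_{t_{n}}v,v)<\varepsilon_{n}\searrow0$, producing closed geodesics $\tau_{n}^{\Sigma}$ of period $l_{g}(\gamma_{n})$ shadowing the orbit segment $\{\phi_{s}v\}_{s\in[0,t_{n}]}$. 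Third, for this \emph{single} shared sequence $\{[\gamma_{n}]\}$, the proof of Lemma \ref{Lem: F =00003D I} gives
\[
\int F\,d\mu\;=\;\lim_{n\to\infty}\frac{1}{l_{g}(\gamma_{n})}\int_{[\gamma_{n}]}F\;=\;\lim_{n\to\infty}\frac{l_{h}(\gamma_{n})}{l_{g}(\gamma_{n})},
\]
and the proof of Lemma \ref{Lem: equidistribution} gives
\[
I_{\mu}(\Sigma,M)\;=\;\lim_{n\to\infty}\frac{a(w_{n},l_{g}(\gamma_{n}))}{l_{g}(\gamma_{n})}\;=\;\lim_{n\to\infty}\frac{l_{h}(\gamma_{n})}{l_{g}(\gamma_{n})},
\]
where $w_{n}$ is the closing vector from the Anosov closing lemma. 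Equating these yields $I_{\mu}(\Sigma,M)=\int F\,d\mu$, which is exactly $C_{1}(\Sigma,M)$ when $\mu=\mu_{-h_{F}F}$ and $C_{2}(\Sigma,M)$ when $\mu=\mu_{BM}$.

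The main technical point is to confirm that both lemmas are being invoked along the \emph{same} sequence of closed orbits: one must select a point $v$ that is simultaneously generic for Birkhoff convergence of $F$ and for the subadditive ergodic theorem applied to $a(v,\cdot)$, and then feed this single $v$ (and the closed orbits it produces via Anosov closing) into both lemmas. Since both generic sets are of full $\mu$-measure their intersection is nonempty, and the Poincar\'e recurrence times $t_{n}$ can be chosen so that the conclusions of both lemmas hold along the \emph{same} $\{[\gamma_{n}]\}$. After that, the identifications $C_{1}(\Sigma,M)=\int F\,d\mu_{-h_{F}F}$ and $C_{2}(\Sigma,M)=\int F\,d\mu_{BM}$ from the definitions given in the proof of Theorem \ref{Thm: main-1}, together with ergodicity, yield the theorem with no further computation.
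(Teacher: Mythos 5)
Your proposal is correct and follows essentially the same route as the paper: the paper's proof of Theorem \ref{Thm: geodesic stetch} is exactly the combination of Lemma \ref{Lem: F =00003D I} and Lemma \ref{Lem: equidistribution}, using that $\mu_{BM}$ and $\mu_{-h_{F}F}$ are Gibbs measures and hence ergodic, and the paper already arranges the ``same sequence'' point you flag, since the proof of Lemma \ref{Lem: equidistribution} explicitly takes $\{[\gamma_{n}]\}$ to be the sequence produced in Lemma \ref{Lem: F =00003D I} from a point $v$ generic for both the Birkhoff and subadditive ergodic theorems.
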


\begin{proof}
It's a consequence of the above two lemmas, because $\mu_{BM}$ and
$\mu=\mu_{-h_{F}F}$ are Gibbs measures, which are, in particular,
ergodic measures of the flow $\phi_{t}$.
\end{proof}

\begin{rem}
Theorem \ref{Thm: geodesic stetch} also indicates that $C_{1}(\Sigma,M)$,
$C_{2}(\Sigma,M)\leq1$, because $a(v,t)\leq t$ for all $t>0$ and
$v\in T^{1}\Sigma$. 
\end{rem}

\section{Applications to immersed minimal surfaces in hyperbolic 3--manifolds\label{sec:Applications-to-Taubes'}}

\subsection{Immersed minimal surfaces in hyperbolic 3--manifolds}

\textsf{In what follows, $M$ denotes a hyperbolic 3--manifold equipped
with a hyperbolic metric $h$ and $\Sigma$ is a compact, 2--dimension
manifolds with negative Euler characteristic. Recall that $f:\Sigma\to M$
is called a minimal immersion if $f$ is an immersion and the its
mean curvature $H$ vanishes identically. }

\textsf{Let $g=f^{\ast}h$ denote the induced metric on $\Sigma$
via the immersion $f$. By the Gauss equation, when $f:\Sigma\to M$
is a minimal immersion, the Gaussian curvature $K_{g}\leq-1$. }

\textsf{So, applying the Theorem \ref{Thm: main-1} to this case,
we have the following corollary. }

\begin{cor}
\label{Cor: main}Let $f:\Sigma\to M$ be a $\pi_{1}$--injective
minimal immersion from a compact surface $\Sigma$ to a hyperbolic
3--manifold $M$, and $\Gamma$ be the copy of $\pi_{1}\Sigma$ in
$\mathrm{Isom}(\hy^{3})$ induced by the immersion $f$. Suppose $\Gamma$
is convex cocompact, then there are explicit constants $C_{1}(\Sigma,M)$and
$C_{2}(\Sigma,M)$ not bigger than 1 such that 
\[
C_{1}(\Sigma,M)\cdot\delta_{\Gamma}\leq h(\Sigma)\leq C_{2}(\Sigma,M)\cdot\delta_{\Gamma}.
\]
Moreover, each equality holds if and only if the marked length spectrum
of $\Sigma$ is proportional to the marked length spectrum of $M$,
and the proportion is the ration $\frac{\delta_{\Gamma}}{h(\Sigma)}.$ 
\end{cor}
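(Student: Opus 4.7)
The plan is to reduce this corollary to a direct application of Theorem \ref{Thm: main-1}. The only hypothesis of the main theorem that is not already assumed in the corollary is that $(\Sigma, f^{*}h)$ is negatively curved. So the sole substantive step is to verify that hypothesis from the minimality of $f$.

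First, I would invoke the Gauss equation for a minimal immersion, as recorded in Remark \ref{rem: Gauss-Codazzi +miminal}: since $H \equiv 0$, the Gauss equation specializes to
\[
K_{g} = -1 - \tfrac{1}{2}\lVert B \rVert_{g}^{2}.
\]
Because $\lVert B\rVert_{g}^{2} \geq 0$ pointwise on $\Sigma$, this immediately yields $K_{g} \leq -1 < 0$ everywhere, so $(\Sigma, g) = (\Sigma, f^{*}h)$ is a compact negatively curved surface. In particular, all the dynamical objects used in the proof of Theorem \ref{Thm: main-1}, namely the geodesic flow on $T^{1}\Sigma$ being a transitive Anosov flow and the universal cover $\widetilde{\Sigma}$ being a pinched Hadamard manifold, are at our disposal.

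With this verified, the three standing hypotheses of Theorem \ref{Thm: main-1}, $\pi_{1}$-injective immersion, convex cocompactness of $\Gamma$, and negative curvature of $(\Sigma, f^{*}h)$, are in force. Applying that theorem directly produces the two geometric constants $C_{1}(\Sigma,M), C_{2}(\Sigma,M) \leq 1$ together with the sandwich inequality
\[
C_{1}(\Sigma,M)\cdot \delta_{\Gamma} \leq h(\Sigma) \leq C_{2}(\Sigma,M)\cdot \delta_{\Gamma},
\]
and the rigidity statement for equality follows verbatim from the corresponding statement in Theorem \ref{Thm: main-1}. There is no genuine obstacle here: the whole content of the corollary is the observation that the Gauss equation promotes minimality into the strict curvature bound $K_{g} \leq -1$ required to enter the main theorem. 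If anything deserves a sentence of care, it is pointing out that the constants $C_{i}(\Sigma, M)$ are \emph{explicit} in the sense of Theorem \ref{thm: c1c2}, that is, they can be expressed as the limiting ratios of sums of lengths of closed geodesics computed with respect to $g$ and $h$, a description which transfers unchanged to the minimal setting.
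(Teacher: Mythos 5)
Your proposal is correct and follows exactly the paper's own route: the Gauss equation for a minimal immersion gives $K_{g}=-1-\tfrac{1}{2}\lVert B\rVert_{g}^{2}\leq-1$, so $(\Sigma,f^{*}h)$ is negatively curved and Theorem \ref{Thm: main-1} applies verbatim. Nothing is missing; the remark about the explicit form of $C_{1}(\Sigma,M)$ and $C_{2}(\Sigma,M)$ via Theorem \ref{thm: c1c2} is a welcome clarification consistent with the paper.
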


\subsection{Minimal hyperbolic germs}

\subsubsection{Minimal hyperbolic germs}

\textsf{In the next three subsections, following Uhlenbeck's assumptions
in \cite{Uhlenbeck:1983wl}, we shall assume $\Sigma$ is a closed
surface. }

\textsf{Recall that  $\mathcal{H}$ is the set of the isotopy classes
of pairs consisting of a Riemann metric $g$ and a symmetric 2-tensor
$B$ on $\Sigma$ such that the trace of $B$ w.r.t. $g$ is zero
and $(g,B)$ satisfies the Gauss-Codazzi equations (cf. Remark \ref{rem: Gauss-Codazzi +miminal}
). Such a pair $(g,B)\in\mathcal{H}$ can be integrated to an immersed
minimal surfaces of a hyperbolic 3-manifold with the induced metric
$g$ and second fundamental form $B$. Moreover, we know that for
each data $(g,B)\in\mathcal{H}$ there exists a representation }$\rho:\pi_{1}(\Sigma)\to\mbox{Isom }(\hy{}^{3})\cong\psl(2,\co)$\textsf{
leaving this minimal immersion invariant.Thus, we have a map 
\begin{equation}
\Phi:\mathcal{H}\to\mathcal{R}(\pi_{1}(\Sigma),\psl(2,\co)),\label{eq: charater variality}
\end{equation}
where }$\mathcal{R}(\pi_{1}(\Sigma),\psl(2,\co))$\textsf{ is the
space of conjugacy classes of representations of $\pi_{1}(S)$ into
}$\psl(2,\co)$.

\textsf{The following corollary is an obvious consequence of Theorem
\ref{Thm: main-1}. Recall that $h(g,B)$ denotes the topological
entropy of the geodesic flow for the immersed surface $(\Sigma,g)$
with second fundamental form $B$.}

\begin{cor}
\label{Cor: taubes' space} Let $\rho\in\mathcal{R}(\pi_{1}(\Sigma),\mathrm{PSL}(2,\mathbb{C}))$
be a discrete, convex cocompact representation and suppose $(g,B)\in\Phi^{-1}(\rho)\neq\emptyset$.
Then there are explicit constants $C_{1}(g,B)$and $C_{2}(g,B)$ not
bigger than 1 such that 
\[
C_{1}(g,B)\cdot\delta_{\rho(\pi_{1}\Sigma)}\leq h(g,B)\leq C_{2}(g,B)\cdot\delta_{\rho(\pi_{1}\Sigma)}\leq\delta_{\rho(\pi_{1}\Sigma)}
\]
with the last equality if and only if $B$ is identically zero which
holds if and only if $\rho$ is Fuchsian. \end{cor}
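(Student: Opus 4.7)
The plan is to specialize Theorem~\ref{Thm: main-1} (equivalently Corollary~\ref{Cor: main}) to the minimal immersion associated with $(g,B)$. Since $(g,B) \in \Phi^{-1}(\rho)$ is a minimal hyperbolic germ, the fundamental theorem of surface theory produces a $\pi_{1}$--injective minimal immersion $f : \Sigma \to M$ into the convex cocompact hyperbolic 3--manifold $M = \rho(\pi_{1}\Sigma)\backslash \hy^{3}$, with induced metric $g$ and second fundamental form $B$. The Gauss equation $K_{g} = -1 - \tfrac{1}{2}\|B\|_{g}^{2} \leq -1$ (cf.\ Remark~\ref{rem: Gauss-Codazzi +miminal}) shows $(\Sigma, g)$ is negatively curved, so Corollary~\ref{Cor: main} applies and produces constants $C_{1}(g,B), C_{2}(g,B) \leq 1$ with
$C_{1}(g,B)\cdot\delta_{\rho(\pi_{1}\Sigma)} \leq h(g,B) \leq C_{2}(g,B)\cdot\delta_{\rho(\pi_{1}\Sigma)}$. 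The final inequality $C_{2}(g,B)\cdot\delta_{\rho(\pi_{1}\Sigma)} \leq \delta_{\rho(\pi_{1}\Sigma)}$ is then immediate from $C_{2}(g,B) \leq 1$.

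For the equality characterization, I first treat $C_{2}(g,B)\cdot\delta_{\rho(\pi_{1}\Sigma)} = \delta_{\rho(\pi_{1}\Sigma)}$. Convex cocompactness and nonelementarity of $\rho$ give $\delta_{\rho(\pi_{1}\Sigma)} > 0$, so the equality forces $C_{2}(g,B) = 1$ and hence $h(g,B) = \delta_{\rho(\pi_{1}\Sigma)}$. Corollary~\ref{cor:rigidity} then shows $\Sigma$ is totally geodesic in $M$, i.e.\ the shape operator $S_{g}$ vanishes identically. Using the identification $B(X,Y) = \langle X, S_{g}(Y)\rangle_{g}$ from Remark~\ref{rem: Gauss-Codazzi +miminal}, this is equivalent to $B \equiv 0$.

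It remains to relate $B \equiv 0$ to $\rho$ being Fuchsian. If $B \equiv 0$, the Gauss equation reduces to $K_{g} \equiv -1$, so the universal cover $\widetilde{\Sigma}$ embeds as a totally geodesic hyperbolic plane in $\hy^{3}$ invariant under $\rho(\pi_{1}\Sigma)$; consequently $\rho$ is conjugate into $\psl(2,\real)$, that is, Fuchsian. Conversely, if $\rho$ is Fuchsian then $\delta_{\rho(\pi_{1}\Sigma)} = 1$ (the limit set is a round circle, and by Sullivan the critical exponent equals its Hausdorff dimension), and the already--established inequality yields $h(g,B) \leq 1$. The Gauss equation forces $K_{g} \leq -1$ pointwise, and the classical Manning--type curvature--entropy bound for compact negatively curved surfaces gives $h(g,B) \geq 1$, with equality only if the curvature is identically $-1$. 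Hence $K_{g} \equiv -1$ and $\|B\|_{g} \equiv 0$, closing the cycle of equivalences.

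The main obstacle I anticipate is the implication $\rho$ Fuchsian $\Rightarrow B \equiv 0$: one must convert a representation--level hypothesis into pointwise vanishing of $B$ on $\Sigma$. The Manning--type route sketched above requires verifying the equality case of the entropy bound; an alternative argument is to invoke uniqueness of $\rho$--equivariant minimal disks in $\hy^{3}$ spanning the round--circle limit set of a Fuchsian group (the disk must coincide with the unique totally geodesic $\hy^{2}$ bounded by that circle, by a maximum--principle argument inside the 2--dimensional convex core), which directly forces $B \equiv 0$. Either path completes the proof.
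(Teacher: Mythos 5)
Your overall route is the paper's own: specialize Theorem~\ref{Thm: main-1} (through Corollary~\ref{Cor: main}) to the minimal immersion integrating $(g,B)$, set $C_{i}(g,B)=C_{i}(\Sigma,M)$, and read off the rightmost inequality from $C_{2}\leq1$. Where the paper simply says ``the rigidity is the consequence of Corollary~\ref{cor:rigidity}'', you spell out the equivalence with $\rho$ Fuchsian, and your mechanism is the same one the paper uses in its proof of Corollary~\ref{cor: Bowen's rigidity}: $\delta_{\rho(\pi_{1}\Sigma)}=1$ for Fuchsian $\rho$, together with the lower bound $h(g,B)\geq1$ coming from $K_{g}\leq-1$ (the paper cites Katok's Theorem~B rather than a Manning-type bound; Katok plus Gauss--Bonnet is also the cleanest way to get the equality case, since $h=1$ forces $\int(-K_{g}-1)\,\mathrm{d}A=0$ and $-K_{g}-1=\tfrac{1}{2}\left\Vert B\right\Vert _{g}^{2}\geq0$, so $B\equiv0$ without invoking any separate rigidity statement). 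That part of your cycle, together with $B\equiv0\Rightarrow$ totally geodesic $\Rightarrow\rho$ Fuchsian, is fine.

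The one genuine jump is in your treatment of the last equality: from $C_{2}(g,B)=1$ you write ``hence $h(g,B)=\delta_{\rho(\pi_{1}\Sigma)}$'', but the displayed inequalities alone only give $h(g,B)\leq\delta_{\rho(\pi_{1}\Sigma)}$, and equality of the two entropies is precisely the hypothesis you need to invoke Corollary~\ref{cor:rigidity}. To close this you must use what $C_{2}$ actually is: $C_{2}=\int F\,\mathrm{d}\mu_{BM}$, where (after the choice made in Step~4 of the proof of Theorem~\ref{Thm: main-1}) $F\leq1$ is H\"older continuous and $\mu_{BM}$ is a fully supported Gibbs measure; hence $C_{2}=1$ forces $F\equiv1$, so $l_{h}(\gamma)=l_{g}(\gamma)$ for every conjugacy class, and therefore $h(\Sigma)=h_{F}=\delta_{\Gamma}$. (Alternatively, skip $h=\delta$ entirely: once $F\equiv1$ you can run the density-of-closed-geodesics argument from the proof of Corollary~\ref{cor:rigidity} directly to get $S_{g}\equiv0$, i.e.\ $B\equiv0$.) With that step supplied, your chain (last equality $\Rightarrow B\equiv0\Rightarrow\rho$ Fuchsian $\Rightarrow$ last equality) is complete and agrees with the paper's intended argument.
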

\begin{proof}
$(g,B)\in\Phi^{-1}(\rho)$ means that there exists a $\pi_{1}$--injective
immersion $f:\Sigma\to\rho(\pi_{1}\Sigma)\backslash\hy^{3}=M$ such
that the induced metric is $g$ and the second fundamental form is
$B$, where $(M,h)$ is a convex cocompact hyperbolic 3--manifold.
Therefore, by Theorem \textsf{\ref{Thm: main-1}} we have\textsf{
}
\[
C_{1}(\Sigma,M)\cdot\delta_{\rho(\pi_{1}\Sigma)}\leq h(\Sigma)\leq C_{2}(\Sigma,M)\cdot\delta_{\rho(\pi_{1}\Sigma)}.
\]
Then we pick $C_{1}(g,B)=C_{1}(\Sigma,M)$ and $C_{2}(g,B)=C_{2}(\Sigma,M)$.
The rightmost inequality is because $C_{2}(g,B)=C_{2}(\Sigma,M)\leq1$,
and the rigidity is the consequence of Corollary \ref{cor:rigidity}.
\end{proof}

\begin{rem}
By Sullivan's theorem, we know that $\delta_{\rho(\pi_{1}\Sigma)}=\dim_{H}\Lambda(\rho(\pi_{1}\Sigma))$.
Thus, we can replace the critical exponent by the Hausdorff dimension
in the above corollary.
\end{rem}

\subsubsection{Quasi-Fuchsian Spaces}

\textsf{We call a discrete faithful representation} $\rho:\pi_{1}(\Sigma)\to\mbox{Isom }(\hy^{3})$
\textit{quasi-Fuchsian} \textsf{if and only if the limit set $\Lambda(\rho(\pi_{1}\Sigma))$
of $\rho(\pi_{1}\Sigma)$ is a Jordan curve and the domain of discontinuity
$\partial_{\infty}\hy^{3}\backslash\Lambda(\rho(\pi_{1}\Sigma))$
is composed by two invariant, connected, simply-connected components.
$\mathcal{QF}$ denotes the set of quasi-Fuchsian representations.}

\textsf{We notice that if $\rho\in\mathcal{QF}$, elements in $\Phi^{-1}(\rho)$
are $\pi_{1}(\Sigma)-$injective minimal immersions from $\Sigma$
to $\rho(\pi_{1}(\Sigma))\backslash\hy^{3}$. Moreover, Uhlenbeck
in \cite{Uhlenbeck:1983wl} points out that for $\rho\in\mathcal{QF}$,
$\Phi^{-1}(\rho)$ is always a non-empty set. }

\begin{cor}
\label{cor:(main thm app AF)}Let $\rho\in\mathcal{QF}$ be a quasi-Fuchsian
representation and $(g,B)\in\Phi^{-1}(\rho)$. Then there are explicit
constants $C_{1}(g,B)$ and $C_{2}(g,B)$ are not bigger than $1$
such that 
\[
C_{1}(g,B)\cdot\delta_{\rho(\pi_{1}\Sigma)}\leq h(g,B)\leq C_{2}(g,B)\cdot\delta_{\rho(\pi_{1}\Sigma)}\leq\delta_{\rho(\pi_{1}\Sigma)}
\]
with the last equality if and only if $B$ is identically zero which
holds if and only if $\rho$ is Fuchsian.
\end{cor}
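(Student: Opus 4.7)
The plan is to derive this corollary as a direct specialization of Corollary \ref{Cor: taubes' space}. The main task is to verify that any quasi-Fuchsian representation $\rho\in\mathcal{QF}$ satisfies the hypotheses of that more general result, and then to handle the rigidity statement at the rightmost equality.

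First I would check that $\rho\in\mathcal{QF}$ is discrete, faithful, and convex cocompact. Discreteness and faithfulness are built into the definition of $\mathcal{QF}$. For convex cocompactness, the limit set $\Lambda(\rho(\pi_{1}\Sigma))$ is a Jordan curve in $\vbdy\hy^{3}$, so $\mathrm{Conv}(\Lambda(\rho(\pi_{1}\Sigma)))$ is a topological 3-ball on which $\rho(\pi_1\Sigma)$ acts cocompactly (the quotient is homeomorphic to $\Sigma\times[0,1]$). By Uhlenbeck \cite{Uhlenbeck:1983wl}, $\Phi^{-1}(\rho)\neq\emptyset$, so given $(g,B)\in\Phi^{-1}(\rho)$ there is a $\pi_{1}$--injective minimal immersion $f:\Sigma\to\rho(\pi_{1}\Sigma)\backslash\hy^{3}$ realizing $(g,B)$.

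Second, I would invoke Corollary \ref{Cor: taubes' space} directly with this $\rho$ and $(g,B)$: it yields explicit constants $C_{1}(g,B)=C_{1}(\Sigma,M)$ and $C_{2}(g,B)=C_{2}(\Sigma,M)$, both bounded by $1$, such that
\[
C_{1}(g,B)\cdot\delta_{\rho(\pi_{1}\Sigma)}\leq h(g,B)\leq C_{2}(g,B)\cdot\delta_{\rho(\pi_{1}\Sigma)}\leq\delta_{\rho(\pi_{1}\Sigma)}.
\]

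Third, for the rigidity at the rightmost equality: the condition $C_{2}(g,B)\cdot\delta_{\rho(\pi_{1}\Sigma)}=\delta_{\rho(\pi_{1}\Sigma)}$ forces $C_{2}(g,B)=1$, which by the averaged-length interpretation in Theorem \ref{thm: c1c2} (or by the argument in step 4 of the proof of Theorem \ref{Thm: main-1}) means $l_{g}(\gamma)=l_{h}(\gamma)$ for every $[\gamma]\in[\pi_{1}\Sigma]$, i.e.\ the marked length spectra agree. Applying Corollary \ref{cor:rigidity}, the immersion $f$ is totally geodesic, so the second fundamental form $B\equiv 0$. Via the Gauss equation $K_{g}=-1-\tfrac{1}{2}\|B\|_{g}^{2}=-1$, the induced metric $g$ is hyperbolic, and $\Sigma$ sits as a totally geodesic hyperbolic plane in $M$; this is precisely the Fuchsian condition on $\rho$. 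Conversely, if $\rho$ is Fuchsian then the unique equivariant minimal immersion is the totally geodesic one with $B=0$, whence $l_g=l_h$ on every class and $C_{2}(g,B)=1$. No step presents a serious obstacle; the only subtlety is ensuring that the equality $C_2(g,B)=1$ is the same rigidity case treated in Theorem \ref{Thm: main-1}, which is immediate once one observes that $C_{2}(g,B)=1$ together with $C_2\leq 1$ and the formula in Theorem \ref{thm: c1c2} forces equality of the marked length spectra.
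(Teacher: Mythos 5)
Your proposal matches the paper's route: the corollary is obtained exactly as you do, by specializing Corollary \ref{Cor: taubes' space} (itself an application of Theorem \ref{Thm: main-1}) after noting that quasi-Fuchsian representations are convex cocompact and that $\Phi^{-1}(\rho)\neq\emptyset$ by Uhlenbeck, with the rigidity at the last equality handled through Corollary \ref{cor:rigidity} and the Gauss equation. One small caution on your final remark: to pass from $C_{2}(g,B)=1$ to equality of the marked length spectra you should argue via $F\leq 1$, $\int F\,\mathrm{d}\mu_{BM}=1$ and the full support of $\mu_{BM}$ (forcing $F\equiv 1$, hence $l_{h}(\gamma)=l_{g}(\gamma)$ for every class), since the averaged-length formula of Theorem \ref{thm: c1c2} alone does not exclude sparse classes with $l_{h}(\gamma)<l_{g}(\gamma)$.
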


\textsf{Using the above corollary, we can give another proof the famous
Bowen's rigidity theorem. }

\begin{cor}
[Bowen's rigidity \cite{Bowen:1979eh}]\label{cor: Bowen's rigidity}A
quasi-Fuchsian representation $\rho\in\mathcal{QF}$ is Fuchsian if
and only if $\dim_{H}\Lambda_{\Gamma}=1$.
\end{cor}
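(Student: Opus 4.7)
The plan is to derive Bowen's rigidity as a direct consequence of Corollary \ref{cor:(main thm app AF)} combined with a classical entropy lower bound coming from the Gauss equation. By Sullivan's theorem we may freely replace $\dim_{H}\Lambda(\rho(\pi_{1}\Sigma))$ by the critical exponent $\delta_{\rho(\pi_{1}\Sigma)}$, so it is equivalent to prove that $\rho\in\mathcal{QF}$ is Fuchsian if and only if $\delta_{\rho(\pi_{1}\Sigma)}=1$.

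The easy direction is forward: if $\rho$ is Fuchsian, then $\rho(\pi_{1}\Sigma)$ stabilizes a totally geodesic copy of $\hy^{2}\subset\hy^{3}$ and acts cocompactly on it. Consequently the limit set $\Lambda(\rho(\pi_{1}\Sigma))$ is the round circle $\vbdy\hy^{2}\subset\vbdy\hy^{3}$, whose Hausdorff dimension is $1$, so $\delta_{\rho(\pi_{1}\Sigma)}=1$.

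For the converse, I would proceed as follows. Assume $\delta_{\rho(\pi_{1}\Sigma)}=1$. By Uhlenbeck's theorem \cite{Uhlenbeck:1983wl}, for the quasi-Fuchsian representation $\rho$ the fiber $\Phi^{-1}(\rho)$ is nonempty; pick any $(g,B)\in\Phi^{-1}(\rho)$. The Gauss equation for a minimal immersion (Remark \ref{rem: Gauss-Codazzi +miminal}) gives
\[
K_{g}=-1-\tfrac{1}{2}\left\Vert B\right\Vert _{g}^{2}\leq-1.
\]
By Manning's theorem the topological entropy of the geodesic flow of $(\Sigma,g)$ equals the volume entropy of $(\us,\widetilde{g})$; by the Bishop--G\"{u}nther volume comparison, $K_{g}\leq-1$ forces the volume growth in the universal cover to dominate that of $\hy^{2}$, and hence $h(g,B)\geq 1$. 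Feeding this bound into Corollary \ref{cor:(main thm app AF)} yields
\[
1\;\leq\;h(g,B)\;\leq\;C_{2}(g,B)\cdot\delta_{\rho(\pi_{1}\Sigma)}\;=\;C_{2}(g,B)\;\leq\;1,
\]
so every inequality is an equality and in particular $h(g,B)=C_{2}(g,B)\cdot\delta_{\rho(\pi_{1}\Sigma)}$. The rigidity clause of Corollary \ref{cor:(main thm app AF)} then forces $B\equiv 0$, which is equivalent to $\rho$ being Fuchsian.

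The main obstacle (or at least the only non-formal input) is the lower bound $h(g,B)\geq 1$. I expect this to be routine via Manning plus volume comparison, but one must be careful to invoke it in the correct form: it is the Manning identity $h_{\mathrm{top}}=h_{\mathrm{vol}}$ for nonpositively curved compact Riemannian manifolds together with the pointwise curvature bound $K_{g}\leq -1$ (which is automatic for minimal surfaces in $\hy^{3}$) that does the work. Everything else, including the passage from $\dim_{H}\Lambda$ to $\delta_{\rho(\pi_{1}\Sigma)}$ via Sullivan, and the identification of the $B\equiv 0$ locus with the Fuchsian locus inside $\mathcal{H}$, is already encoded in Corollary \ref{cor:(main thm app AF)}.
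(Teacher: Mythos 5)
Your argument is correct and follows the same overall architecture as the paper's proof: pick $(g,B)\in\Phi^{-1}(\rho)$, use the Gauss equation to get $K_{g}\leq-1$, obtain the lower bound $h(g,B)\geq1$, and then squeeze against Corollary \ref{cor:(main thm app AF)} to force the rigidity clause, with Sullivan's theorem handling the passage between $\dim_{H}\Lambda$ and $\delta_{\rho(\pi_{1}\Sigma)}$. The one place you diverge is the source of the bound $h(g,B)\geq1$: the paper invokes Katok's Theorem B from \cite{Katok:1982jz}, which gives $h(g,B)\geq\left(-\int K\,\mathrm{d}A/\mathrm{Area}(\Sigma)\right)^{1/2}\geq1$ directly from $K\leq-1$, whereas you use Manning's identity $h_{\mathrm{top}}=h_{\mathrm{vol}}$ for compact nonpositively curved manifolds together with G\"{u}nther's volume comparison in the Cartan--Hadamard universal cover. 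Both routes are valid and of comparable length; your route is the one Sanders uses (as the paper itself notes in the introduction), while Katok's bound has the cosmetic advantage of being stated intrinsically in terms of the average curvature with no comparison geometry needed. One small imprecision: after observing that all inequalities in $1\leq h(g,B)\leq C_{2}(g,B)\cdot\delta_{\rho(\pi_{1}\Sigma)}\leq\delta_{\rho(\pi_{1}\Sigma)}=1$ are equalities, the equality that triggers the rigidity clause of Corollary \ref{cor:(main thm app AF)} is the last one, $C_{2}(g,B)\cdot\delta_{\rho(\pi_{1}\Sigma)}=\delta_{\rho(\pi_{1}\Sigma)}$, not the middle one you single out; since your chain forces all of them simultaneously, the conclusion $B\equiv0$, hence $\rho$ Fuchsian, stands.
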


\begin{proof}
For any $(g,B)\in\Phi^{-1}(\rho)$, we have $\Sigma$ is an immersed
minimal surface in a quasi-Fuchsian manifold $M=\rho(\pi_{1}\Sigma)\backslash\hy^{3}$
with the induced metric $g$ and the second fundamental form $B$.
Let $K(\Sigma)$ denote the Gaussian curvature of $\Sigma$ in $M$,
then by the Gauss-Codazzi equation $K(\Sigma)\leq-1$. Therefore using
the Theorem B in \cite{Katok:1982jz}, we have 
\[
h(g,B)\geq\left(\frac{-\int K(\Sigma)\mbox{d}A}{\mbox{Area}(\Sigma)}\right)^{\frac{1}{2}}\geq1.
\]

Hence the result is derived by the above lower bound of $h(\Sigma)$
plus the above corollary.
\end{proof}

\subsubsection{Almost-Fuchsian spaces}

\textsf{Recall that the }\textit{almost-Fuchsian}\textsf{ space $\mathcal{AF}$
is the space of minimal hyperbolic germs $(g,B)\in\mathcal{H}$ such
that} $\left\Vert B\right\Vert _{g}<2$. \textsf{Given a hyperbolic
metric $m\in\mathcal{F}$ and a holomorphic quadratic differential
$\alpha\in Q([m])$, we discuss an informative smooth path 
\[
r(t)=(g_{t},tB)\subset\mathcal{AF},
\]
where $g_{t}=e^{2u_{t}}m$ and $B=\mathrm{Re}(\alpha)$ satisfying
}$\left\Vert tB\right\Vert _{g_{t}}^{2}<2$ \textsf{. Notice that
$u_{t}:\Sigma\to\real$ is well-defined and smooth on $t$ (cf. Theorem
\ref{Thm: Uhlenbeck solution curve}).}

\textsf{Through studying this path, we can learn many geometric features
of the Fuchsian space $\mathcal{F}$. First, we will see how the entropy
behaves while we change the data along the ray $r(t)$ in $\mathcal{AF}$.
To be more precise, in the following we denote the unit tangent bundle
of $\Sigma$ associated with the data $r(t)$ by $T^{1}\Sigma_{r(t)}$.}

\textsf{We recover the following theorem by employing the reparametrization
method. }

\begin{thm}
[Sanders, Theorem 3.5 \cite{Sanders:2014wv} ]\label{Thm: entropy decreasing}
Consider the entropy function restricting on the almost-Fuchsian space
$h:\mathcal{AF}\to\real$, then \begin{itemize}

\item[1.] the entropy function $h$ realizes its minimum at the Fuchsian
space $\mathcal{F}$, and

\item[2.] for $(m,0)\in\mathcal{F}$, $h$ is monotone increasing
along the ray $r(t)=(g_{t},tB)$ provided $\left\Vert tB\right\Vert _{g_{t}}<2$,
i.e. $r(t)\subset\mathcal{AF}$, where $g_{t}=e^{2u_{t}}m$.

\end{itemize}
\end{thm}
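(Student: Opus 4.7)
The plan is to treat the two assertions separately. For Part~1 I follow the same route as the proof of Corollary~\ref{cor: Bowen's rigidity}: if $(g,B)\in\mathcal{AF}$, the Gauss equation gives $K_g=-1-\tfrac12\|B\|_g^2\le -1$, so Katok's inequality \cite{Katok:1982jz} yields $h(g,B)^2\ge -\bar K_g\ge 1$, with equality iff $B\equiv 0$, i.e., iff $(g,B)\in\mathcal{F}$. Since $h(m,0)=1$ on the Fuchsian locus, the minimum of $h$ on $\mathcal{AF}$ is $1$ and is attained exactly on $\mathcal{F}$.

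For Part~2, fix $(m,0)\in\mathcal{F}$ and a holomorphic quadratic differential $\alpha\in Q([m])$, and write $g_t=e^{2u_t}m$, $B=\mathrm{Re}(\alpha)$; work throughout with the fixed reference flow $\phi^m$ on $T^1(\Sigma,m)$, a transitive Anosov flow of entropy $1$. As in Step~1 of the proof of Theorem~\ref{Thm: main-1}, for each $t$ I build the H\"older Busemann cocycle on $\pi_1\Sigma\times\vbdy\widetilde\Sigma$ associated to the $g_t$-metric on $\widetilde\Sigma$ (rather than the hyperbolic $3$-metric on $\hy^3$); Theorems~\ref{Thm: Ledrappier} and~\ref{Thm: Sambarino} then produce a positive H\"older $F_t\colon T^1(\Sigma,m)\to\real_{>0}$ with periods $\int_{[\gamma]}F_t=l_{g_t}(\gamma)$, and Uhlenbeck's Theorem~\ref{Thm: Uhlenbeck solution curve} propagates smoothness of $t\mapsto u_t$ to smoothness of $t\mapsto F_t$ in the H\"older topology. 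Since $\phi^{F_t}$ is conjugate to the $g_t$-geodesic flow, its topological entropy equals $h_t:=h(g_t,tB)$, so Bowen's formula (Theorem~\ref{thm: bowen's formula}) reads $P_{\phi^m}(-h_t F_t)=0$. Differentiating in $t$, using the Parry--Pollicott first-derivative formula with $m_t$ the equilibrium state of $-h_t F_t$, gives
\[
\dot h_t\,\int F_t\,dm_t \;+\; h_t\,\int \dot F_t\,dm_t \;=\;0,\qquad \dot h_t\;=\;-\,\frac{h_t\int\dot F_t\,dm_t}{\int F_t\,dm_t}.
\]

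The sign of $\dot h_t$ is therefore opposite to that of $\int\dot F_t\,dm_t$, and is controlled by $\dot u_t$. Differentiating Uhlenbeck's equation in $t$ produces the linear elliptic equation
\[
\bigl(\Delta_m-2e^{2u_t}+t^2\|B\|_m^2\,e^{-2u_t}\bigr)\dot u_t \;=\; t\,\|B\|_m^2\,e^{-2u_t},
\]
whose zeroth-order coefficient factors as $-e^{2u_t}\bigl(2-\|tB\|_{g_t}^2\bigr)$, strictly negative on $\mathcal{AF}$, while the right-hand side is non-negative and strictly positive as soon as $t>0$, $B\not\equiv 0$. The strong maximum principle on the closed surface $\Sigma$ then forces $\dot u_t<0$ everywhere on $\Sigma$. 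By the first variation of arclength, every period of $\dot F_t$ is $\int_{[\gamma]}\dot F_t=\dot l_{g_t}(\gamma)=\int_{\gamma_t}\dot u_t\,ds_{g_t}<0$; the positive Liv\v{s}ic theorem (Theorem~\ref{Thm:positive-Liv=000161ic-Theorem}) applied to $-\dot F_t$ then makes $\dot F_t$ Liv\v{s}ic cohomologous to a strictly negative H\"older function, and integrating against $m_t$ yields $\int\dot F_t\,dm_t<0$. Hence $\dot h_t>0$, and $h$ is strictly increasing along $r(t)$ throughout $\mathcal{AF}$.

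The main obstacle is the analytic step of establishing that $F_t$ and the equilibrium state $m_t$ depend smoothly enough on $t$ for the pressure derivative identity to be legitimate; this combines Theorem~\ref{Thm: Uhlenbeck solution curve} for smoothness of $u_t$ with stability of the Ledrappier--Sambarino correspondence under smooth variation of the cocycle. Once this is granted, the proof is driven entirely by the elliptic PDE: the almost-Fuchsian condition $\|tB\|_{g_t}^2<2$ is precisely what makes the zeroth-order term of the linearized Gauss operator strictly negative, enabling the strong maximum principle on $\Sigma$.
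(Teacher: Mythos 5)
Your Part~1 is the same as the paper's (Katok's inequality plus the Gauss equation gives $h\geq 1$ with equality forcing $B\equiv 0$), and your PDE step for Part~2 — differentiating Uhlenbeck's equation, observing that the almost-Fuchsian condition makes the zeroth-order coefficient $e^{2u_t}(\|tB\|_{g_t}^2-2)$ negative, and applying the maximum principle to get $\dot u_t<0$ — is exactly the computation in the paper (the paper only records $\dot u_{t_0}\leq 0$; your strong-maximum-principle refinement is fine). Where you diverge is in how you convert the length information into entropy monotonicity: you differentiate the Bowen equation $P_{\phi^m}(-h_tF_t)=0$ in $t$ and use the Parry--Pollicott first-derivative formula together with the positive Liv\v{s}ic theorem to get $\dot h_t>0$. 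This is essentially Sanders' original derivative-of-entropy strategy, which the paper deliberately avoids: the paper instead fixes $t_0<t$, shows $l_{g_t}(\gamma)\leq l_{g_{t_0}}(\gamma)$ for all $[\gamma]$, deduces $F_t\leq F_{t_0}=1$ (after a cohomologous replacement via the nonnegative Liv\v{s}ic theorem), and concludes $h_t\geq h_{t_0}$ from monotonicity of the pressure (Remark \ref{rem: pressure}) and the uniqueness of equilibrium states (Theorem \ref{Thm: unique eq state}), with no differentiation in $t$ at all.

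The one genuine gap is the step you yourself flag: the smooth (or even $C^1$) dependence of $t\mapsto F_t$, and hence of $m_t$, needed to legitimize $\frac{d}{dt}P(-h_tF_t)=\int\frac{d}{dt}(-h_tF_t)\,\mathrm{d}m_t$. Theorem \ref{Thm: Sambarino} (via Theorem \ref{Thm: Ledrappier}) produces $F_t$ only up to Liv\v{s}ic cohomology, with no canonical choice and no statement about regularity in the parameter, so "stability of the Ledrappier--Sambarino correspondence under smooth variation of the cocycle" is not something you can invoke as it stands; smoothness of $u_t$ from Theorem \ref{Thm: Uhlenbeck solution curve} does not by itself transfer to the reparametrizing function. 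The standard repair is the one the paper uses in the following subsection: structural stability of Anosov flows (\cite{Pollicott:1994wq}, \cite{delaLlave:1986ce}) yields, for small perturbations $g_t$ of a fixed metric, a smooth one-parameter family of H\"older functions $F_t$ on the reference unit tangent bundle, after which your derivative computation (and the Gateaux differentiability of $P$ along such a family, as in Proposition \ref{prop: pressure metric}) goes through. Alternatively, you can bypass the issue entirely: from $\dot u_t<0$ you already know $l_{g_t}(\gamma)$ is strictly decreasing in $t$ for every $[\gamma]$, and then the paper's comparison argument (pressure monotonicity plus uniqueness of equilibrium states, applied to the two fixed parameters $t_0<t$) gives $h_{t_0}\leq h_t$ with the equality analysis, needing only the reparametrization machinery at each fixed $t$ and no parametric regularity of $F_t$ whatsoever.
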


\textsf{Fixed $t_{0}>0$, $r(t_{0})=(e^{2u_{t_{0}}}m,t_{0}B)$ defines
the geodesic flow $\phi^{t_{0}}:T^{1}\Sigma_{r(t_{0})}\to T^{1}\Sigma_{r(t_{0})}$.
For any $t>t_{0}$, we want to show $h(t_{0},g_{0})\leq h(t,g)$ and
equality holds if and only if $B=0$ . Since $(\Sigma,g_{t})$ is
negatively curved, using the distance function $d_{g_{t}}$, we can
construct the Busemann cocycle $B_{\xi}^{g_{t}}(x,y)$ like we did
in the proof of Theorem \ref{Thm: main-1}. Then by Theorem \ref{Thm: Sambarino},
we can reparametrize the geodesic flow $\phi^{t_{0}}$ induced by
the data $r(t_{0})=(e^{2u_{t_{0}}}m,t_{0}B)$ on $T^{1}\Sigma_{r(t_{0})}$
by a Hölder function $F_{t}$ on $T^{1}\Sigma_{(t_{0},g_{0})}$ such
that $\phi^{t}:T^{1}\Sigma_{r(t)}\to T^{1}\Sigma_{r(t)}$ is conjugated
to $(\phi^{t_{0}})^{F_{t}}:T^{1}\Sigma_{r(t_{0})}\to T^{1}\Sigma_{r(t_{0})}$
. We consider the pressure $P_{\phi^{t_{0}}}:C(T^{1}\Sigma_{r(t_{0})})\to\real$,
and we have} 
\begin{alignat*}{1}
P_{\phi^{t_{0}}}(-h_{F_{t}}\cdot F_{t}) & =0=P_{\phi^{t_{0}}}(-h(g_{t},tB)\cdot F_{t})\\
P_{\phi^{t_{0}}}(-h_{F_{t_{0}}}\cdot1) & =0=P_{\phi^{t_{0}}}(-h(g_{t_{0}},t_{0}B)\cdot1).
\end{alignat*}

\begin{rem}
Without using Theorem \ref{Thm: Sambarino} , when $t_{0}$ and $t$
are small, the structure stability of Anosov flows also gives us the
reparametrizing function $F_{t}$. We will see more details about
this perspective in the next subsection. 
\end{rem}
\begin{proof}[Proof of the theorem \ref{Thm: entropy decreasing}]
Because almost-Fuchsian is quasi-Fuchsian, the first assertion is
a consequence of Corollary \ref{cor:(main thm app AF)} and the fact
that $h(g,B)\geq1$ (cf. the proof of Corollary \ref{cor: Bowen's rigidity}).
The remaining part of this proof is devoted to the second assertion.

It's enough to show $l_{g_{t}}(\gamma)\leq l_{g_{t_{0}}}(\gamma)$,
for $t>t_{0}$ and $\forall[\gamma]\in[\pi_{1}\Sigma]$. Because if
$l_{g_{t}}(\gamma)\leq l_{g_{t_{0}}}(\gamma)$ for all $[\gamma]\in[\pi_{1}\Sigma]$,
then we have $F_{t}$ is cohomologous to a function which is bigger
then $F_{t_{0}}$$(=1)$, abusing the notation, we denote this function
by $F_{t}$ again, i.e. $l_{g_{t}}(\gamma)\leq l_{g_{t_{0}}}(\gamma)\implies F_{t}\leq F_{t_{0}}=1$.

\textbf{Claim: }$F_{t}\leq F_{t_{0}}=1$$\implies$$h_{t}\geq h_{t_{0}}$
for $t\leq t_{0}$ and the equality holds if and only if $F_{t}\sim1$.

pf. we repeat the same trick used in the proof of Theorem \ref{Thm: main-1}.
Since the pressure is monotone ( see Remark \ref{rem: pressure} ),
we have

\[
0\leq F_{t}\leq F_{t_{0}}\implies0\geq-h_{t}F_{t}\geq-h_{t}F_{t_{0}}\implies P_{\phi^{t_{0}}}(-h_{t}F_{t})\geq P(-h_{t}F_{t_{0}}).
\]
Thus, 
\[
0=P_{\phi^{t_{0}}}(-h_{t_{0}})=P_{\phi^{t_{0}}}(-h_{t_{0}}F_{t_{0}})=P_{\phi^{t_{0}}}(-h_{t}F_{t})\geq P_{\phi^{t_{0}}}(-h_{t}F_{t_{0}})=P_{\phi^{t_{0}}}(-h_{t}).
\]

By definition, 
\[
0=P_{\phi^{t_{0}}}(-h_{t_{0}})=\sup_{\nu\in\mathcal{M}^{\phi^{t_{0}}}}h(\nu)+\int(-h_{t_{0}})\mathrm{d}\nu\implies\sup_{\nu\in\mathcal{M}^{\phi^{t_{0}}}}h(\nu)=h(\mu_{h_{t_{0}}})=h_{t_{0}}
\]
where $\mu_{h_{t_{0}}}$ is the equilibrium state of the function
$-h_{t_{0}}\cdot1$ and 
\[
0\geq P_{\phi^{t_{0}}}(-h_{t})=\sup_{\nu\in\mathcal{M}^{\phi^{t_{0}}}}h(\nu)+\int(-h_{t})\mathrm{d}\nu=h_{t_{0}}-h_{t}.
\]
To see the equality case, we notice that $h_{t_{0}}=h_{t}$ implies
that $h(\mu_{h_{t}F_{t}})=h_{t}=h_{t_{0}}$, i.e. 
\[
0=P_{\phi^{t_{0}}}(-h_{t_{0}}\cdot1)=h(\mu_{h_{t}F_{t}})-\int h_{t_{0}}\cdot1\mathrm{d}\mu_{h_{t}F_{t}}.
\]
In other words, $\mu_{F_{t}h_{t}}$ is the equilibrium state of the
constant function $-h_{t_{0}}\cdot1$. Hence, by the uniqueness of
equilibrium state (cf. Theorem \ref{Thm: unique eq state}) we have
$\mu_{h_{t}F_{t}}=\mu_{h_{0}\cdot1}$ and which implies $h_{t}F_{t}\sim h_{0}\cdot1$,
i.e. $F_{t}\sim1$. We now complete the proof of this claim.\smallskip{}

By the above claim, we know the equality holds if and only if $F_{t}\sim1$,
i.e. $l_{g_{0}}(\gamma)=l_{g}(\gamma)$ for all $[\gamma]\in[\pi_{1}\Sigma]$.
By the marked length spectrum theorem \cite{Otal:1990gu}, this means
that $g_{0}=g_{1}$. In other words, $h(g_{t},tB)$ is strictly increasing
when $u_{t}\neq0$. 

To prove $l_{g_{t}}(\gamma)\leq l_{g_{t_{0}}}(\gamma)$ for $t>t_{0}$,
it is enough to show that $g_{t}=e^{2u_{t}}m$ is decreasing, i.e.
$\frac{d}{dt}u_{t}<0$ for all $t>0$. 

Because fixing a free homology class of a closed curve $\tau$ and
let $c_{t}$ denote the closed geodesic in this class under the metric
$g_{t}$, assuming that $g_{t}$ is decreasing, then we have $||\cdot||_{g_{t}}\leq||\cdot||_{g_{t_{0}}}$
for $t>t_{0}$. Thus, 
\[
l_{g_{t}}(c_{t})\leq l_{g_{t}}(c_{t_{0}})=\int_{c_{t_{0}}}||v||_{g_{t}}\leq\int_{c_{t_{0}}}||v||_{g_{t_{0}}}=l_{g_{t_{0}}}(c_{t_{0}}),
\]
where $v$ is the unit tangent vector of $c_{t_{0}}$ for the metric
$g_{t_{0}}$, i.e. $v(s):=\dfrac{d}{ds}(c_{t_{0}}(s))/||\dfrac{d}{ds}(c_{t_{0}}(s))||_{g_{t_{0}}}$.

\textbf{Claim:} $g_{t}=e^{2u_{t}}m$ is decreasing, i.e. $\frac{d}{dt}u_{t}<0$
for all $t>0$. 

pf. by Theorem \ref{thm: gauss equation} (the Gauss equation), we
have 
\begin{equation}
K_{g_{t}}=-1-\frac{1}{2}t^{2}e^{-4u_{t}}||B||_{m}^{2}=e^{-2u_{t}}(-\Delta_{m}u_{t}-1).\label{eq:curvature}
\end{equation}
Taking the derivative of equation \ref{eq:curvature} w.r.t. $t$
and evaluating at $t_{0}$ provided $\left\Vert t_{0}B\right\Vert _{g_{0}}^{2}<2$
, then 
\[
-\Delta_{m}\dot{u}_{t_{0}}=e^{2u_{t_{0}}}\cdot\dot{u}_{t_{0}}(\left\Vert t_{0}B\right\Vert _{g_{0}}^{2}-2)-t_{0}e^{-2u_{t_{0}}}\left\Vert B\right\Vert _{m}^{2}.
\]
 Because for the fixed $t_{0}$, at a maximum of $\dot{u_{t_{0}}}$
we have $-\Delta_{m}\dot{u}_{t_{0}}\geq0.$ Thus 
\[
e^{2u_{t_{0}}}\cdot\dot{u}_{t_{0}}(\left\Vert t_{0}B\right\Vert _{g_{0}}^{2}-2)-t_{0}e^{-2u_{0}}\left\Vert B\right\Vert _{m}^{2}\geq0.
\]
The hypothesis $\left\Vert t_{0}B\right\Vert _{g_{0}}^{2}<2$ implies
that $\dot{u}_{t_{0}}\leq0$ at each maximum; hence $\dot{u}_{t_{0}}\leq0$
for all points on $\Sigma$. Moreover, if $\dot{u}_{t_{0}}=0$ for
some $t_{0}>0$, then we have $B=0$. This means $u_{t}\equiv0$ for
all $t$.

\end{proof}

\begin{rem}
The main difference between our proof and Sanders' proof in \cite{Sanders:2014wv}
is that in \cite{Sanders:2014wv} he used a sophisticated formula
of the derivative of the topological entropy whereas in our reasoning
we examine the length changing along the path directly.
\end{rem}

\subsubsection{Another metric on $\mathcal{F}$}

\textsf{Following the previous theorem, Sanders proves that we can
define a metric on the Fuchsian space $\mathcal{F\subset\mathcal{H}}$
by taking the Hessian of the topological entropy along the path $r(s)=(e^{2u_{s}}m,sB)$.
The striking point is that this metric is bounded below by the Weil-Petersson
metric on $\mathcal{F}$. }

\textsf{Recall that the fiber of the cotangent bundle of $m\in\mathcal{F}$
is identified with the space of holomorphic quadratic differentials
on the Riemann surface $(\Sigma,m)$. Thus, in order to connect the
Hessian of the entropy with the Weil-Petersson metric, we will prove
that the Hessian of the topological entropy along the given path $r(s)$
gives us a way to measure holomorphic quadratic differentials on $(\Sigma,m).$
It is because $r(s)$ is defined by the data $(m,B)$, where $B$
is given by a holomorphic quadratic differential $\alpha$ such that
$B=\mathcal{\mathrm{Re}}(\alpha)$. }

\textsf{In the following we give another proof of Sanders' theorem
by using the pressure metric we introduced in the beginning of this
note. }

\textsf{Before we start proving this result, we recall several important
concepts of Anosov flows. We first notice that by the structure stability
of the Anosov flow (cf. Prop. 1 in \cite{Pollicott:1994wq} or \cite{delaLlave:1986ce}),
when $s$ is small, the geodesic flows $\phi^{s}:T^{1}\Sigma_{r(s)}\to T^{1}\Sigma_{r(s)}$
conjugates to the reparametrized flow $\phi^{F_{s}}:T^{1}\Sigma_{r(0)}\to T^{1}\Sigma_{r(0)}$
where $\phi:T^{1}\Sigma_{r(0)}\to T^{1}\Sigma_{r(0)}$ is the geodesic
flow on $T^{1}\Sigma_{r(0)}$ and $F_{s}$ the is the reparametrizing
Hölder continuous function. Since the path $r(s)$ is a smooth one
parameter family in $\mathcal{AF}$, the structure stability theorem
also indicates that $\{F_{s}\}$ form a smooth one parameter family
of Hölder continuous functions on $T^{1}\Sigma_{r(0)}$ .}

\textsf{Since we shall only be interested in metrics $g_{s}$ close
to $g_{0}(=m)$, it suffices to consider one parameter families given
by perturbation series: for $s$ small, 
\[
g_{s}=g_{0}+s\cdot\dot{g}_{0}+\frac{s^{2}}{2}\ddot{g}_{0}+...,\mbox{ and }F_{s}=F_{0}+s\cdot\dot{F}_{0}+\frac{s^{2}}{2}\ddot{F}_{0}+...,
\]
where $\dot{g}_{0},\ddot{g}_{0}$,... are symmetric 2-tensors on $T^{1}\Sigma_{r(0)}$
and $\dot{F}_{0}$, $\ddot{F}_{0}$,... are Hölder continuous functions
on $T^{1}\Sigma_{m}$. }

\textsf{The following lemma connects the derivatives of $g_{s}$ with
$F_{s}$.}

\begin{lem}
[Pollicott, Lemma 5 \cite{Pollicott:1994wq}]\label{lemma: pollicott trick}
\begin{equation}
\int_{T^{1}\Sigma}\dot{F}_{0}\mbox{d}\mu_{0}=\int_{T^{1}\Sigma}\dot{g}_{0}(v,v)\mbox{d}\mu_{0},\label{eq: pollicott lemma up}
\end{equation}
 and 
\begin{equation}
{\displaystyle \int_{T^{1}\Sigma}\ddot{F}_{0}\mbox{d}\mu_{0}}\leq\int_{T^{1}\Sigma}\frac{\ddot{g}_{0}(v,v)}{2}\mbox{d}\mu_{0}.\label{eq:pollicott lemma down}
\end{equation}

\end{lem}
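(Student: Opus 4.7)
The plan is to implement Pollicott's comparison trick: compare, for each closed orbit $\tau$ of the unperturbed geodesic flow $\phi$ on $T^1\Sigma_{r(0)}$, the $\phi^{F_s}$--period of $\tau$ with the $g_s$--length of $\tau$ viewed merely as a loop (not a $g_s$--geodesic), and then average over closed orbits by equidistribution.

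Fix a closed orbit $\tau$ of $\phi$ of $g_0$--period $l_{g_0}(\tau)$, parametrized by $g_0$--arc length, with unit tangent field $v$, so that $g_0(v,v) \equiv 1$ along $\tau$. Let $[\gamma] \in [\pi_1\Sigma]$ be its free homotopy class. By Theorem~\ref{Thm: Sambarino}, the $\phi^{F_s}$--period of $\tau$ is $\int_\tau F_s\,dt = l_{g_s}(\gamma)$. On the other hand, the loop $\tau$ itself has $g_s$--length $\int_\tau \sqrt{g_s(v,v)}\,dt$, which bounds $l_{g_s}(\gamma)$ from above because $g_s$--geodesics minimize length in a free homotopy class. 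Thus
\[
\int_\tau F_s \, dt \;\leq\; \int_\tau \sqrt{g_s(v,v)} \, dt,
\]
with equality at $s=0$ because $F_0\equiv 1$ and $g_0(v,v) \equiv 1$. Consequently the nonnegative function
\[
\Psi_\tau(s) := \int_\tau \bigl(\sqrt{g_s(v,v)}-F_s\bigr)\,dt
\]
attains a global minimum at $s=0$, which forces $\Psi_\tau'(0)=0$ and $\Psi_\tau''(0)\geq 0$.

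Expanding $\sqrt{g_s(v,v)}$ and $F_s$ in Taylor series in $s$, and using $g_0(v,v)\equiv 1$ on $\tau$, the vanishing of $\Psi_\tau'(0)$ produces a linear identity relating $\int_\tau \dot{F}_0\,dt$ to $\int_\tau \dot{g}_0(v,v)\,dt$, while the non--negativity of $\Psi_\tau''(0)$ produces an inequality relating $\int_\tau \ddot{F}_0\,dt$ to $\int_\tau \ddot{g}_0(v,v)\,dt$ (after discarding a nonpositive quadratic correction in $\dot{g}_0(v,v)$). Dividing both relations by $l_{g_0}(\tau)$ and averaging over $\tau \in R_T(1)$ as $T\to\infty$, the equidistribution theorem (Theorem~\ref{Thm: Equidistribution}) identifies the resulting limits with integrals against $\mu_0$, where $\mu_0=\mu_{BM}$ is the Bowen--Margulis measure of $\phi$ (the equilibrium state of the constant $-h_0\cdot 1$). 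This produces \eqref{eq: pollicott lemma up} and \eqref{eq:pollicott lemma down}.

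The main technical obstacle is the exchange of $s$--derivatives with the $T\to\infty$ limit in the equidistribution step. This needs sup--norm control on the Taylor remainders of $F_s$, which is available because $s\mapsto F_s$ is smooth as a curve in a H\"older function space on $T^1\Sigma_{r(0)}$ by structural stability of Anosov flows (as in the paragraph preceding the lemma), and similarly $g_s(v,v)$, $\dot{g}_0(v,v)$, $\ddot{g}_0(v,v)$ are continuous functions on $T^1\Sigma_{r(0)}$. Granted this uniformity one may pass first to the $T\to\infty$ limit and then differentiate in $s$, which yields the two displayed relations of the lemma.
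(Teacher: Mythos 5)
The paper itself contains no proof of this lemma (the remark immediately after it says the proof is omitted and defers to Pollicott), so there is no internal argument to compare against; judged on its own, your comparison scheme is the right one and is essentially the standard mechanism behind the cited result: for each closed $\phi$-orbit $\tau$ in the class $[\gamma]$ one has $\int_\tau F_s\,dt=l_{g_s}(\gamma)\le\int_\tau\sqrt{g_s(v,v)}\,dt$, with equality at $s=0$, and one extracts first- and second-order information per orbit before averaging. Your derivation of \eqref{eq:pollicott lemma down} is correct as sketched, including discarding the nonpositive term $-\tfrac14\bigl(\dot g_0(v,v)\bigr)^2$ coming from the second derivative of the square root.

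There are, however, two points you must address. First, the constant in the first-order relation: since $g_0(v,v)\equiv1$ along $\tau$, one has $\frac{d}{ds}\big|_{s=0}\sqrt{g_s(v,v)}=\tfrac12\dot g_0(v,v)$, so $\Psi_\tau'(0)=0$ gives $\int_\tau\dot F_0\,dt=\tfrac12\int_\tau\dot g_0(v,v)\,dt$, and averaging yields $\int\dot F_0\,\mathrm{d}\mu_0=\tfrac12\int\dot g_0(v,v)\,\mathrm{d}\mu_0$, not \eqref{eq: pollicott lemma up} as printed under the stated convention $g_s=g_0+s\dot g_0+\cdots$. Your phrase that the computation produces a linear identity relating the two integrals hides exactly this: you should state the constant explicitly, and either flag the displayed identity as misprinted by a factor $\tfrac12$ or observe that in the paper's application ($\dot g_0=2\dot u_0 m$ with $\dot u_0=0$) both sides vanish, so nothing downstream is affected. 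Relatedly, $\Psi_\tau'(0)=0$ requires the parameter family to be two-sided; a one-sided ray only gives $\Psi_\tau'(0)\ge0$. Here the family does extend smoothly and evenly across $s=0$ because the Gauss equation involves $s$ only through $s^2$, but this needs to be said. Second, your closing worry about interchanging the $s$-derivative with the $T\to\infty$ limit is a red herring, and the order you propose (limit first, then differentiate) is the delicate one: instead differentiate per orbit first, which is legitimate because $s\mapsto F_s$ is smooth into a H\"older space so that $\dot F_0$ and $\ddot F_0$ are fixed continuous functions, and then apply Theorem \ref{Thm: Equidistribution} to those fixed functions (or, for the identity, the Liv\v{s}ic theorem: a H\"older function with vanishing periods is a coboundary, hence integrates to zero against every $\phi$-invariant measure). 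With that order no uniformity in $s$ is needed at all.
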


\begin{rem*}
The proof of above lemma is a straightforward computation. However,
in the sake of brevity we omit the proof. 
\end{rem*}

\textsf{The following lemma reveals the relation between Weil-Petersson
metric and the second derivative of the metric $g_{s}$.}

\begin{lem}
\label{lemma: liouville is bm}

\[
\int_{T^{1}\Sigma}\frac{\ddot{g}_{0}(v,v)}{2}\mbox{d}\mu_{0}=\int_{T^{1}\Sigma}\ddot{u}_{0}\mbox{d}\mu_{0}=-2\pi\int_{\Sigma}\left\Vert \alpha\right\Vert _{m}^{2}\mbox{d}\mbox{V}_{m},
\]

\end{lem}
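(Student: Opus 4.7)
The plan is to combine a direct conformal calculation (for the first equality) with a linearization of the Gauss equation at the hyperbolic point (for the second equality).

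Since $r(0) = (m,0) \in \mathcal{F}$, we have $g_0 = m$, and any unit vector $v \in T^1\Sigma$ satisfies $g_0(v,v) = m(v,v) = 1$. From $g_s = e^{2u_s}m$ I compute $\dot g_0(v,v) = 2\dot u_0$ and $\ddot g_0(v,v) = 2\ddot u_0 + 4\dot u_0^{\,2}$. To establish the first equality I must therefore show $\dot u_0 \equiv 0$: differentiate the Gauss equation of Theorem \ref{thm: gauss equation} once in the parameter and evaluate at $s = 0$ (where $u_0 = 0$). This yields the linear equation $(\Delta_m - 2)\dot u_0 = 0$, and since $\Delta_m \le 0$ on the closed surface, the operator $\Delta_m - 2$ has trivial kernel, so $\dot u_0 \equiv 0$. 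Hence $\ddot g_0(v,v)/2 = \ddot u_0$ pointwise, which integrates to the first equality.

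For the second equality, the same strategy applied to the second derivative of the Gauss equation, using $u_0 = \dot u_0 = 0$, produces the Poisson-type identity
\[
(\Delta_m - 2)\ddot u_0 \;=\; \|B\|_m^{\,2}.
\]
Integrating against $dV_m$ on the closed surface $\Sigma$ annihilates the Laplacian term and gives $\int_\Sigma \ddot u_0 \, dV_m = -\tfrac{1}{2}\int_\Sigma \|B\|_m^{\,2}\, dV_m$. Two final identifications convert this into the stated formula. First, since $r(0) \in \mathcal{F}$, the surface $(\Sigma, m)$ has constant curvature $-1$, so by Pesin's formula the Liouville measure attains $h_{\mathrm{top}} = 1$ and hence coincides with the Bowen--Margulis measure $\mu_0$; this lets me push the $\Sigma$-valued function $\ddot u_0$ through the projection $T^1\Sigma \to \Sigma$ and pick up the circle-fiber factor of $2\pi$. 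Second, the Hopf description $\alpha = (B_{11} - iB_{12})\,dz^2$ in isothermal coordinates, together with $B_{11} = -B_{22}$ (trace-free because $f$ is minimal), gives $\|B\|_m^{\,2} = 2\|\alpha\|_m^{\,2}$. Multiplying through yields $-2\pi \int_\Sigma \|\alpha\|_m^{\,2}\, dV_m$.

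The main obstacle will be keeping the numerical constants honest: the stated coefficient $-2\pi$ is only pinned down once one commits to specific normalizations for the Liouville measure, for the tensor norm on $B$, and for the norm on $\alpha$, and verifies that the factors $2\pi$ (from the circle fiber), $1/2$ (from inverting the constant in $\Delta_m - 2$), and $2$ (from $\|B\|^2 = 2\|\alpha\|^2$) assemble correctly. The analytic engine (differentiating the Gauss equation twice and inverting $\Delta_m - 2$) and the geometric input (Liouville equals Bowen--Margulis at a hyperbolic point) are each short and standard.
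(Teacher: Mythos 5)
Your proposal is correct and follows essentially the same route as the paper: differentiate the Gauss equation for the conformal factor once to get $\dot{u}_{0}=0$ (the paper invokes the maximum principle, you invert $\Delta_{m}-2$ --- the same elliptic fact), differentiate twice to get $(\Delta_{m}-2)\ddot{u}_{0}=\left\Vert B\right\Vert _{m}^{2}=2\left\Vert \alpha\right\Vert _{m}^{2}$, integrate over the closed surface to kill the Laplacian, and use that at the Fuchsian point the Bowen--Margulis measure coincides with the (unnormalized) Liouville measure to convert the $dV_{m}$ integral into the $\mu_{0}$ integral with the $2\pi$ fiber factor. Your bookkeeping $\left\Vert B\right\Vert _{m}^{2}=2\left\Vert \alpha\right\Vert _{m}^{2}$ is precisely the normalization the paper's own computation uses (its quadratic term appears as $s^{2}\left\Vert \alpha\right\Vert _{m}^{2}$, matching $\tfrac{1}{2}\left\Vert sB\right\Vert _{m}^{2}$), so the constants assemble exactly as you anticipated.
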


\begin{proof}
\textbf{Claim:} 
\[
\int\ddot{u}_{0}\mbox{d}\mu_{0}=-2\pi\int_{\Sigma}\left\Vert \alpha\right\Vert _{m}^{2}\mbox{d}\mbox{V}_{m}.
\]

pf. for $m\in\mathcal{F}$ and $\alpha\in Q([m])$, we notice that
the Gauss equation (Theorem \ref{thm: gauss equation}) for this given
data $r(s)=(e^{2u_{s}}m,s\cdot\mathrm{Re}\alpha)$ is that 
\[
\Delta_{m}u_{s}+1-e^{-2u_{s}}-se^{-2u_{s}}||\alpha||_{m}^{2}=0,
\]
where $||\alpha||_{m}$ is the norm of $\alpha$ with respect to the
hyperbolic metric $m$.

Taking the derivative with respect to $s$, we have 
\[
-\Delta_{m}\dot{u}_{s}=e^{2u_{s}}\cdot\dot{u}_{s}(\left\Vert s\alpha\right\Vert _{m}^{2}-2)-se^{-2u_{s}}\left\Vert \alpha\right\Vert _{m}^{2}.
\]
The maximum principle implies that $\dot{u}_{0}=0$. We differentiate
the above equation again and evaluate at $s=0$, then we get 
\begin{equation}
-\Delta_{m}\ddot{u}_{0}=-2\ddot{u}_{0}-2\left\Vert \alpha\right\Vert _{m}^{2}.\label{eq:gass apply}
\end{equation}
By integrating the equation (\ref{eq:gass apply}) with respect to
the volume form of $m$, and because $\Sigma$ is has no boundary
we have 
\[
0=-2\int_{\Sigma}\ddot{u}_{0}\mbox{d}V_{m}-2\int_{\Sigma}\left\Vert \alpha\right\Vert _{m}^{2}\mbox{d}V_{m}.
\]
Because in this case the Bowen-Margulis measure $\mu_{0}$ of the
geodesic flow $\phi:T^{1}\Sigma_{m}\to T^{1}\Sigma_{m}$ is the Liouville
measure, we have 

\[
2\pi\int_{\Sigma}\ddot{u}_{0}\mbox{d}V_{m}=\int_{T^{1}\Sigma}\ddot{u}_{0}\mbox{d}\mu_{0}.
\]

\medskip{}

\textbf{Claim:} 
\[
\int_{T^{1}\Sigma}\frac{\ddot{g}_{0}(v,v)}{2}\mbox{d}\mu_{0}=\int_{T^{1}\Sigma}\ddot{u}_{0}\mbox{d}\mu_{0}.
\]

pf. It is straightforward, because $\dot{u}_{0}=0$ and $\ddot{g}_{0}=2\ddot{u}_{0}m$.
\end{proof}

\textsf{Now we are ready to state and prove the main result of this
subsection. }
\begin{thm}
[Sanders, Theorem 3.8 \cite{Sanders:2014wv} ]\label{thm:hessian entropy}One
can define a Riemannian metric on the Fuchsian space $\mathcal{F}$
by using the Hessian of $h$. Moreover, this metric is bounded below
by $2\pi$ times the Weil-Petersson metric on $\mathcal{F}$.
\end{thm}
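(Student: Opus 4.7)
The plan is to express the Hessian of $h$ at $(m,0)\in\mathcal{F}$, computed along the Uhlenbeck deformation $r(s)=(g_s,sB)$ with $B=\mathrm{Re}(\alpha)$ and $\alpha\in Q([m])$, as a sum of a pressure-metric contribution and a Weil-Petersson term, and then read off both claims simultaneously from the resulting inequality.

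First, for $s$ small, Theorem \ref{Thm: Uhlenbeck solution curve} supplies the smooth curve $r(s)\subset\mathcal{AF}$, and structural stability of Anosov flows gives a smooth one-parameter family of H\"older reparametrization functions $F_s$ on $T^{1}\Sigma_{m}$ conjugating the geodesic flow on $T^{1}\Sigma_{r(s)}$ to a reparametrization of $\phi:=\phi^{0}$. By Bowen's formula (Theorem \ref{thm: bowen's formula}), the curve $c_s:=-h(s)F_s$ lies in the space $\mathcal{P}(T^{1}\Sigma_{m})$ of pressure-zero H\"older functions, so it is an admissible curve on which to apply Proposition \ref{prop: pressure metric}.

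Next, I would evaluate the pressure-metric norm of $\dot c_0$. At $s=0$ we have $F_0\equiv 1$, $h(0)=1$, and $\dot h(0)=0$ (the second being precisely the minimization content of Theorem \ref{Thm: entropy decreasing}), whence $\dot c_0=-\dot F_0$ and $\ddot c_0=-\ddot h(0)-\ddot F_0$. The equilibrium state of $c_0\equiv-1$ is the Bowen-Margulis measure $\mu_0$, which in the Fuchsian case coincides with Liouville measure, so Proposition \ref{prop: pressure metric} gives
\[
\|\dot c_0\|_P^{2}\;=\;\frac{\int\ddot c_0\,d\mu_0}{\int c_0\,d\mu_0}\;=\;\ddot h(0)+\int\ddot F_0\,d\mu_0,
\]
and therefore $\ddot h(0)=\|\dot c_0\|_P^{2}-\int\ddot F_0\,d\mu_0$. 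Applying Lemma \ref{lemma: pollicott trick} to bound $\int\ddot F_0\,d\mu_0\leq \tfrac12\int\ddot g_0(v,v)\,d\mu_0$, and then Lemma \ref{lemma: liouville is bm} to identify the right-hand side with $-2\pi\int_\Sigma\|\alpha\|_m^{2}\,dV_m=-2\pi\langle\alpha,\alpha\rangle_{WP}$, I obtain
\[
\ddot h(0)\;\geq\;\|\dot c_0\|_P^{2}+2\pi\langle\alpha,\alpha\rangle_{WP}\;\geq\;2\pi\langle\alpha,\alpha\rangle_{WP}.
\]
Since $\dot h(0)=0$ while the right-hand side is strictly positive for $\alpha\neq 0$, the Hessian of $h$ is a positive-definite quadratic form on $Q([m])\cong T_{[m]}\mathcal{T}\cong T_m\mathcal{F}$, so by polarization it defines a Riemannian metric on $\mathcal{F}$ dominating $2\pi$ times the Weil-Petersson metric.

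The main obstacle I anticipate is the regularity step: verifying that the family $\{F_s\}$ is smooth in $s$ as a curve of H\"older functions on $T^{1}\Sigma_{m}$, so that the derivatives $\dot F_0$ and $\ddot F_0$ are genuine H\"older objects to which Proposition \ref{prop: pressure metric} applies, and that the resulting quadratic form on $Q([m])$ coincides under polarization with a well-defined symmetric $(0,2)$-tensor computed from two-parameter deformations $r(s,t)=(g_{s,t},sB_1+tB_2)$. This should follow from the smoothness of the Uhlenbeck solution curve together with the perturbation apparatus underlying Lemma \ref{lemma: pollicott trick} and the reparametrization formalism of \cite{Sambarino:2014jv}, but it is the only piece not already packaged into a lemma above.
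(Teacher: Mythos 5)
Your proposal is correct and takes essentially the same route as the paper: the same pressure-zero curve $c_s=-h_sF_s$, Proposition \ref{prop: pressure metric}, and Lemmas \ref{lemma: pollicott trick} and \ref{lemma: liouville is bm}, yielding the identical chain $\ddot h_0\geq-\int\ddot F_0\,\mathrm{d}\mu_0\geq 2\pi\left\Vert \alpha\right\Vert _{WP}^{2}$. The only cosmetic difference is that you annihilate the cross term using $\dot h(0)=0$ (which in any case follows from $\int\dot F_0\,\mathrm{d}\mu_0=0$, the route the paper takes via Lemma \ref{lemma: pollicott trick} and $\dot u_0=0$), so in all essentials this is the paper's argument.
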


\begin{proof}
[Proof of the Theorem \ref{thm:hessian entropy}]Here we consider
$c_{t}:=-h(g_{t},tB)\cdot F_{t}=-h_{t}F_{t}$. Since $P_{\phi}(c_{t})=0$,
we know that $\int\dot{c}_{0}\mbox{d}\mu_{0}=0$ where $\mu_{0}$
is the Bowen-Margulis measure of the flow $\phi:T^{1}\Sigma_{r(0)}\to T^{1}\Sigma_{r(0)}$,
i.e. $m_{c_{0}}=\mu_{0}$ and $\dot{c}_{0}\in T_{c_{0}}\mathcal{P}(T^{1}\Sigma_{m})$.
Therefore, by Proposition \ref{prop: pressure metric}, the pressure
metric of $\dot{c}_{0}$ is 
\[
\left\Vert \dot{c}_{0}\right\Vert _{P}^{2}=-\frac{\mathrm{Var}(\dot{c}_{0},\mu_{0})}{\int c_{0}\mbox{d}\mu_{0}}=\frac{\int\ddot{c}_{0}\mbox{d}\mu_{0}}{\int c_{0}\mbox{d}\mu_{0}}.
\]
Notice that $h_{0}=1$, $F_{0}=1$, and $\dot{u}_{0}=0$, so by Lemma
\ref{lemma: pollicott trick} 
\[
\int\dot{F}{\rm d}\mu_{0}=\int\dot{g}_{0}{\rm d}\mu_{0}=\int2\dot{u}_{0}m(v,v)\mathrm{d}\mu_{0}=0,
\]
and hence 
\begin{alignat*}{1}
0\leq\left\Vert \dot{c}_{0}\right\Vert _{P}^{2} & =\ddot{h}_{0}+2\dot{h}_{0}\int\dot{F}_{0}\mbox{d}\mu_{0}+h_{0}\int\ddot{F}_{0}\mbox{d}\mu_{0}\\
 & =\ddot{h}_{0}+\int\ddot{F}_{0}\mbox{d}\mu_{0}.
\end{alignat*}

Therefore, 
\begin{alignat*}{2}
\ddot{h}_{0}\geq & -{\displaystyle \int_{T^{1}\Sigma}\ddot{F}_{0}\mathrm{d}\mu_{0}}\\
\geq & -\int_{T^{1}\Sigma}\frac{\ddot{g}_{0}(v,v)}{2}\mbox{d}\mu_{0} & Lemma\mbox{ }\ref{lemma: pollicott trick}\\
= & -\int_{T^{1}\Sigma}\ddot{u}_{0}m(v,v)\mbox{d}\mu_{0} & \quad Lemma\mbox{ }\ref{lemma: liouville is bm}\\
= & -\int_{T^{1}\Sigma}\ddot{u}_{0}\mbox{d}\mu_{0}\\
= & 2\pi\int_{\Sigma}\left\Vert \alpha\right\Vert _{m}^{2}\mbox{d}\mbox{V}_{m} & Lemma\mbox{ }\ref{lemma: liouville is bm}\\
= & 2\pi\left\Vert \alpha\right\Vert _{WP}^{2}.
\end{alignat*}

\end{proof}

\begin{rem}
Comparing with Bridgeman's results in \cite{Bridgeman:2010kt}, we
suspect: \begin{itemize}

\item[1.] $\int_{T^{1}\Sigma}\ddot{F}_{0}\mbox{d}\mu_{0}=\int_{T^{1}\Sigma}\frac{\ddot{g}_{0}(v,v)}{2}\mbox{d}\mu_{0}$,
and

\bigskip{}

\item[2.] $\left\Vert \dot{c}_{0}\right\Vert _{P}=0$.

\end{itemize}
\end{rem}

\bibliographystyle{amsalpha}
\bibliography{/Users/nyima/Dropbox/TEX/Bibtex/Papers3_backup_20151007,/Users/nyima/Dropbox/TEX/Bibtex/Papers3_backup_20151006}

\end{document}